\documentclass[10pt, reqno]{amsart}
\usepackage{graphicx, amssymb, amsmath, amsthm}
\numberwithin{equation}{section}
\usepackage{xcolor}
\usepackage{amscd}
\def\pa{\partial}

\let\Re=\undefined\DeclareMathOperator*{\Re}{Re}
\let\Im=\undefined\DeclareMathOperator*{\Im}{Im}

\newcommand{\R}{\mathbb{R}}
\newcommand{\C}{\mathbb{C}}
\newcommand{\wit}{\widetilde}

\newcommand{\ov}{\overline}

\newcommand{\var}{\varepsilon}

\newcommand{\norm}[1]{\| #1\|}

\newcommand{\xnorm}[4]{\|#1\|_{L_t^{#2}L_x^{#3}(#4\times\R^d)}}

\newtheorem{theorem}{Theorem}[section]

\newtheorem{lemma}[theorem]{Lemma}

\newtheorem{corollary}[theorem]{Corollary}

\newtheorem{proposition}[theorem]{Proposition}

\theoremstyle{definition}
\newtheorem{definition}[theorem]{Definition}
\newtheorem{remark}[theorem]{Remark}

\makeatletter
\newcommand{\Extend}[5]{\ext@arrow0099{\arrowfill@#1#2#3}{#4}{#5}}
\makeatother

\begin{document}
\title[Mass-critical NLS ]{Low regularity blowup solutions for the mass-critical NLS in higher dimensions}

\author{Chenmin Sun}
\address{Universit\'e C\^{o}te d'Azur, CNRS, LJAD, France}
\email{csun@unice.fr}

\author{Jiqiang Zheng}
\address{Institute of Applied Physics and Computational Mathematics, Beijing 100088}
\email{zhengjiqiang@gmail.com}

\begin{abstract}
In this paper, we study the $H^s$-stability of the log-log blowup regime (which has been completely described in a series of recent works by Merle and Rapha\"el) for the focusing mass-critical nonlinear Schr\"odinger equations $i\pa_tu+\Delta u+|u|^\frac4du=0$ in $\R^d$ with $d\geq3$. We aim to extend the result in [Colliander and  Raphael, Rough blowup solutions to the $L^2$ critical NLS, Math. Anna., 345(2009), 307-366.] for dimension two to the higher dimensions cases $d\geq3$, where we use the bootstrap argument in the above paper and the commutator estimates in [M. Visan and X. Zhang,  On the blowup for the $L^2$-critical focusing nonlinear Schr\"odinger equation in higher dimensions below the energy class. SIAM J. Math. Anal., 39(2007), 34-56.
].

\end{abstract}

 \maketitle

\begin{center}
 \begin{minipage}{100mm}
   { \small {{\bf Key Words:}  nonlinear Schr\"odinger equation;  blow up, low regularity.}
      {}
   }\\
    { \small {\bf AMS Classification:}
      {35P25,  35Q55, 47J35.}
      }
 \end{minipage}
 \end{center}


\section{Introduction}

\noindent
 We study the initial-value problem for focusing
nonlinear
 Schr\"odinger equations of the form
\begin{align} \label{equ:nls}
\begin{cases}    (i\partial_t+\Delta)u=-|u|^\frac4du,\quad
(t,x)\in\R\times\R^d,
\\
u(0,x)=u_0(x),
\end{cases}
\end{align}
where $u:\R_t\times\R_x^d\to \C$.

Equation \eqref{equ:nls} admits a number of symmetries in  $H^1(\R^d)$, explicitly:

$\bullet$ {\bf Space-time translation invariance:} if $u(t,x)$ solves \eqref{equ:nls}, then so does $u(t+t_0,x+x_0),~(t_0,x_0)\in\R\times\R^d$;

$\bullet$ {\bf Phase invariance:} if $u(t,x)$ solves \eqref{equ:nls}, then so does $e^{i\gamma}u(t,x),~\gamma\in\R;$

$\bullet$ {\bf Galilean invariance:} if $u(t,x)$ solves \eqref{equ:nls}, then for $\beta\in\R^d$, so does $e^{i\frac{\beta}2\cdot(x-\frac\beta2t)}u(t,x-\beta t)$;

$\bullet$ {\bf Scaling invariance:} if $u(t,x)$ solves \eqref{equ:nls}, then so does $u_\lambda(t,x)$ defined by
\begin{equation}\label{equ:scale}
u_\lambda(t,x)=\lambda^{\frac{d}2}u(\lambda^2t, \lambda x),\quad\lambda>0.
\end{equation}
This scaling defines a notion of \emph{criticality} for \eqref{equ:nls}. In particular, one can check that
the only homogeneous $L_x^2$-based Sobolev space that is left invariant under \eqref{equ:scale} is $L^2_x(\R^d)$, and we call problem \eqref{equ:nls} as mass-critical problem.

In above, we know that  equation \eqref{equ:nls} admits a number of symmetries in the energy space $H^1$: if $u(t,x)$ solves \eqref{equ:nls}, then for any $(\lambda_0,t_0,x_0,\beta_0,\gamma_0)\in\R^+\times\R\times\R^d\times\R^d\times\R$, so does
\begin{equation}\label{equ:symen}
v(t,x)=\lambda_0^\frac{d}2e^{i\gamma_0}e^{i\frac{\beta_0}2(x-\frac{\beta_0}2t)}u\big(\lambda_0^2t+t_0,\lambda_0x+x_0-\beta_0t\big).
\end{equation}
From the Ehrenfest law or direct computation, these symmetries induce invariances in the energy space, namely
\begin{align}\label{equ:mass123}
&\textbf{Mass:}~~M(u)=\int_{\R^d}|u(t,x)|^2\;dx=M(u_0);\\\label{equ:energy123}
&\textbf{Energy:}~~E(u)=\int_{\R^d}\Big(\frac12|\nabla u(t,x)|^2-\frac{d}{2(d+2)}|u(t,x)|^{\frac{2(d+2)}{d}}\Big)\;dx=E(u_0);\\\label{equ:momentum123}
&\textbf{Momentum:}~~P(u)={\rm Im}\int_{\R^d}\nabla u\bar{u}\;dx=P(u_0).
\end{align}

In the focusing nonlinear Schr\"odinger equation, the special solutions play an important role. They are the so-called
solitary waves and are of the form $u(t,x)=e^{iwt}Q_w(x)$ (which is a global solution but not scatters), where $Q_w$ solves
\begin{equation}\label{equ:ellmasub12}
\begin{cases}
\Delta Q_w+Q_w|Q_w|^{\frac4d}=wQ_w,\\
Q_w\in H^1(\R^d)\backslash\{0\}.
\end{cases}
\end{equation}
Equation \eqref{equ:ellmasub12} is a standard nonlinear elliptic equation.
It is known that if $w\leq0$, then \eqref{equ:ellmasub12} does not have any solution. Therefore, we assume that $w>0$.

In dimension $d=1$,
there exists a unique solution in $H^1$ up to translation to \eqref{equ:ellmasub12} and infinitely
many with growing $L^2$-norm for $d\geq2$. Nevertheless, from \cite{BL,GNN,Kw}, there is a unique positive solution up to translation $Q_w(x)$.
$Q_w$ is in addition radially symmetric.  Letting $Q=Q_{w=1}$, then $Q_w(x)=w^{\frac1{p-1}}Q(w^{1/2}x)$ from scaling property, i.e. $Q$ solves
\begin{equation}\label{equ:ellmasub123}
\Delta Q+Q|Q|^{p-1}=Q.
\end{equation}

Using the Strichartz estimate and a standard fixed point argument, see Ginibre and Velo\cite{GV79} and Cazenave and Weissler\cite{CaW}, we derive that \eqref{equ:nls} is locally well-posedness in $H^s$ for $0\leq s\leq1$ and the Cauchy problem is subcritical in $H^s$ for $s>0$: for $u_0\in H^s,~s>0$, there exists $0<T\leq +\infty$ such that $u\in C([0,T),H^s)$ and either $T=+\infty$ and we say the solution is global, or $T<+\infty$ and then
$$\limsup_{t\to T}\|u(t)\|_{\dot{H}^s}=+\infty$$
and we say the solution blows up in finite time.

In the case $\|u_0\|_{L^2}<\|Q\|_{L^2}$, from classical variational arguments, one can obtain the global well-posedness in $H^1(\R^d)$.
Indeed, this follows from the conservation of the energy, the mass and Gagliardo-Nirenberg inequality as exhibited by
Weinstein in \cite{wen82}:
\begin{equation}
\forall~u\in H^1:~E(u)\geq\frac12\Big(\int|\nabla u|^2\Big)\Big(1-\Big(\frac{\int|u|^2}{\int Q^2}\Big)\Big).
\end{equation}
Moreover, the scattering result in $L^2(\R^d)$ is obtained by Killip, Tao, Visan and Zhang \cite{KTV2009,KVZ2008} for radial initial data and Dodson \cite{Dod}
for nonradial initial data.

In the case $\|u_0\|_{L^2}=\|Q\|_{L^2},$  the
pseudo-conformal transformation applied to the stationary solution $e^{it}Q$  yields an explicit solution
\begin{equation}\label{equ:stx}
S(t,x)=\frac1{|t|^{d/2}}Q\Big(\frac{x}{t}\Big)e^{-i\frac{|x|^2}{4t}+\frac{i}{t}},~\|S(t)\|_{L^2}=\|Q\|_{L^2}
\end{equation}
which scatters as $t\to-\infty$ in the sense that there exists $\psi_-\in L^2(\R^d)$ such that
$$\lim_{t\to-\infty}\|S(t,x)-e^{it\Delta}\psi_-\|_{L^2(\R^d)}=0.$$
And
$S(t,x)$ blows up at $T=0$ at the speed
 $$\|\nabla S(t)\|_{L^2}\sim\frac1{|t|}.$$ An essential feature of \eqref{equ:stx} is compact up to the symmetries of the flow, meaning
that all the mass goes into the singularity formation
  \begin{equation}\label{equ:stdels0}
  |S(t)|^2\rightharpoonup\|Q\|_{L^2}^2\delta_{x=0}~\text{as}~t\to0.
  \end{equation}
  It turns out that $S(t)$ is the unique minimal mass blow-up solution in $H^1$ in the following sense: let $u(-1)\in H^1$ with $\|u(-1)\|_{L^2}=\|Q\|_{L^2}$, and assume that $u(t)$ blows up at $T=0$, then $u(t)=S(t)$ up to the symmetries of the equation, see  Merle \cite{Merle92} (radial) and \cite{Merle93} (general case). Note
that from direct computation
$$E(S(t,x))>0,~\text{and}~\|\nabla S(t)\|_{L^2}=\frac{C}{|t|}.$$
The general intuition is that such a behavior is exceptional in the sense that such minimal
elements can be classified.

The situation $\|u_0\|_{L^2}>\|Q\|_{L^2}$ has been clarified by Merle and Rapha\"el in the series of papers \cite{MR05annmath,MR03GAFA,MR04Invemath,MR06JAMS,MR05CMP,Ra05Mathann}. Let us define the differential operator
$$\Lambda:=\frac{d}2+y\cdot\nabla,$$
which will be of constant use. Then we introduce the following property:

{\bf Spectral property.} Let $d\geq1$.  Consider the two real Schr\"odinger operators
\begin{equation}\label{equ:scroper}
\mathcal{L}_1=-\Delta+\frac2d\Big(\frac4d+1\Big)Q^{\frac4d-1}y\cdot\nabla Q,~\mathcal{L}_2=-\Delta+\frac2d
Q^{\frac4d-1}y\cdot\nabla Q,
\end{equation}
and the real quadratic form for $\varepsilon=\varepsilon_1+i\varepsilon_2\in H^1:$
$$H(\varepsilon,\varepsilon)=(\mathcal{L}_1\varepsilon_1,\varepsilon_1)+(\mathcal{L}_2\varepsilon_2,
\varepsilon_2).$$
Then there exists a universal constant $\tilde{\delta}_1>0$ such that for all $\varepsilon\in H^1$, if
$$(\varepsilon_1,Q)=(\varepsilon_1,Q_1)=(\varepsilon_1,yQ)=(\varepsilon_2,Q_1)=(\varepsilon_2,Q_2)
=(\varepsilon_2,\nabla Q)=0,$$
then
$$H(\varepsilon,\varepsilon)\geq\tilde{\delta}_1\int\Big(|\nabla \varepsilon|^2+|\varepsilon|^2e^{-(2_-)|y|}\Big)$$
where  $Q_1=\Lambda  Q$ and $Q_2=\Lambda Q_1$, and $2_-=2-\epsilon$ with $0<\epsilon\ll1.$

\begin{remark}
We remark that this spectral property has been proved rigorously in \cite{MR05annmath} for dimension $d=1$, since the ground state
$Q$
is explicit in dimension one and the spectral property
could be deduced from some known properties of the second-order differential operators  \cite{Titch46}.  For the dimensions
$d\in\{2,3,4\}$,  Fibich,  Merle and Raphael\cite{FMR} gave a numerically-assisted proof of the above
 spectral  property by using  the  numerical  representation of the ground state $Q$. Rectently, Yang,
   Roudenko and  Zhao \cite{YRZ} showed the spectral property in dimensions $5\leq d\leq10$ (general case) and $d\in\{11,12\}$ (radial).
\end{remark}

Based on the works \cite{MR05annmath,MR03GAFA,MR04Invemath,MR06JAMS,MR05CMP,Ra05Mathann}, we now have:

\begin{theorem}[Dynamics of NLS]\label{thm:dynamssnls}
Let $d\geq1$
 and assume the spectral property holds true. Then there exists $\alpha^\ast>0$ and a universal constant $C^\ast>0$ such that the following is true. Let $u_0\in \mathcal{B}_{\alpha^\ast}$ with
 $$\mathcal{B}_{\alpha^\ast}:=\big\{u_0\in H^1(\R^d):~\int Q^2\leq\int |u_0|^2<\int Q^2+\alpha^\ast\big\},$$ let $u(t)$ be the corresponding solution to \eqref{equ:nls} with $[0,T)$ its maximum time interval with existence on the right in $H^1$.

$(i)$ {\bf Estimates on the blow-up speed:} assume $u(t)$ blows up in finite time i.e.,
$0<T<+\infty$, for $t$ close enough to $T$ , we have either
\begin{equation}\label{equ:loglogbsp}
\lim_{t\to T}\frac{\|\nabla u\|_{L_x^2}}{\|Q\|_{L_x^2}}\Big(\frac{T-t}{\ln|\ln(T-t)|}\Big)^\frac12=\frac1{\sqrt{2\pi}}
\end{equation}
or
\begin{equation}\label{equ:stblow}
\|\nabla u(t)\|_{L^2}\geq\frac{C^\ast}{(T-t)\sqrt{E^G(u_0)}},
\end{equation}
with
$$E^G(u):=E(u)-\frac12\frac{P(u)^2}{\|u\|_{L_x^2}^2}.$$

$(ii)$ {\bf Description of the singularity:} assume $u(t)$ blows up in finite time, then there exist parameters $(\lambda(t),x(t),\gamma(t))\in \R^+\times\R^d\times\R$ and an asymptotic profile
$u^\ast\in L^2(\R^d)$ such that
\begin{equation}\label{equ:l2asyt123}
u(t)-\frac1{\lambda(t)^{d/2}}Q\Big(\frac{x-x(t)}{\lambda(t)}\Big)e^{i\gamma(t)}\to u^\ast\quad\text{in}\quad L^2(\R^d)\quad\text{as}\quad t\to T.
\end{equation}
Moreover, the blow up point is finite in the sense that
$$x(t)\to x(T)\in\R^d,\quad \text{as}\quad t\to T.$$
Moreover, assume $u(t)$ satisfies \eqref{equ:loglogbsp}, $x(T)$ be its blow up point. Set
\begin{equation}
\lambda_0(t)=\sqrt{2\pi}\sqrt{\frac{T-t}{\ln|\ln(T-t)|}}
\end{equation}
then there exists a phase parameter $\gamma_0(t)\in\R$ such that:
\begin{equation}\label{equ:l2asyt1231}
u(t)-\frac1{\lambda_0(t)^{d/2}}Q\Big(\frac{x-x_0(T)}{\lambda(t)}\Big)e^{i\gamma_0(t)}\to u^\ast\quad\text{in}\quad L^2(\R^d)\quad\text{as}\quad t\to T.
\end{equation}

$(iii)$ {\bf Universality of blow up profile in $\dot{H}^1$}: if we assume that $u(t)$ blows up in finite time with \eqref{equ:loglogbsp}, then there exist parameters  $\lambda_0(t)=\frac{\|\nabla Q\|_{L_x^2}}{\|\nabla u(t)\|_{L_x^2}},~x_0(t)\in \R^d$ and $\gamma_0(t)\in\R$ such that
\begin{equation}\label{equ:conuin123}
e^{i\gamma_0(t)}\lambda_0(t)^\frac{d}2u(t,\lambda_0(t)x+x_0(t))\to Q\quad\text{in}\quad \dot{H}^1,\quad \text{as}\quad t\to T.
\end{equation}
If  $u(t)$ satisfies \eqref{equ:stblow}, then  asymptotic stability \eqref{equ:conuin123} holds on a sequence $t_n\to T.$

$(iv)$ {\bf Sufficient condition for log-log blow-up:} if \begin{equation}\label{equ:Egdef}
E^G(u):=E(u)-\frac12\frac{P(u)^2}{\|u\|_{L_x^2}^2}<0,
\end{equation} then $u(t)$ blows up
in finite time with the log-log speed \eqref{equ:loglogbsp}. More generally, the set of initial data
$u_0\in \mathcal{B}_{\alpha^\ast}$ such that the corresponding solution $u(t)$ to \eqref{equ:nls} blows up in finite time
$0<T<+\infty$ with the log-log speed \eqref{equ:loglogbsp} is open in $H^1$ ({\bf $H^1$ stability of the log-log regime}).

$(v)$ {\bf Asymptotic of $u^\ast$ on the singularity:} assume $T<+\infty$; if $u(t)$ satisfies \eqref{equ:loglogbsp}, then  for $R>0$ small,
\begin{equation}\label{equ:uastl23}
\frac1{C^\ast(\ln|\ln(R)|)^2}\leq\int_{|x-x(T)|<R}|u^\ast(x)|^2\;dx\leq\frac{C^\ast}{(\ln|\ln(R)|)^2}
\end{equation}
which implies $
u^\ast\not\in H^1~\text{and}~u^\ast\not\in L^p~\text{with}~p>2.
$ If $u(t)$ satisfies \eqref{equ:stblow}, then \begin{equation}\label{equ:lstlwh1}
\int_{|x-x(T)|\leq R}|u^\ast(x)|^2\;dx\leq C^\ast E_0R^2,~\text{and}~u^\ast\in H^1.
\end{equation}

\end{theorem}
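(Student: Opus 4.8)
The plan is to follow the Merle--Rapha\"el program \cite{MR05annmath,MR03GAFA,MR04Invemath,MR06JAMS,MR05CMP,Ra05Mathann}, in which the spectral property is the only genuinely dimension-dependent input; once it is granted, the rest is $d$-independent modulation theory together with a Lyapunov-functional argument. Using the Galilean invariance in \eqref{equ:symen} I would first normalize $P(u_0)=0$, so that $E^G(u_0)=E(u_0)$. As long as $\|\nabla u(t)\|_{L^2}$ is large, Weinstein's variational inequality and mass conservation force $u(t)$ to stay, modulo the symmetries, inside a small $L^2$-tube around the ground-state manifold; an implicit function theorem argument then produces $C^1$ parameters $(\lambda(t),x(t),\gamma(t),b(t))$ and a remainder $\varepsilon$ with
\[
u(t,x)=\frac{1}{\lambda(t)^{d/2}}\,\big(Q_{b(t)}+\varepsilon\big)\!\Big(t,\tfrac{x-x(t)}{\lambda(t)}\Big)\,e^{i\gamma(t)},
\]
where $Q_b\approx Q\,e^{-ib|y|^2/4}$ is a slowly varying refined blow-up profile and $\varepsilon$ is subject to orthogonality conditions chosen to match exactly the hypotheses of the spectral property (those involving $Q$, $Q_1=\Lambda Q$, $Q_2=\Lambda Q_1$, $yQ$, $\nabla Q$), plus one normalization fixing $\lambda$. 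Differentiating the orthogonality conditions in the rescaled time $s$ with $\tfrac{ds}{dt}=\lambda^{-2}$ produces the modulation system, schematically $\big|\tfrac{\lambda_s}{\lambda}+b\big|+\big|b_s+(\text{virial term})\big|+|\gamma_s-1|+|x_s|\lesssim\|\varepsilon\|_{\mathrm{loc}}$, where $\|\cdot\|_{\mathrm{loc}}$ is a fixed exponentially weighted local norm.

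I would then run a bootstrap inside a \emph{trapped regime} $\{\,b>0\text{ small},\ \lambda\text{ small},\ \|\varepsilon\|_{H^1}\text{ small}\,\}$. With the orthogonality conditions in force the spectral property gives the coercivity $H(\varepsilon,\varepsilon)\gtrsim\|\varepsilon\|_{\mathrm{loc}}^2$, which, combined with conservation of mass and energy (the rescaled energy being $\lambda(t)^2E(u_0)\to0$), upgrades the pointwise control of $\varepsilon$. The crux is the construction of a Lyapunov functional: a localized, carefully weighted combination of the energy of $\varepsilon$ with a virial correction, whose $s$-derivative has a definite sign up to errors absorbed by the coercivity and by $\Gamma_b$, the exponentially small quantity $\sim e^{-\pi/b}$ measuring the failure of $Q_b$ to be an exact soliton. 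Its monotonicity forces $b(s)>0$ to persist; feeding this into $\tfrac{\lambda_s}{\lambda}\sim -b$, $b_s\sim-\Gamma_b$, and a monotonicity tying $b$ to $\lambda$, reduces the blow-up speed to an ODE analysis whose integration gives $\lambda(t)^2\sim C\,\tfrac{T-t}{\log|\log(T-t)|}$, that is \eqref{equ:loglogbsp} with the sharp constant read off from this system; the complementary scenario, in which the sign/monotonicity structure fails, is exactly the one producing the conformal bound \eqref{equ:stblow}.

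The remaining items are then largely soft analysis. For (ii), the bubble carries mass $\|Q\|_{L^2}^2$ with excess at most $\alpha^\ast$, and an $L^2$-Cauchy estimate for $u(t)$ minus the bubble, away from $x(t)$, yields strong $L^2$-convergence to an asymptotic profile $u^\ast$ and $x(t)\to x(T)$. For (iii), rescaling by $\|\nabla Q\|_{L^2}/\|\nabla u(t)\|_{L^2}$ and using $\|\varepsilon(t)\|_{\dot H^1}\to0$ gives $\dot H^1$-convergence to $Q$, only along a sequence in the conformal case. For (iv), membership in the trapped regime at one time is an open condition in $H^1$ (hence openness of the log-log set), while $E^G(u_0)<0$ is seen to force entry into that regime through the truncated virial identity, whose leading term is a multiple of $E^G(u)$. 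For (v), the bounds on $\int_{|x-x(T)|<R}|u^\ast|^2$ come from refined local-mass estimates calibrated to the log-log rate: the upper bound from local mass conservation, and the matching lower bound --- the subtler half --- from a lower bound on the radiation ejected by the shrinking bubble; in the conformal case the clean rate $\lambda(t)\sim(T-t)\sqrt{E^G(u_0)}$ yields instead the $R^2$ bound, hence $u^\ast\in H^1$.

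The single hardest point --- the spectral property aside --- is the Lyapunov step: designing the weighted virial/energy functional so that its monotonicity survives both the localization errors and the modulation-equation errors, and then extracting from the resulting $(\lambda,b)$ ODE the \emph{sharp} log-log law rather than merely two-sided bounds. The lower bound in (v) is a second delicate point for the same reason --- it requires a genuine lower bound on the radiation, not just an upper one --- while the decomposition, the modulation bookkeeping, and the $L^2$ and $\dot H^1$ convergence statements are comparatively routine.
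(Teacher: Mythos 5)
The paper does not actually give a proof of Theorem~\ref{thm:dynamssnls}: it is introduced by the sentence ``Based on the works [Merle--Rapha\"el]$\dots$ we now have'' and is quoted verbatim as a black-box result from \cite{MR05annmath,MR03GAFA,MR04Invemath,MR06JAMS,MR05CMP,Ra05Mathann} together with \cite{FMR,YRZ} supplying the spectral property in higher dimensions. So there is no in-paper argument to compare your attempt against.

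That said, your sketch is an accurate summary of the Merle--Rapha\"el program as it appears in those references, and it correctly identifies the architecture: Galilean normalization $P(u_0)=0$; modulation into the tube around the $Q_b$ profiles with orthogonality conditions tuned to the spectral property; the rescaled-time modulation system; the Lyapunov/virial monotonicity as the dynamical engine; and the resulting $(\lambda,b)$ ODE producing the sharp log-log constant in \eqref{equ:loglogbsp}, with the conformal scenario \eqref{equ:stblow} as the alternative. The softer items (ii)--(v) are also attributed to the right mechanisms: strong $L^2$ compactness away from the bubble, vanishing of $\|\varepsilon\|_{\dot H^1}$ for (iii), openness of the trapped regime plus the truncated-virial entry criterion for (iv), and refined local-mass flux estimates for (v). The one place where your sketch is thinner than the actual references is the same place you flag yourself: the sharp constant $1/\sqrt{2\pi}$ in \eqref{equ:loglogbsp} and the two-sided bound \eqref{equ:uastl23} require the refined radiation term $\zeta_b$ (this paper's Lemma~\ref{lem:linoutrad}) and the precise asymptotics of $\Gamma_b$, not merely the schematic $b_s\sim-\Gamma_b$; without that refinement one only gets log-log bounds up to constants and the lower bound in (v) does not follow. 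Since the paper itself delegates all of this to the literature, there is no gap to point to on its side, but if you were to flesh your outline into a full proof you would need that radiation construction explicitly.
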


\begin{remark}
The above theorem asserts the existence and the stability of a log-log blowup regime, and gives sufficient conditions to ensure its occurrence. It also assets that the log-log blowup regime is open in $H^1$.
 In \cite{CR09}, Colliander and Raphael proved that the log-log blowwup dynamics described by Theorem \ref{thm:dynamssnls} are stable under small $H^s$ perturbations with $0<s\leq1$ in dimension $d=2$. In this paper, we extend their result to higher dimensional cases $d\geq3.$

\end{remark}

Now, we state our main result.

\begin{theorem}[$H^s$-stability of the log-log regime]\label{thm:main}
Let $d\geq3$ and $\frac1{1+\min\{1,\frac4d\}}<s\leq1$,  and assume the spectral property holds true. Then, the log-log blowup dynamics described by Theorem \ref{thm:dynamssnls} are stable under small $H^s$ perturbations. In other words, let $u_0\in H^1$ evolve into  a log-log blowup solution given by Theorem \ref{thm:dynamssnls}. Then, there exists $\varepsilon=\varepsilon(s,u_0)$ such that for any $v_0\in H^s$ with
$$\|v_0-u_0\|_{H^s(\R^d)}<\varepsilon,$$
then, the corresponding solution $v(t)$ to \eqref{equ:nls} blows up in finite time $0<T<+\infty$ with the following blowup dynamics: there exist geometrical parameters $(\lambda(t),x(t),\gamma(t))\in(0,+\infty)\times\R^d\times\R$ and an asymptotic residual profile $v^\ast\in L^2$, with $v^\ast\not\in L^p(p>2)$, such that as $t\to T$
\begin{align}\label{equ:convt}
v(t)-\frac1{\lambda(t)^\frac{d}2}Q\Big(\frac{x-x(t)}{\lambda(t)}\Big)e^{i\gamma(t)}\to v^\ast\quad \text{in}\quad L^2,\\\label{equ:xtconx}
x(t)\to x(T)\in\R^d,\\\label{equ:lamdts}
\lambda(t)\sqrt{\frac{\log|\log(T-t)|}{T-t}}\to\sqrt{2\pi}.
\end{align}

\end{theorem}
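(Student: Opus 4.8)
The plan is to deduce the theorem from the $H^1$-stability of the log-log regime (Theorem~\ref{thm:dynamssnls}$(iv)$) by a perturbative argument in the spirit of Colliander--Raphael: given rough data $v_0$ that is $H^s$-close to $u_0$, I would mollify it to data lying in the open $H^1$-basin of log-log blow-up around $u_0$, run the known $H^1$-dynamics for that smooth datum, and control the difference---which starts small in $H^s$---up to the blow-up time. Because for $d\geq 3$ the nonlinearity $|u|^{\frac4d}u$ has power $\frac4d<2$ and is not twice differentiable, the fractional derivative $\langle\nabla\rangle^s$ of the difference of two nonlinear terms cannot be handled by the naive Leibniz rule, and this is exactly where the Visan--Zhang commutator/fractional-chain-rule estimates enter.

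\emph{First}, set $w_0:=v_0-u_0$ and fix a large dyadic $N=N(u_0)$. Writing $v_0=P_{\le N}v_0+P_{>N}v_0$, we have $\|P_{>N}u_0\|_{H^1}\to0$ as $N\to\infty$ (since $u_0\in H^1$) and $\|P_{\le N}w_0\|_{H^1}\lesssim N^{1-s}\|w_0\|_{H^s}$, hence
\[
\|P_{\le N}v_0-u_0\|_{H^1}\lesssim N^{1-s}\eps+\|P_{>N}u_0\|_{H^1},\qquad \|P_{\le N}v_0\|_{H^1}\lesssim\|u_0\|_{H^1}.
\]
Choosing $N$ large first and then $\eps$ small, $P_{\le N}v_0$ lies in the open set (Theorem~\ref{thm:dynamssnls}$(iv)$) of $H^1$-data evolving into a log-log blow-up; let $\wit u(t)$ be the corresponding solution, blowing up at some $0<\wit T<+\infty$ with parameters $(\wit\lambda,\wit x,\wit\gamma)$ obeying \eqref{equ:loglogbsp}, $\wit x(t)\to\wit x(\wit T)$, and $L^2$-profile $\wit u^\ast$. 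Meanwhile $w(0):=v_0-P_{\le N}v_0=P_{>N}v_0$ satisfies $\|w(0)\|_{H^s}\lesssim\eps+N^{s-1}\|u_0\|_{H^1}$, which can be made as small as we wish.

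\emph{Next}, the heart of the argument is to propagate the remainder $w:=v-\wit u$, which solves
\[
i\pa_t w+\Delta w=-\big(|\wit u+w|^{\frac4d}(\wit u+w)-|\wit u|^{\frac4d}\wit u\big),\qquad w(0)=P_{>N}v_0,
\]
and to show $\sup_{[0,\wit T)}\|w(t)\|_{H^s}\lesssim\eps^\theta$ for some $\theta>0$. On a fixed $[0,T_0]\subset[0,\wit T)$ this follows from the subcritical $H^s$ local theory; near $\wit T$ I would renormalize time by $d\sigma=\wit\lambda(t)^{-2}dt$ (so $\sigma\uparrow\infty$), cut $[T_0,\wit T)$ into consecutive pieces on which the mass-critical spacetime norm of $\wit u$ is $O(1)$, and run Strichartz estimates for $w$ on each piece. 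Since $\big|\,|\wit u+w|^{\frac4d}(\wit u+w)-|\wit u|^{\frac4d}\wit u\,\big|\lesssim(|\wit u|+|w|)^{\frac4d}|w|$, the difference is linear in $w$ to leading order; applying $\langle\nabla\rangle^s$ and the Visan--Zhang commutator estimates distributes the derivative so that the main contribution carries $\langle\nabla\rangle^s w$ in a Strichartz norm with the remaining $\frac4d$ slots being powers of $\wit u$ and $w$ in controlled mass-critical-type norms, plus genuinely higher-order-in-$w$ terms. Matching Strichartz exponents in this distribution---one derivative on a single factor of $w$, the other $\frac4d$ slots on $\wit u$---is precisely what forces $s>\tfrac1{1+\min\{1,\frac4d\}}$: below this threshold there is not enough integrability to carry $\langle\nabla\rangle^s$. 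Using the smallness of $\|w(0)\|_{H^s}$, the quantitative log-log description of $\wit u$ (so its dangerous Strichartz norms diverge only at the mild log-log rate and the bubble, concentrating spatially and oscillating on the fast scale $\wit\lambda^2$, interacts integrably with the spread-out $w$), and summing the resulting gains over the pieces, the bootstrap closes; the same bounds control the accumulated nonlinear interaction, so by Duhamel $w(t)\to w^\ast$ in $L^2$ as $t\to\wit T$.

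\emph{Finally}, since $v=\wit u+w$ with $\|w(t)\|_{H^s}$ uniformly bounded while $\wit u$ concentrates at scale $\wit\lambda(t)\to0$ (hence $\|\wit u(t)\|_{\dot H^s}\to\infty$), the solution $v$ exists on $[0,\wit T)$ and blows up exactly at $T=\wit T\in(0,\infty)$ in $\dot H^s$. Adding $w$ to the Merle--Raphael decomposition of $\wit u$,
\[
v(t)-\frac1{\wit\lambda(t)^{d/2}}Q\Big(\frac{x-\wit x(t)}{\wit\lambda(t)}\Big)e^{i\wit\gamma(t)}\longrightarrow \wit u^\ast+w^\ast=:v^\ast\qquad\text{in }L^2,
\]
so \eqref{equ:convt}--\eqref{equ:lamdts} hold with $(\lambda,x,\gamma)=(\wit\lambda,\wit x,\wit\gamma)$ and the constant $\sqrt{2\pi}$ is inherited from \eqref{equ:loglogbsp}; moreover $v^\ast\notin L^p$ for every $p>2$, because $\wit u^\ast$ already fails to be locally $L^p$ near $\wit x(\wit T)$ by Theorem~\ref{thm:dynamssnls}$(v)$ while $w^\ast$, being a bounded-in-$H^s$ $L^2$-limit, lies in $H^s$ and so is locally $L^p$ there. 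I expect the main obstacle to be the bootstrap of the previous step: the coefficients of the $w$-equation degenerate like $\wit\lambda(t)^{-1}\to\infty$, so one must play the slowness of the log-log law against this growth while simultaneously controlling $\langle\nabla\rangle^s$ of a difference of $\frac4d$-power nonlinearities through the higher-dimensional commutator estimates---the step that replaces the $d=2$ argument of Colliander--Raphael and pins down the admissible range of $s$.
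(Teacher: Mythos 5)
Your plan is genuinely different from the paper's, and the difference is not cosmetic: the key step in your argument has a gap that I do not think can be closed by the tools you invoke. The paper does \emph{not} split $v=\wit u+w$ with $\wit u$ a smooth log-log solution and then propagate $\|w(t)\|_{H^s}$ by Strichartz up to the blow-up time. It instead runs the Merle--Rapha\"el modulation analysis directly on $v$, smoothing with $I_{N(t)}$ (note that $N$ is time dependent and chosen adaptively, $N(t)=\lambda(t)^{-1/\beta}$) and replacing the conservation of $E$ and $P$ by \emph{almost}-conservation of $E(I_{N(t)}v)$ and $P(I_{N(t)}v)$ (Lemma~\ref{lem:conincre}, Proposition~\ref{prop:almostcons}). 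The accumulated error over all LWP subintervals is then summed \emph{against a gain in $N(t)^{-\min\{1,4/d\}s+}$}, and the choice of $\beta$ (Remark~\ref{rem:beta}) is tuned so that this geometric sum converges. The bootstrap (Lemma~\ref{lem:bootstrap}) closes because these quantities feed into the virial dispersion and Lyapunov-functional machinery (Lemmas~\ref{lem:glvirest}--\ref{lem:l2dispinf}, Corollary~\ref{cor:jsest}), which are estimates in the modulation variable $\var$ rather than in the ``difference'' $w$.

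The gap in your approach sits precisely where you flag ``the main obstacle.'' The equation for $w=v-\wit u$ reads $i\pa_tw+\Delta w=-(|\wit u+w|^{4/d}(\wit u+w)-|\wit u|^{4/d}\wit u)$, whose linearization is a Schr\"odinger equation with the time-dependent potential $V\sim|\wit u|^{4/d}$. On each subinterval of $[T_0,\wit T)$ where the mass-critical spacetime norm of $\wit u$ is $O(1)$, Strichartz estimates give you a bound of the schematic form $\|w\|_{S^0(I_j)}\le C_1\|w(\tau_j)\|+C_2\|w\|_{S^0(I_j)}$ with $C_2$ a universal constant of order one, hence at best $\|w(\tau_{j+1})\|\le C\|w(\tau_j)\|$ with $C>1$; since there are infinitely many such intervals up to $\wit T$ (their number diverges like $k N(T^+)^{2(1-s)/s}$ per doubling of $1/\lambda$, cf.\ \eqref{equ:jk}), the iterated bound is vacuous. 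There is no mechanism in a bare commutator/Strichartz argument to recover a summable decay factor per interval: the linear-in-$w$ interaction $|\wit u|^{4/d}w$ does not become smaller just because $\|w\|$ is small, and the heuristic that ``the spread-out $w$ interacts integrably with the concentrating bubble'' is exactly what fails in general, since nothing prevents $w$ from picking up mass inside the bubble region as it evolves. By contrast, the I-method with an adaptive $N(t)$ and the modulation decomposition provide the missing small parameter: the increment per LWP interval carries the explicit factor $N(T^\ast)^{-\min\{1,4/d\}s+}$, which, summed against $J_k\lesssim kN^{2(1-s)/s}$ intervals and the scale hierarchy $\lambda(t_k)\sim 2^{-k}$, yields a convergent geometric series provided $\beta$ satisfies the constraints in Remark~\ref{rem:beta}. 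This is what pins down the admissible $s$ and $\beta$ in the paper, and it has no analogue in your scheme.

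Two smaller issues. First, your reduction $v_0\mapsto P_{\le N}v_0$ is different from the paper's \eqref{equ:iniassum}--\eqref{equ:h0hs}: the paper needs the $H^s$ rough part $H(0)$ to be \emph{quantitatively} tiny ($\|H(0)\|_{H^s}\le\lambda(0)^{10}$), coupled to the initial scale, not merely $o(1)$. Second, your argument that $v$ blows up at $T=\wit T$ because $\|\wit u(t)\|_{\dot H^s}\to\infty$ while $\|w(t)\|_{H^s}$ stays bounded presupposes the uniform $H^s$ bound on $w$ that is exactly the unresolved step; once that step is missing the entire conclusion (including $v^\ast\notin L^p$ via $w^\ast\in H^s$) is undeduced.
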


\begin{remark}
The restriction $\frac1{1+\min\{1,\frac4d\}}<s$ follows from the commutator estimation in Lemma \ref{commutor}.

\end{remark}

\subsection{Outline of proof of Theorem \ref{thm:main}}

First, we recall from \cite{MR03GAFA,MR04Invemath} the existence of a one parameter family of localized self-similar profiles close to the ground state solution $Q$.

\begin{proposition}[Localized self-similar profiles, \cite{MR03GAFA,MR04Invemath}]
There exist universal constants $C>0,~\eta^\ast>0$ such that the following holds true. For all $0<\eta<\eta^\ast$, there exist constants $\varepsilon^\ast(\eta)>0,~b^\ast(\eta)>0$ going to zero as $\eta\to0$ such that for all $|b|<b^\ast(\eta)$, there exists a unique radial solution
$\tilde{Q}_b$ to
\begin{equation}
\begin{cases}
\Delta\tilde{Q}_b-\tilde{Q}_b+ib\Lambda\tilde{Q}_b+\tilde{Q}_b|\tilde{Q}_b|^\frac4d=0,\\
P_b=\tilde{Q}_b e^{i\frac{b|y|^2}4}>0~\text{in}~B_{R_b},\\
|\tilde{Q}_b-Q(0)|<\varepsilon^\ast(\eta),~\tilde{Q}_b(R_b)=0,
\end{cases}
\end{equation}
with $R_b=\frac2{|b|}\sqrt{1-\eta},~B_{R_b}=\{y\in\R^d,~|y|\leq R_b\}.$ Let $\phi_b(y)$ be a regular radially symmetric cut-off function satisfying
\begin{equation*}
  \phi_b(y)=\begin{cases}
  1 \quad \text{if}\quad |x|\leq R_b^-=\sqrt{1-\eta}R_b,\\
  0\quad \text{if}\quad |x|\geq R_b,
  \end{cases} ~0\leq\phi_b\leq1,
\end{equation*}
and $\|\nabla\phi_b\|_{L^\infty}+\|\Delta\phi_b\|_{L^\infty}\to0$ as $|b|\to0.$ Moreover, let
\begin{equation}\label{def:qb}
Q_b(r)=\tilde{Q}_b(r)\phi_b(r),
\end{equation}
then
\begin{align}\label{equ:qbclsq}
\big\|e^{Cr}(Q_b-Q)\big\|_{H^{10}\cap C^2}\to0\quad\text{as}\quad |b|\to0,\\\label{equ:paqbest}
\big\|e^{cr}\big(\tfrac{\pa Q_b}{\pa b}+i\tfrac{|y|^2}4Q\big)\big\|_{C^2}\to0\quad \text{as}\quad |b|\to0,\\\label{equ:eqb}
|E(Q_b)|\leq e^{-\frac{C}{|b|}},
\end{align}
and $Q_b$ has super-critical mass:
\begin{equation}\label{equ:qbcrimass}
0<\frac{d}{d(b^2)}\Big(\int|Q_b|^2\Big)\Big|_{b^2=0}=d_0<+\infty.
\end{equation}

\end{proposition}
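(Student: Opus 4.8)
To prove this proposition I would follow the construction of Merle and Rapha\"el, whose starting point is to strip off the gauge factor responsible for the non--self-adjoint convection term. Writing $\tilde Q_b=P_b\,e^{-ib|y|^2/4}$, a direct computation shows that the $ib\Lambda\tilde Q_b$ term is entirely absorbed and that $P_b$ solves the genuinely elliptic equation
\begin{equation*}
\Delta P_b-\Big(1-\frac{b^2|y|^2}{4}\Big)P_b+P_b|P_b|^{4/d}=0 ,
\end{equation*}
on which we impose $P_b>0$ in $B_{R_b}$ and $P_b(R_b)=0$. The coefficient $1-b^2|y|^2/4$ is a small, slowly varying perturbation of $1$ on $B_{R_b^-}$, but it degenerates and changes sign at the turning point $|y|=2/|b|$; this is exactly why one truncates at $R_b=\tfrac{2}{|b|}\sqrt{1-\eta}$, just inside the region where the solution still sits on the exponentially decaying branch.

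To produce $P_b$ I would run an ODE shooting argument on $[0,R_b]$ for the radial profile: shoot from $P_b(0)=\alpha$, $P_b'(0)=0$, compare the trajectory with that of $Q$ through a Gronwall-type estimate in which the perturbation $\tfrac{b^2r^2}{4}P_b$ is absorbed using the exponential decay of $Q$ (so the relevant weighted integrals over $[0,R_b]$ stay $O(1)$ and $P_b-Q$ stays small), and then tune $\alpha=\alpha(b)\to Q(0)$ by the intermediate value theorem so that $P_b(R_b)=0$; positivity on $[0,R_b)$ follows from the same comparison. Equivalently one can set up a fixed-point/implicit-function argument near $Q$, using that the radial linearized operator $L_+=-\Delta+1-(1+\tfrac4d)Q^{4/d}$ has trivial radial kernel, is invertible on radial functions, and its inverse preserves exponential weights (recall $L_+\Lambda Q=-2Q$, hence $L_+^{-1}Q=-\tfrac12\Lambda Q$). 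From either construction one bootstraps the weighted closeness $\|e^{Cr}(P_b-Q)\|_{H^{10}\cap C^2}\to0$ as $|b|\to0$: the source driving $P_b-Q$ is $\tfrac{b^2r^2}{4}P_b\sim b^2r^2e^{-r}$, inverting the elliptic operator restores exponential decay, and the prefactor $b^2$ times a polynomial is small, with $C$ taken strictly below the decay rate of $Q$. Passing from $P_b$ to $Q_b=\tilde Q_b\phi_b=P_b\,e^{-ibr^2/4}\phi_b$ costs only the factors $e^{-ibr^2/4}-1=O(br^2)$ and the cutoff error; both are tamed against the exponential decay of $Q$ because $Q_b$ is supported in $\{r\lesssim R_b\sim1/|b|\}$, where $e^{-r}$ is already $\lesssim e^{-c/|b|}$, and this gives \eqref{equ:qbclsq}. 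Differentiating the $P_b$-equation in $b$ and noting $\partial_bP_b|_{b=0}=0$ (trivial radial kernel again) yields $\partial_bQ_b|_{b=0}=-i\tfrac{|y|^2}{4}Q$, which is \eqref{equ:paqbest}.

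The energy bound \eqref{equ:eqb} rests on an exact algebraic cancellation in a Pohozaev-type identity. Since $Q_b$ is compactly supported, pairing the equation $\Delta Q_b-Q_b+ib\Lambda Q_b+Q_b|Q_b|^{4/d}=\Psi_b$ (where $\Psi_b$, the commutator error created by the cutoff $\phi_b$, is supported in the annulus $R_b^-\le|y|\le R_b$) with $\overline{\Lambda Q_b}$, taking real parts, and using $\Re\int Q_b\,\overline{\Lambda Q_b}=0$ together with $\Re\int(ib\Lambda Q_b)\,\overline{\Lambda Q_b}=0$ (the latter because $\int|\Lambda Q_b|^2\in\R$), one gets $-\int|\nabla Q_b|^2+\tfrac{d}{d+2}\int|Q_b|^{2+4/d}=\Re\int\Psi_b\,\overline{\Lambda Q_b}$; inserting this into the definition of $E$ collapses the two leading terms and leaves exactly $E(Q_b)=-\tfrac12\Re\int\Psi_b\,\overline{\Lambda Q_b}$ (so an exact, cutoff-free solution would have $E=0$). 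On the annulus $\tilde Q_b$ and its derivatives are $\lesssim e^{-R_b^-}=e^{-2(1-\eta)/|b|}$ while $\|\nabla\phi_b\|_{\infty}+\|\Delta\phi_b\|_{\infty}\to0$, so $\|\Psi_b\|_{L^2}\lesssim e^{-c/|b|}$, and since $\|\Lambda Q_b\|_{L^2}$ stays bounded we obtain $|E(Q_b)|\lesssim e^{-c/|b|}$, which gives \eqref{equ:eqb} after relabeling the constant. Finally, for the supercritical mass \eqref{equ:qbcrimass} I would expand $Q_b=Q+bQ^{(1)}+b^2Q^{(2)}+O(b^3)$ by iterating the profile equation: $Q^{(1)}=-i\tfrac{|y|^2}{4}Q$ is purely imaginary, so $\int|Q_b|^2=\int Q^2+O(b^2)$ with no linear term, and the $b^2$-coefficient equals $\tfrac18\int|y|^4Q^2+2\Re\int Q^{(2)}Q$, where $\Re Q^{(2)}$ is obtained by inverting $L_+$ against an explicit source built from $Q$ (essentially $\Lambda(|y|^2Q)$ and $|y|^4Q^{1+4/d}$); using $L_+^{-1}Q=-\tfrac12\Lambda Q$, this reduces to an explicit finite integral in $Q$ that one checks makes the total coefficient strictly positive, so $d_0\in(0,+\infty)$.

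The main obstacle is the uniform control in the first step. The interval $[0,R_b]$ with $R_b\sim1/|b|\to+\infty$ is unbounded in the limit and the coefficient $1-b^2r^2/4$ degenerates at its right endpoint, so the smallness of the perturbation is only weighted/integrated, not pointwise; one must show that the shooting solution stays on the exponentially decaying (``stable'') branch all the way out to $R_b^-$ and never crosses over into the oscillatory regime. That non-perturbative step is the heart of the matter; once it is in place, the subsequent weighted estimates, the Pohozaev identity for the energy, and the second-order mass expansion are comparatively routine.
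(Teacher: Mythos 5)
The paper does not prove this proposition; it is imported with the citation \cite{MR03GAFA,MR04Invemath}, so there is no internal argument here to compare against. Your sketch does reproduce the architecture of the Merle--Rapha\"el construction: the gauge change $P_b=\tilde Q_b e^{ib|y|^2/4}$ turning the non-self-adjoint $ib\Lambda$ term into the real confining potential $\tfrac{b^2|y|^2}{4}$, an ODE shooting/comparison on $[0,R_b]$ with a turning point at $r=2/|b|$, a Pohozaev identity making $E(Q_b)$ a pairing of the cutoff error $\Psi_b$ against $\Lambda Q_b$, and an even-in-$b$ expansion of $P_b$ for the supercritical mass. A few points to tighten. The constant $d_0$ can be computed in closed form rather than ``checked'': since the $P_b$-equation depends only on $b^2$, $P_b=Q+b^2R+O(b^4)$ with $L_+R=\tfrac{|y|^2}{4}Q$, and by self-adjointness of $L_+$ and $L_+^{-1}Q=-\tfrac12\Lambda Q$ one gets
\[
d_0=\frac{d}{d(b^2)}\int|Q_b|^2\Big|_{b^2=0}=2\int QR
=-\tfrac12\int(\Lambda Q)\,\tfrac{|y|^2}{4}Q
=\tfrac14\int|y|^2Q^2>0,
\]
the last equality using $\int(\Lambda Q)|y|^2Q=-\int|y|^2Q^2$, so positivity is automatic. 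In the energy estimate, the relevant bound is the pointwise size of $\Lambda Q_b$ over the annulus $R_b^-\leq|y|\leq R_b$ where $\Psi_b$ is supported (both factors are already exponentially small there); a global $\|\Lambda Q_b\|_{L^2}=O(1)$ bound is cruder than what is needed to match the claimed exponent. And when deriving \eqref{equ:paqbest} you must also account for the $b$-dependence of $\phi_b$ and argue that $\partial_b\phi_b$ contributes nothing at $b=0$, which holds because $\nabla\phi_b$ is supported where $r\gtrsim1/|b|$ and meets the exponential decay of $Q$.

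The genuine gap is the one you flag yourself: showing that the radial trajectory of $P_b$ stays on the decaying branch across the whole interval $[0,R_b^-]$, which becomes unbounded and whose coefficient $1-\tfrac{b^2r^2}{4}$ degenerates at the right endpoint. The Gronwall/exponential-weight comparison you invoke is not enough on its own; one needs a genuine WKB-type turning-point analysis (or an iteration scheme with $b$-dependent weighted norms tracking the transition region) to rule out crossing into the oscillatory regime and to obtain the sharp exponential smallness encoded in $\Gamma_b$. This is precisely the technical core of the cited Merle--Rapha\"el lemmas, so as written your proposal names the hard step but does not close it.
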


\begin{remark}\label{Rem:qbequ}
The profiles $Q_b$ are not exact self-similar solutions and we define the error term $\Psi_b$ by:
\begin{equation}\label{def:psib}
\Delta Q_b-Q_b+ib\Lambda Q_b+Q_b|Q_b|^\frac4d=-\Psi_b.
\end{equation}
Indeed,
$$-\Psi_b=2\nabla\tilde{Q}_b\cdot\nabla \phi_b+\tilde{Q}_b(\Delta\phi_b)+ib\tilde{Q}_b(\Lambda\phi_b)+(\phi_b^{1+\frac4d}-\phi_b)|\tilde{Q}_b|^\frac4d\tilde{Q}_b.$$
\end{remark}

Next, we introduce the outgoing radiation escaping the soliton core according to the following Lemma.

\begin{lemma}[Linear outgoing radiation, \cite{MR04Invemath}(Lemma 15) ]\label{lem:linoutrad}
There exist universal constants $C>0$ and $\eta^\ast>0$ such that $\forall~0<\eta<\eta^\ast$, there exists $b^\ast>0$ such that for any $|b|<b^\ast$, the following holds true: with $\Psi_b$ given by \eqref{def:psib}, there exists a unique radiation solution $\zeta_b$ to
\begin{equation}
\begin{cases}
\Delta \zeta_b-\zeta_b+ib\Lambda\zeta_b=\Psi_b,\\
\int|\nabla \zeta_b|^2<+\infty.
\end{cases}
\end{equation}
Moreover, let
\begin{equation}\label{equ:gammab}
\Gamma_b:=\lim_{|y|\to+\infty}|y|^d|\zeta_b(y)|^2,
\end{equation}
then,
\begin{equation}\label{equ:geb}
e^{-(1+C\eta)\frac{\pi}{|b|}}\leq\Gamma_b\leq e^{-(1-C\eta)\frac{\pi}{|b|}}.
\end{equation}
More precisely, it follows that
$$ \left\||y|^{\frac{d}{2}}(|\zeta_b|+|y||\nabla\zeta_b|)\right\|_{L^{\infty}(|y|\geq R_b)}\leq \Gamma_b^{\frac{1}{2}-C\eta},\quad \int |\nabla\zeta_b|^2\leq \Gamma_b^{1-C\eta}.
$$
For $|y|$ small, we have: For any $\sigma\in (0,5)$, there exists $\eta^{**}(\sigma)$, such that for any $0<\eta<\eta^{**}$, there exists $b^{**}(\eta)$ such that for any $0<|b|<b^{**}(\eta)$, it follows that
$$ \|\zeta_be^{-\frac{\sigma\theta(b|y|)}{|b|}}\|_{\mathcal{C}^2(|y|\leq R_b)}\leq \Gamma_b^{\frac{1}{2}+\frac{\sigma}{10}}.
$$
Last, $\zeta_b$ is differentiable with respect to $b$ with estimate
$$ \|\partial_b\zeta_b\|_{\mathcal{C}^1}\leq \Gamma_b^{\frac{1}{2}-C\eta}.
$$
\end{lemma}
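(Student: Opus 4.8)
\emph{Proof strategy.} The plan is to use radial symmetry to reduce \eqref{def:psib} to a second order ODE, gauge away the transport term $ib\Lambda$, and then carry out a quantitative turning-point/WKB analysis; the exponential rate $\pi/|b|$ will come out of a concrete action integral.

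First I would set $\zeta_b(y)=e^{-ib|y|^2/4}w(|y|)$. Using
$$\Delta\big(e^{-ib|y|^2/4}w\big)=e^{-ib|y|^2/4}\Big(\Delta w-ib\Lambda w-\tfrac{b^2|y|^2}{4}w\Big),\qquad \Lambda\big(e^{-ib|y|^2/4}w\big)=e^{-ib|y|^2/4}\Big(\Lambda w-\tfrac{ib|y|^2}{2}w\Big),$$
the equation $\Delta\zeta_b-\zeta_b+ib\Lambda\zeta_b=\Psi_b$ becomes
$$\Delta w-w+\tfrac{b^2|y|^2}{4}w=e^{ib|y|^2/4}\Psi_b=:\widetilde\Psi_b,$$
with $\widetilde\Psi_b$ supported in the annulus $\{R_b^-\le|y|\le R_b\}$ (same support as $\Psi_b$, cf. Remark \ref{Rem:qbequ}). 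In radial form, and after the substitution $w=r^{-(d-1)/2}v$, this is a one–dimensional Schrödinger equation with potential $V(r)=1-\tfrac{b^2r^2}{4}$ (plus a negligible centrifugal term): $V$ is positive — classically forbidden — on $r<r_\ast:=\tfrac2{|b|}$, and negative — oscillatory — on $r>r_\ast$, and the source sits just inside the turning point, at $r\approx R_b=r_\ast\sqrt{1-\eta}$ (where $V(R_b)=\eta$).

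Next I would pin down the solution space. On $(R_b,\infty)$ the equation is homogeneous, and a large-$r$ dominant-balance/WKB computation shows the two fundamental solutions behave like $|y|^{-d/2}e^{\pm ib|y|^2/4}$ up to slowly varying amplitudes and a logarithmic phase; equivalently $\zeta_b$ has one branch $\sim|y|^{-d/2-i/b}$ with $|\nabla\zeta_b|^2\sim|y|^{-d-2}$ integrable, and one branch $\sim|y|^{i/b-d/2}e^{-ib|y|^2/2}$ with $|\nabla\zeta_b|^2\sim|y|^{2-d}$ \emph{not} integrable. Hence $\int|\nabla\zeta_b|^2<\infty$ singles out, on $(R_b,\infty)$, the multiples of the ``good'' branch; combined with regularity at $0$ this fixes $\zeta_b$ uniquely once one knows the only radial homogeneous finite-energy solution on $\R^d$ is $0$. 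This non-degeneracy I would extract from the ODE itself (the solution regular at $0$ is, across the barrier, dominated by the growing WKB mode, which after $r_\ast$ carries a nonzero ``bad-branch'' component, so a nonvanishing Wronskian), or alternatively from a $\Lambda$-commutator identity in the spirit of the spectral property. The heart of the matter is then the quantitative turning-point analysis yielding \eqref{equ:geb}. In the barrier $R_b\le r<r_\ast$ the WKB branches are $w_\pm(r)\approx V(r)^{-1/4}r^{-(d-1)/2}\exp\!\big(\pm\int_r^{r_\ast}\sqrt{V(\rho)}\,d\rho\big)$; I would (i) solve the inhomogeneous equation on $[R_b^-,R_b]$ by variation of parameters against $w_\pm$, controlling $\widetilde\Psi_b$ via Remark \ref{Rem:qbequ} and \eqref{equ:qbclsq}, (ii) connect through the turning point $r_\ast$ to the oscillatory region via the Airy connection formulas, which fix the phase forced by the finite-energy condition, and (iii) read off the amplitude $\Gamma_b^{1/2}$ of the $|y|^{-d/2}$ tail. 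The whole tunnelling weight is governed by the action
$$\int_0^{r_\ast}\sqrt{V(\rho)}\,d\rho=\int_0^{2/|b|}\sqrt{1-\tfrac{b^2\rho^2}{4}}\,d\rho=\frac2{|b|}\int_0^1\sqrt{1-t^2}\,dt=\frac{\pi}{2|b|},$$
so crossing the full barrier costs $e^{-\pi/(2|b|)}$ and $\Gamma_b\sim e^{-\pi/|b|}$, the $C\eta$ losses in \eqref{equ:geb} coming from the source sitting at $R_b=\tfrac2{|b|}\sqrt{1-\eta}$ rather than at $0$, from crude control of the slowly varying prefactors $V^{-1/4}r^{-(d-1)/2}$, and from the connection-formula error. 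The same WKB representation restricted to $\{r\ge R_b\}$ gives $\||y|^{d/2}(|\zeta_b|+|y||\nabla\zeta_b|)\|_{L^\infty(|y|\ge R_b)}\le\Gamma_b^{1/2-C\eta}$ and, after integrating $|\nabla\zeta_b|^2\lesssim\Gamma_b^{1-C\eta}|y|^{-d-2}$ over $\{|y|\ge R_b\}$, the bound $\int|\nabla\zeta_b|^2\le\Gamma_b^{1-C\eta}$; restricted to $\{r\le R_b\}$ it gives $\|\zeta_be^{-\sigma\theta(b|y|)/|b|}\|_{\mathcal C^2(|y|\le R_b)}\le\Gamma_b^{1/2+\sigma/10}$, with $\theta$ the rescaled WKB action $\theta(s)\sim\int_0^s\sqrt{1-\rho^2/4}\,d\rho$ and the exponent $\sigma/10$ absorbing all polynomial-in-$|b|$ and $\eta$-dependent constants.

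Finally, for differentiability in $b$ I would differentiate the equation: $\partial_b\zeta_b$ solves $\Delta(\partial_b\zeta_b)-\partial_b\zeta_b+ib\Lambda(\partial_b\zeta_b)=\partial_b\Psi_b-i\Lambda\zeta_b$, and since $\partial_b\Psi_b$ (via \eqref{equ:paqbest}) and $\Lambda\zeta_b$ (via the bounds just obtained) satisfy estimates of the same exponential type, re-running the variation-of-parameters/turning-point argument yields $\|\partial_b\zeta_b\|_{\mathcal C^1}\le\Gamma_b^{1/2-C\eta}$. The main obstacle is the quantitative step above: a \emph{uniform} (as $b\to0$) and sharp WKB analysis at the turning point $r_\ast=\tfrac2{|b|}\to\infty$, i.e. justifying the connection formulas with errors controlled by fixed powers of $\Gamma_b^{\eta}$, together with the non-degeneracy needed to make $\zeta_b$ well defined. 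Everything else is bookkeeping on top of the profile estimates \eqref{equ:qbclsq}--\eqref{equ:eqb} already quoted.
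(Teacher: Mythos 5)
The paper itself gives no proof here: Lemma~\ref{lem:linoutrad} is quoted verbatim from Merle--Rapha\"el \cite{MR04Invemath} (Lemma~15), so there is no in-paper argument to compare against. Your proposed route --- conjugate by $e^{-ib|y|^2/4}$ to remove the $ib\Lambda$ term, reduce radially to a one-dimensional Schr\"odinger problem with barrier potential $V(r)=1-b^2r^2/4$, and extract the exponential rate from the action $\int_0^{2/|b|}\sqrt{V}\,d\rho=\pi/(2|b|)$ --- is the correct mechanism behind \eqref{equ:geb} and is in the spirit of the cited source. The conjugation identity, the classification of the two branches at infinity by finiteness of $\int|\nabla\zeta_b|^2$, and the identification of $\theta$ with the rescaled WKB action are all right.

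That said, two pieces of the bookkeeping would need repair to actually close the argument. First, you attribute $\Gamma_b\sim e^{-\pi/|b|}$ to $\zeta_b$ ``crossing the full barrier'' and the $C\eta$ losses to ``the source sitting at $R_b$ rather than at $0$.'' But $\Psi_b$ is supported in the annulus $R_b^-\le|y|\le R_b$, just inside the turning point, so the outgoing $\zeta_b$ pays essentially no tunnelling cost on its way out. The factor $e^{-\pi/|b|}$ instead emerges from the exponentially small amplitude of $\Psi_b$ there (inherited from the decay of $\tilde Q_b$ across the barrier from the origin) together with the exponentially large Wronskian between the regular-at-$0$ and finite-energy branches; the $C\eta$ enters via $\theta(bR_b)=\theta(2\sqrt{1-\eta})=\tfrac\pi2+O(\eta^{3/2})$, the width $R_b-R_b^-$ of the source annulus, and the turning-point prefactors --- not via a discrepancy with a notional source at the origin. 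The distinction matters: the \emph{lower} bound in \eqref{equ:geb} requires a two-sided estimate on $\Psi_b$ and non-vanishing of the Wronskian, which a pure tunnelling-of-$\zeta_b$ heuristic cannot supply. Second, you frankly leave the uniform-in-$b$ rigorous turning-point/connection analysis and the non-degeneracy making the finite-energy branch unique as ``the main obstacle''; that is precisely where the analytical content of the Merle--Rapha\"el argument lives, so your sketch should be read as a correct blueprint rather than a proof.
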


Next, let us introduce some notations in  the I-method, which consists in  smoothing out the $H^s$-initial data with $0<s<1$ in order
to access a good local theory available at the $H^1$-regularity. To do it, for $N\gg1$, we define the Fourier multiplier $I_N$ by
$$\widehat{I_Nu}(\xi):=m_N(\xi)\hat{u}(\xi),$$ where $m_N(\xi)$ is a
smooth radial decreasing cut off function such that
\begin{equation}\label{mxidy}
m_N(\xi)=\begin{cases} 1, ~~  \qquad\qquad|\xi|\leq N,\\
\Big(\frac{|\xi|}{N}\Big)^{s-1},\quad |\xi|\geq2N.
\end{cases}
\end{equation}
Thus, $I_N$ is the identity operator on frequencies $|\xi|\leq N$ and
behaves like a fractional integral operator of order $1-s$ on higher
frequencies. It is easy to show  that the operator $I_N$ maps $H^s$ to
$H^1$. Moreover, we have
\begin{equation}\label{equcont}
 \|u\|_{H^s}\lesssim\|I_Nu\|_{H^1}\lesssim
N^{1-s}\|u\|_{H^s}.
\end{equation}
Let $\delta_\lambda f(x)=f(x/\lambda)$. Then, by a simple computation, we have
\begin{equation}\label{equ:scaio}
(I_N\delta_\lambda f)(x)=(\delta_\lambda I_{N\lambda}f)(x).
\end{equation}

By a simple argument as in \cite{CR09} and perturbation theory, we can reduce Theorem \ref{thm:main} to  the following proposition.

\begin{proposition}[Explicit description of the blowup set]\label{prop:main}
Let $s>s(d)$ and consider an initial data
\begin{equation}\label{equ:iniassum}
u_0=G(0)+H(0),~G(0)\in H^1,~H(0)\in H^s
\end{equation}
such that $G(0)$ admits a geometrical decomposition:
\begin{equation}\label{equ:g0x}
G(0,x)=\frac1{\lambda(0)^\frac{d}2}\big(Q_{b(0)}+g(0)\big)\Big(\frac{x-x(0)}{\lambda(0)}\Big)e^{-i\gamma(0)},
\end{equation}
with the following controls:

$(i)$ {\bf Control of the scaling parameter:}
\begin{equation}\label{equ:b0l0}
0<b(0)\ll1,~~0<\lambda(0)\leq e^{-e^\frac{2\pi}{3b(0)}},
\end{equation}

$(ii)$ {\bf $L^2$-control of the excess of mass:}
\begin{equation}\label{equ:excmass}
\|g(0)\|_{L^2}+\|H(0)\|_{L^2}\ll1,
\end{equation}

$(iii)$ {\bf $H^s$-control of the rough excess of mass:}
\begin{equation}\label{equ:h0hs}
\|H(0)\|_{H^s}\leq \lambda(0)^{10},
\end{equation}

$(iv)$ {\bf $H^1$-smallness of $g(0)$:}
\begin{equation}\label{equ:g0small}
\int|\nabla g(0)|^2+\int |g(0)|^2e^{-|y|}\leq \Gamma_{b(0)}^\frac34,
\end{equation}

$(v)$ {\bf Control of the conservation laws for the $H^1$-part:}
\begin{align}\label{equ:eg0}
|E(G(0))|\leq&\frac1{\sqrt{\lambda(0)}},\\\label{equ:gg0}
|P(G(0))|\leq&\frac1{\sqrt{\lambda(0)}}.
\end{align}
Then, the corresponding $H^s$ solution $u(t)$ to \eqref{equ:nls} blows up in finite time $0<T<+\infty$ in the log-log regime and the conclusions of Theorem \ref{thm:main} hold true.

\end{proposition}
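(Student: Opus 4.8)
The plan is to run the Colliander–Raphael bootstrap machinery in the I-method framework, adapting the splitting $u(t)=G(t)+H(t)$ where $G$ carries the $H^1$ blowup structure and $H$ is the rough remainder, and replacing the two-dimensional product/commutator estimates with the higher-dimensional commutator estimates of Visan–Zhang (this is the step governed by Lemma \ref{commutor} and hence the source of the restriction $s>\frac{1}{1+\min\{1,4/d\}}$). Rather than attempt a genuine decomposition of the nonlinear flow, one works with the smoothed solution $I_Nu$, which satisfies a perturbed NLS with a commutator error $\mathcal{N}_N(u)=I_N(|u|^{4/d}u)-|I_Nu|^{4/d}I_Nu$; the point is that $\mathcal{N}_N$ gains regularity, so $I_Nu$ stays close (in $\dot H^1$, locally in time, and after rescaling) to a genuine $H^1$ log-log solution, to which Theorem \ref{thm:dynamssnls} applies.

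The key steps, in order: (1) Rescale by $\lambda(0)$ and use \eqref{equ:scaio} so that $I_{N\lambda(0)}$ has effective frequency threshold pushed far out; choose $N=N(s,u_0)$ large and set up the modulation-adapted geometrical decomposition $u(t)=\frac{1}{\lambda(t)^{d/2}}(Q_{b(t)}+\varepsilon(t))(\frac{x-x(t)}{\lambda(t)})e^{-i\gamma(t)}+$ rough part, using the localized self-similar profiles $Q_b$ and enforcing the orthogonality conditions of the spectral property. (2) Derive the modulation equations for $(\lambda,x,\gamma,b)$ exactly as in Merle–Raphael, picking up additional source terms from $\mathcal{N}_N$ and from the rough part $H$; bound these source terms via the commutator estimate of Lemma \ref{commutor}, the radiation estimates of Lemma \ref{lem:linoutrad} (the $\Gamma_b$ smallness), and the profile estimates \eqref{equ:qbclsq}–\eqref{equ:eqb}. (3) Set up the bootstrap: assume on $[0,t^\ast)$ the a priori bounds $b(t)>0$ small, $\lambda(t)\le e^{-e^{2\pi/(3b(t))}}$ (the log-log pointwise law), $\|\varepsilon\|_{H^1}^2\lesssim \Gamma_b^{3/4}$, $\|H\|_{H^s}\le \lambda(t)^{10}$, and the conservation-law controls \eqref{equ:eg0}–\eqref{equ:gg0} with the powers slightly relaxed; then improve each one. (4) The energy/virial-type Lyapunov functional of Merle–Raphael (the quantity controlling $\frac{b}{\lambda^2}$ or $\frac{d}{ds}$ of a localized energy) must be reproved with the $\mathcal{N}_N$ error controlled — this is where the gain of $\min\{1,4/d\}$ derivatives in the commutator is exactly what is needed to close. (5) Finally, propagate $\|H\|_{H^s}\le\lambda^{10}$ using the $H^s$ almost-conservation (the I-method increment estimate) together with $\lambda(t)\to 0$, and pass to the limit to extract $u^\ast=v^\ast$ and the convergences \eqref{equ:convt}–\eqref{equ:lamdts}.

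\textbf{The main obstacle} I expect is step (4) combined with (5): closing the bootstrap on $\|H\|_{H^s}$ (equivalently, on the $\dot H^1$-size of the difference $I_Nu - (\text{genuine }H^1\text{ solution})$) against the extremely fast blowup of $\lambda(t)$. The log-log rate $\lambda(t)\sim\sqrt{(T-t)/\log|\log(T-t)|}$ means $\lambda$ does decay, which helps, but the I-method increment of $E(I_Nu)$ over a unit rescaled-time interval must be shown to be $o(\lambda^{20})$ or so, and this requires the commutator term to gain strictly more than a fixed amount of regularity — quantitatively $\min\{1,4/d\}$ derivatives — which forces $s>\frac{1}{1+\min\{1,4/d\}}$. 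One must be careful that in high dimensions $d\ge 5$ the nonlinearity $|u|^{4/d}u$ is only $C^{1,4/d}$, not smooth, so the fractional Leibniz / commutator estimates (Lemma \ref{commutor}) cannot be iterated past one gain; the Visan–Zhang technology is precisely designed to handle this non-smoothness, and the proof should cite it for the local theory at $H^1$-level and for the difference equation. The remaining steps are, as in \cite{CR09}, a careful but routine transcription of the Merle–Raphael bootstrap, so the writeup can afford to be schematic there and detailed only at the commutator-driven estimates.
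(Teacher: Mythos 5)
Your high-level plan correctly identifies the two ingredients -- the $I$-method with the Visan--Zhang commutator estimate, and the Merle--Rapha\"el modulation/virial machinery -- and you correctly locate the restriction $s>\tfrac{1}{1+\min\{1,4/d\}}$ in Lemma~\ref{commutor}. However, the \emph{organization} of your bootstrap diverges from the paper's in a way that would cause step~(5) to fail. You propose to propagate a time-evolving split $u(t)=G(t)+H(t)$ and to bootstrap $\|H(t)\|_{H^s}\le\lambda(t)^{10}$. The paper does \emph{not} maintain any evolving rough remainder $H(t)$: the decomposition $u_0=G(0)+H(0)$ is used only at $t=0$, where $h(0)$ is immediately absorbed into a single error $\varepsilon(0)=g(0)+h(0)$ (see \eqref{equ:reu0}--\eqref{equ:ncsma}). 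There is no natural evolution equation for a putative $H(t)$, and trying to compare $u(t)$ with a genuine $H^1$ log-log solution $G(t)$ in $H^s$ over the blowup interval would require synchronizing the two sets of modulation parameters, a serious difficulty you have not addressed. Relatedly, your suggestion that ``$I_Nu$ stays close to a genuine $H^1$ log-log solution, to which Theorem~\ref{thm:dynamssnls} applies'' is not how the argument is run: the paper re-derives the Merle--Rapha\"el modulation, virial and Lyapunov estimates (Lemmas~\ref{lem:estconlaw}, \ref{lem:glvirest}, \ref{lem:virdradreg}, Corollary~\ref{cor:jsest}) \emph{directly} for the $I_{N(t)\lambda(t)}$-localized error $\varepsilon(t)$, rather than invoking Theorem~\ref{thm:dynamssnls} as a black box.

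Concretely, the bootstrap quantity you should track is not $\|H(t)\|_{H^s}$ but the \emph{frequency-localized excess of mass}
$\int|I_{N(t)\lambda(t)}\nabla\varepsilon(t)|^2+\int|\varepsilon(t)|^2e^{-|y|}\le\Gamma_{b(t)}^{1/4}$
with the time-dependent threshold $N(t)=\lambda(t)^{-1/\beta}$ (see \eqref{equ:ntass}--\eqref{equ:vartass} and Lemma~\ref{lem:bootstrap}). Two further ingredients, absent from your sketch, are essential: (a) the quantity $\Xi(t)=\tfrac{\lambda(t)^2}{2}\int\big(|\nabla G(0)|^2-|\nabla I_{N(t)}G(0)|^2\big)$, which enters the almost conservation of energy \eqref{equ:modiener} as a compensating term (without it the modified energy is not small); and (b) the summation of the $I$-method energy/momentum increments over LWP subintervals $\{\tau_k^j\}$ covering the doubling intervals $[t_k,t_{k+1}]$ (Lemma~\ref{lem:conincre}, Proposition~\ref{prop:almostcons}), which is where the convergence condition on $\beta$ in Remark~\ref{rem:beta} is used. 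Also note your bootstrap ansatz $\lambda(t)\le e^{-e^{2\pi/(3b(t))}}$ matches only the \emph{initialization} \eqref{equ:b0l0}; the propagated hypothesis is the weaker $\lambda(t)\le e^{-e^{\pi/(100b(t))}}$ improved to $e^{-e^{\pi/(10b(t))}}$ (see \eqref{equ:ltass}, \eqref{equ:lamtimp}), and this gap in exponents is what gives the room to close the bootstrap.
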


Hence, our goal is to prove Proposition \ref{prop:main}.
Let $u_0\in H^s$ satisfy the hypotheses of Proposition \ref{prop:main}. We can rewrite the decomposition \eqref{equ:iniassum} as
\begin{equation}\label{equ:reu0}
u(0,x)=\frac1{\lambda(0)^\frac{d}2}\big(Q_{b(0)}+\varepsilon(0)\big)\Big(\frac{x-x(0)}{\lambda(0)}\Big)e^{-i\gamma(0)}
\end{equation}
with $\varepsilon(0)=g(0)+h(0)$ and
\begin{equation}\label{equ:defh0}
H(0,x)=\frac1{\lambda(0)^\frac{d}2}h\Big(0,\frac{x-x(0)}{\lambda(0)}\Big)e^{-i\gamma(0)}.
\end{equation}
We have by \eqref{equ:h0hs}
\begin{equation}\label{equ:h0small}
\|h(0)\|_{\dot{H}^s}=\lambda(0)^s\|H(0)\|_{\dot{H}^s}\leq \lambda(0)^{10+s}.
\end{equation}
This together with \eqref{equ:excmass} and \eqref{equ:g0small} yields that
\begin{equation}\label{equ:vesma}
\|\varepsilon(0)\|_{H^s}\leq\|g(0)\|_{H^s}+\|h(0)\|_{H^s}\ll1.
\end{equation}
Next, we derive a frequency localized version of \eqref{equ:g0small} for $\varepsilon(0)$. Set
\begin{equation}\label{equ:n0def}
N(0)=\Big(\frac1{\lambda(0)}\Big)^\frac1{\beta},
\end{equation}
with $\beta$ given in Remark \ref{rem:beta}, then,
\begin{equation}\label{equ:nlamd}
1\ll\Big(\frac1{\lambda(0)}\Big)^\frac{1-\beta}{\beta}=N(0)\lambda(0).
\end{equation}
And so, using \eqref{equ:geb}, \eqref{equcont}, \eqref{equ:b0l0} and \eqref{equ:h0small}, we get
\begin{equation*}
\int|I_{N(0)\lambda(0)}\nabla h(0)|^2\lesssim (N(0)\lambda(0))^{2(1-s)}\|h(0)\|_{\dot{H}^s}^2\lesssim \lambda(0)^{10}\leq\Gamma_{b(0)}^{10}.
\end{equation*}
This together with \eqref{equ:h0hs} and \eqref{equ:g0small} implies
\begin{equation}\label{equ:ncsma}
\int|I_{N(0)\lambda(0)}\nabla \varepsilon(0)|^2+\int|\varepsilon(0)|^2e^{-|y|}\leq\Gamma_{b(0)}^\frac34.
\end{equation}
\begin{remark}\label{rem:beta}
The restriction on $\beta$ stems from two sides. One comes from Corollary \ref{cor:modlwp} below: $$\frac{4s}{\min\{4,d\}s-4(1-s)}<\frac1{\beta}.$$
Another comes from  Lemma \ref{lem:finallemm} below
$$\frac{1}{\beta}>\frac{4s}{\min\{4,d\}s^2-(1-s)}$$
to guarantee the convergence of the summation. Hence, we will take $\beta\in(0,1)$ such that
$$\frac{1}{\beta}>\min\Big\{\frac{4s}{\min\{4,d\}s-4(1-s)}\frac{4s}{\min\{4,d\}s^2-(1-s)}\Big\}.$$

\end{remark}

\begin{lemma}[Nonlinear modulation theory, \cite{MR05annmath,MR03GAFA,MR06JAMS}]\label{lem:moduthe}
There exists $\alpha_2>0$ such that for $\alpha_0<\alpha_2$, there exist some functions $(\lambda,\gamma,x,b):~[0,T)\to(0,+\infty)\times\R\times\R^d\times\R$ such that
\begin{equation}\label{equ:gemdec}
\varepsilon(t,y)=e^{i\gamma(t)}\lambda(t)^\frac{d}2u\big(t,\lambda(t)y+x(t)\big)-Q_{b(t)}(y)
\end{equation}
satisfies the following orthogonality conditions:
\begin{align}\label{equ:cy2}
(\varepsilon_1(t),~|y|^2\Sigma)+(\var_2(t),~|y|^2\Theta)=&0,\\\label{equ:cy1}
(\var_1(t),~y\Sigma)+(\var_2(t),~y\Theta)=&0,\\\label{equ:cth}
-(\var_1(t),~\Lambda\Theta)+(\var_2(t),~\Lambda\Sigma)=&0,\\\label{equ:cth2}
-(\var_1(t),~\Lambda^2\Theta)+(\var_2(t),~\Lambda^2\Sigma)=&0,
\end{align}
where $\var=\var_1+i\var_2$ and $Q_{b(t)}=\Sigma+i\Theta$ in terms of real and imaginary part.

\end{lemma}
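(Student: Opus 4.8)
\textbf{Proof plan for Lemma \ref{lem:moduthe} (nonlinear modulation theory).}

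The plan is to realize the geometric decomposition \eqref{equ:gemdec} by a standard modulation/implicit-function-theorem argument, exploiting the fact that the four orthogonality conditions \eqref{equ:cy2}--\eqref{equ:cth2} are designed to be transverse to the infinitesimal action of the four-parameter group $(\lambda,\gamma,x,b)$ on the profile family $Q_b$. First I would set up the map: given $u$ close (in the appropriate sense, governed by $\alpha_0$) to the $H^1$ manifold of rescaled profiles $\{\lambda^{-d/2}e^{-i\gamma}Q_b((\cdot-x)/\lambda)\}$, define for a candidate choice of parameters $(\lambda,\gamma,x,b)$ the residual $\varepsilon$ by \eqref{equ:gemdec}, and let $\Phi(u;\lambda,\gamma,x,b)\in\R^{d+3}$ be the vector whose components are the four left-hand sides of \eqref{equ:cy2}--\eqref{equ:cth2}. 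The goal is to solve $\Phi=0$ for $(\lambda,\gamma,x,b)$ as $C^1$ functions of $t$ along the flow.

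The key step is to check that the Jacobian of $\Phi$ with respect to $(\lambda,\gamma,x,b)$, evaluated at $\varepsilon=0$ and $b=0$ (where $Q_0=Q$ is the ground state), is invertible. Differentiating the decomposition in each parameter produces, at $\varepsilon=0$, the generators $\Lambda Q$ (from scaling), $iQ$ (from phase), $\nabla Q$ (from translation), and $\partial_b Q_b|_{b=0}\approx -i\tfrac{|y|^2}{4}Q$ by \eqref{equ:paqbest} (from the $b$-parameter). Pairing these generators against the chosen weights $|y|^2\Sigma, |y|^2\Theta$ and $y\Sigma, y\Theta$ and $\Lambda\Theta,\Lambda\Sigma$ and $\Lambda^2\Theta,\Lambda^2\Sigma$ gives a block-triangular (in fact essentially diagonal, after using parity: $Q$ and $\Lambda Q$ are even, $\nabla Q$ odd) matrix whose diagonal entries are nonzero explicit integrals such as $(\tfrac{|y|^2}{4}Q,|y|^2 Q)=\tfrac14\|\,|y|^2 Q\|_{L^2}^2\neq 0$, $(\Lambda Q,\Lambda\Lambda Q)$-type quantities, $(\nabla Q,y\cdot\nabla(\ldots))$, and $(Q,\Lambda^2 Q)=-(\Lambda Q,\Lambda Q)\neq0$ — the standard nondegeneracy facts for the $L^2$-critical problem recorded in \cite{MR05annmath,MR03GAFA,MR06JAMS}. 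Since $Q_b\to Q$ in the weighted $H^{10}$ sense by \eqref{equ:qbclsq} and $\partial_b Q_b$ is controlled by \eqref{equ:paqbest}, the Jacobian stays invertible for $|b|$ small and $\|\varepsilon\|$ small, so the implicit function theorem yields unique parameters $(\lambda(u),\gamma(u),x(u),b(u))$, of class $C^1$ in $u$, for all $u$ in a neighborhood of the manifold determined by $\alpha_2$.

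Next I would promote this to the time-dependent statement: because $u\in C([0,T),H^1)$ (or the relevant class) and, by the hypotheses ensuring $\alpha_0<\alpha_2$, the solution $u(t)$ stays within the neighborhood where the decomposition applies, composing the fixed-time map with $t\mapsto u(t)$ gives $(\lambda,\gamma,x,b):[0,T)\to(0,+\infty)\times\R\times\R^d\times\R$ with the required regularity, and \eqref{equ:gemdec} holds with $\varepsilon$ satisfying \eqref{equ:cy2}--\eqref{equ:cth2} by construction. The main obstacle — and the only place real work is needed — is the nondegeneracy computation for the Jacobian: one must verify that the particular weights $|y|^2\Sigma,\ y\Sigma,\ \Lambda\Theta,\ \Lambda^2\Theta$ (and their $\Theta/\Sigma$ counterparts) pair nontrivially with the corresponding generators and that the off-diagonal couplings are either zero by parity or small as $b\to0$; this is exactly the algebraic input imported from the Merle--Raphaël analysis, so here I would invoke \cite{MR05annmath,MR03GAFA,MR06JAMS} rather than reprove it. The smallness threshold $\alpha_2$ is then simply the radius on which the inverse function theorem is quantitatively valid.
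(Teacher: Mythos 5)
Your proposal reproduces the standard Merle--Rapha\"el modulation argument, which is precisely what the paper relies on: the paper itself does not prove Lemma \ref{lem:moduthe} but cites \cite{MR05annmath,MR03GAFA,MR06JAMS}, with only the accompanying remark noting that $L^2$-smallness of $\varepsilon$ suffices thanks to the exponential decay of $Q_b$ (so all four pairings are well-defined inner products against Schwartz-class weights). Your IFT plan, including the linearization at $\varepsilon=0$, $b=0$ using generators $\Lambda Q$, $iQ$, $\nabla Q$, $\partial_bQ_b|_{b=0}=-i\tfrac{|y|^2}{4}Q$, the parity-based vanishing of cross terms, and the continuity-in-$b$ step via \eqref{equ:qbclsq}--\eqref{equ:paqbest}, is the correct mechanism and is consistent with the cited references.

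One concrete slip in your illustrative computation: the entry you display, $(\tfrac{|y|^2}{4}Q,|y|^2 Q)=\tfrac14\||y|^2Q\|_{L^2}^2$, does not actually occur in the Jacobian. At $b=0$ we have $\Theta\equiv 0$, so condition \eqref{equ:cy2} reduces to $(\varepsilon_1,|y|^2Q)=0$ and only sees the \emph{real} part of the variation; but $\partial_bQ_b|_{b=0}$ is purely imaginary, so the $\partial_b$-derivative of \eqref{equ:cy2} vanishes. The nonzero diagonal entries are instead $(\Lambda Q,|y|^2Q)=-\||y|Q\|_{L^2}^2$ (scaling against \eqref{equ:cy2}), $(\nabla Q,yQ)$ (translation against \eqref{equ:cy1}), $\tfrac14(|y|^2Q,\Lambda Q)=-\tfrac14\||y|Q\|_{L^2}^2$ ($b$-parameter against \eqref{equ:cth}), and $(Q,\Lambda^2Q)=-\|\Lambda Q\|_{L^2}^2$ (phase against \eqref{equ:cth2}); these, with parity, give the invertible triangular structure you want. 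This does not affect the validity of your overall plan, only the example you chose to exhibit.
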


\begin{remark}
The existence of such a decomposition \eqref{equ:gemdec} requires only the smallness of the local $L^2$-norm of $\var$ due to the regularity of $Q_b$ and its fast decay in space. We note that \eqref{equ:ncsma} ensures that the deformed parameters ensuring the orthogonality conditions at time $t=0$ are exponentially small in $b(0)$ compared to \eqref{equ:reu0}. We shall thus abuse notations at time $t=0$ and identify these two decompositions which satisfy the initialized control of Proposition \ref{prop:main}.
\end{remark}

Our main claim now is that the controls of Proposition \ref{prop:main} determine a trapped dynamical region. In other words, we claim the following bootstrapped estimates. Consider a time interval $[0,T^+]$ such that the solution $u(t)$ admits a decomposition
\begin{equation}\label{equ:decass}
u(t,x)=\frac1{\lambda(t)^\frac{d}2}\big(Q_{b(t)}(\cdot)+\var(t,\cdot)\big)\Big(\frac{x-x(t)}{\lambda(t)}\Big)e^{-i\gamma(t)},~t\in[0,T^+]
\end{equation}
satisfying the orthogonality conditions \eqref{equ:cy2}-\eqref{equ:cth2}. Now, let us assume the following uniform controls on $[0,T^+]$:

$(i)$ {\bf Control of $b(t)$ and the $L^2$ mass:}
\begin{equation}\label{equ:btvart}
b(t)>0\quad\text{and}\quad b(t)+\|\var(t)\|_{L^2}\leq10\big(b(0)+\|\var(0)\|_{L^2}\big);
\end{equation}

$(ii)$ {\bf Control and monotonicity of the scaling parameter:}
\begin{equation}\label{equ:ltass}
\lambda(t)\leq e^{-e^\frac{\pi}{100b(t)}}
\end{equation}
and almost monotonicity:
\begin{equation}\label{equ:almons}
\forall~0<t_1\leq t_2\leq T^+,~\lambda(t_2)\leq \frac32\lambda(t_1).
\end{equation}
Let $k_0\leq k^+$ integers such that
\begin{equation}\label{equ:l0k0}
\frac1{2^{k_0}}\leq\lambda(0)\leq\frac1{2^{k_0-1}},~\frac1{2^{k^+}}\leq\lambda(T^+)\leq\frac1{2^{k^+-1}},
\end{equation}
and for $k_0\leq k\leq k^+$, let $t_k$ be a time such that
\begin{equation}\label{equ:ltk}
\lambda(t_k)=\frac1{2^k},
\end{equation}
then, we assume the control of the doubling time interval:
\begin{equation}\label{equ:dotime}
t_{k+1}-t_k\leq k\lambda(t_k)^2.
\end{equation}

$(iii)$ {\bf Frequency localized control of the excess of mass:} let
\begin{equation}\label{equ:ntass}
N(t)=\Big(\frac1{\lambda(t)}\Big)^\frac1{\beta},
\end{equation}
then,
\begin{equation}\label{equ:vartass}
\int|I_{N(t)\lambda(t)}\nabla \var(t)|^2+\int|\var(t)|^2e^{-|y|}\leq \Gamma_{b(t)}^\frac14.
\end{equation}

We then claim the following Lemma which is the main step of the proof of Proposition \ref{prop:main} and states that all above estimates may be improved:
\begin{lemma}[Bootstrap lemma]\label{lem:bootstrap}
There holds the following uniform control on $[0,T^+]$:
\begin{align}\label{equ:btimp}
b(t)>0\quad\text{and}\quad b(t)+\|\var(t)\|_{L^2}\leq&5\big(b(0)+\|\var(0)\|_{L^2}\big),\\\label{equ:lamtimp}
\lambda(t)\leq& e^{-e^\frac{\pi}{10b(t)}},\\\label{equ:lmt2imp}
\forall~0<t_1\leq t_2\leq T^+,~\lambda(t_2)\leq& \frac54\lambda(t_1),\\\label{equ:tktk-1}
t_{k+1}-t_k\leq& \sqrt{k}\lambda(t_k)^2,\\\label{equ:lamtimpse}
\int|I_{N(t)\lambda(t)}\nabla \var(t)|^2+\int|\var(t)|^2e^{-|y|}\leq& \Gamma_{b(t)}^\frac23.
\end{align}
\end{lemma}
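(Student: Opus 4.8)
The plan is to run a continuity/bootstrap argument: one assumes the five controls \eqref{equ:btvart}--\eqref{equ:vartass} hold on a maximal interval $[0,T^+]$ and proves that each is improved to its strict counterpart \eqref{equ:btimp}--\eqref{equ:lamtimpse}, which by standard openness/closedness then forces $T^+$ to coincide with the blowup time. The architecture closely follows Merle--Rapha\"el \cite{MR05annmath,MR06JAMS} for the $H^1$ theory and Colliander--Rapha\"el \cite{CR09} for the low-regularity adaptation, the only genuinely new input in dimension $d\geq3$ being the commutator control of the $I$-operator via Lemma \ref{commutor}. I would organize the proof around the following modulation quantities: the local virial/Lyapunov functional controlling $b(t)$ and the excess of mass, the refined virial inequality giving monotonicity of $\lambda$, and the frequency-localized energy/mass functional
$$
\mathcal{I}(t)=\int |I_{N(t)\lambda(t)}\nabla\var(t)|^2+\int|\var(t)|^2 e^{-|y|}
$$
whose almost-monotone decay is the heart of the matter.

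First I would record the consequences of the orthogonality conditions \eqref{equ:cy2}--\eqref{equ:cth2} together with the coercivity coming from the spectral property: under the bootstrap assumption \eqref{equ:vartass} (hence smallness of the local $L^2$-norm of $\var$), one gets the modulation equations for $\dot\lambda/\lambda,\dot\gamma,\dot x,\dot b$ with the usual error terms controlled by $\|\var\|_{L^2_{loc}}+b^2+\Gamma_b^{1/2-C\eta}$, and the pointwise lower bound $b_t \gtrsim \Gamma_b^{1-C\eta}$ type estimates in the regime $E^G<0$. Then I would reprove the local virial identity in the $I$-variables: differentiating the localized virial functional along the flow and using \eqref{def:psib}, one obtains
$$
b_t \geq \delta_1\Big(\int|\nabla\var|^2 e^{-|y|}+\int|\var|^2 e^{-|y|}\Big)-\Gamma_b^{1-C\eta}-\text{(rough error)},
$$
where the rough error is exactly where $H^s$ regularity enters and must be absorbed using \eqref{equ:h0hs}, the choice of $N(0)$ in \eqref{equ:n0def}, and the almost-monotonicity of $\lambda$ to propagate \eqref{equ:h0small} in renormalized variables. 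This, integrated, yields \eqref{equ:btimp}; then \eqref{equ:lamtimp} and \eqref{equ:lmt2imp} follow from the monotonicity of $\lambda$ derived from the refined virial inequality exactly as in \cite{MR06JAMS}, and \eqref{equ:tktk-1} from integrating $-\lambda_t/\lambda\sim b$ over a doubling interval together with the logarithmic lower bound on $b$.

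The hard part, and the genuinely dimension-dependent step, is \eqref{equ:lamtimpse}: improving the frequency-localized control of the excess of mass from $\Gamma_b^{1/4}$ to $\Gamma_b^{2/3}$. Here one writes $\var = I_{N\lambda}\var + (1-I_{N\lambda})\var$, the high-frequency piece being controlled in $L^2$ by \eqref{equ:excmass}-type smallness (it never gets better but it is harmless since it only enters through the nonlinearity and the mass), while for the low-frequency piece one must run the energy-type estimate for the $I$-modified equation
$$
i\pa_t(I_{N\lambda}\var) + \Delta(I_{N\lambda}\var) = I_{N\lambda}(\text{nonlinear and modulation terms}),
$$
and the commutator $[I_{N\lambda}, |u|^{4/d}u]$ must be estimated: this is precisely Lemma \ref{commutor}, and the exponent bookkeeping there dictates $s>\frac{1}{1+\min\{1,4/d\}}$ and the admissible range of $\beta$ in Remark \ref{rem:beta}. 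Concretely I expect to bound the commutator contribution by $N^{-\mu}\|\var\|_{H^1}^{\text{power}}$ for some $\mu=\mu(s,d)>0$, convert $N^{-\mu}$ via \eqref{equ:ntass} into $\lambda^{\mu/\beta}\lesssim \Gamma_b^{\text{large}}$ (using \eqref{equ:ltass} which makes $\lambda$ doubly-exponentially small in $b$), and sum the resulting errors over the dyadic blocks $[t_k,t_{k+1}]$ using \eqref{equ:dotime}; the condition $\frac1\beta > \frac{4s}{\min\{4,d\}s^2-(1-s)}$ is exactly what makes that sum converge. Combining the improved virial bound (which controls the local $\int|\var|^2 e^{-|y|}$) with this Gronwall-type estimate on the $I$-modified kinetic energy yields $\mathcal{I}(T^+)\leq\Gamma_{b(T^+)}^{2/3}$, closing the bootstrap. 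The technical crux is keeping the rough-perturbation error terms in both the virial inequality and the energy estimate below $\Gamma_b$-thresholds uniformly in $t$, which is where the delicate interplay between $N(t)=\lambda(t)^{-1/\beta}$, the commutator gain, and the doubly-exponential smallness of $\lambda$ has to be calibrated.
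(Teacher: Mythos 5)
Your proposal correctly captures the overall bootstrap architecture and, in particular, the role of the commutator estimates (Lemma \ref{commutor}) and the dyadic summation over doubling intervals with the $\beta$-condition from Remark \ref{rem:beta}; these are exactly the ingredients behind the almost conservation laws (Proposition \ref{prop:almostcons}). But the mechanism you propose for the crucial improvement $\Gamma_{b}^{1/4}\to\Gamma_{b}^{2/3}$ in \eqref{equ:lamtimpse} is not the one the paper uses, and as stated it would not close. You suggest running an energy-type Gronwall estimate directly on the $I$-modified equation for $\var$ (i.e., on $i\partial_t I_{N\lambda}\var+\Delta I_{N\lambda}\var=\ldots$) and combining that with the local virial bound. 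The difficulty is that almost conservation of $E(I_{N}u)$, rewritten via the geometrical decomposition, only relates $\int|\nabla I_{N\lambda}\var|^2$ to a quadratic form plus degenerate scalar products such as $(\var_1,Q)$, $(\var_2,\nabla Q)$; the coercivity from the spectral property then costs you these degenerate directions, and they are controlled merely by $\|\var\|_{L^2}$, which is small of order $\alpha^*$ but not $\Gamma_b$-small. So the conservation-law route alone can only yield $\int|\nabla I_{N\lambda}\var|^2 \lesssim (\text{small})^2 + \Gamma_b^{1-C\eta}$, which is far from the target $\Gamma_b^{2/3}$.

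What is actually missing from your sketch is the radiative virial dispersion and the Lyapunov functional $\mathcal{J}$, which are the heart of the paper's proof. After modulation theory (Lemma \ref{lem:congeo}) and the global virial estimate (Lemma \ref{lem:glvirest}), the paper introduces the outgoing radiation profile $\zeta_b$ (Lemma \ref{lem:linoutrad}), the cut-off correction $\tilde\zeta=\chi(r/A)\zeta_b$, and the shifted variable $\tilde\var=\var-\tilde\zeta$. The refined virial dispersion in the radiative regime (Lemma \ref{lem:virdradreg}) produces a functional $f_1(s)$ with $\{f_1\}_s\gtrsim\Xi+\int|\nabla I_{N\lambda}\tilde\var|^2+\int|\tilde\var|^2e^{-|y|}+\Gamma_b-\delta_1^{-1}\int_A^{2A}|\var|^2$, the boundary flux $\int_A^{2A}|\var|^2$ is controlled by the $L^2$ dispersion at infinity (Lemma \ref{lem:l2dispinf}), and these are combined into the Lyapunov functional $\mathcal{J}$ of Corollary \ref{cor:jsest} satisfying $\{\mathcal{J}\}_s\leq -Cb(\Gamma_b+\Xi+\int|\nabla I_{N\lambda}\tilde\var|^2+\int|\tilde\var|^2e^{-|y|}+\int_A^{2A}|\var|^2)+C\Xi/b^2$. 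It is the monotone decay of $\mathcal{J}$, combined with its $O(b^2)$ size, that yields the pointwise improvement to $\Gamma_{b(t)}^{2/3}$; the almost conservation of $E(I_Nu)$ and $P(I_Nu)$ enters only to control the degenerate scalar products and the remainder $F(s)$ in Lemmas \ref{lem:estconlaw}--3.4. There is also a minor slip in your sketch of the virial inequality: the quantity it controls is $\int|\nabla I_{N(s)\lambda(s)}\var|^2$ (a global, frequency-localized kinetic energy), not $\int|\nabla\var|^2e^{-|y|}$. You should incorporate the $\zeta_b$-corrected virial and the Lyapunov functional $\mathcal{J}$ from \cite{MR06JAMS} (adapted as in \cite{CR09}, Section 4.3) before the bootstrap on \eqref{equ:lamtimpse} can be closed.
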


\begin{remark}\label{rem:uths}
By \eqref{equcont}, and the bootstrapped estimates \eqref{equ:btvart}, \eqref{equ:ntass}, \eqref{equ:vartass}, we obtain
\begin{equation}\label{equ:varsmall}
\|\var(t)\|_{H^s}\lesssim\|I_{N(t)\lambda(t)}\var(t)\|_{H^1}\ll1.
\end{equation}
This together with the geometrical decomposition \eqref{equ:decass}, \eqref{equ:qbclsq}  yields that
\begin{equation}\label{equ:uthslam}
\|u(t)\|_{H^s}=\frac{\|Q_{b(t)}+\var(t)\|_{H^s}}{\lambda(t)^s}\sim\frac1{\lambda(t)^s}.
\end{equation}

\end{remark}




\section{Notation and Almost conservation law}
\subsection{Some notation}
For nonnegative quantities $X$ and $Y$, we will write $X\lesssim Y$ to denote the estimate $X\leq C Y$ for some $C>0$. If $X\lesssim Y\lesssim X$, we will write $X\sim Y$. Dependence of implicit constants on the power $p$ or the dimension will be suppressed; dependence on additional parameters will be indicated by subscripts. For example, $X\lesssim_u Y$ indicates $X\leq CY$ for some $C=C(u)$. We denote $a_{\pm}$ as $a\pm\epsilon$ with $0<\epsilon\ll1.$

For a spacetime slab $I\times\R^d$, we write $L_t^q L_x^r(I\times\R^d)$ for the Banach space of functions $u:I\times\R^d\to\C$ equipped with the norm
    $$\xnorm{u}{q}{r}{I}:=\bigg(\int_I \norm{u(t)}_{L_x^r(\R^d)}\bigg)^{1/q},$$
with the usual adjustments when $q$ or $r$ is infinity. When $q=r$, we abbreviate $L_t^qL_x^q=L_{t,x}^q$.
We will also often abbreviate $\norm{f}_{L_x^r(\R^d)}$ to $\norm{f}_{L_x^r}.$ For $1\leq r\leq\infty$,
we use $r'$ to denote the dual exponent to $r$, i.e. the solution to $\tfrac{1}{r}+\tfrac{1}{r'}=1.$

The Fourier transform on $\mathbb{R}^d$ is defined by
\begin{equation*}
\aligned \widehat{f}(\xi):= \big( 2\pi
\big)^{-\frac{d}{2}}\int_{\mathbb{R}^d}e^{- ix\cdot \xi}f(x)dx ,
\endaligned
\end{equation*}
giving rise to the fractional differentiation operators
$|\nabla|^{s}$ and $\langle\nabla\rangle^s$,  defined by
\begin{equation*}
\aligned
\widehat{|\nabla|^sf}(\xi):=|\xi|^s\hat{f}(\xi),~~\widehat{\langle\nabla\rangle^sf}(\xi):=\langle\xi\rangle^s\hat{f}(\xi),
\endaligned
\end{equation*} where $\langle\xi\rangle:=1+|\xi|$.
This helps us to define the homogeneous and inhomogeneous Sobolev
norms
\begin{equation*}
\big\|f\big\|_{\dot{H}^s_x(\R^d)}:= \big\|
|\xi|^s\hat{f}\big\|_{L^2_x(\R^d)},~~\big\|f\big\|_{{H}^s_x(\R^d)}:=
\big\| \langle\xi\rangle^s\hat{f}\big\|_{L^2_x(\R^d)}.
\end{equation*}

We will also need the Littlewood-Paley projection operators.
Specifically, let $\varphi(\xi)$ be a smooth bump function adapted
to the ball $|\xi|\leq 2$ which equals 1 on the ball $|\xi|\leq 1$.
For each dyadic number $N\in 2^{\mathbb{Z}}$, we define the
Littlewood-Paley operators
\begin{equation*}
\aligned \widehat{P_{\leq N}f}(\xi)& :=
\varphi\Big(\frac{\xi}{N}\Big)\widehat{f}(\xi), \\
\widehat{P_{> N}f}(\xi)& :=
\Big(1-\varphi\Big(\frac{\xi}{N}\Big)\Big)\widehat{f}(\xi), \\
\widehat{P_{N}f}(\xi)& :=
\Big(\varphi\Big(\frac{\xi}{N}\Big)-\varphi\Big(\frac{2\xi}{N}\Big)\Big)\widehat{f}(\xi).
\endaligned
\end{equation*}
Similarly we can define $P_{<N}$, $P_{\geq N}$, and $P_{M<\cdot\leq
N}=P_{\leq N}-P_{\leq M}$, whenever $M$ and $N$ are dyadic numbers.
We will frequently write $f_{\leq N}$ for $P_{\leq N}f$ and
similarly for the other operators.

The Littlewood-Paley operators commute with derivative operators,
the free propagator, and the conjugation operation. They are
self-adjoint and bounded on every $L^p_x$ and $\dot{H}^s_x$ space
for $1\leq p\leq \infty$ and $s\geq 0$, moreover, they also obey the
following
 Bernstein estimates
\begin{eqnarray*}\label{bernstein}
 \big\| P_{\geq N} f \big\|_{L^p} & \lesssim & N^{-s} \big\|
|\nabla|^{s}P_{\geq N} f \big\|_{L^p}, \\
\big\||\nabla|^s P_{\leq N} f \big\|_{L^p} & \lesssim  & N^{s}
\big\|
P_{\leq N} f \big\|_{L^p},  \\
\big\||\nabla|^{\pm s} P_{N} f \big\|_{L^p} & \thicksim & N^{\pm s}
\big\|
P_{N} f \big\|_{L^p},  \\
\big\| P_{\leq N} f \big\|_{L^q} & \lesssim &
N^{\frac{d}{p}-\frac{d}{q}} \big\|
P_{\leq N} f \big\|_{L^p},  \\
\big\| P_{ N} f \big\|_{L^q} & \lesssim &
N^{\frac{d}{p}-\frac{d}{q}} \big\|P_{ N} f \big\|_{L^p},
\end{eqnarray*}
where  $s\geq 0$ and $1\leq p\leq q \leq \infty$.

We will also use the following basic inequalities.
\begin{lemma}[\cite{MR05annmath}]\label{lem:nonltermest}
For any $z\in\C$ with $z=z_1+iz_2$, there holds
\begin{align}\nonumber
&\big|(1+z_1)|1+z|^\frac4d-1-\big(\tfrac4d+1\big)z_1+iz_2\big(|1+z|^\frac4d-1\big)\big|\\\label{equ:z1est}\leq&\begin{cases}
C\big(|z|^{1+\frac43}+|z|^2\big)\quad\text{if}\quad d=3\\
C|z|^2\quad\text{if}\quad d\geq4,
\end{cases}
\end{align}
and
\begin{align}\nonumber
&\big|(1+z_1)|1+z|^\frac4d-1-\big(\tfrac4d+1\big)z_1-\tfrac2d\big(\tfrac4d+1\big)z_1^2-\tfrac2dz_2^2\big|
\\\label{equ:z2est}\leq&\begin{cases}C|z|^3\quad\text{if}\quad d=3\\
C|z|^{2+\frac2d}\quad\text{if}\quad d\geq4,
\end{cases}
\end{align}
and
\begin{align}\nonumber
&\big||1+z|^{2+\frac4d}-1-\big(\tfrac4d+2\big)z_1-\big(\tfrac2d+1\big)\big(\tfrac4d+1\big)z_1^2-\big(\tfrac2d+1\big)z_2^2\big|
\\\label{equ:z3est}
\leq&\begin{cases}
C\big(|z|^{2+\frac43}+|z|^3\big)\quad\text{if}\quad d=3\\
C|z|^3\quad\text{if}\quad d\geq4.
\end{cases}
\end{align}

\end{lemma}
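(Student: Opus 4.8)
\noindent
All three estimates are elementary pointwise inequalities for the real-analytic functions
$$\Phi_1(z):=(1+z_1)|1+z|^\frac4d\qquad\text{and}\qquad \Phi_2(z):=|1+z|^{2+\frac4d},\qquad z=z_1+iz_2\in\C,$$
and the plan is to prove them by treating the regimes $|z|\leq\tfrac12$ and $|z|>\tfrac12$ separately. First I would record the algebraic meaning of the three left-hand sides: writing $N(u)=u|u|^{4/d}$ for the nonlinearity, one has $N(1)=1$ and $DN(1)z=(\tfrac4d+1)z_1+iz_2$, so that the quantity inside the modulus in \eqref{equ:z1est} is exactly $N(1+z)-N(1)-DN(1)z$, while the left-hand sides of \eqref{equ:z2est} and \eqref{equ:z3est} are the second-order Taylor remainders at $z=0$ of $\Re N(1+z)=\Phi_1(z)$ and of $\Phi_2(z)$. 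Since $|1+z|\geq 1-|z|>0$ on $\{|z|<1\}$, both $\Phi_1$ and $\Phi_2$ are $C^\infty$ (indeed real-analytic) there, the branch point $r=0$ of $r\mapsto r^{4/d}$ being avoided, so the Taylor expansions below are legitimate with remainders bounded uniformly on $\{|z|\leq\tfrac12\}$.

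On $\{|z|\leq\tfrac12\}$ I would apply Taylor's theorem with integral remainder. The degree-one expansion of $\Phi_1$ gives $|\Phi_1(z)-1-(\tfrac4d+1)z_1|\leq C|z|^2$, and since $|1+z|^{4/d}-1=O(|z|)$ the imaginary correction $z_2(|1+z|^{4/d}-1)$ is also $O(|z|^2)$, so the left side of \eqref{equ:z1est} is $\lesssim|z|^2$ here. For the degree-two expansions of $\Phi_1$ and $\Phi_2$ the remainders are $\lesssim|z|^3$, and a direct computation of the partial derivatives at the origin — for $\Phi_1$ one finds $\partial_{z_1}\Phi_1(0)=\tfrac4d+1$, $\tfrac12\partial_{z_1}^2\Phi_1(0)=\tfrac2d(\tfrac4d+1)$, $\tfrac12\partial_{z_2}^2\Phi_1(0)=\tfrac2d$, while $\partial_{z_2}\Phi_1(0)=\partial_{z_1}\partial_{z_2}\Phi_1(0)=0$; for $\Phi_2$ one finds $\partial_{z_1}\Phi_2(0)=2+\tfrac4d$, $\tfrac12\partial_{z_1}^2\Phi_2(0)=(\tfrac2d+1)(\tfrac4d+1)$, $\tfrac12\partial_{z_2}^2\Phi_2(0)=\tfrac2d+1$, while $\partial_{z_2}\Phi_2(0)=\partial_{z_1}\partial_{z_2}\Phi_2(0)=0$ — shows that the Taylor polynomials are precisely the polynomials subtracted in \eqref{equ:z2est} and \eqref{equ:z3est}. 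Hence on $\{|z|\leq\tfrac12\}$ the left sides of \eqref{equ:z2est} and \eqref{equ:z3est} are $\lesssim|z|^3$, which is stronger than the claimed bounds since $|z|^3\leq|z|^{2+2/d}$ and $|z|^3\leq|z|^{2+\frac43}$ for $|z|\leq1$.

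On $\{|z|>\tfrac12\}$ I would use only crude homogeneity: $|\Phi_1(z)|\leq|1+z|^{1+4/d}\lesssim|z|^{1+4/d}+1$, $|\Phi_2(z)|=|1+z|^{2+4/d}\lesssim|z|^{2+4/d}+1$, together with $|z_i|\leq|z|$, $z_i^2\leq|z|^2$ and the fact that $1\lesssim|z|^a$ for every $a\geq0$ once $|z|>\tfrac12$, so that each monomial on a left-hand side is dominated by the term of largest homogeneity; this bounds the left sides of \eqref{equ:z1est}, \eqref{equ:z2est}, \eqref{equ:z3est} by $|z|^{1+4/d}+|z|^2$, $|z|^{1+4/d}+|z|^2$ and $|z|^{2+4/d}$ respectively on this region. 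Combining with the small-$|z|$ bounds and keeping, for each inequality, the exponent that is worse in each regime gives exactly the asserted right-hand sides, and this is where the case split $d=3$ versus $d\geq4$ enters. For $d\geq4$ one has $1+\tfrac4d\leq2$, $2+\tfrac4d\leq3$ and $2+\tfrac2d\geq1+\tfrac4d$, so on $\{|z|>\tfrac12\}$ the above bounds are controlled by $|z|^2$, $|z|^{2+2/d}$ and $|z|^3$, which also absorb the small-$|z|$ remainders, yielding \eqref{equ:z1est}--\eqref{equ:z3est}. For $d=3$ the nonlinearity is super-quadratic, $1+\tfrac4d=\tfrac73>2$, so the large-$|z|$ growth must be recorded explicitly, and the combined bounds become $|z|^{1+\frac43}+|z|^2$, $|z|^3$ (using $|z|^{7/3}\lesssim|z|^3$ on $\{|z|>\tfrac12\}$) and $|z|^{2+\frac43}+|z|^3$, which is the statement. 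The only real obstacle is the second-order Taylor-coefficient computation of the previous paragraph — mechanical, but it must reproduce the listed polynomials verbatim — everything else being routine bookkeeping of exponents, which is precisely why the two dimension ranges are handled separately.
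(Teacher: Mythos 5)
The paper does not prove this lemma: it is quoted verbatim from Merle--Rapha\"el \cite{MR05annmath}, so there is no in-paper argument to compare against. Your proof is correct and is the expected elementary one: you recognize the left-hand side of \eqref{equ:z1est} as $N(1+z)-N(1)-DN(1)z$ with $N(u)=u|u|^{4/d}$ and the left-hand sides of \eqref{equ:z2est}, \eqref{equ:z3est} as second-order Taylor remainders of the functions $\Phi_1(z)=(1+z_1)|1+z|^{4/d}$ and $\Phi_2(z)=|1+z|^{2+4/d}$; on $\{|z|\le\tfrac12\}$ these functions are smooth (since $|1+z|\ge\tfrac12$), the displayed Taylor coefficients are verified by direct computation, and Taylor's theorem gives remainders $O(|z|^2)$ and $O(|z|^3)$; on $\{|z|>\tfrac12\}$ crude homogeneity bounds dominate each monomial by its leading power of $|z|$, and reconciling the small- and large-$|z|$ exponents forces the split between $d=3$ (where $1+\tfrac4d=\tfrac73>2$ makes the nonlinearity super-quadratic, so the extra $|z|^{1+4/3}$ and $|z|^{2+4/3}$ terms are genuinely needed) and $d\ge4$ (where $1+\tfrac4d\le 2$ and $2+\tfrac4d\le 3$, so a single power suffices and $2+\tfrac2d\in[2,3]$ is a valid compromise exponent absorbing both regimes). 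This is the standard proof of the Merle--Rapha\"el pointwise inequalities and I see no gaps.
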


\subsection{Nonlinear estimate}

For $N>1$, we define the Fourier multiplier $I_N$ given  by
$$\widehat{I_Nu}(\xi):=m_N(\xi)\hat{u}(\xi),$$
where $m_N(\xi)$ is a smooth radial decreasing cut off function by
\eqref{mxidy}. Let us collect basic properties of $I_N$.

\begin{lemma}[\cite{VZ07}]\label{highcont} Let $1<p<\infty$ and $0\leq\sigma\leq
s<1$. Then,
\begin{align}\label{equ2.1}
\|I_Nf\|_{L^p}\lesssim&\|f\|_{L^p},\\\label{equ2.2}
\big\||\nabla|^\sigma
P_{>N}f\big\|_{L^p}\lesssim&N^{\sigma-1}\big\|\nabla
I_Nf\big\|_{L^p},\\\label{equ2.3}
\|f\|_{H^s}\lesssim\|I_Nf\|_{H^1}\lesssim&N^{1-s}\|f\|_{H^s}.
\end{align}
\end{lemma}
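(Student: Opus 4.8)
The plan is to derive all three estimates from two standard ingredients: the Mihlin--H\"ormander multiplier theorem on $L^p$ for $1<p<\infty$, together with the scaling relation $m_N(\xi)=m_1(\xi/N)$ which is immediate from the definition \eqref{mxidy} (and is the symbol-level version of \eqref{equ:scaio}); and Plancherel's identity for the two $L^2$-based bounds in \eqref{equ2.3}. I would not invoke anything beyond these.

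\emph{Step 1 (the $L^p$-bound \eqref{equ2.1}).} Since $I_N$ and $I_1$ are conjugate by the dilation $m_N(\xi)=m_1(\xi/N)$, and the $L^p\to L^p$ operator norm of a Fourier multiplier is invariant under dilations of the symbol, it suffices to treat $N=1$. I would then verify that $m_1$ is a Mihlin--H\"ormander symbol: it is identically $1$ near the origin, smooth with bounded derivatives on the transition annulus $1\le|\xi|\le2$, and equal to $|\xi|^{s-1}$ for $|\xi|\ge2$; since $s-1<0$ this gives $|\pa^\alpha m_1(\xi)|\lesssim_\alpha|\xi|^{-|\alpha|}$ for all multi-indices $\alpha$. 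Hence $I_1$, and therefore $I_N$ uniformly in $N$, is bounded on $L^p$.

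\emph{Step 2 (the gain estimate \eqref{equ2.2}).} The heart of the matter is an algebraic factorization. Since $\widehat{|\nabla|I_Nf}(\xi)=|\xi|m_N(\xi)\hat f(\xi)$, I would write
\[
\widehat{|\nabla|^\sigma P_{>N}f}(\xi)=\tilde m_N(\xi)\cdot|\xi|m_N(\xi)\hat f(\xi),\qquad \tilde m_N(\xi):=\frac{|\xi|^{\sigma-1}\big(1-\varphi(\xi/N)\big)}{m_N(\xi)},
\]
which is legitimate because $1-\varphi(\xi/N)$ vanishes precisely where $m_N$ could degenerate. By scaling $\tilde m_N(\xi)=N^{\sigma-1}\tilde m_1(\xi/N)$, where $\tilde m_1$ is supported in $\{|\xi|\ge1\}$, smooth there, and equals $|\xi|^{\sigma-s}$ for $|\xi|\ge2$; the hypothesis $0\le\sigma\le s$ forces $\sigma-s\le0$, so $\tilde m_1$ again satisfies the Mihlin condition. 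Therefore $\tilde m_N(D)$ has $L^p\to L^p$ norm $\lesssim N^{\sigma-1}$, and combining this with the $L^p$-boundedness of the Riesz transforms, i.e. $\||\nabla|g\|_{L^p}\sim\|\nabla g\|_{L^p}$ for $1<p<\infty$, yields \eqref{equ2.2}.

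\emph{Step 3 (the Sobolev comparison \eqref{equ2.3}).} Both inequalities reduce, via Plancherel, to the pointwise two-sided bound
\[
\langle\xi\rangle^{2s}\ \lesssim\ \big(1+|\xi|^2\big)\,m_N(\xi)^2\ \lesssim\ N^{2(1-s)}\langle\xi\rangle^{2s},\qquad\xi\in\R^d,\ N\ge1,
\]
which I would check on the three regions $|\xi|\le N$ (where $m_N\equiv1$ and $1+|\xi|^2\sim\langle\xi\rangle^2$, using $s\le1$), $|\xi|\ge2N$ (where $m_N(\xi)=(|\xi|/N)^{s-1}$, so $(1+|\xi|^2)m_N(\xi)^2\sim N^{2-2s}|\xi|^{2s}$, using $N\ge1$), and the transition annulus $N\le|\xi|\le2N$ (where $m_N$ is bounded above and below by absolute constants and $1+|\xi|^2\sim N^2$). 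Integrating against $|\hat f(\xi)|^2$ gives $\|f\|_{H^s}\lesssim\|I_Nf\|_{H^1}\lesssim N^{1-s}\|f\|_{H^s}$, recovering \eqref{equcont}.

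No genuine obstacle is anticipated; the only point demanding a little care is the verification of the Mihlin bounds for $\tilde m_1$ in Step 2, and it is exactly there that the restriction $\sigma\le s$ enters --- without it the homogeneous piece $|\xi|^{\sigma-s}$ grows at infinity and the associated multiplier fails to be $L^p$-bounded.
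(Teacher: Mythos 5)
Your proof is correct, and the paper itself gives no argument for this lemma --- it is simply cited from Visan--Zhang \cite{VZ07}. Your approach (Mihlin--H\"ormander multiplier theorem exploiting the scaling $m_N(\xi)=m_1(\xi/N)$ for the two $L^p$ bounds, and a pointwise Fourier-side comparison on the three frequency regimes for the $H^s$--$H^1$ equivalence) is the standard route and, as far as one can tell, essentially the one used in \cite{VZ07} as well. The one place deserving a second look is the factorization in Step 2: the symbol $\tilde m_N(\xi)=|\xi|^{\sigma-1}(1-\varphi(\xi/N))/m_N(\xi)$ is well defined despite the negative power $|\xi|^{\sigma-1}$ precisely because $1-\varphi(\xi/N)$ kills the low frequencies, and you correctly observe that $\sigma\le s$ is exactly what is needed for the homogeneous tail $|\eta|^{\sigma-s}$ of $\tilde m_1$ to be bounded at infinity, so the Mihlin bounds hold and the $L^p$ operator norm scales as $N^{\sigma-1}$; together with the Riesz transform equivalence $\||\nabla|g\|_{L^p}\sim\|\nabla g\|_{L^p}$ for $1<p<\infty$ this closes the argument. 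No gaps.
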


We will need the following fractional calculus estimates from \cite{CW}.

\begin{lemma}[Fractional product rule \cite{CW}]
Let $s\geq0$, and $1<r,r_j,q_j<\infty$ satisfy
$\frac1r=\frac1{r_i}+\frac1{q_i}$ for $i=1,2$. Then
\begin{equation}\label{moser}
\big\||\nabla|^s(fg)\big\|_{L_x^r(\R^d)}\lesssim\|f\|_{{L_x^{r_1}(\R^d)}}\big\||\nabla|^sg
\big\|_{{L_x^{q_1}(\R^d)}}+\big\||\nabla|^sf\big\|_{{L_x^{r_2}(\R^d)}}\|g\|_{{L_x^{q_2}(\R^d)}}.
\end{equation}
From \cite{CR09}, we have
\begin{equation}\label{moser2}
\big\|I_N\nabla(fg)\big\|_{L_x^r(\R^d)}\lesssim\|f\|_{{L_x^{r_1}(\R^d)}}\big\|I_N\nabla g
\big\|_{{L_x^{q_1}(\R^d)}}+\big\|I_N\nabla f\big\|_{{L_x^{r_2}(\R^d)}}\|g\|_{{L_x^{q_2}(\R^d)}}.
\end{equation}
\end{lemma}

\begin{lemma}[Fractional chain rule \cite{CW}]
Let $G\in
C^1(\mathbb{C}),~s\in(0,1],$ and $1<r,r_1,r_2<+\infty$ satisfy $\frac1r=\frac1{r_1}+\frac1{r_2}.$
Then
\begin{equation}\label{fraclsfz}
\big\||\nabla|^s
G(u)\big\|_r\lesssim\|G'(u)\|_{r_1}\big\||\nabla|^su\big\|_{r_2}.
\end{equation}
\end{lemma}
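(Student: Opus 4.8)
The plan is to follow the argument of Christ and Weinstein \cite{CW}. The endpoint $s=1$ is immediate: the ordinary chain rule gives the pointwise identity $\nabla\big(G(u)\big)=G'(u)\nabla u$ (with $G'(u)$ understood as the full derivative, so that $\|G'(u)\|_{r_1}$ refers to its operator norm), whence Hölder's inequality with $\frac1r=\frac1{r_1}+\frac1{r_2}$ yields $\big\||\nabla|G(u)\big\|_r\le\|G'(u)\|_{r_1}\|\nabla u\|_{r_2}$. So it remains to treat $0<s<1$. Here I would use the Littlewood--Paley square function characterization $\big\||\nabla|^sf\big\|_{L^r}\sim\big\|\big(\sum_{N\in2^{\Z}}N^{2s}|P_Nf|^2\big)^{1/2}\big\|_{L^r}$, valid for $1<r<\infty$ and $s>0$, and reduce to estimating the right-hand side for $f=G(u)$. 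Writing $\Delta_j=P_{2^j}$, $S_j=P_{<2^j}$ and telescoping,
\[
G(u)-G(0)=\sum_j\big(G(S_{j+1}u)-G(S_ju)\big)=\sum_j A_j\,\Delta_ju,\qquad A_j:=\int_0^1 G'\big(S_ju+t\,\Delta_ju\big)\,dt,
\]
so that $P_NG(u)=\sum_j P_N(A_j\Delta_ju)$.

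Inserting this into the square function, one splits each $P_N(A_j\Delta_ju)$ according to whether the output frequency satisfies $N\lesssim 2^j$ or $N\gg 2^j$. In the first (``low--high'' and diagonal) regime one writes $P_N|\nabla|^s=N^s\widetilde P_N$ with $|\widetilde P_Nh|\lesssim Mh$ ($M$ the Hardy--Littlewood maximal function) and sums the geometric series in $N\lesssim 2^j$ to generate the factor $2^{js}$; then a combination of Bernstein's inequality, the vector-valued Fefferman--Stein inequality, and Hölder's inequality with $\frac1r=\frac1{r_1}+\frac1{r_2}$ separates the factor carrying $G'$ from $\big\|\big(\sum_j 2^{2js}|\Delta_ju|^2\big)^{1/2}\big\|_{r_2}\sim\big\||\nabla|^su\big\|_{r_2}$, and one arrives at the claimed bound for this contribution. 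The precise bookkeeping of the amplitudes $A_j$ already requires some care, since $G$ is only $C^1$.

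The step I expect to be the main obstacle is the ``high--high'' interaction, i.e. the part of $P_NG(u)$ built from scales $2^j\ll N$: there the high output frequency must be produced by the amplitude $A_j$ itself, and since $G'$ is merely continuous, $A_j$ (hence $A_j\Delta_ju$) need not be frequency localized, so no polynomial gain in $N/2^j$ is available from regularity of $G'$ alone. This is exactly the delicate point of \cite{CW}, where it is resolved by exploiting that $S_ju+t\Delta_ju$ is band-limited at frequency $\lesssim 2^{j+1}$, hence varies slowly on scale $2^{-j}$, together with pointwise maximal-function control of $A_j$, in order to absorb the high-frequency tail into the square-function and maximal-function machinery and recover $\ell^2_N$-summability; I would reproduce that argument. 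I also note that for the power-type nonlinearities $G(z)=|z|^{4/d}z$ and its variants occurring in this paper there is a more elementary route that avoids the Littlewood--Paley analysis altogether: using the equivalent difference characterization $\big\||\nabla|^sf\big\|_{L^r}\sim\big\|\big(\int_{\R^d}|f(\cdot)-f(\cdot-y)|^2|y|^{-d-2s}\,dy\big)^{1/2}\big\|_{L^r}$ and the pointwise inequality $\big||a|^{4/d}a-|b|^{4/d}b\big|\lesssim\big(|a|^{4/d}+|b|^{4/d}\big)|a-b|$, one dominates the difference square-function of $G(u)$ by $|G'(u)|$ times the difference square-function of $u$ plus an error symmetric in the two variables, the latter being handled by Minkowski's inequality; Hölder with $\frac1r=\frac1{r_1}+\frac1{r_2}$ then closes the estimate.
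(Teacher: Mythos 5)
The paper does not actually prove this lemma: it is imported verbatim from Christ--Weinstein \cite{CW} (as the bracketed citation indicates), so there is no in-paper argument to compare yours against; what you have done is attempt to reconstruct the literature proof, and your outline is indeed the standard route of that reference (telescoping $G(u)=\sum_j\big(G(S_{j+1}u)-G(S_ju)\big)=\sum_jA_j\Delta_ju$, square-function characterization, maximal-function and Fefferman--Stein machinery), with the difference-quotient characterization as a shortcut for power nonlinearities. Two concrete caveats. First, in the low--high/diagonal regime your Hölder step silently needs a pointwise bound of the form $|A_j(x)|\lesssim M\big(G'(u)\big)(x)$ (or an equivalent control allowing the factor $\|G'(u)\|_{r_1}$ to be split off), and mere continuity of $G'$ does not give this: one needs a structural hypothesis such as $|G'(\theta a+(1-\theta)b)|\lesssim |G'(a)|+|G'(b)|$ (Hölder continuity of $G'$, or power-type behaviour $|G'(z)|\sim|z|^{4/d}$, which is the case actually used in this paper). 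As literally stated for arbitrary $G\in C^1(\mathbb{C})$ the lemma is a loose quotation of \cite{CW}, and your proof sketch inherits that looseness; you flag the issue ("requires some care, since $G$ is only $C^1$") but do not resolve it, and together with the high--high interaction which you explicitly defer to \cite{CW}, this means the proposal is an outline of the known proof rather than a self-contained argument -- acceptable here precisely because the paper itself treats the lemma as a citation. Second, in your more elementary route via $\big\||\nabla|^sf\big\|_{L^r}\sim\big\|\big(\int|f(\cdot)-f(\cdot-y)|^2|y|^{-d-2s}dy\big)^{1/2}\big\|_{L^r}$, the term carrying $G'(u(x-y))$ cannot simply be pulled out of the $y$-integral; the Minkowski exchange you invoke requires $r\geq2$ (and the subsequent comparison with $\||\nabla|^su\|_{r_2}$ passes through the embedding $\dot W^{s,r_2}\hookrightarrow\dot B^s_{r_2,2}$, needing $r_2\geq2$), so that alternative closes only in part of the stated range of exponents, though it does cover the applications made of the lemma in this paper.
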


As noted in the introduction, one needs to estimate the commutator
 $|Iu|^pIu-I(|u|^pu)$ in the increment of modified
energy $E(Iu)(t)$. When $p$ is an even integer, one can use
multilinear analysis to  expand this commutator into a product of
Fourier transforms of $u$ and $Iu$,  and carefully measure
frequency interactions to derive an estimate (see for example
\cite{CGT09}). However, this is not possible when $p$ in not an even
integer. Instead, Visan and Zhang in \cite{VZ07} established the following
rougher (weaker, but more robust)  estimate:

\begin{lemma}[commutator  estimate, \cite{VZ07}]\label{commutor}
Let $1<r,r_1,r_2<\infty$ be such that
$\frac1r=\frac1{r_1}+\frac1{r_2}$ and let $0<\nu<s.$ Then,
\begin{equation}\label{equ2.4}
\big\|I_N(fg)-(I_Nf)g\big\|_{L^r}\lesssim
N^{-(1-s+\nu)}\|I_Nf\|_{L^{r_1}}\big\|\langle\nabla\rangle^{1-s+\nu}g\big\|_{L^{r_2}}.
\end{equation}
Furthermore, let $I$ be a time interval and let $\frac1{1+\min\{1,\frac4d\}}<s<1,$  then we have
\begin{align}\label{equ:inuf}
\big\|\nabla I_N(|u|^\frac4du)-(I_N\nabla u)|u|^\frac4d\big\|_{L_t^2L_x^\frac{2d}{d+2}(I\times\R^d)}\lesssim N^{-\min\{1,\frac4d\}s_+}\|\langle\nabla\rangle I_Nu\|_{S^0(I)}^{1+\frac4d},\\\label{equ:nainfu}
\big\|\langle\nabla\rangle I_N(|u|^\frac4du)\big\|_{N^0(I)}\lesssim(|I|^\frac{2s}d+N^{-\min\{1,\frac4d\}s_+})\big\|\langle\nabla\rangle I_Nu\big\|_{S^0(I)}^{1+\frac4d},\\\label{equ:nonlestque}
\big\|\langle\nabla\rangle^{\min\{1,\frac4d\}s_-}(|u|^\frac4d)\big\|_{L_t^\infty L_x^\frac{d}2}\lesssim\big\|\langle\nabla\rangle I_Nu\big\|_{L_t^\infty L_x^2}^\frac4d,
\end{align}
where $S^0(I)$ and $N^0(I)$ is defined in Definition \ref{def1} below.
\end{lemma}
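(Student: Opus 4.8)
\textbf{Proof plan for Lemma \ref{commutor}.}

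The first estimate \eqref{equ2.4} is the Visan--Zhang commutator bound and I would simply quote it from \cite{VZ07}; it follows from writing $I_N(fg)-(I_Nf)g$ as a paraproduct, putting the high frequencies on $f$ so that the symbol difference $m_N(\xi)-m_N(\xi_1)$ gains a factor $N^{-(1-s+\nu)}\langle\xi_2\rangle^{1-s+\nu}$ by the mean value theorem on the (almost) homogeneous degree-$(s-1)$ multiplier. The real work is in the three nonlinear estimates \eqref{equ:inuf}--\eqref{equ:nonlestque}. For \eqref{equ:inuf} I would write $G(u)=|u|^{4/d}u$, so $\nabla G(u) = G'(u)\cdot\nabla u$ (interpreting $G'$ as the appropriate real-linear derivative), and decompose
\[
\nabla I_N(|u|^{4/d}u)-(I_N\nabla u)|u|^{4/d}
= \big[I_N\big(|u|^{4/d}\nabla u\big)-(I_N\nabla u)|u|^{4/d}\big] + I_N\big(u\,\nabla(|u|^{4/d})\big).
\]
To the first bracket I apply \eqref{equ2.4} with $f=\nabla u$, $g=|u|^{4/d}$, $r=\tfrac{2d}{d+2}$ and an exponent split that places $I_N\nabla u$ in a Strichartz-admissible $L_x^{r_1}$ and $\langle\nabla\rangle^{1-s+\nu}(|u|^{4/d})$ in $L_x^{r_2}$; the latter is controlled by \eqref{fraclsfz} (fractional chain rule, since $1-s+\nu<\min\{1,4/d\}s$ for $\nu$ small when $s>\tfrac1{1+\min\{1,4/d\}}$, which is exactly where the hypothesis on $s$ enters) together with a Sobolev/Strichartz bound on $\|u\|$ in terms of $\|\langle\nabla\rangle I_N u\|_{S^0}$. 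The term $I_N(u\nabla(|u|^{4/d}))$ is handled by $L^p$-boundedness of $I_N$ \eqref{equ2.1} and again the chain rule, yielding the same $N^{-\min\{1,4/d\}s_+}$ gain since here the derivative already falls on the "low-power" factor. Summing the Hölder-in-time and interpolation of Strichartz norms produces the $\|\langle\nabla\rangle I_N u\|_{S^0(I)}^{1+4/d}$ on the right.

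For \eqref{equ:nainfu} I would split $\langle\nabla\rangle I_N(|u|^{4/d}u)$ into its $L^2$-in-space part and its $\dot H^1$-type part. The $L^2$ part is estimated purely by Strichartz and Hölder, without any $I_N$-smoothing, which is where the $|I|^{2s/d}$ factor comes from (a Hölder loss in time from the short interval, with no frequency gain). The gradient part I write as $\nabla I_N(|u|^{4/d}u) = (I_N\nabla u)|u|^{4/d} + \big[\nabla I_N(|u|^{4/d}u)-(I_N\nabla u)|u|^{4/d}\big]$: the commutator is exactly \eqref{equ:inuf}, contributing $N^{-\min\{1,4/d\}s_+}$, and the main term $(I_N\nabla u)|u|^{4/d}$ is bounded in the dual Strichartz space $N^0(I)$ by Hölder, placing $I_N\nabla u$ in $S^0$ and $|u|^{4/d}$ in a Lebesgue space controlled by $\|\langle\nabla\rangle I_N u\|_{S^0}^{4/d}$ via Sobolev embedding; this main-term Hölder step is again where a (short-time) $|I|^{2s/d}$ factor is absorbed. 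Finally \eqref{equ:nonlestque} is the cleanest: by the fractional chain rule \eqref{fraclsfz} with $G(z)=|z|^{4/d}$ (which is $C^1$ since $\min\{1,4/d\}s<4/d$, so the exponent $\min\{1,4/d\}s_-$ is an admissible fractional-derivative order for this nonlinearity), one gets $\|\langle\nabla\rangle^{\min\{1,4/d\}s_-}(|u|^{4/d})\|_{L_x^{d/2}} \lesssim \|u\|_{L_x^{p}}^{4/d-1}\|\langle\nabla\rangle^{\text{(something)}} u\|_{L_x^{q}}$, and then Sobolev embedding $H^1\hookrightarrow L^p$ together with the boundedness \eqref{equ2.3} of $\langle\nabla\rangle I_N$ upgrades this to $\|\langle\nabla\rangle I_N u\|_{L_t^\infty L_x^2}^{4/d}$; taking $L_t^\infty$ is immediate since no time integration is involved.

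The main obstacle is the bookkeeping in \eqref{equ:inuf}: one must choose the Lebesgue exponents $(r_1,r_2)$ in the commutator estimate \eqref{equ2.4} so that simultaneously (a) $I_N\nabla u$ lands in an $L_t^{q_1}L_x^{r_1}$ norm that is dominated by $\|\langle\nabla\rangle I_N u\|_{S^0(I)}$, (b) the remaining $1+4/d-1 = 4/d$ copies of $u$, carrying $1-s+\nu$ fractional derivatives distributed by the chain rule, fit into $L_t^{q_2}L_x^{r_2}$ and are again controlled by powers of $\|\langle\nabla\rangle I_N u\|_{S^0(I)}$ through Sobolev embedding, and (c) the time exponents close with $\tfrac1{q_1}+\tfrac1{q_2}=\tfrac12$ so that the output sits in $L_t^2 L_x^{2d/(d+2)}$. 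The constraint that makes this possible is precisely $1-s+\nu < \min\{1,4/d\}\,s$, i.e. $s>\tfrac1{1+\min\{1,4/d\}}$ after sending $\nu\downarrow0$; this is why the range of $s$ in the hypothesis, and the exponents $\min\{1,4/d\}s_\pm$ in the conclusions, take the form they do. Once the exponents are pinned down the remaining steps are routine Strichartz interpolation, so I would present the exponent choice carefully and then be brief about the Hölder chasing.
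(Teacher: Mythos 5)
This lemma is not proved in the paper: it is quoted essentially verbatim from Visan--Zhang \cite{VZ07} and applied as a black box (see the proof of Lemma \ref{lem:conincre}, which cites ``(4.5) in \cite{VZ07}'' directly). So the comparison is really between your sketch and the Visan--Zhang argument. Your account of \eqref{equ2.4} (paraproduct plus mean-value gain on the symbol), the splitting of $\langle\nabla\rangle$ into order-zero and gradient parts in \eqref{equ:nainfu} (the order-zero and main-term Hölder steps producing the $|I|^{2s/d}$, the commutator producing $N^{-\min\{1,\frac4d\}s_+}$), and the use of the fractional chain rule for \eqref{equ:nonlestque} are all of the right shape and match the \cite{VZ07} framework.

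There is, however, a genuine gap in the treatment of \eqref{equ:inuf}. Your decomposition leaves the piece $I_N\big(u\,\nabla(|u|^{4/d})\big)$, and the claim that it gains $N^{-\min\{1,\frac4d\}s_+}$ ``since here the derivative already falls on the `low-power' factor'' is not substantiated: this term is not a commutator, and $L^p$-boundedness of $I_N$ together with the chain rule gives no power of $N$ at all. Indeed, for real $u$ one has $u\nabla(|u|^{4/d}) = \tfrac4d\,|u|^{4/d}\nabla u$, so $I_N(u\nabla|u|^{4/d})=\tfrac4d I_N(|u|^{4/d}\nabla u)$, and the latter does not decay in $N$ on its own. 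The estimate that actually carries the gain in \cite{VZ07} (and that this paper in fact invokes later) is for the quantity $\nabla\big[I_N(|u|^{4/d}u)-|I_Nu|^{4/d}I_Nu\big]$, which admits the genuine commutator decomposition $[I_N,F'(u)]\nabla u + (F'(u)-F'(I_Nu))\,I_N\nabla u$ with $F(z)=|z|^{4/d}z$: the first piece decays via \eqref{equ2.4}, the second via the Hölder continuity of $F'$ and the smallness of $u-I_Nu$. Your decomposition does not reproduce this cancellation, so you should either switch to the $\nabla[I_NF(u)-F(I_Nu)]$ form (noting that \eqref{equ:inuf} as printed seems to be an imprecise transcription of \cite{VZ07}) or supply a separate argument for your leftover term. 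Two smaller points: in your exponent bookkeeping one should take $\nu$ close to its maximal admissible value $\min\{1,\tfrac4d\}s-(1-s)$, not small, since $\nu\to 0$ only yields $N^{-(1-s)_+}$ rather than $N^{-\min\{1,\frac4d\}s_+}$ (compare the paper's choice $\nu=\min\{1,\tfrac4d\}s-(1-s)_-$ in Lemma \ref{lem:xianchafa}); and for $d>4$ the map $z\mapsto|z|^{4/d}$ is only Hölder of order $4/d$, not $C^1$, so your appeal to \eqref{fraclsfz} with a factor $\|u\|^{4/d-1}$ breaks down (the exponent is negative) and one must invoke the Hölder-continuous fractional chain rule, which is precisely what produces the cap $\min\{1,\tfrac4d\}$ in the derivative order of \eqref{equ:nonlestque}.
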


\begin{remark}
It is easy to check that for $(q,r)\in\Lambda_0$,
\begin{equation}
\big\|\langle\nabla\rangle^su\big\|_{L_t^qL_x^r}\lesssim\big\|\langle\nabla\rangle I_Nu\big\|_{L_t^qL_x^r},
\end{equation}
where $\Lambda_0$ is defined in Definition \ref{def1} below.
\end{remark}

 \subsection{Strichartz estimates and local well-posedness}\label{sze}

In this subsection, we consider the Cauchy problem
\begin{equation} \label{equ2}
    \left\{ \aligned &iu_t+\Delta u-f(u)=  0, \\
    &u(0)=u_0.
    \endaligned
    \right.
\end{equation}
The integral equation for the Cauchy problem $(\ref{equ2})$ can be
written as
\begin{equation}\label{inte1}
u(t,x)=e^{i(t-t_0)\Delta}u(t_0)-i\int_{t_0}^te^{i(t-s)\Delta}f(u(s))ds.
\end{equation}

Now we recall the dispersive estimate for the free Schr\"odinger
operator $U(t)=e^{it\Delta}$. From the explicit formula
$$e^{it\Delta}f(x)=\frac1{(4i\pi
t)^\frac{d}{2}}\int_{\R^d}e^{i\frac{|x-y|^2}{4t}}f(y)dy,$$ it is easy to get the
standard dispersive inequality
\begin{equation}\label{disper}
\big\|e^{it\Delta}f
\big\|_{L^\infty_x(\R^d)}\lesssim|t|^{-\frac{d}2}
\|f\|_{L^1_x(\R^d)}
\end{equation}
for all $t\neq0.$ On the other hand, since the free operator
conserves the $L_x^2(\R^d)$-norm, we obtain by interpolation
\begin{equation}\label{dispers}
\big\|e^{it\Delta}f \big\|_{L^q_x(\R^d)} \leq
C|t|^{-d(\frac12-\frac1q)} \|f\|_{L^{q'}_x(\R^d)}
\end{equation}
for all $t\neq0$ and $2\leq q\leq+\infty,~\frac1q+\frac1{q'}=1$.

The Strichartz estimates involve the following definitions:
\begin{definition}\label{def1}
A pair of Lebesgue space exponents $(q,r)$ are called
Schr\"{o}dinger admissible for $\mathbb{R}^{d+1}$, or denote by
$(q,r)\in \Lambda_{0}$ when $q,r\geq 2,~(q,r,d)\neq (2,\infty,2)$,
and
\begin{equation}\label{equ21}
\frac{2}{q}=d\Big(\frac{1}{2}-\frac{1}{r}\Big).
\end{equation}
For a fixed spacetime slab $I\times\R^d$, we define the Strichartz
norm
$$\|u\|_{S^0(I)}:=\sup_{(q,r)\in\Lambda_0}\|u\|_{L_t^q
L_x^r(I\times\R^d)},~d\geq3$$ We denote $S^0(I)$ to be the closure of all
test functions under this norm and write $N^0(I)$ for the dual of
$S^0(I)$.
\end{definition}

According to the above dispersive estimate, the abstract duality and
interpolation argument(see \cite{KeT98}), we have the following
Strichartz estimates.
\begin{lemma}[Strichartz estimate, \cite{GiV85b,KeT98}]\label{lem22}
Let $s\geq0,$ and let $I$ be a compact time interval, and let $u:
I\times\R^d\to\C$ be a solution of the Schr\"odinger equation
$$iu_t+\Delta u+h=0.$$
Then, for all $t_0\in I$
$$\big\||\nabla|^su\big\|_{S^0(I)}\leq
C\big\||\nabla|^su(t_0)\big\|_{L_x^2(\R^d)}+\big\||\nabla|^sh\big\|_{N^0(I)}.$$
\end{lemma}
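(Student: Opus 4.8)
The plan is to reduce to the case $s=0$ and then invoke the abstract machinery of Keel and Tao \cite{KeT98}. Since $|\nabla|^s$ is a Fourier multiplier, it commutes with the free propagator $e^{it\Delta}$ and with time integration; moreover, $|\nabla|^s u$ solves $i\pa_t(|\nabla|^su)+\Delta(|\nabla|^su)+|\nabla|^sh=0$. Hence it suffices to establish
$$\|v\|_{S^0(I)}\leq C\|v(t_0)\|_{L_x^2}+C\|g\|_{N^0(I)}$$
for any solution of $i\pa_tv+\Delta v+g=0$, and then apply it to $v=|\nabla|^su$, $g=|\nabla|^sh$.

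First I would write the Duhamel representation: from $i\pa_tv=-\Delta v-g$ one has, for $t_0\in I$,
$$v(t)=e^{i(t-t_0)\Delta}v(t_0)+i\int_{t_0}^te^{i(t-s)\Delta}g(s)\,ds,$$
so by the triangle inequality it is enough to prove separately the homogeneous estimate $\|e^{i(t-t_0)\Delta}v(t_0)\|_{S^0(I)}\lesssim\|v(t_0)\|_{L_x^2}$ and the retarded estimate $\big\|\int_{t_0}^te^{i(t-s)\Delta}g(s)\,ds\big\|_{S^0(I)}\lesssim\|g\|_{N^0(I)}$. For the homogeneous estimate, the key input is the dispersive bound \eqref{dispers}, namely $\|e^{it\Delta}f\|_{L_x^q}\lesssim|t|^{-d(\frac12-\frac1q)}\|f\|_{L_x^{q'}}$, together with $L^2$-conservation. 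Interpolating these and running the standard $TT^*$ argument, $\|e^{it\Delta}f\|_{L_t^qL_x^r}^2=\langle (TT^*)g,g\rangle$ with $(TT^*)g(t)=\int e^{i(t-s)\Delta}g(s)\,ds$, one reduces the off-endpoint cases ($q>2$) to the one-dimensional Hardy--Littlewood--Sobolev inequality in the time variable, using that $(q,r)\in\Lambda_0$ forces the exponent relation \eqref{equ21}. Taking the supremum over all admissible $(q,r)$ gives control of the $S^0(I)$ norm.

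For the retarded (inhomogeneous) estimate, I would first note that by duality $\big\|\int_I e^{-is\Delta}g(s)\,ds\big\|_{L_x^2}\lesssim\|g\|_{N^0(I)}$ is exactly the adjoint of the homogeneous estimate, and combining this with the homogeneous estimate produces the \emph{un-truncated} bound $\big\|\int_I e^{i(t-s)\Delta}g(s)\,ds\big\|_{S^0(I)}\lesssim\|g\|_{N^0(I)}$. To pass from the full time integral to the retarded integral $\int_{t_0}^t$, one applies the Christ--Kiselev lemma, which is available precisely because the admissible pairs are strictly off-endpoint after peeling the diagonal. The main obstacle is the endpoint pair $q=2$, $r=\frac{2d}{d-2}$ (legitimate since $d\geq3$): here Hardy--Littlewood--Sobolev degenerates and the Christ--Kiselev lemma fails, so one cannot argue by a soft truncation. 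This is handled by the Keel--Tao bilinear interpolation/atomic-decomposition scheme, which directly yields the endpoint retarded estimate $\big\|\int_{s<t}e^{i(t-s)\Delta}g(s)\,ds\big\|_{L_t^2L_x^{2d/(d-2)}}\lesssim\|g\|_{L_t^2L_x^{(2d/(d-2))'}}$ without passing through the diagonal; the remaining mixed pairs then follow by interpolation with the off-endpoint estimates already obtained. Assembling the homogeneous and retarded pieces and taking the supremum over $\Lambda_0$ gives the claimed bound, and undoing the reduction $v=|\nabla|^su$ completes the proof.
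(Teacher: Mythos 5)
Your sketch is correct and follows exactly the route the paper itself indicates for this lemma, which is stated without proof and attributed to the dispersive estimate \eqref{dispers} plus the ``abstract duality and interpolation argument'' of Keel--Tao \cite{KeT98}: reduction to $s=0$ by commuting $|\nabla|^s$, Duhamel, the $TT^*$/Hardy--Littlewood--Sobolev argument off the endpoint, Christ--Kiselev for the retarded non-endpoint estimates, and the Keel--Tao bilinear scheme for the endpoint $(2,\tfrac{2d}{d-2})$ admissible since $d\geq3$. No gaps worth flagging.
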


By the fixed point argument, we have the following local well-posedness(LWP).

\begin{lemma}[$H^s$-LWP]\label{lem:hslwp}
Let $s\in(0,1]$, $u_0\in H^s(\R^d)$ and
\begin{equation}\label{equ:tlwp}
T_{\rm LWP}=c\|u_0\|_{H^s}^{-\frac2s}
\end{equation}
with $c$ small depending the constant in Strichartz estimate and Sobolev embedding. Then, there exists a unique solution $u(t)$ to \eqref{equ:nls} on $[0,T_{\rm LWP}]$ and satisfying
\begin{align}\label{equ:uest}
\|u\|_{S^0([0,T_{\rm LWP}])}\leq 2C\|u_0\|_{L_x^2},&~\big\|\langle \nabla\rangle^s u\big\||_{S^0([0,T_{\rm LWP}])}\leq 2C\|u_0\|_{H^s}.
\end{align}
\end{lemma}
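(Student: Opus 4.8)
The plan is to run a standard contraction-mapping argument for the integral equation \eqref{inte1} with $t_0=0$, in the function space adapted to the Strichartz estimates of Lemma \ref{lem22}. First I would fix the time interval $I=[0,T_{\rm LWP}]$ with $T_{\rm LWP}=c\|u_0\|_{H^s}^{-2/s}$ and introduce the complete metric space
\[
\mathcal{X}=\Big\{u:\ \|u\|_{S^0(I)}\leq 2C\|u_0\|_{L_x^2},\ \ \big\|\langle\nabla\rangle^s u\big\|_{S^0(I)}\leq 2C\|u_0\|_{H^s}\Big\},
\]
equipped with the metric $d(u,v)=\|u-v\|_{S^0(I)}$ (one checks $\mathcal{X}$ is closed, hence complete, for this metric). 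On this space I would consider the map
\[
\Phi(u)(t)=e^{it\Delta}u_0-i\int_0^t e^{i(t-\tau)\Delta}\big(|u|^{\frac4d}u\big)(\tau)\,d\tau,
\]
the goal being to show $\Phi:\mathcal{X}\to\mathcal{X}$ is a contraction.

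The key estimates are: (1) by Lemma \ref{lem22} applied with $h=-|u|^{4/d}u$,
\[
\big\|\langle\nabla\rangle^s\Phi(u)\big\|_{S^0(I)}\leq C\|u_0\|_{H^s}+C\big\|\langle\nabla\rangle^s(|u|^{\frac4d}u)\big\|_{N^0(I)},
\]
and similarly for the $S^0$-norm without derivatives; (2) a nonlinear estimate bounding $\big\|\langle\nabla\rangle^s(|u|^{\frac4d}u)\big\|_{N^0(I)}$ by a suitable power of $\|\langle\nabla\rangle^s u\|_{S^0(I)}$ with a gain in $|I|$. Concretely, choosing the admissible pair $(q,r)=\big(\tfrac{2(d+2)}{d},\tfrac{2(d+2)}{d}\big)$ (the $L^{2(d+2)/d}_{t,x}$ pair) and its dual exponent for the Duhamel term, the fractional chain rule \eqref{fraclsfz} gives
\[
\big\|\langle\nabla\rangle^s(|u|^{\frac4d}u)\big\|_{L_t^{q'}L_x^{r'}}\lesssim \big\||u|^{\frac4d}\big\|_{L_t^{a}L_x^{b}}\big\|\langle\nabla\rangle^s u\big\|_{L_t^{q}L_x^{r}}
\]
for appropriate $a,b$; one then distributes the $L_{t,x}^{\frac{2(d+2)}{d}}$-norm of $u$ (controlled by $\|u\|_{S^0(I)}$, hence by the mass) over the $\tfrac4d$ factors, and the remaining room in the time exponent produces a factor $|I|^{\theta}$ with $\theta=\theta(s,d)>0$. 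This is exactly where the choice $T_{\rm LWP}\sim\|u_0\|_{H^s}^{-2/s}$ is forced: the power of $\|u_0\|_{H^s}$ coming out of $\big(2C\|u_0\|_{H^s}\big)^{1+4/d}$ together with $|I|^\theta$ must be reabsorbed into a factor $\tfrac12\cdot 2C\|u_0\|_{H^s}$, which pins down $\theta$ and the exponent $2/s$. Taking $c$ small (depending only on $C$ from Strichartz and the Sobolev/Hölder constants) then yields $\Phi(u)\in\mathcal{X}$, and repeating the same computation on the difference $\Phi(u)-\Phi(v)$ — using that $\big||u|^{\frac4d}u-|v|^{\frac4d}v\big|\lesssim(|u|^{\frac4d}+|v|^{\frac4d})|u-v|$, so only the base ($s=0$) Strichartz norm of $u-v$ enters — gives $d(\Phi(u),\Phi(v))\leq\tfrac12 d(u,v)$.

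The contraction mapping theorem then produces a unique fixed point $u\in\mathcal{X}$, which is the desired solution and automatically satisfies the bounds \eqref{equ:uest}; uniqueness in the full $C([0,T_{\rm LWP}];H^s)$ class follows by a standard continuity/bootstrap argument on subintervals. The main technical obstacle is purely the bookkeeping in step (2): one must check that for $s$ in the stated range the fractional chain rule can indeed be applied with exponents that are all admissible (in particular $>1$ and $<\infty$) \emph{and} leave a strictly positive power of $|I|$; this is a dimension-dependent verification but involves no new ideas beyond Hölder in time, Sobolev embedding in space, and the Strichartz admissibility relation \eqref{equ21}. Everything else is the routine fixed-point machinery.
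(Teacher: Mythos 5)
Your proposal follows essentially the same route as the paper's proof: a Banach fixed-point argument on the ball cut out by the two Strichartz bounds, with the nonlinearity estimated via the diagonal pair $L^{2(d+2)/d}_{t,x}$, Hölder in time to produce the factor $T_{\rm LWP}^{s/2}$, and a fractional chain-rule/Hölder step to place the $\langle\nabla\rangle^s$ on a single factor. The only cosmetic deviation is the choice of metric (you use $\|u-v\|_{S^0(I)}$, whereas the paper uses $\|u-v\|_{L^{2(d+2)/d}_{t,x}}$ to avoid the usual closedness subtlety), but both lead to the same contraction estimate.
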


\begin{proof}
  We apply the Banach fixed point argument to prove this lemma. First,
we define the map
  $$\Phi(u)=e^{it\Delta}u_0-i\int_0^t e^{i(t-s)\Delta}(|u|^\frac4du)(s)\;ds$$
  on the complete metric space $B$
  \begin{equation*}
  \begin{split}
  B:=\big\{u\in C(I;H^s):~&\|u\|_{S^0([0,T_{\rm LWP}])}\leq 2C\|u_0\|_{L_x^2},~\big\|\langle \nabla\rangle^s u\big\||_{S^0([0,T_{\rm LWP}])}\leq 2C\|u_0\|_{H^s}\big\}
  \end{split}
  \end{equation*}
with the metric
$d(u,v)=\big\|u-v\big\|_{L_{t,x}^{\frac{2(d+2)}{d}}([0,T_{\rm
LWP}]\times\R^d)}$,  where $C$ is the constant in Strichartz
estimates.

It suffices to prove that the operator $\Phi(u)$  is a contraction map on $B$ for $[0,T_{\rm LWP}]$. In fact, if
$u\in B$, then  by Strichartz estimate, H\"older's inequality and Sobolev embedding, we have
  \begin{align*}
  \|\Phi(u)\|_{S^0([0,T_{\rm LWP}])}\leq&C\|u_0\|_{L_x^2}+C\big\||u|^\frac4du\big\|_{L_{t,x}^{\frac{2(d+2)}{d+4}}([0,T_{\rm LWP}]\times\R^d)}\\
 \leq&C\|u_0\|_{L_x^2}+C\|u\|_{L_{t,x}^{\frac{2(d+2)}{d}}([0,T_{\rm LWP}]\times\R^d)}\|u\|_{L_{t,x}^{\frac{2(d+2)}{d}}([0,T_{\rm LWP}]\times\R^d)}^\frac4d \\
  \leq&C\|u_0\|_{L_x^2}+C\|u\|_{L_{t,x}^{\frac{2(d+2)}{d}}([0,T_{\rm LWP}]\times\R^d)}\Big(T_{\rm LWP}^\frac{s}2\big\|\langle \nabla\rangle^s u\big\||_{S^0([0,T_{\rm LWP}])}\Big)^\frac4d\\
  \leq&C\|u_0\|_{L_x^2}+2^\frac4dC^{2+\frac4d}c^\frac4d\|u_0\|_{L_x^2}\\
  \leq&2C\|u_0\|_{L_x^2}
  \end{align*}
  provided that $2^\frac4dC^{2+\frac4d}c^\frac4d<1$ with $T_{\rm LWP}=c\|u_0\|_{H^s}^{-\frac2s}$. Similarly, we obtain
  \begin{align*}
  \big\|\langle \nabla\rangle^s \Phi(u)\big\||_{S^0([0,T_{\rm LWP}])}\leq &C\|u_0\|_{H^s}+C\big\|\langle \nabla\rangle^s(|u|^\frac4du)
  \big\|_{L_{t,x}^{\frac{2(d+2)}{d+4}}([0,T_{\rm LWP}]\times\R^d)}\\
  \leq &C\|u_0\|_{H^s}+C\|\langle \nabla\rangle^su\|_{L_{t,x}^{\frac{2(d+2)}{d}}([0,T_{\rm LWP}]\times\R^d)}\Big(T_{\rm LWP}^\frac{s}2\big\|\langle \nabla\rangle^s u\big\||_{S^0([0,T_{\rm LWP}])}\Big)^\frac4d\\
  \leq&C\|u_0\|_{H^s}+2^\frac4dC^{2+\frac4d}c^\frac4d\|u_0\|_{H^s}\leq2C\|u_0\|_{L_x^2}.
  \end{align*}
  Hence, $\Phi(u)\in B.$

On the other hand,  for $u, v\in B$, by Strichartz
estimate, we obtain
  \begin{align*}
  d(\Phi(u),\Phi(v))=&\big\|\Phi(u)-\Phi(v)\big\|_{L_{t,x}^{\frac{2(d+2)}{d}}([0,T_{\rm LWP}]\times\R^d)}\\
  \leq&C\big\||u|^\frac4du-|v|^\frac4dv\big\|_{L_{t,x}^{\frac{2(d+2)}{d+4}}([0,T_{\rm LWP}]\times\R^d)}\\
  \leq&C\|u-v\|_{L_{t,x}^{\frac{2(d+2)}{d}}([0,T_{\rm LWP}]\times\R^d)}\|(u,v)\|_{L_{t,x}^{\frac{2(d+2)}{d}}([0,T_{\rm LWP}]\times\R^d)}^\frac4d\\
  \leq&C\|u-v\|_{L_{t,x}^{\frac{2(d+2)}{d}}([0,T_{\rm LWP}]\times\R^d)}\Big(2T_{\rm LWP}^\frac{s}2\big\|\langle \nabla\rangle^s u\big\||_{S^0([0,T_{\rm LWP}])}\Big)^\frac4d\\
  \leq&2^\frac8dC^{2+\frac4d}c^\frac4d d(u,v)\\
  \leq&\frac12d(u,v),
  \end{align*}
  provided that $2^\frac8dC^{2+\frac4d}c^\frac4d<\tfrac12.$

  Therefore,  applying the fixed point theorem gives a unique solution
$u$ of \eqref{equ:nls} on $[0,T_{\rm LWP}]$ which satisfies the bound
\eqref{equ:uest}.

  Therefore,  applying the fixed point theorem gives a unique solution
$u$ of \eqref{equ:nls} on $[0,T_{\rm LWP}]$ which satisfies the bound
\eqref{equ:uest}.

\end{proof}

\begin{corollary}[`Modified' $H^s$-LWP]\label{cor:modlwp}
For $T^\ast\geq T_{\rm LWP}$, we denote
\begin{equation}\label{equ:mtlwp}
\tilde{T}_{\rm LWP}:=c_0\big\|\langle\nabla\rangle I_{N(T^\ast)}u_0\|_{L^2}^{-\frac2s}
\end{equation}
with $c_0$ small. By \eqref{equ2.3}, we obtain
$$\tilde{T}_{\rm LWP}\leq c_0\|u_0\|_{H^s}^{-\frac2s}.$$
This together with Lemma \ref{lem:hslwp} implies that \eqref{equ:nls} is well-posedness on $[0,\tilde{T}_{\rm LWP}]$, and
\begin{align}\label{equ:uest123}
\|u\|_{S^0([0,\tilde T_{\rm LWP}])}\leq 2C\|u_0\|_{L_x^2},&~\big\|\langle \nabla\rangle^s u\big\||_{S^0([0,\tilde T_{\rm LWP}])}\leq 2C\|u_0\|_{H^s}.
\end{align}
Moreover, if
 $\frac{1}{1+\min\{1,\frac4d\}}<s<1,~~\frac{4s}{\min\{4,d\}s-4(1-s)}<\frac1{\beta}$, then there holds
\begin{equation}\label{equ:mlwpes}
\big\|\langle\nabla\rangle I_Nu\big\|_{S^0([0,\tilde{T}_{\rm LWP}])}\lesssim\|I_Nu\|_{H^1}.
\end{equation}
\end{corollary}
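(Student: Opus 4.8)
The plan splits into two parts. The first assertion---well-posedness on $[0,\tilde T_{\rm LWP}]$ together with the Strichartz bounds \eqref{equ:uest123}---is immediate and is essentially already recorded in the statement: by \eqref{equ2.3} one has $\|u_0\|_{H^s}\lesssim\|\langle\nabla\rangle I_{N(T^\ast)}u_0\|_{L^2}$, so $\tilde T_{\rm LWP}\le c_0\|u_0\|_{H^s}^{-2/s}\le T_{\rm LWP}$ once $c_0$ is chosen small relative to the constant $c$ of Lemma \ref{lem:hslwp}; hence Lemma \ref{lem:hslwp} applies on $[0,T_{\rm LWP}]\supseteq[0,\tilde T_{\rm LWP}]$ and \eqref{equ:uest123} is simply its restriction. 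The real content is the gain \eqref{equ:mlwpes}.

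For \eqref{equ:mlwpes} I would run a continuity/bootstrap argument. Write $N=N(T^\ast)$, $I=[0,\tilde T_{\rm LWP}]$, $A:=\|\langle\nabla\rangle I_Nu_0\|_{L^2}\sim\|I_Nu_0\|_{H^1}$ and $X(I):=\|\langle\nabla\rangle I_Nu\|_{S^0(I)}$. Applying the Fourier multiplier $\langle\nabla\rangle I_N$ (which commutes with $e^{it\Delta}$) to the Duhamel formula \eqref{inte1} with $t_0=0$, then the Strichartz estimate of Lemma \ref{lem22} with $s=0$ and the nonlinear bound \eqref{equ:nainfu} of Lemma \ref{commutor}, yields
\begin{equation*}
X(I)\le CA+C\big(|I|^{\frac{2s}d}+N^{-\min\{1,\frac4d\}s_+}\big)X(I)^{1+\frac4d}.
\end{equation*}
On the ball $X(I)\le 2CA$ the nonlinear term splits into two pieces that are small for independent reasons.

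The first piece is the time factor, and the exponent $-2/s$ in the definition of $\tilde T_{\rm LWP}$ is tailored precisely for it: since $|I|=\tilde T_{\rm LWP}=c_0A^{-2/s}$ (up to a fixed constant),
\begin{equation*}
|I|^{\frac{2s}d}X(I)^{\frac4d}\le\big(c_0A^{-2/s}\big)^{\frac{2s}d}(2CA)^{\frac4d}=(2C)^{\frac4d}c_0^{\frac{2s}d},
\end{equation*}
which is $\ll1$ once $c_0$ is small, regardless of the size of $A$. The second piece, $N^{-\min\{1,\frac4d\}s_+}X(I)^{\frac4d}$, forces one to control $A$ by a small power of $N$: combining \eqref{equ2.3}, the identity $\|u_0\|_{H^s}\sim\lambda(0)^{-s}$ of Remark \ref{rem:uths}, the almost monotonicity \eqref{equ:almons} (giving $\lambda(T^\ast)\lesssim\lambda(0)$), and $N=N(T^\ast)=\lambda(T^\ast)^{-1/\beta}$, one gets $A\lesssim N^{1-s}\|u_0\|_{H^s}\lesssim N^{1-s}\lambda(T^\ast)^{-s}=N^{1-s+\beta s}$, hence
\begin{equation*}
N^{-\min\{1,\frac4d\}s_+}X(I)^{\frac4d}\lesssim N^{-\min\{1,\frac4d\}s_++\frac4d(1-s+\beta s)}.
\end{equation*}
A numerical check shows that, given the lower bound on $s$, the hypotheses $s>\frac1{1+\min\{1,\frac4d\}}$ and $\frac{4s}{\min\{4,d\}s-4(1-s)}<\frac1\beta$ are exactly what makes $\frac4d(1-s+\beta s)<\min\{1,\frac4d\}s$, i.e., makes this exponent negative; since $N=N(T^\ast)\gg1$ by \eqref{equ:ltass}, the term is $\ll1$ too. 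Feeding both bounds back, on $X(I)\le 2CA$ we obtain $X(I)\le CA+\tfrac14\cdot2CA<2CA$; since $t\mapsto X([0,t])$ is continuous and vanishes at $t=0$, the usual continuity argument upgrades this to $X(I)\lesssim A\sim\|I_Nu\|_{H^1}$ on all of $[0,\tilde T_{\rm LWP}]$, which is \eqref{equ:mlwpes}.

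I expect the only genuinely delicate point to be this frequency piece: it rests on the commutator-type nonlinear estimate \eqref{equ:nainfu}, whose validity already requires $s>\frac1{1+\min\{1,\frac4d\}}$, and it is the competition between the gain $N^{-\min\{1,\frac4d\}s_+}$ and the loss $A^{4/d}\lesssim N^{\frac4d(1-s+\beta s)}$ that pins down the constraint $\frac1\beta>\frac{4s}{\min\{4,d\}s-4(1-s)}$ recorded in Remark \ref{rem:beta}. Everything else---the comparison $\tilde T_{\rm LWP}\le T_{\rm LWP}$, the Duhamel/Strichartz set-up, and the time-factor bound---is routine.
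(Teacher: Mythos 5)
Your proof is correct and follows essentially the same route as the paper: the same Strichartz estimate and commutator bound \eqref{equ:nainfu} give the contraction inequality, the same chain of comparisons produces $\|I_{N(T^\ast)}u_0\|_{H^1}\lesssim N(T^\ast)^{s\beta+1-s}$, and the same constraint on $\beta$ closes the bootstrap. The only cosmetic difference is that the paper absorbs the frequency prefactor $N(T^\ast)^{-\min\{1,4/d\}s_+}$ into the time prefactor $\tilde T_{\rm LWP}^{2s/d}$ before invoking the continuity argument, whereas you keep the two small factors separate and handle each on its own (and, minor slip, $X([0,t])$ equals $\|\langle\nabla\rangle I_Nu_0\|_{L^2}$ rather than $0$ at $t=0$, though this does not affect the continuity argument since the starting value is still strictly below $2CA$).
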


\begin{proof}
First, we have by Strichartz estimate and \eqref{equ:nainfu}
\begin{align}\nonumber
\big\|\langle\nabla\rangle I_Nu\big\|_{S^0([0,\tilde{T}_{\rm LWP}])}\leq&C\big\|I_Nu_0\big\|_{H^1}+C\big\|\langle\nabla\rangle I_N(|u|^\frac4du)\big\|_{N^0([0,\tilde{T}_{\rm LWP}])}\\\nonumber
\leq&C\big\|I_Nu_0\big\|_{H^1}+C\tilde{T}_{\rm LWP}^\frac{2s}d\big\|\langle\nabla\rangle I_Nu\big\|_{S^0([0,\tilde{T}_{\rm LWP}])}^{1+\frac4d}\\\label{equ:cnt}
&+CN(T^\ast)^{-\min\{1,\frac4d\}s+\var}\big\|\langle\nabla\rangle I_Nu\big\|_{S^0([0,\tilde{T}_{\rm LWP}])}^{1+\frac4d}
\end{align}
for any $\var>0$ sufficiently small. Using \eqref{equ:ntass}, monotonicity \eqref{equ:almons}, Remark \ref{rem:uths}: \eqref{equ:uthslam} and \eqref{equ2.3}, we get
\begin{align*}
N(T^\ast)^{-1}=&\lambda(T^\ast)^{\frac1\beta}\lesssim\lambda(0)^{\frac1\beta}
\lesssim\|u_0\|_{H^s}^{-\frac1{s\beta}}\\
\lesssim&\big(N(T^\ast)^{s-1}\|I_{N(T^\ast)}u_0\|_{H^1}\big)^{-\frac1{s\beta}}
\end{align*}
and so
$$N(T^\ast)^{-1}\lesssim \|I_{N(T^\ast)}u_0\|_{H^1}^{-\frac1{s\beta+1-s}}.$$
Hence, from $\frac{4s}{\min\{4,d\}s-4(1-s)}<\frac1{\beta}$, we know $\frac{4(s\beta+1-s)}{d}-\min\{1,\frac4d\}s<0$ and
\begin{align*}
N(T^\ast)^{-\min\{1,\frac4d\}s+\var}=&N(T^\ast)^{-\min\{1,\frac4d\}s+\var+\frac{4(s\beta+1-s)}{d}}N(T^\ast)^{-\frac{4(s\beta+1-s)}{d}}\\
\lesssim&\|I_{N(T^\ast)}u_0\|_{H^1}^{-\frac4d}\sim \tilde{T}_{\rm LWP}^\frac{2s}d.
\end{align*}
Plugging this into \eqref{equ:cnt} implies
$$\big\|\langle\nabla\rangle I_Nu\big\|_{S^0([0,\tilde{T}_{\rm LWP}])}\leq C\big\|I_Nu_0\big\|_{H^1}+C\tilde{T}_{\rm LWP}^\frac{2s}d\big\|\langle\nabla\rangle I_Nu\big\|_{S^0([0,\tilde{T}_{\rm LWP}])}^{1+\frac4d}.$$
Therefore, \eqref{equ:mlwpes} follows from standard continuous argument.

\end{proof}

\begin{remark}
Here the restriction on $s$ is different from \cite{VZ07}.
\end{remark}

As a direct application of $H^s$-LWP, we can control the number of LWP intervals covering the interval $[t_k,t_{k+1}]$ as follows.

\begin{lemma} Let  $\frac{1}{1+\min\{1,\frac4d\}}<s<1,~~\frac{4s}{\min\{4,d\}s-4(1-s)}<\frac1{\beta}$.
Let $\{t_k\}_{k_0\leq k\leq k^+}$ be defined as in \eqref{equ:ltk}, and $T^\ast\geq t_{k+1}$. We cover the interval $[t_k,t_{k+1}]$ by LWP time interval $\{\tau_{k}^j\}_{1\leq j\leq J_k}$ given by Corollary \ref{cor:modlwp}. Then, we have
\begin{equation}\label{equ:jk}
J_k\lesssim kN(T^\ast)^\frac{2(1-s)}s.
\end{equation}
\end{lemma}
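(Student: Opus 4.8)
The plan is to estimate the length of each LWP interval $\tau_k^j$ from below using the ``modified'' local well-posedness of Corollary \ref{cor:modlwp}, and then sum up. On $[t_k,t_{k+1}]$ the scaling parameter satisfies $\lambda(t)\sim 2^{-k}$ by \eqref{equ:ltk} together with the almost-monotonicity \eqref{equ:almons}, so $N(t)=\lambda(t)^{-1/\beta}\sim 2^{k/\beta}$; in particular $N(t)\sim N(t_k)$ throughout, and we may replace $N(T^\ast)$ by $N(t_k)$ up to harmless constants, since the multiplier $I_{N(T^\ast)}$ is comparable to $I_{N(t_k)}$ on the relevant frequency ranges (alternatively, one simply observes $N(T^\ast)\gtrsim N(t_k)$ and runs the LWP on the scale dictated by $N(t_k)$). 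The key point from \eqref{equ:mtlwp} is that the $j$-th LWP interval has length
$$
|\tau_k^j|\;\sim\;\tilde T_{\rm LWP}\;=\;c_0\big\|\langle\nabla\rangle I_{N(t_k)}u(\cdot)\big\|_{L^2}^{-\frac2s},
$$
evaluated at the left endpoint of $\tau_k^j$, provided we know an upper bound for $\|\langle\nabla\rangle I_{N(t_k)}u\|_{L^2}$ at that time.

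First I would produce such an upper bound. Rescaling the geometrical decomposition \eqref{equ:decass} and using \eqref{equ:scaio}, on $[t_k,t_{k+1}]$ we have, schematically,
$$
\big\|\langle\nabla\rangle I_{N(t)}u(t)\big\|_{L^2}\;\lesssim\;\frac1{\lambda(t)}\Big(\big\|\nabla I_{N(t)\lambda(t)}\big(Q_{b(t)}+\var(t)\big)\big\|_{L^2}+\big\|Q_{b(t)}+\var(t)\big\|_{L^2}\Big).
$$
The soliton part $Q_{b(t)}$ is smooth with fast decay (see \eqref{equ:qbclsq}), so $\|\nabla I_{N(t)\lambda(t)}Q_{b(t)}\|_{L^2}\lesssim\|\nabla Q_{b(t)}\|_{L^2}\lesssim1$; the $\var$-part is controlled by the bootstrap hypothesis \eqref{equ:vartass}, which gives $\|\nabla I_{N(t)\lambda(t)}\var(t)\|_{L^2}\leq\Gamma_{b(t)}^{1/8}\ll1$. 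Hence $\|\langle\nabla\rangle I_{N(t)}u(t)\|_{L^2}\lesssim \lambda(t)^{-1}\sim 2^k$ on all of $[t_k,t_{k+1}]$, and therefore
$$
|\tau_k^j|\;\gtrsim\;\big(2^{k}\big)^{-\frac2s}\;=\;\lambda(t_k)^{\frac2s}\;\sim\;2^{-\frac{2k}s}.
$$

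Next, I would convert this into a bound on $J_k$. By the bootstrap control of the doubling time \eqref{equ:dotime} (or its improved version \eqref{equ:tktk-1}), the interval $[t_k,t_{k+1}]$ has length $t_{k+1}-t_k\leq k\lambda(t_k)^2\sim k\,2^{-2k}$. Since the $\tau_k^j$ are essentially disjoint subintervals covering $[t_k,t_{k+1}]$, each of length $\gtrsim 2^{-2k/s}$, we get
$$
J_k\;\lesssim\;\frac{t_{k+1}-t_k}{\min_j|\tau_k^j|}\;\lesssim\;\frac{k\,2^{-2k}}{2^{-2k/s}}\;=\;k\,2^{2k(\frac1s-1)}\;=\;k\,2^{\frac{2k(1-s)}s}\;\sim\;k\,\lambda(t_k)^{-\frac{2(1-s)}s}\;\sim\;k\,N(t_k)^{\frac{2\beta(1-s)}s},
$$
and since $N(t_k)^\beta=\lambda(t_k)^{-1}\sim 2^k\lesssim N(T^\ast)^\beta$ we may crudely bound $N(t_k)^{\beta}\leq N(T^\ast)^{\beta}$...

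Actually the cleanest bookkeeping is to keep everything in terms of $\lambda(t_k)$: writing $N(T^\ast)^{-1}=\lambda(T^\ast)^{1/\beta}$ and using \eqref{equ:almons} to compare $\lambda(T^\ast)$ with $\lambda(t_k)$ is \emph{not} what we want here (it goes the wrong way); instead one simply notes $N(T^\ast)\geq N(t_k)$ is false in general, so the honest statement is $J_k\lesssim k\,\lambda(t_k)^{-2(1-s)/s}=k\,N(t_k)^{2\beta(1-s)/s}$, and one then \emph{defines} the covering using the scale $N(t_k)$ rather than $N(T^\ast)$, at which point \eqref{equ:jk} reads with $N(T^\ast)$ replaced by $N(t_k)$; alternatively, if the covering is built with $I_{N(T^\ast)}$ throughout, then since $\|\langle\nabla\rangle I_{N(T^\ast)}u(t)\|_{L^2}\lesssim\|\langle\nabla\rangle I_{N(t)}u(t)\|_{L^2}\lesssim\lambda(t)^{-1}$ still holds on $[t_k,t_{k+1}]$ (the extra high frequencies only make the multiplier smaller), one obtains exactly $J_k\lesssim k\,N(T^\ast)^{2(1-s)/s\cdot\beta}\cdot(\text{stuff})$ — and matching this to the claimed $k\,N(T^\ast)^{2(1-s)/s}$ forces the identification via $N(T^\ast)^\beta\sim\lambda(T^\ast)^{-1}$ only when $T^\ast=t_{k+1}$, which is precisely the hypothesis $T^\ast\geq t_{k+1}$ being used at its critical value.

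\textbf{Main obstacle.} The delicate point is the uniform upper bound $\|\langle\nabla\rangle I_{N}u(t)\|_{L^2}\lesssim\lambda(t)^{-1}$ across the whole doubling interval: it relies on rescaling the bootstrap estimate \eqref{equ:vartass} correctly under the scaling identity \eqref{equ:scaio} (so that $I_{N(t)}$ at the physical scale becomes $I_{N(t)\lambda(t)}$ at the self-similar scale, with $N(t)\lambda(t)=\lambda(t)^{-(1-\beta)/\beta}\gg1$ by \eqref{equ:nlamd}), and on the fact that on $[t_k,t_{k+1}]$ the parameter $\lambda(t)$ varies only by a bounded factor so that a single LWP scale works for the whole interval. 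Once that bound is in hand, and the doubling-time control \eqref{equ:dotime} is invoked, the counting of $J_k$ is a one-line division; the role of the hypothesis $\frac{4s}{\min\{4,d\}s-4(1-s)}<\frac1\beta$ is exactly to guarantee (via Corollary \ref{cor:modlwp}) that on each such $\tau_k^j$ the modified solution stays in the good $H^1$-type space, so that $\tilde T_{\rm LWP}$ is a genuine lower bound for the interval length and not merely for a continuation time.
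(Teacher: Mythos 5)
There is a genuine gap in your argument, and it is precisely the inequality direction you invoke to pass from $I_{N(t)}$ to $I_{N(T^\ast)}$. You claim that $\|\langle\nabla\rangle I_{N(T^\ast)}u(t)\|_{L^2}\lesssim\|\langle\nabla\rangle I_{N(t)}u(t)\|_{L^2}$ because ``the extra high frequencies only make the multiplier smaller.'' This is backwards. By almost monotonicity \eqref{equ:almons}, $\lambda(T^\ast)\lesssim\lambda(t)$ for $t\leq T^\ast$, so $N(T^\ast)\gtrsim N(t)$, and for $s<1$ the symbol $m_N(\xi)=\min\{1,(|\xi|/N)^{s-1}\}$ is \emph{increasing} in $N$ at each fixed $\xi$; thus $I_{N(T^\ast)}$ is \emph{closer} to the identity than $I_{N(t)}$, and $\|\langle\nabla\rangle I_{N(T^\ast)}u\|_{L^2}\geq\|\langle\nabla\rangle I_{N(t)}u\|_{L^2}$, not the other way. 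Consequently your sharp bound $\|\langle\nabla\rangle I_{N(T^\ast)}u(t)\|_{L^2}\lesssim\lambda(t)^{-1}$ does not follow from the bootstrap \eqref{equ:vartass}, which only controls $\|\nabla I_{N(t)\lambda(t)}\varepsilon(t)\|_{L^2}$, and the subsequent estimate $|\tau_k^j|\gtrsim\lambda(t_k)^{2/s}$ is unjustified. The hedging at the end (redefining the covering, or matching only at $T^\ast=t_{k+1}$) does not repair this; the lemma is stated with $I_{N(T^\ast)}$ and a general $T^\ast\geq t_{k+1}$, and one must price in the mismatch between $N(T^\ast)$ and $N(\tau_k^j)$.

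The paper's argument avoids all of this by accepting a \emph{cruder} bound: combining \eqref{equ:mtlwp} with \eqref{equ2.3} and Remark \ref{rem:uths} gives directly
\begin{equation*}
\tau_k^{j+1}-\tau_k^j\;\gtrsim\;\big\|\langle\nabla\rangle I_{N(T^\ast)}u(\tau_k^j)\big\|_{L^2}^{-\frac2s}\;\gtrsim\;\Big(N(T^\ast)^{1-s}\|u(\tau_k^j)\|_{H^s}\Big)^{-\frac2s}\;\sim\;N(T^\ast)^{-\frac{2(1-s)}s}\lambda(t_k)^2,
\end{equation*}
where the last step uses $\|u(t)\|_{H^s}\sim\lambda(t)^{-s}$ and $\lambda(\tau_k^j)\sim\lambda(t_k)$. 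Dividing the doubling-time bound \eqref{equ:dotime} by this yields \eqref{equ:jk} in one line, with the $N(T^\ast)^{1-s}$ factor appearing naturally and not requiring any comparison of multipliers at different scales. In other words: the bound you were aiming for is the one obtained by setting $N(T^\ast)=N(t_k)$, but the lemma must hold for the larger $N(T^\ast)$, and the only clean way to carry that larger $N$ through is via the universal estimate $\|\langle\nabla\rangle I_N f\|_{L^2}\lesssim N^{1-s}\|f\|_{H^s}$, not via the rescaled bootstrap quantity, which is blind to frequencies between $N(t)\lambda(t)$ and $N(T^\ast)\lambda(t)$.
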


\begin{proof}
First, it follows from \eqref{equ:tlwp} that
\begin{equation}\label{equ:taukk-1}
\tau_k^{j+1}-\tau_k^j\sim\frac1{\big\|\langle\nabla\rangle I_{N(T^\ast)}u(\tau_k^j)\|_{L^2}^\frac2s}\gtrsim\Big(\frac1{N(T^\ast)^{1-s}\|u(\tau_k^j)\|_{H^s}}\Big)^\frac2s.
\end{equation}
This together with Remark \ref{rem:uths}: \eqref{equ:uthslam} and the almost monotonicity \eqref{equ:almons} implies
\begin{equation}\label{equ:taukktao}
\tau_k^{j+1}-\tau_k^j\gtrsim\frac1{N(T^\ast)^\frac{2(1-s)}{s}}
\lambda(\tau_k^j)^2\sim\frac1{N(T^\ast)^\frac{2(1-s)}{s}}\lambda(t_k)^2.
\end{equation}
And so \eqref{equ:jk} follows from the control of the blowup speed \eqref{equ:dotime}.

\end{proof}

\begin{lemma}\label{lem:xianchafa}
 Let $[\tau_k^j,\tau_k^{j+1}]$ be a LWP time interval as given by  Corollary \ref{cor:modlwp}. Then,
there holds:
\begin{align}\label{equ:xianchafa}
&\Big\|I_{N(t)}(|u|^\frac4du)-I_{N(t)}u|I_{N(t)}u|^\frac4d\Big\|_{L_t^2L_x^\frac{2d}{d+2}([\tau_k^j,\tau_k^{j+1}]\times\R^d)}\\\nonumber
\lesssim& \lambda(t_k)^{\frac1\beta\min\{1,\frac4d\}s_-}\big\|\langle\nabla\rangle I_Nu\big\|_{L_t^\infty L_x^2([\tau_k^j,\tau_k^{j+1}]\times\R^d)}^\frac4d.
\end{align}

\end{lemma}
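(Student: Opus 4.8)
\textbf{Proof plan for Lemma \ref{lem:xianchafa}.}
The target is a commutator estimate on a single LWP interval $[\tau_k^j,\tau_k^{j+1}]$, and the natural tool is the robust commutator bound \eqref{equ:inuf} from Lemma \ref{commutor}, applied not with the fixed multiplier $I_N$ but with the \emph{time-dependent} multiplier $I_{N(t)}$. The plan is to first observe that on the LWP interval $N(t)$ is essentially constant: by the definition \eqref{equ:ntass} and the almost-monotonicity \eqref{equ:almons} (together with the fact that $\lambda$ only varies by a bounded factor across a doubling time, hence much less across a single LWP subinterval), one has $N(t)\sim N(t_k)\sim 2^{k/\beta}$, and more precisely $\lambda(t)\sim\lambda(t_k)=2^{-k}$ on $[\tau_k^j,\tau_k^{j+1}]$. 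Thus I can freeze the multiplier at $N=N(t_k)$, up to harmless errors, and reduce matters to \eqref{equ:inuf} with this fixed $N$.

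Next, I would apply \eqref{equ:inuf} directly: with $I=[\tau_k^j,\tau_k^{j+1}]$,
$$\big\|\nabla I_{N(t_k)}(|u|^\frac4du)-(I_{N(t_k)}\nabla u)|u|^\frac4d\big\|_{L_t^2L_x^\frac{2d}{d+2}(I\times\R^d)}\lesssim N(t_k)^{-\min\{1,\frac4d\}s_+}\|\langle\nabla\rangle I_{N(t_k)}u\|_{S^0(I)}^{1+\frac4d}.$$
Since $N(t_k)=(1/\lambda(t_k))^{1/\beta}$, the prefactor $N(t_k)^{-\min\{1,\frac4d\}s_+}$ is exactly $\lambda(t_k)^{\frac1\beta\min\{1,\frac4d\}s_-}$ — matching the claimed gain. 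Then I must pass from the statement about $\nabla I_N(|u|^\frac4du)-(I_N\nabla u)|u|^\frac4d$ to the one about $I_N(|u|^\frac4du)-I_Nu\,|I_Nu|^\frac4d$. For the $L_x^{2d/(d+2)}$-in-$x$ piece this is done by writing the difference of the two commutator forms and controlling $(I_N\nabla u)(|u|^\frac4d-|I_Nu|^\frac4d)$ and $(I_N\nabla u)|I_Nu|^\frac4d-\nabla I_Nu\cdot\text{(chain-rule terms)}$; alternatively, since the left side of \eqref{equ:xianchafa} has no derivative, it is cleaner to bound $I_N(|u|^\frac4du)-I_Nu|I_Nu|^\frac4d$ via the product/commutator rules \eqref{moser2}, \eqref{equ2.4} and the elementary inequality $\big||u|^\frac4d-|I_Nu|^\frac4d\big|\lesssim (|u|+|I_Nu|)^{\frac4d-1}|P_{>N}u|$ together with \eqref{equ2.2}, absorbing all derivative losses into $\langle\nabla\rangle I_N u$. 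In either route, Sobolev embedding and Hölder in $t$ on the short interval $I$ let me replace the $S^0(I)$ norm by the $L_t^\infty L_x^2$ norm of $\langle\nabla\rangle I_N u$ at the cost of a factor $|I|^{\theta}$ which, being bounded by $\tilde T_{\rm LWP}^{\theta}\lesssim 1$, is harmless; this produces the right-hand side of \eqref{equ:xianchafa}. Finally I restore the time dependence of the multiplier: the error between $I_{N(t)}$ and $I_{N(t_k)}$ is supported on frequencies $\sim N(t_k)$ where both multipliers agree up to constants, so it contributes a term of the same form.

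The main obstacle I anticipate is the bookkeeping around the time-dependent multiplier $I_{N(t)}$: Lemma \ref{commutor} is stated for a fixed $N$, so one has to justify carefully that replacing $N(t)$ by $N(t_k)$ on the LWP interval costs nothing — this uses that $\lambda$, hence $N$, varies by at most a bounded multiplicative factor across $[\tau_k^j,\tau_k^{j+1}]$ (a consequence of \eqref{equ:almons} and the shortness of LWP intervals relative to doubling times), so the symbols $m_{N(t)}$ and $m_{N(t_k)}$ are comparable pointwise in $\xi$ and their difference is a bounded Fourier multiplier concentrated near $|\xi|\sim N(t_k)$. A secondary technical point is that the bound \eqref{equ:inuf} controls the \emph{derivative} of the commutator, whereas \eqref{equ:xianchafa} controls the commutator itself; this is the easier direction (no derivative to lose) and follows from the fractional product rule plus Bernstein, but it must be written so that the only norm on the right is $\|\langle\nabla\rangle I_N u\|_{L_t^\infty L_x^2}$, which forces one to use \eqref{equ:nonlestque} to handle the $|u|^{4/d}$ factor in $L_t^\infty L_x^{d/2}$ and then Hölder $L_x^2\cdot L_x^{d/2}\hookrightarrow L_x^{2d/(d+2)}$.
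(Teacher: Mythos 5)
Your second, derivative-free route is essentially the paper's approach, and the parts you describe correctly match it: you split by the triangle inequality into the commutator $I_N(|u|^{4/d}u)-(I_Nu)|u|^{4/d}$ and the nonlinear remainder $(I_Nu)\big(|u|^{4/d}-|I_Nu|^{4/d}\big)$, treat the first with \eqref{equ2.4} (together with \eqref{equ:nonlestque} to handle the factor $|u|^{4/d}$ in $L_t^\infty L_x^{d/2}$), and treat the second with a pointwise H\"older-type bound plus high-frequency smallness. However, there is a genuine gap: the elementary inequality you write, $\big||u|^{4/d}-|I_Nu|^{4/d}\big|\lesssim(|u|+|I_Nu|)^{\frac4d-1}|P_{>N}u|$, is only valid when $\frac4d\geq1$, i.e.\ $d\leq4$. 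For $d\geq5$ the exponent $\frac4d-1$ is negative and the mean value theorem form of the bound fails; one must instead use $\big||u|^{4/d}-|I_Nu|^{4/d}\big|\lesssim|u-I_Nu|^{4/d}$, which is exactly what produces the H\"older factor $\|I_Nu-u\|_{L^2}^{\min\{1,4/d\}}$ and hence the decay rate $N^{-\min\{1,4/d\}s}$ in the lemma's statement. Without the $\min\{1,4/d\}$ in the pointwise step you cannot reach the claimed exponent for $d\geq5$, so this needs fixing, not just rephrasing.

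Two secondary points. First, your first route through \eqref{equ:inuf} is a detour: \eqref{equ:inuf} bounds $\nabla I_N(|u|^{4/d}u)-(I_N\nabla u)|u|^{4/d}$, which is neither the derivative-free quantity in \eqref{equ:xianchafa} nor the right polynomial of $I_Nu$, and passing between the two forms is at least as much work as the direct argument, so you should drop it. Second, the paper does not need the $|I|^\theta$ gain from H\"older-in-time that you invoke to convert $S^0(I)$ into $L_t^\infty L_x^2$; this would change the power of $\|\langle\nabla\rangle I_Nu\|_{L_t^\infty L_x^2}$ appearing on the right. Instead the remaining factor $\|I_Nu\|_{L_t^2L_x^{2d/(d-2)}}$ is controlled by $\|u\|_{S^0(I)}\lesssim\|u_0\|_{L^2}=\|u\|_{L_t^\infty L_x^2}$ from Corollary \ref{cor:modlwp} (with boundedness of $I_N$ on $L^p$), and this bounded mass factor is simply absorbed into the implicit constant; that is why \eqref{equ:xianchafa} has exactly the power $\frac4d$ and no extra factor of the $H^1$ norm.
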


\begin{proof}
 we have by triangle inequality
\begin{align*}
&\big\||I_Nu|^\frac4dI_Nu-I_N(|u|^\frac4du)\big\|_{L_t^2L_x^\frac{2d}{d+2}([\tau_k^j,\tau_k^{j+1}]\times\R^d)}\\
\lesssim&\big\|I_Nu\big(|I_Nu|^\frac4d-|u|^\frac4d\big)\big\|_{L_t^2L_x^\frac{2d}{d+2}([\tau_k^j,\tau_k^{j+1}]\times\R^d)}+\big\|(I_Nu)|u|^\frac4d-I_N(|u|^\frac4du)\big\|_{L_t^2L_x^\frac{2d}{d+2}([\tau_k^j,\tau_k^{j+1}]\times\R^d)}.
\end{align*}
Using \eqref{equ2.4} with $\nu=\min\{1,\frac4d\}s-(1-s)_-$ and \eqref{equ:nonlestque}, we estimate
\begin{align*}
&\big\|(I_Nu)|u|^\frac4d-I_N(|u|^\frac4du)\big\|_{L_t^2L_x^\frac{2d}{d+2}([\tau_k^j,\tau_k^{j+1}]\times\R^d)}\\
\lesssim&N(t_k)^{-\min\{1,\frac4d\}s_+}\|I_Nu\|_{L_t^2L_x^\frac{2d}{d-2}([\tau_k^j,\tau_k^{j+1}]\times\R^d)}\big\|\langle\nabla\rangle^{\min\{1,\frac4d\}s_-}(|u|^\frac4d)\big\|_{L_t^\infty L_x^\frac{d}2([\tau_k^j,\tau_k^{j+1}]\times\R^d)}\\
\lesssim&N(t_k)^{-\min\{1,\frac4d\}s_+}\|u\|_{L_t^\infty L_x^2}
\big\|\langle\nabla\rangle I_Nu\big\|_{L_t^\infty L_x^2([\tau_k^j,\tau_k^{j+1}]\times\R^d)}^\frac4d\\
\lesssim&\lambda(t_k)^{\frac1\beta\min\{1,\frac4d\}s_-}\big\|\langle\nabla\rangle I_Nu\big\|_{L_t^\infty L_x^2([\tau_k^j,\tau_k^{j+1}]\times\R^d)}^\frac4d.
\end{align*}
Similarly,
\begin{align*}
&\big\|I_Nu\big(|I_Nu|^\frac4d-|u|^\frac4d\big)\big\|_{L_t^2L_x^\frac{2d}{d+2}([\tau_k^j,\tau_k^{j+1}]\times\R^d)}\\
\lesssim&\|I_Nu\|_{L_t^2L_x^\frac{2d}{d-2}([\tau_k^j,\tau_k^{j+1}]\times\R^d)}\big\||I_Nu-u|^{\min\{1,\frac4d\}}(|I_Nu|+|u|)^{\frac4d-\min\{1,\frac4d\}}\big\|_{L_t^\infty L_x^\frac{d}2([\tau_k^j,\tau_k^{j+1}]\times\R^d)}\\
\lesssim&\|u\|_{L_t^\infty L_x^2}^{1+\frac4d-\min\{1,\frac4d\}}\big\|I_Nu-u\big\|_{L_t^\infty L_x^2([\tau_k^j,\tau_k^{j+1}]\times\R^d)}^{\min\{1,\frac4d\}}\\
\lesssim&   N(t_k)^{-\min\{1,\frac4d\}s}\big\|\langle\nabla\rangle I_Nu\big\|_{L_t^\infty L_x^2([\tau_k^j,\tau_k^{j+1}]\times\R^d)}^{\min\{1,\frac4d\}}\|u\|_{L_t^\infty L_x^2}^{1+\frac4d-\min\{1,\frac4d\}}\\
\lesssim&\lambda(t_k)^{\frac1\beta\min\{1,\frac4d\}s}\big\|\langle\nabla\rangle I_Nu\big\|_{L_t^\infty L_x^2([\tau_k^j,\tau_k^{j+1}]\times\R^d)}^{\frac4d}.
\end{align*}

\end{proof}

\subsection{Almost conservation law}

First, we consider the increments in the local-wellposedness time interval.

\begin{lemma}[Control of increments in LWP time interval]\label{lem:conincre} Let $\frac{1}{1+\min\{1,\frac4d\}}<s<1,~\frac{4s}{\min\{4,d\}s-4(1-s)}<\frac1{\beta}$.  Let $\{\tau_{k}^j\}_{1\leq j\leq J_k}$  be the LWP time interval given by Corollary \ref{cor:modlwp}. Then, for any $T^\ast\geq\tau_k^{j+1}$,
the modified energy has a slow increment over the LWP time interval:
\begin{equation}\label{equ:modenerlwp}
\begin{split}
&\big|E(I_{N(T^\ast)}u(\tau_{k}^{j+1}))-E(I_{N(T^\ast)}u(\tau_{k}^{j}))\big|\\ \leq& C N(T^\ast)^{-\min\{1,\frac4d\}s_+}\Big(\|I_{N(T^\ast)}u(\tau_k^j)\|_{H^1}^{2+\frac4d}+\|I_{N(T^\ast)}u(\tau_k^j)\|_{H^1}^{2+\frac8d}\Big).
\end{split}
\end{equation}Here, the constant $C$ depends only on $s$.
The modified momentum has a slow increment over the LWP time interval:
\begin{equation}\label{equ:modenerlwp123}
\big|P(I_{N(T^\ast)}u(\tau_{k}^{j+1}))-P(I_{N(T^\ast)}u(\tau_{k}^{j}))\big|\leq CN(T^\ast)^{-\min\{1,\frac4d\}s_+}\|I_{N(T^\ast)}u(\tau_k^j)\|_{H^1}^{2+\frac 4d-\frac1s}.
\end{equation}
Moreover,
\begin{equation}\label{equ:dius}
\|I_{N(T^\ast)}u(\tau_k^j)\|_{H^1}\leq\Big(\frac{N(T^\ast)}{N(\tau_k^j)}\Big)^{1-s}\frac1{\lambda(\tau_k^j)}.
\end{equation}

\end{lemma}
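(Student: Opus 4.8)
The plan is to prove the three estimates of Lemma~\ref{lem:conincre} by differentiating the modified energy $E(I_{N(T^\ast)}u)$ and the modified momentum $P(I_{N(T^\ast)}u)$ in time, using that $I_{N(T^\ast)}u$ solves a perturbed NLS with a commutator source term, and then integrating over the LWP interval $[\tau_k^j,\tau_k^{j+1}]$ using the Strichartz bounds \eqref{equ:uest123} and \eqref{equ:mlwpes} from Corollary~\ref{cor:modlwp}. Throughout I will abbreviate $N=N(T^\ast)$ and $\tau=\tau_k^j$, $\tau'=\tau_k^{j+1}$.

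\textbf{Step 1: the equation for $I_Nu$ and the energy increment.} Applying $I_N$ to \eqref{equ:nls} gives
$(i\pa_t+\Delta)I_Nu=-I_N(|u|^\frac4du)=-|I_Nu|^\frac4d I_Nu+\mathcal F$, where $\mathcal F:=|I_Nu|^\frac4d I_Nu-I_N(|u|^\frac4du)$ is the commutator controlled by Lemma~\ref{commutor} and Lemma~\ref{lem:xianchafa}. A standard computation (as in the $H^1$ theory, since for a genuine solution of the focusing NLS the energy is conserved) shows that the only contribution to $\frac{d}{dt}E(I_Nu)$ comes from $\mathcal F$:
$$\frac{d}{dt}E(I_Nu(t))=\Re\int \nabla \ov{\nabla I_Nu}\cdot\nabla\mathcal F\,dx \;-\;\Re\int |I_Nu|^\frac4d I_Nu\,\ov{\mathcal F}\,dx,$$
up to harmless constants; the precise algebraic form is the one used in all I-method arguments. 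Integrating in $t$ over $[\tau,\tau']$ and applying Cauchy--Schwarz in space-time, the first term is bounded by $\|\nabla\mathcal F\|_{L_t^2L_x^{2d/(d+2)}}\,\|\nabla I_Nu\|_{L_t^2L_x^{2d/(d-2)}}$ after pairing the two admissible Strichartz norms, and the second by a similar Hölder splitting placing $|I_Nu|^{4/d}$ in $L_t^\infty L_x^{d/2}$ and $I_Nu$ in $L_t^2L_x^{2d/(d-2)}$. For $\nabla\mathcal F$ one uses \eqref{equ:inuf}; for the lower-order piece one uses \eqref{equ:xianchafa} together with \eqref{equ:nonlestque}. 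All space-time norms on $[\tau,\tau']$ are then dominated by $\|\langle\nabla\rangle I_Nu\|_{S^0}^{1+4/d}$ or $\|\langle\nabla\rangle I_Nu\|_{S^0}^{1+8/d}$ (the higher power coming from the term quadratic in the nonlinearity), and by \eqref{equ:mlwpes} this is $\lesssim \|I_Nu_0\|_{H^1}^{1+4/d}+\|I_Nu_0\|_{H^1}^{1+8/d}$. Gathering the $N$-powers from the commutator estimates yields the gain $N^{-\min\{1,4/d\}s_+}$ and hence \eqref{equ:modenerlwp}.

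\textbf{Step 2: the momentum increment and the crude bound \eqref{equ:dius}.} For $P(I_Nu)=\Im\int\nabla I_Nu\,\ov{I_Nu}\,dx$ the same manipulation gives $\frac{d}{dt}P(I_Nu)$ equal to a bilinear pairing of $\mathcal F$ against $\nabla I_Nu$ and $I_Nu$; integrating and using Cauchy--Schwarz with one factor $I_Nu\in L_t^\infty L_x^2$ (whose norm is bounded by $\|u_0\|_{L^2}\lesssim1$, costing one fewer derivative, which is the source of the exponent $2+\frac4d-\frac1s$ after converting the $L^2$ norm of $u$ into $\lambda$ via \eqref{equ:uthslam} and \eqref{equ:mlwpes}) gives \eqref{equ:modenerlwp123}. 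Finally \eqref{equ:dius} is purely a statement about the initial time $\tau_k^j$: write $\|I_Nu(\tau_k^j)\|_{H^1}\lesssim N^{1-s}\|u(\tau_k^j)\|_{H^s}$ by \eqref{equ2.3}, use $\|u(\tau_k^j)\|_{H^s}\sim\lambda(\tau_k^j)^{-s}$ from \eqref{equ:uthslam}, and then rewrite $N^{1-s}\lambda(\tau_k^j)^{-s}=(N/N(\tau_k^j))^{1-s}\,N(\tau_k^j)^{1-s}\lambda(\tau_k^j)^{-s}$, noting $N(\tau_k^j)^{1-s}=\lambda(\tau_k^j)^{-(1-s)/\beta}\cdot$... — more simply, since $N(\tau_k^j)=\lambda(\tau_k^j)^{-1/\beta}$ one has $N(\tau_k^j)^{1-s}\lambda(\tau_k^j)^{-s}\leq \lambda(\tau_k^j)^{-1}$ is \emph{not} automatic, so instead one observes $\|I_{N(\tau_k^j)}u(\tau_k^j)\|_{H^1}\lesssim N(\tau_k^j)^{1-s}\lambda(\tau_k^j)^{-s}$ and separately that frequencies are only enlarged, giving the ratio factor $(N/N(\tau_k^j))^{1-s}$; combined with $N(\tau_k^j)^{1-s}\lambda(\tau_k^j)^{-s}\le\lambda(\tau_k^j)^{-1}$ (valid because $N(\tau_k^j)\lambda(\tau_k^j)=\lambda(\tau_k^j)^{(\beta-1)/\beta}\gg1$, so $N(\tau_k^j)^{1-s}\le\lambda(\tau_k^j)^{-(1-s)}$) one obtains \eqref{equ:dius}.

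\textbf{Main obstacle.} The delicate point is Step~1: one must extract a genuine \emph{negative} power of $N$ from the commutator, uniformly on the whole LWP interval, while the $H^1$ norm of $I_Nu$ is itself large (of size $N^{1-s}\lambda^{-s}$, diverging as the solution concentrates). The bootstrap succeeds only because the combination $N^{-\min\{1,4/d\}s_+}\cdot(\text{large }H^1\text{ norm})^{2+8/d}$ is still summable in $k$ after re-expressing everything via $\lambda(t_k)$ and using the doubling-time control \eqref{equ:dotime}; this is precisely the role of the two constraints on $\beta$ in Remark~\ref{rem:beta}. Making the pairing of Strichartz exponents consistent — so that every factor lands in an admissible pair $(q,r)\in\Lambda_0$ and the commutator estimates \eqref{equ:inuf}, \eqref{equ:nainfu}, \eqref{equ:nonlestque} apply with the stated exponents — is the main bookkeeping burden, and the restriction $\frac1{1+\min\{1,4/d\}}<s$ is exactly what makes $\nu=\min\{1,4/d\}s-(1-s)$ positive so that Lemma~\ref{commutor} is usable.
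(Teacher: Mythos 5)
Your overall strategy for the two increment estimates \eqref{equ:modenerlwp} and \eqref{equ:modenerlwp123} is essentially the right one, and it lines up with the paper: the paper simply cites Lemma 4.2 and estimate (4.5) of Visan--Zhang \cite{VZ07} for the energy increment and the gradient commutator, and then carries out the H\"older pairing for the momentum exactly as you describe, converting one factor of $\|\langle\nabla\rangle I_Nu\|_{S^0}$ into $\|I_Nu(\tau_k^j)\|_{H^1}$ and trading a half power of the interval length $|\tau_k^{j+1}-\tau_k^j|^{1/2}\lesssim\|I_Nu(\tau_k^j)\|_{H^1}^{-1/s}$ to get the exponent $2+\tfrac4d-\tfrac1s$. (Your proposed pairing of the commutator against $I_Nu\in L_t^\infty L_x^2$ is not dual to the $L_t^2L_x^{2d/(d+2)}$ norm in \eqref{equ:inuf}; the paper pairs against $\|I_Nu\|_{L_t^2L_x^{2d/(d-2)}}$ and then H\"olders in time, which is cleaner, but the final exponent is the same.)

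There is, however, a genuine gap in your proof of \eqref{equ:dius}, and it is fatal as written. You attempt to deduce it from the crude operator bound $\|I_Nu\|_{H^1}\lesssim N^{1-s}\|u\|_{H^s}$ together with $\|u(\tau_k^j)\|_{H^s}\sim\lambda(\tau_k^j)^{-s}$. This would require $N^{1-s}\lambda(\tau_k^j)^{-s}\lesssim\big(N/N(\tau_k^j)\big)^{1-s}\lambda(\tau_k^j)^{-1}$, i.e. $N(\tau_k^j)^{1-s}\leq\lambda(\tau_k^j)^{-(1-s)}$. But your own observation $N(\tau_k^j)\lambda(\tau_k^j)=\lambda(\tau_k^j)^{(\beta-1)/\beta}\gg1$ gives $N(\tau_k^j)\gg\lambda(\tau_k^j)^{-1}$, hence $N(\tau_k^j)^{1-s}\geq\lambda(\tau_k^j)^{-(1-s)}$ --- the opposite inequality. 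In fact, since $\beta<1$, the crude bound overshoots the target by a factor $\lambda(\tau_k^j)^{(1-s)(1-1/\beta)}\gg1$. The correct proof of \eqref{equ:dius} (this is (3.20) of \cite{CR09}) does not go through $\|u\|_{H^s}$ at all: one writes $\|I_N\nabla u(\tau)\|_{L^2}=\lambda(\tau)^{-1}\|I_{N\lambda(\tau)}\nabla(Q_{b(\tau)}+\varepsilon(\tau))\|_{L^2}$ using the geometrical decomposition \eqref{equ:decass} and the scaling identity \eqref{equ:scaio}, observes that the multiplier $m_N$ with $N=N(T^\ast)\geq N(\tau)$ satisfies $m_N(\xi)\leq(N/N(\tau))^{1-s}m_{N(\tau)}(\xi)$ pointwise, and then invokes the bootstrap hypothesis \eqref{equ:vartass} to control $\|I_{N(\tau)\lambda(\tau)}\nabla\varepsilon(\tau)\|_{L^2}\lesssim\Gamma_{b(\tau)}^{1/8}\ll1$. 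The key information you are missing is that the excess $\varepsilon$ has small $I$-filtered gradient, not merely bounded $H^s$ norm; without the decomposition and the bootstrap, \eqref{equ:dius} does not follow.
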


\begin{proof}
First, \eqref{equ:modenerlwp} follows from Lemma 4.2 in \cite{VZ07} and Corollary \ref{cor:modlwp}. And \eqref{equ:dius} follows from (3.20) in \cite{CR09}. Thus, we only need to show \eqref{equ:modenerlwp123}. We write $N=N(T^\ast)$ in this proof.
Note that
$${\rm Re}\int\nabla\Delta I_Nu\overline{I_Nu}=0\quad\text{and}\quad {\rm Re}\int \nabla(|I_Nu|^\frac4dI_Nu)\overline{I_Nu}=0,$$
we derive that
\begin{align*}
\frac{d}{dt}P(I_Nu)=&{\rm Im}\int \nabla\pa_tI_Nu\overline{I_Nu}+{\rm Im}\int \nabla I_Nu\overline{\pa_tI_Nu}\\
=&{\rm Re}\int\big(\nabla\Delta I_Nu+\nabla I_N(|u|^\frac4du)\big)\overline{I_Nu}-{\rm Re}\int\nabla I_Nu\big(\Delta\overline{I_Nu}+\overline{I_N(|u|^\frac4du)}\big)\\
=&2{\rm Re}\int\nabla I_N(|u|^\frac4du)\overline{I_Nu}\\
=&2{\rm Re}\int\nabla (I_N(|u|^\frac4du)-|I_Nu|^\frac4dI_Nu)\overline{I_Nu}.
\end{align*}
Hence,
\begin{align*}
&\big|P(I_{N(T^\ast)}u(\tau_{k}^{j+1}))-P(I_{N(T^\ast)}u(\tau_{k}^{j}))\big|\\
\leq&\Big|\int_{\tau_k^j}^{\tau_k^{j+1}} \frac{d}{dt}P(I_Nu)\;dt\Big|\\
\lesssim&\int_{\tau_k^j}^{\tau_k^{j+1}}\int_{\R^d} \big|\nabla (I_N(|u|^\frac4du)-|I_Nu|^\frac4dI_Nu)\overline{I_Nu}\big|\;dx\;dt\\
\lesssim&\|\nabla[I_N(|u|^\frac4du)-|I_Nu|^\frac4dI_Nu]\|_{L_t^2L_x^\frac{2d}{d+2}([\tau_k^j,\tau_k^{j+1}]\times\R^d)}
\|I_Nu\|_{L_t^2L_x^\frac{2d}{d-2}([\tau_k^j,\tau_k^{j+1}]\times\R^d)}.
\end{align*}
 It follows from (4.5) in \cite{VZ07} that
\begin{align*}
\big\|\nabla\big[I_N(|u|^\frac4du)-|I_Nu|^\frac4dI_Nu\big]\big\|_{L_t^2L_x^\frac{2d}{d+2}([\tau_k^j,\tau_k^{j+1}]\times\R^d)}\lesssim
N^{-\min\{1,\frac 4d\}s_+}\|\langle\nabla\rangle I_N
u\|_{S^0([\tau_k^j,\tau_k^{j+1}])}^{1+\frac 4d},
\end{align*}
this together with H\"older's inequality implies that
\begin{align*}
&\big|P(I_{N(T^\ast)}u(\tau_{k}^{j+1}))-P(I_{N(T^\ast)}u(\tau_{k}^{j}))\big|\\
\lesssim&N^{-\min\{1,\frac 4d\}s_+}\|\langle\nabla\rangle I_N u\|_{S^0([\tau_k^j,\tau_k^{j+1}])}^{2+\frac 4d}|\tau_k^j-\tau_k^{j+1}|^\frac12\\
\lesssim&N^{-\min\{1,\frac 4d\}s_+}\|\langle\nabla\rangle I_N
u(\tau_k^j)\|_{L^2}^{2+\frac 4d-\frac1s}.
\end{align*}

\end{proof}

Next, we consider the initial data.

\begin{lemma}\label{lem:einit}
Let
\begin{equation}\label{equ:thetadef}
\Xi(t):=\frac{\lambda(t)^2}2\int\big(|\nabla G(0)|^2-|\nabla I_{N(t)}G(0)|^2\big)\;dx.
\end{equation}
Then, we have for $t\in[0,T^+],$
\begin{align}\label{equ:eint0}
\Big|E(I_{N(t)}u(0))+\frac{\Xi(t)}{\lambda(t)^2}\Big|\lesssim&N(t)^{2(1-s)}+\frac1{\lambda(t)^{2-\frac{1-\beta}{\beta}\frac2{d+2}}},\\\label{equ:pu0}
\big|P(I_{N(t)}u(0))\big|\lesssim&N(t)^{1-s}+\frac1{\lambda(t)^{1-\frac{1-\beta}{2\beta}}}.
\end{align}

\end{lemma}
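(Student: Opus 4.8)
The plan is to substitute the splitting $u(0)=G(0)+H(0)$ everywhere, pull out the term $-\Xi(t)/\lambda(t)^2$ by a direct computation, and then estimate the remaining pieces according to their nature: the conserved quantities of the smooth part $G(0)$, the high‑frequency truncation error on $G(0)$, and the contributions of the rough part $H(0)$. Write $N=N(t)$, $G=G(0)$, $H=H(0)$ and $p+1=\tfrac{2(d+2)}d$, so that $E(f)=\tfrac12\int|\nabla f|^2-\tfrac d{2(d+2)}\int|f|^{p+1}$. I will use repeatedly the following elementary facts: from \eqref{equ:g0x}, \eqref{equ:qbclsq}, \eqref{equ:g0small} and a rescaling (which is where $p+1=\tfrac{2(d+2)}d$ enters), $\|\nabla G\|_{L^2}\sim\lambda(0)^{-1}$, $\|G\|_{L^2}\sim1$ and $\|G\|_{L^{p+1}}\sim\lambda(0)^{-d/(d+2)}$; from \eqref{equ2.3} and \eqref{equ:h0hs}, $\|I_NH\|_{H^1}\lesssim N^{1-s}\|H\|_{H^s}\le N^{1-s}\lambda(0)^{10}$ and $\|I_NH\|_{L^2}\le\|H\|_{L^2}\le\lambda(0)^{10}$; and $\lambda(0)\ll1$, $\lambda(0)\gtrsim\lambda(t)$ by \eqref{equ:b0l0} and the almost‑monotonicity \eqref{equ:almons}. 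Expanding the kinetic energy with $I_Nu=I_NG+I_NH$ and then writing $I_NG=G+(I_N-1)G$, one recognises $\tfrac12\int|\nabla I_NG|^2=\tfrac12\int|\nabla G|^2-\Xi(t)/\lambda(t)^2$ straight from \eqref{equ:thetadef}, which gives the identity
\begin{align*}
E(I_Nu)+\frac{\Xi(t)}{\lambda(t)^2}=&\ E(G)-\frac d{2(d+2)}\int\big(|I_Nu|^{p+1}-|G|^{p+1}\big)\\
&\ +\,{\rm Re}\int\nabla I_NG\cdot\overline{\nabla I_NH}+\frac12\int|\nabla I_NH|^2 .
\end{align*}

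Of the four terms on the right, $E(G)$ is bounded by \eqref{equ:eg0}, hence $|E(G)|\lesssim\lambda(t)^{-1/2}$, which for the admissible range of $(s,\beta)$ is dominated by the right‑hand side of \eqref{equ:eint0} (compare exponents, recalling $N=\lambda(t)^{-1/\beta}$); the two terms carrying $H$ are bounded, by Cauchy--Schwarz and the facts above, by $\lambda(0)^{-1}N^{1-s}\lambda(0)^{10}+N^{2(1-s)}\lambda(0)^{20}\lesssim N^{2(1-s)}$. The heart of the matter is the nonlinear difference, which I split as $|I_Nu|^{p+1}-|G|^{p+1}=(|I_Nu|^{p+1}-|I_NG|^{p+1})+(|I_NG|^{p+1}-|G|^{p+1})$. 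For the second piece, the pointwise bound $\big||I_NG|^{p+1}-|G|^{p+1}\big|\lesssim(|G|^p+|(I_N-1)G|^p)|(I_N-1)G|$ and Hölder reduce it to $\|G\|_{L^{p+1}}^p\|(I_N-1)G\|_{L^{p+1}}+\|(I_N-1)G\|_{L^{p+1}}^{p+1}$; now $(I_N-1)G$ has Fourier support in $\{|\xi|>N\}$, so by the Sobolev embedding $\dot H^{d/(d+2)}\hookrightarrow L^{p+1}$ and then Bernstein (trading the remaining $\tfrac2{d+2}$ derivatives for $N^{-2/(d+2)}$) one gets $\|(I_N-1)G\|_{L^{p+1}}\lesssim N^{-2/(d+2)}\|\nabla G\|_{L^2}\sim N^{-2/(d+2)}\lambda(0)^{-1}$; combining with $\|G\|_{L^{p+1}}^p\sim\lambda(0)^{-(d+4)/(d+2)}$ and using $N=\lambda(t)^{-1/\beta}$, $\lambda(0)\gtrsim\lambda(t)$ yields precisely
\[
\Big|\int\big(|I_NG|^{p+1}-|G|^{p+1}\big)\Big|\lesssim N^{-\frac2{d+2}}\lambda(0)^{-2-\frac2{d+2}}\lesssim\lambda(t)^{-2+\frac2{d+2}(\frac1\beta-1)}=\frac1{\lambda(t)^{\,2-\frac{1-\beta}{\beta}\frac2{d+2}}},
\]
the $\|(I_N-1)G\|_{L^{p+1}}^{p+1}$ term being strictly smaller. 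For the first piece, $\big||I_Nu|^{p+1}-|I_NG|^{p+1}\big|\lesssim(|I_NG|^p+|I_NH|^p)|I_NH|$, Hölder, the $L^{p+1}$‑boundedness of $I_N$ (Lemma \ref{highcont}), the Gagliardo--Nirenberg bound $\|I_NH\|_{L^{p+1}}\lesssim\|\nabla I_NH\|_{L^2}^{d/(d+2)}\|I_NH\|_{L^2}^{2/(d+2)}\lesssim N^{(1-s)d/(d+2)}\lambda(0)^{10}$, together with $\lambda(0)\ll1$, show this is $\lesssim N^{2(1-s)}$. Adding the four contributions proves \eqref{equ:eint0}.

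For \eqref{equ:pu0} I would run the same expansion with $P(f)={\rm Im}\int\nabla f\,\overline f$, giving $P(I_Nu)=P(G)+\big(P(I_NG)-P(G)\big)+{\rm Im}\int\nabla I_NG\,\overline{I_NH}+{\rm Im}\int\nabla I_NH\,\overline{I_NG}+P(I_NH)$. Here $P(G)$ is controlled by \eqref{equ:gg0}; after an integration by parts the truncation error is $|P(I_NG)-P(G)|\lesssim\|(I_N-1)G\|_{L^2}\|\nabla G\|_{L^2}\lesssim N^{-1}\lambda(0)^{-2}\lesssim\lambda(t)^{1/\beta-2}\le\lambda(t)^{-(1-\frac{1-\beta}{2\beta})}$ (the last inequality since $\beta\le1$), and the three $H$‑terms are $\lesssim\lambda(0)^{9}+N^{1-s}\lambda(0)^{10}\lesssim N^{1-s}$ using $\|G\|_{L^2}\sim1$ and $\lambda(0)\ll1$. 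The only new point relative to the energy case is that the momentum density carries a single derivative, so one gains a full power of $N$ off the smooth part, producing the exponent $1-\frac{1-\beta}{2\beta}$.

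The step I expect to be the main obstacle is the nonlinear‑difference estimate for $G$: the high‑frequency tail $(I_N-1)G$ must be measured in exactly the $L^{p+1}$ norm that the nonlinearity imposes, so that it pairs correctly with the $\lambda(0)$‑scaling of $\|G\|_{L^{p+1}}^p$, and the sharp power of $N$ must then be extracted via Sobolev embedding plus Bernstein; any slack there would spoil the exact exponent $2-\frac{1-\beta}{\beta}\frac2{d+2}$ in \eqref{equ:eint0}. Everything else is soft, driven by the strong smallness $\|H(0)\|_{H^s}\le\lambda(0)^{10}$ and $\lambda(0)\ll1$.
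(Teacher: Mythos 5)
Your proposal is correct and follows essentially the same route as the paper's proof: split $u(0)=G(0)+H(0)$, isolate the $\Xi(t)/\lambda(t)^2$ term through the kinetic part, bound the high-frequency tail $(I_N-\mathrm{Id})G(0)$ by Sobolev embedding plus Bernstein (paying $N^{-2/(d+2)}$ in the energy case and $N^{-1/2}$ respectively $N^{-1}$ in the momentum case), and use the strong smallness $\|H(0)\|_{H^s}\le\lambda(0)^{10}$ to make all cross terms negligible. The only organizational differences are that you expand around $E(G(0))$ and $P(G(0))$ directly (the paper first isolates $E(I_NG(0))+\Xi/\lambda^2$ and $E(I_Nu(0))-E(I_NG(0))$ and similarly for $P$), and for the momentum truncation error you integrate by parts to pair $L^2\times\dot H^1$ instead of the paper's $\dot H^{1/2}\times\dot H^{1/2}$, which yields a slightly different but still admissible exponent; both yield the stated bounds.
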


\begin{proof}
Note that $u_0=G(0)+H(0)$, we have
\begin{align}\label{equ:etnut}
\Big|E(I_{N(t)}u(0))+\frac{\Xi(t)}{\lambda(t)^2}\Big|\lesssim& \Big|E(I_{N(t)}G(0))+\frac{\Xi(t)}{\lambda(t)^2}\Big|\\\label{equ:einut2}
&+\big|E(I_{N(t)}\big(G(0)+H(0)\big)-E(I_{N(t)}G(0))\big|.
\end{align}

{\bf The estimate of \eqref{equ:etnut}:} A simple computation shows
\begin{equation}
E(I_{N(t)}u(0))+\frac{\Xi(t)}{\lambda(t)^2}=E(G(0))+\frac{d}{2(d+2)}\int\big(|G(0)|^\frac{2(d+2)}{d}-|I_{N(t)}G(0)|^\frac{2(d+2)}{d}\big)\;dx.
\end{equation}
Hence, by $G(0)\in H^1$ and $\|G(0)\|_{\dot{H}^s}\sim\frac1{\lambda(0)^s}$, \eqref{equ:eg0}, \eqref{equ:almons} and \eqref{equ:ntass}, we get
\begin{align}\nonumber
\Big|E(I_{N(t)}G(0))+\frac{\Xi(t)}{\lambda(t)^2}\Big|\leq&|E(G(0))|+\frac{d}{2(d+2)}\int\Big||G(0)|^\frac{2(d+2)}{d}-|I_{N(t)}G(0)|^\frac{2(d+2)}{d}
\Big|\;dx\\\nonumber
\lesssim&\frac1{\sqrt{\lambda(0)}}+\big\|(I_{N(t)}-Id)G(0)\big\|_{L_x^\frac{2(d+2)}{d}}\|G(0)\|_{L_x^\frac{2(d+2)}{d}}^\frac{d+4}d\\\nonumber
\lesssim&\frac1{\sqrt{\lambda(0)}}+\frac{\big\|(I_{N(t)}-Id)G(0)\big\|_{\dot{H}^\frac{d}{d+2}}}{\lambda(0)^\frac{d+4}{d+2}}\\\nonumber
\lesssim&\frac1{\sqrt{\lambda(0)}}+\frac1{\lambda(0)^2}\Big(\frac1{\lambda(0)N(t)}\Big)^\frac{2}{d+2}\\\nonumber
\lesssim&\frac1{\lambda(0)^2}\Big(\lambda(t)^\frac32+\lambda(t)^{\frac{1-\beta}{\beta}\frac2{d+2}}\Big)\\\label{equ:eintu1}
\lesssim&\frac1{\lambda(t)^{2-\frac{1-\beta}{\beta}\frac2{d+2}}}
\end{align}

{\bf The estimate of \eqref{equ:einut2}:} Observe that
\begin{align*}
|E(u+v)-E(u)|\lesssim&\|\nabla u\|_{L_x^2}\|\nabla v\|_{L_x^2}+\|\nabla v\|_{L_x^2}^2+\int\big(|u|^\frac{d+4}d+|v|^\frac{d+4}d\big)|v|\;dx\\
\lesssim&\|\nabla v\|_{L_x^2}^2(1+\|v\|_{L_x^2}^\frac4d)+\|\nabla u\|_{L_x^2}\|\nabla v\|_{L_x^2}\\
&+\|\nabla u\|_{L^2}^\frac{d+4}{d+2}\|\nabla v\|_{L^2}^\frac{d}{d+2}\|u\|_{L^2}^\frac{d+4}{d+2}\|v\|_{L^2}^\frac{d}{d+2}.
\end{align*}
Using \eqref{equcont}, \eqref{equ:g0x}, \eqref{equ:h0hs} and \eqref{equ:g0small}, we obtain
\begin{align}\label{equ:g0uw}
\|\nabla I_{N(t)}G(0)\|_{L^2}\leq\|\nabla G(0)\|_{L^2}\lesssim&\frac1{\lambda(0)},\\\label{equ:h0vt}
\|\nabla I_{N(t)}H(0)\|_{L^2}\lesssim N(t)^{1-s}\|H(0)\|_{H^s}\leq&\lambda(0)^{10}N(t)^{1-s}.
\end{align}
This together with the uniform $L^2$ control \eqref{equ:excmass} implies
\begin{equation}\label{equ:eintgh}
\big|E(I_{N(t)}\big(G(0)+H(0)\big)-E(I_{N(t)}G(0))\big|\lesssim N(t)^{2(1-s)}.
\end{equation}
And so we obtain \eqref{equ:eint0}.

Next, we prove \eqref{equ:pu0}. This part is independent of nonlinear term, so this term is as in \cite{CR09}. In fact, by \eqref{equ:gg0}, we have
\begin{align*}
|P(I_{N(t)}u(0))|\leq&\big|P(I_{N(t)}(G(0)+H(0))-P(I_{N(t)}G(0))\big|\\
&+|P(I_{N(t)}G(0))-P(G(0))\big|+|P(G(0))|\\
\lesssim&\big|P(I_{N(t)}(G(0)+H(0))-P(I_{N(t)}G(0))\big|\\
&+|P(I_{N(t)}G(0))-P(G(0))\big|+\frac1{\sqrt{\lambda(0)}}.
\end{align*}
A simple computation shows that for $u,~v\in\dot{H}^\frac12,$
$$|P(u+v)-P(u)|\lesssim\|v\|_{\dot{H}^\frac12}\big(\|u\|_{\dot{H}^\frac12}+\|v\|_{\dot{H}^\frac12}\big).$$
Combining this with \eqref{equ:g0uw} and \eqref{equ:h0vt}, we derive that
\begin{align*}
&\big|P(I_{N(t)}(G(0)+H(0))-P(I_{N(t)}G(0))\big|\\
\lesssim&\|I_{N(t)}H(0)\|_{\dot{H}^\frac12}\Big(\frac1{\lambda(0)}+\|I_{N(t)}H(0)\|_{\dot{H}^\frac12}\Big)\lesssim N(t)^{1-s}.
\end{align*}
On the other hand,
\begin{align*}
|P(I_{N(t)}G(0))-P(G(0))\big|\lesssim&\big\|(Id-I_{N(t)})G(0)\big\|_{\dot{H}^\frac12}\|G(0)\|_{\dot{H}^\frac12}\\
\lesssim&\frac1{\sqrt{\lambda(0)}}\Big(\frac1{N(t)\lambda(0)}\Big)^\frac12\\
\lesssim&\frac1{\sqrt{\lambda(t)}}\Big(\frac1{N(t)\lambda(t)}\Big)^\frac12\\
\lesssim&\frac1{\lambda(t)^{1-\frac{1-\beta}{2\beta}}}.
\end{align*}

\end{proof}

\begin{proposition}[Almost conservation laws]\label{prop:almostcons}  Let $\frac{1}{1+\min\{1,\frac4d\}}<s<1,~\frac{4s}{\min\{4,d\}s-4(1-s)}<\frac1{\beta}$.
There holds the following control of the modified energy and momentum on $[0,T^+]$:
\begin{align}\label{equ:modiener}
\Big|E(I_{N(t)}u(t))+\frac{\Xi(t)}{\lambda(t)^2}\Big|\leq&\frac1{\lambda(t)^{2(1-\alpha_1)}},\\\label{equ:modimom}
\big|P(I_{N(t)}u(t))\big|\leq&\frac1{\lambda(t)^{1-\alpha_1}},
\end{align}
for some $\alpha_1=\frac{1-\beta}{4\beta}.$ In other words,
\begin{align}\label{equ:lamenermod}
\big|\lambda(t)^2E(I_{N(t)}u(t))+\Xi(t)|\leq&\lambda(t)^{2\alpha_1}\leq\Gamma_{b(t)}^{10},\\\label{equ:lammonmod}
\lambda(t)\big|P(I_{N(t)}u(t))\big|\leq&\lambda(t)^{\alpha_1}\leq\Gamma_{b(t)}^{10}.
\end{align}

\end{proposition}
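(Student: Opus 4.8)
The plan is to run an almost‑conservation (``$I$‑method'') argument adapted to the blow‑up geometry, by freezing the Fourier multiplier at the scale $N=N(t)$ of the \emph{target} time $t$ and telescoping over the local well‑posedness intervals produced by Corollary \ref{cor:modlwp}. Fix $t\in[0,T^+]$, cover $[0,t]$ by LWP intervals $[\tau^j,\tau^{j+1}]$ from Corollary \ref{cor:modlwp} with $T^\ast=t$ (legitimate since every endpoint $\tau^{j+1}\le t$), and write
\begin{align*}
E(I_{N(t)}u(t))+\frac{\Xi(t)}{\lambda(t)^2}=&\Big(E(I_{N(t)}u(0))+\frac{\Xi(t)}{\lambda(t)^2}\Big)\\
&+\sum_j\Big(E\big(I_{N(t)}u(\tau^{j+1})\big)-E\big(I_{N(t)}u(\tau^{j})\big)\Big),
\end{align*}
together with the analogous identity for $P(I_{N(t)}u(t))$; note that $\Xi(t)$ depends only on $t$, hence is invisible to the telescoping and appears only in the first parenthesis. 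By Lemma \ref{lem:einit} the first parenthesis is controlled by \eqref{equ:eint0}, i.e.\ by $N(t)^{2(1-s)}+\lambda(t)^{-(2-\frac{1-\beta}{\beta}\frac{2}{d+2})}$ (and by \eqref{equ:pu0} for the momentum), while each summand is controlled by Lemma \ref{lem:conincre} with $T^\ast=t$, that is by \eqref{equ:modenerlwp} (resp.\ \eqref{equ:modenerlwp123}).

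The core of the argument is the increment sum. I would slice $[0,t]$ along the dyadic doubling blocks $[t_k,t_{k+1}]$, $k_0\le k\le k_\ast$, where $t\in[t_{k_\ast},t_{k_\ast+1}]$, so that $2^{-k_\ast}\sim\lambda(t)$ and $k_\ast\sim\log(1/\lambda(t))$. On the $k$‑th block I cover by the $J_k\lesssim k\,N(t)^{2(1-s)/s}$ LWP intervals counted in \eqref{equ:jk}, apply the per‑interval increment bound \eqref{equ:modenerlwp}, and feed in the uniform size control \eqref{equ:dius}, which on the block reads $\|I_{N(t)}u(\tau^{j})\|_{H^1}\lesssim (N(t)/N(t_k))^{1-s}\lambda(t_k)^{-1}$ after replacing $\lambda(\tau^j),N(\tau^j)$ by $\lambda(t_k),N(t_k)$ via the almost monotonicity \eqref{equ:almons} (and \eqref{equ:uthslam}). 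Summing in $j$ over the block and then in $k$ over $k_0\le k\le k_\ast$ produces an estimate of the form $N(t)^{\mu}(\log(1/\lambda(t)))^2$ for an exponent $\mu=\mu(s,d,\beta)$; the gain $N(t)^{-\min\{1,\frac4d\}s_+}$ coming from the commutator estimate inside \eqref{equ:modenerlwp} is what keeps $\mu$ under control, and under the standing hypotheses on $s$ together with the calibration of $\beta$ from Remark \ref{rem:beta} one gets $N(t)^{\mu}(\log(1/\lambda(t)))^2\le\tfrac12\lambda(t)^{-2(1-\alpha_1)}$ with $\alpha_1=\frac{1-\beta}{4\beta}$; the same choice makes the contribution of \eqref{equ:eint0} at most $\tfrac12\lambda(t)^{-2(1-\alpha_1)}$, and adding the two gives \eqref{equ:modiener}. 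The momentum estimate \eqref{equ:modimom} follows verbatim, using \eqref{equ:modenerlwp123} and \eqref{equ:pu0} in place of \eqref{equ:modenerlwp} and \eqref{equ:eint0}; these carry roughly one power of $\lambda(t)^{-1}$ less, which is why the exponent there is $1-\alpha_1$ rather than $2(1-\alpha_1)$.

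Finally, \eqref{equ:lamenermod}--\eqref{equ:lammonmod} are obtained by multiplying \eqref{equ:modiener}--\eqref{equ:modimom} by $\lambda(t)^2$, resp.\ $\lambda(t)$, after which the only remaining point is $\lambda(t)^{2\alpha_1}\le\Gamma_{b(t)}^{10}$ and $\lambda(t)^{\alpha_1}\le\Gamma_{b(t)}^{10}$. This is a pure comparison of the bootstrap control \eqref{equ:ltass}, $\lambda(t)\le e^{-e^{\pi/(100b(t))}}$ (a double exponential in $1/b(t)$), with the lower bound \eqref{equ:geb}, $\Gamma_{b(t)}\ge e^{-(1+C\eta)\pi/|b(t)|}$ (a single exponential): for $b(t)$ small, $2\alpha_1 e^{\pi/(100b(t))}\ge 10(1+C\eta)\pi/b(t)$, hence $\lambda(t)^{2\alpha_1}\le e^{-10(1+C\eta)\pi/b(t)}\le\Gamma_{b(t)}^{10}$, and likewise for $\lambda(t)^{\alpha_1}$.

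The hard part will be the exponent bookkeeping in the increment sum: one must track how the slowly‑growing multiplier $N(t)$ interacts with the fact that Lemma \ref{lem:conincre} and the counting estimate \eqref{equ:jk} are stated for a \emph{fixed} multiplier, and then verify that the polynomial‑in‑$k$ growth of the number of LWP intervals together with the $(\log(1/\lambda(t)))^2$ loss from summing over dyadic blocks still leaves room for the $\lambda(t)^{2\alpha_1}$ margin. This is precisely the computation that fixes the admissible range of $s$ and the restrictions on $\beta$ recorded in Remark \ref{rem:beta}.
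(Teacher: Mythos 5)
Your proposal reproduces the paper's proof essentially step by step: set the target multiplier at $N(t)$, telescope the modified energy (resp.\ momentum) over the LWP intervals from Corollary \ref{cor:modlwp}, absorb the time‑independent term $\Xi(t)/\lambda(t)^2$ into the initial datum and bound it via Lemma \ref{lem:einit}, bound each increment via Lemma \ref{lem:conincre} and \eqref{equ:dius}, slice by dyadic blocks with the counting bound \eqref{equ:jk}, sum the resulting geometric series, and finally compare the polynomial loss $\lambda(t)^{\alpha_1}$ against $\Gamma_{b(t)}^{10}$ by comparing the double exponential in \eqref{equ:ltass} against the single exponential in \eqref{equ:geb}. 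The one small inaccuracy is the claimed $(\log(1/\lambda(t)))^2$ loss: since the block‑wise summand $kN(t_k)^{c}$ with $c=(2+\frac8d)(\beta-(1-s))>0$ is dominated by the last block, the paper's computation yields only a single factor of $k^+\sim|\log\lambda(T^+)|$ — but this is immaterial, as either logarithmic factor is absorbed into the polynomial margin.
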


\begin{proof}
Without loss of generality, we may assume $t=T^+$. By \eqref{equ:modenerlwp}, \eqref{equ:jk}, \eqref{equ:eint0}, \eqref{equ:dius},  we have
\begin{align*}
&\Big|E(I_{N(T^+)}u(T^+))+\frac{\Xi(T^+)}{\lambda(T^+)^2}\Big|\\\leq&\Big|E(I_{N(T^+)}u(0))+\frac{\Xi(T^+)}{\lambda(T^+)^2}\Big|
+\sum_{k=k_0}^{k^+}\sum_{j=1}^{J_k}\big|E(I_{N(T^+)}u(\tau_{k}^{j+1}))-E(I_{N(T^+)}u(\tau_{k}^{j}))\big|\\
\lesssim&N(T^+)^{2(1-s)}+\frac1{\lambda(T^+)^{2-\frac{1-\beta}{\beta}\frac2{d+2}}}\\
&+\sum_{k=k_0}^{k^+}kN(T^+)^\frac{2(1-s)}sN(T^+)^{-\min\{1,\frac4d\}s_+}\Big(\|I_{N(T^+)}u(\tau_k^j)\|_{H^1}^{2+\frac4d}
+\|I_{N(T^+)}u(\tau_k^j)\|_{H^1}^{2+\frac8d}\Big)\\
\lesssim&\Big(\frac1{\lambda(T^+)}\Big)^\frac{2(1-s)}{\beta}+\frac1{\lambda(T^+)^{2-\frac{1-\beta}{\beta}\frac2{d+2}}}\\
&+\sum_{k=k_0}^{k^+}kN(T^+)^\frac{2(1-s)}sN(T^+)^{-\min\{1,\frac4d\}s_+}
\Big[\Big(\frac{N(T^+)}{N(t_k)}\Big)^{1-s}\frac1{\lambda(t_k)}\Big]^{2+\frac8d}.
\end{align*}
We now sum up the geometric series from \eqref{equ:ltk} to get
\begin{align*}
&\sum_{k=k_0}^{k^+}kN(T^+)^\frac{2(1-s)}sN(T^+)^{-\min\{1,\frac4d\}s_+}
\Big[\Big(\frac{N(T^+)}{N(t_k)}\Big)^{1-s}\frac1{\lambda(t_k)}\Big]^{2+\frac8d}\\
\lesssim&N(T^+)^{(1-s)(2+\frac8d+\frac{2}s)-\min\{1,\frac4d\}s_+}\sum_{k=k_0}^{k^+}kN(t_k)^{(2+\frac8d)(\beta-(1-s))}\\
\lesssim&k^+N(T^+)^{(2+\frac8d)\beta+\frac{2(1-s)}s-\min\{1,\frac4d\}s_+}\\
\lesssim&|\log(\lambda(T^+))|\Big(\frac1{\lambda(T^+)}\Big)^{\frac1{\beta}((2+\frac8d)\beta+\frac{2(1-s)}s-\min\{1,\frac4d\}s_+)}\\
\lesssim&\Big(\frac1{\lambda(T^+)}\Big)^{\frac1{\beta}((2+\frac8d)\beta+\frac{2(1-s)}s-\min\{1,\frac4d\}s_+)}.
\end{align*}

Similarly,  using \eqref{equ:modenerlwp123}, \eqref{equ:jk}, \eqref{equ:pu0} and \eqref{equ:dius},  we estimate
\begin{align*}
&\big|P(I_{N(T^+)}u(T^+))\big|\\\leq&\big|P(I_{N(T^+)}u(0))\big|+\sum_{k=k_0}^{k^+}\sum_{j=1}^{J_k}
\big|P(I_{N(T^+)}u(\tau_{k}^{j+1}))-P(I_{N(T^+)}u(\tau_{k}^{j}))\big|\\
\lesssim&N(T^+)^{1-s}+\frac1{\lambda(T^+)^{1-\frac{1-\beta}{2\beta}}}
+\sum_{k=k_0}^{k^+}kN(T^+)^\frac{2(1-s)}sN(T^+)^{-\min\{1,\frac4d\}s_+}\|I_{N(T^+)}u(t_k)\|_{H^1}^{1+\frac4d}\\
\lesssim&\Big(\frac1{\lambda(T^+)}\Big)^\frac{1-s}\beta+\frac1{\lambda(T^+)^{1-\frac{1-\beta}{2\beta}}}
+\sum_{k=k_0}^{k^+}kN(T^+)^{\frac{2(1-s)}s-\min\{1,\frac4d\}s_+}\Big[\Big(\frac{N(T^+)}{N(t_k)}\Big)^{1-s}\frac1{\lambda(t_k)}\Big]^{2+\frac 4d-\frac1s}\\
\lesssim&\Big(\frac1{\lambda(T^+)}\Big)^\frac{1-s}\beta+\frac1{\lambda(T^+)^{1-\frac{1-\beta}{2\beta}}}
+\Big(\frac1{\lambda(T^+)}\Big)^{\frac1{\beta}((2+\frac8d-\frac1s)\beta+\frac{2(1-s)}s-\min\{1,\frac4d\}s_+)}.
\end{align*}

\end{proof}



\section{Proof of Proposition \ref{prop:main}}

In this section, we will show Proposition \ref{prop:main}, and then we conclude the proof of our main Theorem \ref{thm:main}.

\subsection{Control of the geometrical parameters}

Recall the geometrical decoposition
\begin{equation}\label{equ:decass123}
u(t,x)=\frac1{\lambda(t)^\frac{d}2}\big(Q_{b(t)}(\cdot)+\var(t,\cdot)\big)\Big(\frac{x-x(t)}{\lambda(t)}\Big)e^{-i\gamma(t)},~t\in[0,T^+].
\end{equation}
Let us introduce the rescaled time
\begin{equation}\label{equ:stdef}
\frac{ds}{dt}=\frac{1}{\lambda(s)^2}\quad\text{with}\quad s(0)=s_0=e^\frac{5\pi}{9b(0)}
\end{equation}
and  $y=\frac{x-x(t)}{\lambda(t)}$. Then,  $\var(s,y)$ satisfies on $[0,T^+]$ the equation:
\begin{align*}
&i\frac{\pa Q_b}{\pa b}b_s+i\pa_s\var+\Delta Q_b+\Delta\var+|Q_b+\var|^\frac4d(Q_b+\var)\\
=&-\Big(\gamma_s+i\frac{d}2\frac{\lambda_s}{\lambda}\Big)(Q_b+\var)+i\Big(\frac{x_s}{\lambda}+\frac{\lambda_s}{\lambda}y\Big)\cdot\nabla(Q_b+\var).
\end{align*}
To simplify notations, we note
$$\var=\var_1+i\var_2,~Q_b=\Sigma+i\Theta$$
in terms of real and imaginary parts. We have  by using Remark \ref{Rem:qbequ}
\begin{align}\nonumber
b_s\frac{\pa\Sigma}{\pa b}+\pa_s\var_1-M_-(\var)+b\Lambda\var_1=&\Big(\frac{\lambda_s}{\lambda}+b\Big)\Lambda\Sigma+\tilde{\gamma}_s\Theta+\frac{x_s}{\lambda}\cdot\nabla\Sigma\\\nonumber
&+\Big(\frac{\lambda_s}{\lambda}+b\Big)\Lambda\var_1+\tilde{\gamma}_s\var_2+\frac{x_s}{\lambda}\cdot\nabla\var_1\\\label{equ:var1}
&+{\rm Im}(\Psi_b)-R_2(\var)\\\nonumber
b_s\frac{\pa\Theta}{\pa b}+\pa_s\var_2+M_+(\var)+b\Lambda\var_2=&\Big(\frac{\lambda_s}{\lambda}+b\Big)\Lambda\Theta-\tilde{\gamma}_s\Sigma+\frac{x_s}{\lambda}\cdot\nabla\Theta\\\nonumber
&+\Big(\frac{\lambda_s}{\lambda}+b\Big)\Lambda\var_2-\tilde{\gamma}_s\var_1+\frac{x_s}{\lambda}\cdot\nabla\var_2\\\label{equ:var2}
&-{\rm Re}(\Psi_b)+R_1(\var),
\end{align}
with $\tilde{\gamma}(s)=-s-\gamma(s).$
The linear operator close to $Q_b$ is now a deformation of the linear operator $L$ close to $Q$ and is $M=(M_+,M_-)$ with
\begin{align*}
M_+(\var)=&-\Delta\var_1+\var_1-\Big(\frac{4\Sigma^2}{d|Q_b|^2}+1\Big)|Q_b|^\frac4d\var_1-\Big(\frac{4\Sigma\Theta}{d|Q_b|^2}|Q_b|^\frac4d\Big)\var_2\\
M_-(\var)=&-\Delta\var_2+\var_2-\Big(\frac{4\Theta^2}{d|Q_b|^2}+1\Big)|Q_b|^\frac4d\var_2-\Big(\frac{4\Sigma\Theta}{d|Q_b|^2}|Q_b|^\frac4d\Big)\var_1.
\end{align*}
The formally quadratic in $\var$ interaction terms are:
\begin{align*}
R_1(\var)=&(\var_1+\Sigma)|\var+Q_b|^\frac4d-\Sigma|Q_b|^\frac4d-\Big(\frac{4\Sigma^2}{d|Q_b|^2}+1\Big)|Q_b|^\frac4d\var_1-\Big(\frac{4\Sigma\Theta}{d|Q_b|^2}|Q_b|^\frac4d\Big)\var_2,\\
R_2(\var)=&(\var_2+\Theta)|\var+Q_b|^\frac4d-\Theta|Q_b|^\frac4d-\Big(\frac{4\Theta ^2}{d|Q_b|^2}+1\Big)|Q_b|^\frac4d\var_2-\Big(\frac{4\Sigma\Theta}{d|Q_b|^2}|Q_b|^\frac4d\Big)\var_1.
\end{align*}

The formally cubic terms in $\epsilon$ are:
\begin{equation*}
\begin{split}
\widetilde{R}_1(\epsilon)=&R_1(\epsilon)-\epsilon_1^2\frac{|Q_b|^\frac{4}{d}}{|Q_b|^4}\left[\frac{2}{d}\left(1+\frac{4}{d}\right)\Sigma^3+\frac{6}{d}\Sigma\Theta^2\right]\\
-&\epsilon_2^2\frac{|Q_b|^{\frac{4}{d}}}{|Q_b|^4}\left[\frac{2}{d}\Sigma^3+\frac{2}{d}\left(
\frac{4}{d}-1\right)\Sigma\Theta^2\right]\\
-&\frac{4}{d}\frac{|Q_b|^{\frac{4}{d}}}{|Q_b|^4}\epsilon_1\epsilon_2\left[\left(\frac{4}{d}-1\right)\Sigma^2\Theta+\Theta^3\right]
\end{split}
\end{equation*}

\begin{equation*}
\begin{split}
\widetilde{R}_2(\epsilon)=&R_2(\epsilon)-\epsilon_2^2\frac{|Q_b|^\frac{4}{d}}{|Q_b|^4}\left[\frac{2}{d}\left(1+\frac{4}{d}\right)\Theta^3+\frac{6}{d}\Theta\Sigma^2\right]\\
-&\epsilon_1^2\frac{|Q_b|^{\frac{4}{d}}}{|Q_b|^4}\left[\frac{2}{d}\Theta^3+\frac{2}{d}\left(
\frac{4}{d}-1\right)\Theta\Sigma^2\right]\\
-&\frac{4}{d}\frac{|Q_b|^{\frac{4}{d}}}{|Q_b|^4}\epsilon_1\epsilon_2\left[\left(\frac{4}{d}-1\right)\Theta^2\Sigma+\Sigma^3\right]
\end{split}
\end{equation*}

\begin{remark}
	From Weinstein\cite{Wen85}, the linearized operator $L$ close to the ground state in dimension $d$ can be explicitly written $L=(L_+,L_-)$ with
	$$L_+=-\Delta+1-\big(\tfrac4d+1\big)Q^\frac4d,\quad L_-=-\Delta+1-Q^\frac4d,$$
	and the following algebraic relations hold:
	\begin{align*}
	L_+(\Lambda Q)=-2Q,&~L_+(\nabla Q)=0,\\
	L_-(Q)=0,~L_-(yQ)=-2\nabla Q,~&L_-(|y|^2Q)=-4\Lambda Q.
	\end{align*}
	
\end{remark}

Recall
$$\big\|e^{Cr}(Q_b-Q)\big\|_{H^{10}\cap C^2}\to0\quad\text{as}\quad |b|\to0,$$
we can replace $Q_{b(s)}$ by $Q$ in the following with some loss such as $\Gamma_{b(s)}^{10}$.

\begin{lemma}[Estimates induced by conservation laws]\label{lem:estconlaw}
	For all $s\in [s_0,s^+]$ with $s^+=s(T^+)$, there holds:
	\begin{align}\nonumber
	&\Big|\big[2\big(\var_1,\Sigma+b\Lambda\Theta-{\rm Re}(\Psi_b)\big)+2\big(\var_2,\Theta-b\Lambda\Sigma-{\rm Im}(\Psi_b)\big)\big]-2\Xi(s)-\int\big|I_{N(s)\lambda(s)}\nabla\var(s)\big|^2\\\nonumber
	&+\Big[\int\Big(\frac{4}{d}+1\Big)|Q|^\frac4d(I_{N\lambda}\var_1)^2+\int|Q|^\frac4d(I_{N\lambda}\var_2)^2\Big]\Big|\\\label{equ:modenerg}
	\leq&\delta_0\Big(\int\big|I_{N(s)\lambda(s)}\nabla\var(s)\big|^2+\int|\var(s)|^2e^{-|y|}\Big)+\Gamma_{b(s)}^{1-C\eta},
	\end{align}
	and
	\begin{equation}
	\label{equ:modmomet}
	\big|(\var_2,\nabla Q)\big|\leq\delta_0\Big(\int\big|I_{N(s)\lambda(s)}\nabla\var(s)\big|^2\Big)^\frac12+\Gamma_{b(s)}^{10}.
	\end{equation}

\end{lemma}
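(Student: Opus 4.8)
The plan is to read off \eqref{equ:modenerg}--\eqref{equ:modmomet} from the almost conservation laws of Proposition \ref{prop:almostcons} by expanding the modified energy and momentum about the profile. Since $I_{N(s)}\delta_{\lambda(s)}=\delta_{\lambda(s)}I_{N(s)\lambda(s)}$ by \eqref{equ:scaio}, and $E,P$ are scaling-homogeneous ($E(\lambda^{-d/2}v(\cdot/\lambda))=\lambda^{-2}E(v)$, $P(\lambda^{-d/2}v(\cdot/\lambda))=\lambda^{-1}P(v)$) and invariant under translations and phase, the geometrical decomposition \eqref{equ:decass123} gives $\lambda^{2}E(I_{N}u)=E(I_{N\lambda}(Q_{b}+\var))$ and $\lambda P(I_{N}u)=P(I_{N\lambda}(Q_{b}+\var))$, with $I_{N\lambda}$ short for $I_{N(s)\lambda(s)}$. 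Hence \eqref{equ:lamenermod}--\eqref{equ:lammonmod} become $E(I_{N\lambda}(Q_{b}+\var))=-\Xi(s)+O(\Gamma_{b}^{10})$ and $|P(I_{N\lambda}(Q_{b}+\var))|\leq\Gamma_{b}^{10}$, and it remains to expand these functionals to second order in $\var$ and to identify the linear and quadratic parts. Throughout I would use that $N\lambda\gg1$ (doubly exponentially large in $1/b$ by \eqref{equ:ltass}), so that the smoothing multiplier may be removed from any profile quantity, which is smooth with $e^{Cr}$-decay by \eqref{equ:qbclsq}, at cost $O(\Gamma_{b}^{10})$ in any weighted norm.

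For the energy, write $E(I_{N\lambda}(Q_{b}+\var))=E(I_{N\lambda}Q_{b})+DE(I_{N\lambda}Q_{b})[I_{N\lambda}\var]+\tfrac12D^{2}E(I_{N\lambda}Q_{b})[I_{N\lambda}\var]^{2}+\mathcal{R}$ with $\mathcal{R}$ cubic and higher in $\var$. The zeroth term is $E(Q_{b})+O(\Gamma_{b}^{10})\leq\Gamma_{b}^{1-C\eta}$ by \eqref{equ:eqb}. Using $DE(w)[h]=\mathrm{Re}\int(-\Delta w-|w|^{4/d}w)\bar h$, moving one copy of $I_{N\lambda}$ onto $w=I_{N\lambda}Q_{b}$ by self-adjointness, de-smoothing, and then substituting \eqref{def:psib} in the form $-\Delta Q_{b}-|Q_{b}|^{4/d}Q_{b}=-Q_{b}+ib\Lambda Q_{b}+\Psi_{b}$, the linear term becomes $-2(\var_{1},\Sigma)-2(\var_{2},\Theta)+2(\var_{1},\mathrm{Re}\Psi_{b})+2(\var_{2},\mathrm{Im}\Psi_{b})+2b\big[-(\var_{1},\Lambda\Theta)+(\var_{2},\Lambda\Sigma)\big]$, which is precisely the linear part of \eqref{equ:modenerg}, the last bracket being identically zero by the orthogonality condition \eqref{equ:cth}. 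The Hessian contributes $\int|\nabla I_{N\lambda}\var|^{2}-\int|w|^{4/d}|I_{N\lambda}\var|^{2}-\tfrac{4}{d}\int|w|^{4/d-2}(\mathrm{Re}(\bar w\,I_{N\lambda}\var))^{2}$, which for $w\approx Q$ collapses to $\int|I_{N\lambda}\nabla\var|^{2}-\int Q^{4/d}[(1+\tfrac{4}{d})(I_{N\lambda}\var_{1})^{2}+(I_{N\lambda}\var_{2})^{2}]$, matching the remaining terms of \eqref{equ:modenerg}. The errors from de-smoothing and from $|I_{N\lambda}Q_{b}|^{4/d}\to Q^{4/d}$ are of the form $O(\Gamma_{b}^{10})\|\var e^{-|y|/2}\|_{L^{2}}$ and $o_{b}(1)\int Q^{4/d}|I_{N\lambda}\var|^{2}$; the first is absorbed by Cauchy-Schwarz into $\delta_{0}\int|\var|^{2}e^{-|y|}+\Gamma_{b}^{1-C\eta}$, the second, via Sobolev embedding and $Q\in L^{2}$, into $\delta_{0}\int|I_{N\lambda}\nabla\var|^{2}$.

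The control of $\mathcal{R}$ and of the commutator errors is the genuinely delicate part, and the second of these I expect to be the main obstacle. The cubic-and-higher term $\mathcal{R}$ I would bound, via Lemma \ref{lem:nonltermest}, H\"older and Sobolev embedding, by $\|\var\|_{H^{s}}\big(\int|I_{N\lambda}\nabla\var|^{2}+\int|\var|^{2}e^{-|y|}\big)$: near the soliton core the decay of $Q$ supplies the weight, while in the far region the pure-power Taylor remainder is absorbed by the $L^{2}$-smallness \eqref{equ:varsmall}, and since $\|\var\|_{H^{s}}\ll1$ this gives the admissible $\delta_{0}(\cdots)$ bound. More seriously, the expansion above is really written with $I_{N\lambda}$ inside the nonlinearity, and one must pass to $I_{N\lambda}$ outside; this is exactly the role of the commutator estimate \eqref{equ2.4} of Lemma \ref{commutor}, and it is here that the hypothesis $\tfrac{1}{1+\min\{1,4/d\}}<s$ enters, to make the gain $N^{-(1-s+\nu)}$ beat the size of $\|\langle\nabla\rangle I_{N\lambda}\var\|$. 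I also expect to need the rapid off-diagonal decay of the kernel of $I_{N\lambda}$, together with $N\lambda\gg1$, to replace local norms of $I_{N\lambda}\var$ by local norms of $\var$, and to check carefully that every stray term lands either inside $\delta_{0}(\int|I_{N\lambda}\nabla\var|^{2}+\int|\var|^{2}e^{-|y|})$ or below $\Gamma_{b}^{1-C\eta}$.

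Finally, \eqref{equ:modmomet} follows the same scheme but is lighter. Since $Q_{b}$ and $I_{N\lambda}Q_{b}$ are radial, $P(Q_{b})=P(I_{N\lambda}Q_{b})=0$, so in $P(I_{N\lambda}(Q_{b}+\var))$ only the $\var$-linear cross term $2\,\mathrm{Im}\int I_{N\lambda}^{2}\nabla Q_{b}\,\overline{\var}=2(\var_{1},\nabla\Theta)-2(\var_{2},\nabla\Sigma)$ (up to $O(\Gamma_{b}^{10})$ from de-smoothing) and $P(I_{N\lambda}\var)$ survive. Writing $\nabla\Sigma=\nabla Q+\nabla(\Sigma-Q)$, it suffices to bound $|(\var_{1},\nabla\Theta)|$, $|(\var_{2},\nabla(\Sigma-Q))|$ and $|P(I_{N\lambda}\var)|$; for the first two I would split $\var=I_{N\lambda}\var+(1-I_{N\lambda})\var$ and integrate by parts, throwing the derivative onto $I_{N\lambda}\var$, whose gradient is controlled by $\int|I_{N\lambda}\nabla\var|^{2}$, and onto $\Theta$, $\Sigma-Q$, which are $O(|b|)$ and $o_{b}(1)$ in weighted $L^{2}$ by \eqref{equ:qbclsq}, obtaining $\lesssim\delta_{0}(\int|I_{N\lambda}\nabla\var|^{2})^{1/2}$, while the high-frequency remainder $(1-I_{N\lambda})\var$ is $O(\Gamma_{b}^{10})$ in $L^{2}$; and $|P(I_{N\lambda}\var)|\leq\|I_{N\lambda}\nabla\var\|_{L^{2}}\|\var\|_{L^{2}}\leq\delta_{0}(\int|I_{N\lambda}\nabla\var|^{2})^{1/2}$ by \eqref{equ:varsmall}. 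Combining these with momentum conservation gives \eqref{equ:modmomet}.
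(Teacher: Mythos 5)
Your proposal follows essentially the same route as the paper: scale to pull back $\lambda^2E(I_Nu)=E(I_{N\lambda}(Q_b+\varepsilon))$ and $\lambda P(I_Nu)=P(I_{N\lambda}(Q_b+\varepsilon))$, invoke the almost conservation laws of Proposition~\ref{prop:almostcons}, expand the energy about the profile, read off the linear part from the $Q_b$-equation \eqref{def:psib} and the quadratic part from the Hessian of $E$, and absorb all errors using the smallness $\|(I_{N\lambda}-\mathrm{Id})Q_b\|_{H^p}\lesssim\Gamma_b^{10}$ and $\|e^{Cr}(Q_b-Q)\|_{C^2}\to 0$. Your Hessian formula $\int|\nabla h|^2-\int Q^{4/d}[(1+\tfrac4d)h_1^2+h_2^2]$ and the momentum computation (in particular $P(I_{N\lambda}Q_b)=0$ by radiality, cross term $2(\varepsilon_1,\nabla\Theta)-2(\varepsilon_2,\nabla\Sigma)$) both check out and match the paper's algebra.

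The one place you go slightly astray is in diagnosing where the delicacy lies: you anticipate needing the commutator estimate \eqref{equ2.4} of Lemma~\ref{commutor} and the restriction $s>\tfrac1{1+\min\{1,4/d\}}$ directly in this proof. That is not so. Every $I_{N\lambda}$ that must be moved or removed here is applied (after self-adjointness) to a \emph{smooth, exponentially decaying} profile quantity built from $Q_b$; the gap between $|I_{N\lambda}Q_b|^{4/d}I_{N\lambda}Q_b$ and $I_{N\lambda}(|Q_b|^{4/d}Q_b)$, between $E(I_{N\lambda}(Q_b+\varepsilon))$ and $E(Q_b+I_{N\lambda}\varepsilon)$, and between the $Q_b$-weighted and $Q$-weighted quadratic forms, are all $O(\Gamma_b^{10})$ thanks to $N\lambda\gg 1$ and \eqref{equ:qbclsq} alone, with no input from Lemma~\ref{commutor}. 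The genuine commutator difficulty — and the source of the restriction on $s$ — has already been paid for upstream in Proposition~\ref{prop:almostcons}, which here is used only as a black box. The paper actually organizes the expansion slightly differently from you: rather than Taylor-expanding about $I_{N\lambda}Q_b$, it isolates $E(I_{N\lambda}(Q_b+\varepsilon))-E(Q_b+I_{N\lambda}\varepsilon)$ as a single error (small since $I_{N\lambda}(Q_b+\varepsilon)-(Q_b+I_{N\lambda}\varepsilon)=(I_{N\lambda}-\mathrm{Id})Q_b=O(\Gamma_b^{10})$) and then expands $E(Q_b+I_{N\lambda}\varepsilon)$ exactly about $Q_b$, which avoids ever having to de-smooth a nonlinear function of $I_{N\lambda}Q_b$. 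Both organizations lead to the same estimates; yours is a bit heavier on de-smoothing bookkeeping, but it is correct.
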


\begin{proof}
	Note that
	$$I_Nu(t,x)=\frac1{\lambda^{\frac{d}{2}}}\big[I_{N\lambda}(Q_b+\var)\big]\Big(\frac{x-x(t)}{\lambda(t)}\Big),$$
	we have
	\begin{align}\nonumber
	2\lambda^2E(I_Nu)=&2E\big(I_{N\lambda}(Q_b+\var)\big)\\\nonumber
	=&2E\big(I_{N\lambda}(Q_b+\var)\big)-2E\big(Q_b+I_{N\lambda}\var\big)\\\label{equ:modenr1}
	&+2E\big(Q_b+I_{N\lambda}\var\big).
	\end{align}
	On the other hand, observe that
	$$\Delta Q_b-Q_b+ib\Lambda Q_b+|Q_b|^\frac4dQ_b=-\Psi_b,$$
	we get
	\begin{align*}
	&2E\big(Q_b+I_{N\lambda}\var\big)=\int\big|\nabla(Q_b+I_{N\lambda}\var)\big|^2-\frac{d}{d+2}\int|Q_b+I_{N\lambda}\var|^\frac{2(d+2)}{d}\\
	=&\int\Big(|\nabla Q_b|^2-2{\rm Re}\big(\Delta Q_b\cdot\overline{ I_{N\lambda}\var}\big)+|\nabla I_{N\lambda}\var|^2\Big)-\frac{d}{d+2}\int|Q_b+I_{N\lambda}\var|^\frac{2(d+2)}{d}\\
	=&2E(Q_b)+\int|\nabla I_{N\lambda}\var|^2-2{\rm Re}\int(\Delta Q_b+|Q_b|^\frac4dQ_b)\overline{I_{N\lambda}\var}\\
	&-\frac{d}{d+2}\int\Big(|Q_b+I_{N\lambda}\var|^\frac{2(d+2)}{d}-|Q_b|^\frac{2(d+2)}{d}-\frac{2(d+2)}{d}
	|Q_b|^\frac4d{\rm Re}\big(Q_b\overline{I_{N\lambda}\var}\big)\Big)
	\end{align*}
	and
	\begin{align*}
	&2{\rm Re}\int(\Delta Q_b+|Q_b|^\frac4dQ_b)\overline{I_{N\lambda}\var}=2{\rm Re}\int\big(Q_b-ib\Lambda Q_b-\Psi_b\big)\overline{I_{N\lambda}\var}\\
	=&2{\rm Re}\int\big(Q_b-ib\Lambda Q_b-\Psi_b\big)\overline{\var}
	+2{\rm Re}\int(I_{N\lambda}-{\rm Id})\big(Q_b-ib\Lambda Q_b-\Psi_b\big)\overline{\var}\\
	=&2\big(\var_1,\Sigma+b\Lambda\Theta-{\rm Re}(\Psi_b)\big)+2\big(\var_2,\Theta-b\Lambda\Sigma-{\rm Im}(\Psi_b)\big)\\
	&+2{\rm Re}\int(I_{N\lambda}-{\rm Id})\big(Q_b-ib\Lambda Q_b-\Psi_b\big)\overline{\var}.
	\end{align*}
	Hence,
	\begin{equation}\label{conservationofenergy}
	\begin{split}
	&2\big(\var_1,\Sigma+b\Lambda\Theta-{\rm Re}(\Psi_b)\big)+2\big(\var_2,\Theta-b\Lambda\Sigma-{\rm Im}(\Psi_b)\big)-2\Xi(s)-\int\big|I_{N(s)\lambda(s)}\nabla\var(s)\big|^2\\
	&+\int\Big(\frac{4\Sigma^2}{d|Q_b|^2}+1\Big)|Q_b|^\frac4d(I_{N\lambda}\var_1)^2+\int\Big(\frac{4\Theta^2}{d|Q_b|^2}+1\Big)|Q_b|^\frac4d(I_{N\lambda}\var_2)^2\\
	=&2E(Q_b)-2\big(\lambda^2E(I_Nu)+\Xi(s)\big)+2E\big(I_{N\lambda}(Q_b+\var)\big)-2E\big(Q_b+I_{N\lambda}\var\big)\\
	&-2{\rm Re}\int(I_{N\lambda}-{\rm Id})\big(Q_b-ib\Lambda Q_b-\Psi_b\big)\overline{\var}\\
	&-8\int\frac{\Sigma\Theta}{d|Q_b|^2}|Q_b|^\frac4dI_{N\lambda}\var_1I_{N\lambda}\var_2
	-\frac{d}{d+2}\int J(I_{N\lambda}\var),
	\end{split}
	\end{equation}
	where the cubic term $J(\var)$ ( is given in Appendix B in \cite{MR06JAMS})
	\begin{align*}
	J(\var)=&|\var+Q_b|^{2+\frac4d}-|Q_b|^{2+\frac4d}-\Big(2+\frac4d\Big)|Q_b|^{\frac4d}(\Sigma\var_1+\Theta\var_2)\\
	&-\var_1^2|Q_b|^{\frac{4}{d}-2}\Big[\Big(1+\frac2d\Big)\Big(1+\frac4d\Big)\Sigma^2+\Big(1+\frac2d\Big)\Theta^2\Big]\\
	&-\var_2^2|Q_b|^{\frac{4}{d}-2}\Big[\Big(1+\frac2d\Big)\Big(1+\frac4d\Big)\Theta^2+\Big(1+\frac2d\Big)\Sigma^2\Big]\\
	&-\var_1\var_2|Q_b|^{\frac4d-2}\frac8d\Big(1+\frac2d\Big)\Sigma\Theta.
	\end{align*}
	Indeed, $J(\epsilon)$ is the formal cubic term in $\epsilon$ in the expansion
	$$ |\epsilon+Q_b|^{2+\frac{4}{d}}=|Q_b|^{2+\frac{4}{d}}\left[1+\frac{2(\Sigma\epsilon_1+\Theta\epsilon_2)}{|Q_b|^2}+\frac{|\epsilon|^2}{|Q_b|^2}\right]^{\frac{d+2}{d}}.
	$$
	In fact, $J(\epsilon)$ is not cubic in $\epsilon$ when $d\geq 4$, but from elementary inequality, $J(\epsilon)=O(|\epsilon|^{2+\frac{4}{d}})$. This can give us the desired smallness in the sequel.
	
	Then, \eqref{equ:modenerg} follows by \eqref{equ:eqb}, Proposition \ref{prop:almostcons}   and
	\begin{equation}
	\big\|(I_{N(t)\lambda(t)}-{\rm Id})Q\big\|_{H^p}\lesssim \lambda(t)^{C_p}\lesssim\Gamma_{b(t)}^{10}.
	\end{equation}
	
	Next, we turn to prove \eqref{equ:modmomet}. Observe that
	\begin{align*}
	\lambda P(I_Nu)=&P\big(I_{N\lambda}(Q_b+\var)\big)\\
	=&\Big(P\big(I_{N\lambda}(Q_b+\var)\big)-P(Q_b+I_{N\lambda}\var)\Big)+P(Q_b+I_{N\lambda}\var)
	\end{align*}
	and
	\begin{align*}
	P(Q_b+I_{N\lambda}\var)=&{\rm Im}\int\nabla(Q_b+I_{N\lambda}\var)(\overline{Q_b+I_{N\lambda}\var})\\
	=&-2\int I_{N\lambda}\var_2\nabla\Sigma-2\int\Theta\nabla\Sigma+2\int I_{N\lambda}\var_1\nabla\Theta+2\int I_{N\lambda}\var_1\nabla I_{N\lambda}\var_2\\
	=&-2(\var_2,\nabla Q)+2(\var_2,\nabla(Q-\Sigma))+2(\var_2,({\rm Id}-I_{N\lambda})\nabla\Sigma)\\
	&-2\int\Theta\nabla\Sigma+2\int I_{N\lambda}\var_1\nabla\Theta+2\int I_{N\lambda}\var_1\nabla I_{N\lambda}\var_2,
	\end{align*}
	then, we obtain
	\begin{align*}
	-2(\var_2,\nabla Q)=&\lambda P(I_Nu)-\Big(P\big(I_{N\lambda}(Q_b+\var)\big)-P(Q_b+I_{N\lambda}\var)\Big)\\
	&-2(\var_2,\nabla(Q-\Sigma))-2(\var_2,({\rm Id}-I_{N\lambda})\nabla\Sigma)\\
	&+2\int\Theta\nabla\Sigma-2\int I_{N\lambda}\var_1\nabla\Theta-2\int I_{N\lambda}\var_1\nabla I_{N\lambda}\var_2.
	\end{align*}
	Note that $(\Theta,\nabla\Sigma)=0$ since $Q_b$ is radial. Thus, we get \eqref{equ:modmomet} by \eqref{equ:qbclsq} and Proposition \ref{prop:almostcons}. Therefore, we conclude the proof of Lemma \ref{lem:estconlaw}.

\end{proof}

Recall
\begin{equation}\label{equ:qb0paqb}
(Q_b)|_{b=0}=Q,\quad \Big(\frac{\pa Q_b}{\pa b}\Big)|_{b=0}=-i\frac{|y|^2}{4}Q.
\end{equation}
It is easy to see that
\begin{equation}\label{equ:commut}
\nabla\Lambda f-\Lambda\nabla f=\nabla f,~(f,\Lambda g)=-(\Lambda f,g).
\end{equation}

\begin{lemma}
There holds

$(i)$
\begin{align}
\label{equ:bsest}
&b_s\Big[\big(\tfrac{\pa\Theta}{\pa b},\Lambda\Sigma\big)-\big(\tfrac{\pa\Sigma}{\pa b},\Lambda\Theta\big)+\big(\var_1,\tfrac{\pa\Lambda\Theta}{\pa b}\big)-\big(\var_2,\tfrac{\pa\Lambda\Sigma}{\pa b}\big)\Big]\\\nonumber
=&-(M_+(\var),\Lambda\Sigma)-(M_-(\var),\Lambda\Theta)-\tilde{\gamma}_s\big[(\var_1,\Lambda\Sigma)+(\var_2,\Lambda\Theta)\big]\\\nonumber
&-\frac{x_s}{\lambda}\cdot\Big[(\nabla\Sigma,\Lambda\Theta)-(\nabla\Theta,\Lambda\Sigma)+(\var_2,\nabla\Lambda\Sigma)-(\var_1,\nabla\Lambda\Theta)\Big]\\\nonumber
&+(R_1(\var),\Lambda\Sigma)+(R_2(\var),\Lambda\Theta)-({\rm Re}(\Psi_b),\Lambda\Sigma)-({\rm Im}(\Psi_b),\Lambda\Theta),
\end{align}

$(ii)$
\begin{align}
\label{equ:lamsest}
&\Big(\frac{\lambda_s}{\lambda}+b\Big)\big[(\Lambda\Sigma,|y|^2\Sigma)+(\Lambda\Theta,|y|^2\Theta)\big]\\\nonumber
=&(M_+(\var),|y|^2\Theta)-(M_-(\var),|y|^2\Sigma)-\frac{\lambda_s}{\lambda}\big[(\var_1,\Lambda(|y|^2\Sigma))+(\var_2,\Lambda(|y|^2\Theta))\big]\\\nonumber
&+b_s\Big[\big(\tfrac{\pa\Sigma}{\pa b},|y|^2\Sigma\big)+\big(\tfrac{\pa\Theta}{\pa b},|y|^2\Theta\big)-\big(\var_1,\tfrac{\pa(|y|^2\Sigma)}{\pa b}\big)-\big(\var_2,\tfrac{\pa(|y|^2\Theta)}{\pa b}\big)\Big]\\\nonumber
&+\tilde{\gamma}_s\big[(\var_1,|y|^2\Theta)-(\var_2,|y|^2\Sigma)\big]+\frac{x_s}{\lambda}\cdot\big[(\var_1,|y|^2\nabla\Sigma)+(\var_2,|y|^2\nabla\Theta)\big]\\\nonumber
&-(R_1(\var),|y|^2\Theta)+(R_2(\var),|y|^2\Sigma)-({\rm Im}(\Psi_b),|y|^2\Sigma)+({\rm Re}(\Psi_b),|y|^2\Theta),
\end{align}

$(iii)$
\begin{align}
\label{equ:xsest}
&\frac{x_s}{\lambda}\cdot\big[(\nabla\Sigma,y\Sigma)+(\nabla\Theta,y\Theta)-(\var_1,\nabla(y\Sigma))-(\var_2,\nabla(y\Theta))\big]\\\nonumber
=&(M_+(\var),y\Theta)-(M_-(\var),y\Sigma)+\tilde{\gamma}_s\big[(\var_1,y\Theta)-(\var_2,y\Sigma)\big]\\\nonumber
&-b_s\Big[\big(\var_1,y\tfrac{\pa\Sigma}{\pa b}\big)+\big(\var_2,y\tfrac{\pa\Theta}{\pa b}\big)\Big]
+\frac{\lambda_s}{\lambda}\big[(\var_1,\Lambda(y\Sigma))+(\var_2,\Lambda(y\Theta))]\\\nonumber
&-(R_1(\var),y\Theta)+(R_2(\var),y\Sigma)-({\rm Im}(\Psi_b),y\Sigma)+({\rm Re}(\Psi_b),y\Theta),
\end{align}

$(iv)$
\begin{align}\label{equ:gammasest}
&\tilde{\gamma}_s\big[(\Theta,\Lambda^2\Theta)+(\Sigma,\Lambda^2\Sigma)+(\var_1,\Lambda^2\Sigma)+(\var_2,\Lambda^2\Theta)\big]\\\nonumber
=&(M_+(\var),\Lambda^2\Sigma)+(M_-(\var),\Lambda^2\Theta)+\tilde{\gamma}_s\big[(\var_1,\Lambda^2\Sigma)+(\var_2,\Lambda^2\Theta)\big]\\\nonumber
&-b_s\Big[\big(\tfrac{\pa\Sigma}{\pa b},\Lambda^2\Theta\big)-\big(\tfrac{\pa\Theta}{\pa b},\Lambda^2\Sigma\big)-\big(\var_1,\tfrac{\pa\Lambda^2\Theta}{\pa b}\big)+\big(\var_2,\tfrac{\pa\Lambda^2\Sigma}{\pa b}\big)\Big]\\\nonumber
&+\frac{x_s}{\lambda}\cdot\Big[(\nabla\Sigma,\Lambda^2\Theta)-(\nabla\Theta,\Lambda^2\Sigma)-(\var_1,\nabla\Lambda^2\Theta)+(\var_2,\nabla\Lambda^2\Sigma)\Big]\\\nonumber
&-\frac{\lambda_s}{\lambda}\big[(\var_1,\Lambda^3\Theta)-(\var_2,\Lambda^3\Sigma)\big]+2\Big(\frac{\lambda_s}{\lambda}+b\Big)(\Lambda\Sigma,\Lambda^2\Theta)\\\nonumber
&-(R_2(\var),\Lambda^2\Theta)-(R_1(\var),\Lambda^2\Sigma)+({\rm Im}(\Psi_b),\Lambda^2\Theta)+({\rm Re}(\Psi_b),\Lambda^2\Sigma).
\end{align}

\end{lemma}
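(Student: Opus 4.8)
The plan is to derive the four identities by differentiating the orthogonality conditions \eqref{equ:cy2}--\eqref{equ:cth2} with respect to the rescaled time $s$ of \eqref{equ:stdef}, and then substituting the evolution equations \eqref{equ:var1}--\eqref{equ:var2} for $\pa_s\var_1,\pa_s\var_2$. Each orthogonality condition is tailored to isolate one modulation parameter: I would obtain the $b_s$-equation \eqref{equ:bsest} from \eqref{equ:cth}, the $\big(\tfrac{\lambda_s}{\lambda}+b\big)$-equation \eqref{equ:lamsest} from \eqref{equ:cy2}, the $\tfrac{x_s}{\lambda}$-equation \eqref{equ:xsest} from \eqref{equ:cy1}, and the $\tilde\gamma_s$-equation \eqref{equ:gammasest} from \eqref{equ:cth2}.

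Concretely, for (i) I would differentiate $-(\var_1,\Lambda\Theta)+(\var_2,\Lambda\Sigma)\equiv 0$ in $s$. The derivative hits $\var_1,\var_2$, producing $\pa_s\var_1,\pa_s\var_2$, and also the test functions $\Lambda\Sigma,\Lambda\Theta$, whose $s$-dependence is only through $b(s)$, so $\pa_s(\Lambda\Theta)=b_s\tfrac{\pa\Lambda\Theta}{\pa b}$, etc.; this, together with the $b_s\tfrac{\pa\Sigma}{\pa b}$ and $b_s\tfrac{\pa\Theta}{\pa b}$ terms carried along by $\pa_s\var_{1,2}$ through \eqref{equ:var1}--\eqref{equ:var2}, produces the full $b_s$-bracket on the left of \eqref{equ:bsest}. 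After substituting $\pa_s\var_{1,2}$, I collect all $b_s$-terms on the left and the rest on the right. The remaining steps are: use the adjoint identity $(f,\Lambda g)=-(\Lambda f,g)$ from \eqref{equ:commut} to rewrite pairings such as $(\Lambda\var_1,\Lambda\Theta)=-(\var_1,\Lambda^2\Theta)$; combine the $b\Lambda\var_{1,2}$ terms from the left of \eqref{equ:var1}--\eqref{equ:var2} with the $\big(\tfrac{\lambda_s}{\lambda}+b\big)\Lambda\var_{1,2}$ on their right, leaving $\tfrac{\lambda_s}{\lambda}\Lambda\var_{1,2}$ terms which, after the adjoint identity, reorganize into $-\tfrac{\lambda_s}{\lambda}\big[-(\var_1,\Lambda^2\Theta)+(\var_2,\Lambda^2\Sigma)\big]=0$ by the orthogonality condition \eqref{equ:cth2}; and discard pairings that vanish by the radial symmetry of $Q_b$, such as $(\Theta,\nabla\Sigma)$. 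This yields exactly \eqref{equ:bsest}. The identities (ii)--(iv) follow from the same scheme applied to \eqref{equ:cy2}, \eqref{equ:cy1}, \eqref{equ:cth2}; there I would also use the commutator relation $\nabla\Lambda f-\Lambda\nabla f=\nabla f$ from \eqref{equ:commut} to move $\nabla$ past $\Lambda$, and in those cases several of the $\tfrac{\lambda_s}{\lambda}\Lambda\var$, $\tfrac{x_s}{\lambda}\cdot\nabla\var$ and $\tilde\gamma_s\var$ cross-terms do not cancel and are instead displayed explicitly on the right-hand side, to be absorbed later as small error terms.

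No estimate enters at this stage: each identity is an exact consequence of the $\var$-equation and the orthogonality relations, so the whole proof is an algebraic/calculus computation. The only genuine difficulty I expect is the bookkeeping — tracking every bilinear pairing generated by the right-hand sides of \eqref{equ:var1}--\eqref{equ:var2} and recognizing which ones collapse, whether via \eqref{equ:commut}, via radiality, or (the one genuinely non-obvious point, in (i)) via a \emph{different} orthogonality condition. This is the standard Merle--Rapha\"el modulation computation adapted to the localized profile $Q_b$, and carrying it out carefully gives the Lemma.
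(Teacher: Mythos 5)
Your approach---differentiating each orthogonality condition in $s$ and substituting the $\var$-equations \eqref{equ:var1}--\eqref{equ:var2} for $\pa_s\var_1,\pa_s\var_2$---is computationally identical to what the paper does (pair \eqref{equ:var1}--\eqref{equ:var2} with the appropriate test functions, sum, and invoke the orthogonality condition to remove the $\pa_s\var_j$ scalar products), and you correctly match each identity to its orthogonality condition, correctly flag the cancellation of the $\tfrac{\lambda_s}{\lambda}\Lambda\var$ terms in (i) via \eqref{equ:cth2} after the adjoint identity $(f,\Lambda g)=-(\Lambda f,g)$, and correctly invoke integration by parts, radiality, and the vanishing of $(\Sigma,\Lambda\Sigma)$, $(\Theta,\Lambda\Theta)$. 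The proposal is correct and takes essentially the same route as the paper, with more bookkeeping spelled out.
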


\begin{proof}
Take the inner product of \eqref{equ:var1} with $-\Lambda\Theta$ and of \eqref{equ:var2} with $\Lambda\Sigma$, sum the obtained equalities,  use the orthogonality condition \eqref{equ:cth} and integrate by parts to get \eqref{equ:bsest}.

\eqref{equ:lamsest} follows by summing the inner product of \eqref{equ:var1} with $|y|^2\Sigma$ and of \eqref{equ:var2} with $|y|^2\Theta$ and  the orthogonality condition \eqref{equ:cy2}.

Similarly, \eqref{equ:xsest}  follows by summing the inner product of \eqref{equ:var1} with $y\Sigma$ and of \eqref{equ:var2} with $y\Theta$ and  the orthogonality condition \eqref{equ:cy1}.

Finally,
 \eqref{equ:gammasest}  follows by summing the inner product of \eqref{equ:var1} with $\Lambda^2\Theta$ and of \eqref{equ:var2} with $-\Lambda^2\Sigma$ and  the orthogonality condition \eqref{equ:cth2}.

\end{proof}

Note that
\begin{align*}
\Lambda(fg)=&g\Lambda f+f\Lambda g-\frac{d}{2}fg\\
=&g\Lambda f+f y\cdot\nabla g\\
\Delta(\Lambda f)=&2\Delta f+\Lambda(\Delta f),
\end{align*}
and using the  orthogonality condition \eqref{equ:cth2}, we obtain the following lemma.

\begin{lemma}
There holds
\begin{align}\label{equ:m+m-lamthe}
&-(M_+(\var),\Lambda\Sigma)-(M_-(\var),\Lambda\Theta)\\\nonumber
=&2\big(\var_1,\Sigma+b\Lambda\Theta-{\rm Re}(\Psi_b)\big)+2\big(\var_2,\Theta-b\Lambda\Theta-{\rm Im}(\Psi_b)\big)\\\nonumber
&-\big(\var_1,{\rm Re}(\Lambda\Psi_b)\big)-\big(\var_2,{\rm Im}(\Lambda\Psi_b)\big).
\end{align}
Thus, this together with \eqref{equ:bsest} and  \eqref{equ:z2est} yields that
\begin{align}\label{equ:bestfurth}
&b_s\left[(\partial_b\Theta,\Lambda\Sigma)-(\partial_b\Sigma,\Lambda\Theta)+(\epsilon_1,\partial_b\Lambda\Theta)-(\epsilon_2,\partial_b\Lambda\Sigma)\right]\\=&2\Xi(s)+H(I_{N\lambda}\var,I_{N\lambda}\var)-(\var_1,{\rm Re}(\Lambda\Psi_b))-(\var_2,{\rm Im}(\Lambda\Psi_b))\\\nonumber
&-\tilde{\gamma}_s\big[(\var_1,\Lambda\Sigma)+(\var_2,\Lambda\Theta)\big]-\frac{x_s}{\lambda}\cdot\Big[(\var_2,\nabla\Lambda\Sigma)-(\var_1,\nabla\Lambda\Theta)\Big]\\\nonumber
&+O\Big(\delta_0\Big(\int\big|I_{N(s)\lambda(s)}\nabla\var(s)\big|^2+\int|\var(s)|^2e^{-|y|}\Big)\Big)+O(\Gamma_{b(s)}^{1-C\eta})+F(s),
\end{align}
with
\begin{equation*}
\begin{split}
F(s)=&(R_2(\epsilon)-R_2(I_{N\lambda}\epsilon),\Lambda\Theta)+(R_1(\epsilon)-R_1(I_{N\lambda}\epsilon),\Lambda\Sigma)\\
+&\widetilde{H}_b(I_{N\lambda}\epsilon,I_{N\lambda}\epsilon)+(\widetilde{R}_1(I_{N\lambda}\epsilon),\Lambda\Sigma)+(\widetilde{R}_2(I_{N\lambda}\epsilon),\Lambda\Theta),
\end{split}
\end{equation*}
\begin{equation*}
\begin{split}
\widetilde{H}_b(\epsilon,\epsilon):=&\int |V_1(y)|(\epsilon_1)^2+\int V_2(y)(\epsilon_2)^2+\int V_{12}(y)\epsilon_1\epsilon_2,\\
V_1(y)=&\frac{2}{d}\left(1+\frac{4}{d}\right)\left[\frac{|Q_b|^{\frac{4}{d}}}{|Q_b|^4}\Sigma^3y\cdot\nabla\Sigma-Q^{\frac{4}{d}-1}y\nabla Q\right]\\
+&\frac{|Q_b|^{\frac{4}{d}}}{|Q_b|^4}\Theta^2\left[\frac{6}{d}\Sigma\Lambda\Sigma-\left(\frac{4}{d}+2\right)\Sigma^2-\Theta^2\right]\\
+&\frac{2}{d}\frac{|Q_b|^{\frac{4}{d}}}{|Q_b|^4}\Theta\Lambda\Theta\left[\Theta^2+\left(\frac{4}{d}-1\right)\Sigma^2\right],\\
V_2(y)=&\frac{2}{d}\left[\frac{|Q_b|^{\frac{4}{d}}}{|Q_b|^4}\Sigma^3y\cdot\nabla\Sigma-Q^{\frac{4}{d}-1}y\cdot\nabla Q\right]\\
+&\frac{|Q_b|^{\frac{4}{d}}}{|Q_b|^4}\Theta^2\left[\frac{2}{d}\left(
\frac{4}{d}-1\right)\Sigma\Lambda\Sigma-\left(
\frac{4}{d}+2\right)\Sigma^2-\left(
\frac{4}{d}+1\right)\Theta^2\right]\\
+&\frac{2\Theta\Lambda\Theta}{d}\frac{|Q_b|^{\frac{4}{d}}}{|Q_b|^4}\left[3\Sigma^2+\left(\frac{4}{d}+1\right)\Theta^2\right],\\
V_{12}(y)=&\frac{4}{d}\frac{|Q_b|^{\frac{4}{d}}}{|Q_b|^4}\left[\Theta\left(\Theta^2\Lambda\Sigma+\left(\frac{4}{d}-1\right)(\Sigma^2\Lambda\Sigma+\Sigma\Theta\Lambda\Theta)-2\Sigma|Q_b|^2\right)+\Sigma^3\Lambda\Theta\right].
\end{split}
\end{equation*}
Moreover, the remainder $F$ can be bounded by
$$|F|\leq \delta(\alpha^*)\left(\int|\nabla I_{N\lambda}\epsilon|^2+\int |\epsilon|^2e^{-|y|}\right)+\Gamma_b^{1-C\eta}.
$$
\end{lemma}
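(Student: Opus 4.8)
The plan is to establish the three assertions in turn, using the profile equation \eqref{def:psib}, the commutator identities \eqref{equ:commut}, and the fact (by \eqref{equ:qbclsq}) that $Q_b$ may be replaced by $Q$ — and $(\Sigma,\Theta)$ by $(Q,0)$ — at the cost of errors $\Gamma_b^{10}$.

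\textbf{Step 1 (the identity \eqref{equ:m+m-lamthe}).} Note that $(M_+,M_-)$ is the real/imaginary decomposition of the self-adjoint operator $\mathcal{M}w:=-\Delta w+w-DF(Q_b)w$ on $w=\var_1+i\var_2$, where $F(u)=|u|^{4/d}u$ and $DF$ is its real-linear derivative; indeed the $2\times2$ potential matrix occurring in $M_\pm$ is symmetric, so $(M_+(\var),\phi_1)+(M_-(\var),\phi_2)={\rm Re}\int\overline{\var}\,\mathcal{M}\phi$ for every $\phi=\phi_1+i\phi_2$. Apply this with $\phi=\Lambda Q_b=\Lambda\Sigma+i\Lambda\Theta$. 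Using $\Delta\Lambda=\Lambda\Delta+2\Delta$, the Euler relation $DF(Q_b)\Lambda Q_b=\Lambda(F(Q_b))+2F(Q_b)$ (because $F$ is real-homogeneous of degree $1+\tfrac4d$), and the substitution $\Delta Q_b-Q_b+F(Q_b)=-\Psi_b-ib\Lambda Q_b$ from \eqref{def:psib}, one computes
\[
\mathcal{M}(\Lambda Q_b)=-2Q_b+2\Psi_b+\Lambda\Psi_b+2ib\Lambda Q_b+ib\Lambda^2 Q_b .
\]
Pairing against $\overline{\var}$ and taking real parts: $-2Q_b$ gives $2(\var_1,\Sigma)+2(\var_2,\Theta)$, the $\Psi_b$-terms give the $\Psi_b$-pairings in \eqref{equ:m+m-lamthe}, while $2ib\Lambda Q_b$ and $ib\Lambda^2 Q_b$ give $2b[(\var_1,\Lambda\Theta)-(\var_2,\Lambda\Sigma)]$ and $b[(\var_1,\Lambda^2\Theta)-(\var_2,\Lambda^2\Sigma)]$, which \emph{vanish} by \eqref{equ:cth}, \eqref{equ:cth2}; one nonetheless reinserts the (vanishing) group $2b(\var_1,\Lambda\Theta)-2b(\var_2,\Lambda\Sigma)$ so as to phrase the result with the same "potential'' combinations $\Sigma+b\Lambda\Theta-{\rm Re}\Psi_b$, $\Theta-b\Lambda\Sigma-{\rm Im}\Psi_b$ that appear in \eqref{equ:modenerg}. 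This is \eqref{equ:m+m-lamthe}.

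\textbf{Step 2 (the master identity \eqref{equ:bestfurth}).} Insert \eqref{equ:m+m-lamthe} into the right side of \eqref{equ:bsest}; the cross-terms $(\nabla\Sigma,\Lambda\Theta)$, $(\nabla\Theta,\Lambda\Sigma)$ vanish by radiality. Use the energy relation \eqref{equ:modenerg} of Lemma \ref{lem:estconlaw} to replace the "mass'' group $2(\var_1,\Sigma+b\Lambda\Theta-{\rm Re}\Psi_b)+2(\var_2,\Theta-b\Lambda\Sigma-{\rm Im}\Psi_b)$ by
\[
2\Xi(s)+\int|I_{N\lambda}\nabla\var|^2-\Big(\tfrac4d+1\Big)\!\int Q^{4/d}(I_{N\lambda}\var_1)^2-\int Q^{4/d}(I_{N\lambda}\var_2)^2+O\!\Big(\delta_0\big(\!\int|I_{N\lambda}\nabla\var|^2+\!\int|\var|^2e^{-|y|}\big)+\Gamma_b^{1-C\eta}\Big).
\]
Next expand the formally quadratic interaction by \eqref{equ:z2est} (with \eqref{equ:z1est}, \eqref{equ:z3est}): with $Q_b$ replaced by $Q$,
\[
R_1(\var)=\tfrac2d\Big(\tfrac4d+1\Big)Q^{4/d-1}\var_1^2+\tfrac2d Q^{4/d-1}\var_2^2+\widetilde R_1(\var),\qquad R_2(\var)=\tfrac4d Q^{4/d-1}\var_1\var_2+\widetilde R_2(\var),
\]
$\widetilde R_i$ the formally cubic remainder. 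Replacing $\var$ by $I_{N\lambda}\var$ in the quadratic part and pairing against $\Lambda Q=\tfrac d2 Q+y\cdot\nabla Q$ produces $\tfrac2d(\tfrac4d+1)\int Q^{4/d-1}\Lambda Q(I_{N\lambda}\var_1)^2+\tfrac2d\int Q^{4/d-1}\Lambda Q(I_{N\lambda}\var_2)^2$; added to the three potential terms coming from \eqref{equ:modenerg}, the $\tfrac d2 Q$ contributions cancel the bare potentials $Q^{4/d}$ and leave precisely the potentials $\tfrac2d(\tfrac4d+1)Q^{4/d-1}y\cdot\nabla Q$ and $\tfrac2d Q^{4/d-1}y\cdot\nabla Q$ of $\mathcal{L}_1,\mathcal{L}_2$, so the gradient term plus these potential terms assemble into $(\mathcal{L}_1 I_{N\lambda}\var_1,I_{N\lambda}\var_1)+(\mathcal{L}_2 I_{N\lambda}\var_2,I_{N\lambda}\var_2)=H(I_{N\lambda}\var,I_{N\lambda}\var)$. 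The errors created along the way — the $Q_b$-vs-$Q$ discrepancy in the quadratic form, whose potentials are exactly $V_1,V_2,V_{12}$ (giving $\widetilde H_b$); the $\var$-vs-$I_{N\lambda}\var$ discrepancy $(R_i(\var)-R_i(I_{N\lambda}\var),\Lambda\Sigma/\Lambda\Theta)$; and the cubic remainders $(\widetilde R_i(I_{N\lambda}\var),\Lambda\Sigma/\Lambda\Theta)$ — are collected into $F(s)$. The pairings $({\rm Re}\Psi_b,\Lambda\Sigma)$, $({\rm Im}\Psi_b,\Lambda\Theta)$ from \eqref{equ:bsest} are $O(\Gamma_b^{1-C\eta})$ since $\Psi_b$ is supported near $|y|\sim|b|^{-1}$, where $\Lambda Q$ decays like $e^{-|y|}$ (using the $\Psi_b$-estimates of \cite{MR04Invemath}), while the modulation terms $\tilde\gamma_s[(\var_1,\Lambda\Sigma)+(\var_2,\Lambda\Theta)]$, $\tfrac{x_s}{\lambda}\cdot[(\var_2,\nabla\Lambda\Sigma)-(\var_1,\nabla\Lambda\Theta)]$ and the linear term $(\var_1,{\rm Re}\Lambda\Psi_b)+(\var_2,{\rm Im}\Lambda\Psi_b)$ carry over verbatim. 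This yields \eqref{equ:bestfurth}.

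\textbf{Step 3 (the bound on $F$) and main obstacle.} Every summand of $F$ is at least quadratic in $\var$ and is tested against weights ($\Lambda\Sigma,\Lambda\Theta$, or the $Q^{4/d-1}$-type factors inside $V_1,V_2,V_{12}$) that are smooth and exponentially decaying. For $\widetilde H_b$: the potentials $V_1,V_2,V_{12}$ vanish identically when $b=0$, $Q_b=Q$, so $|V_i(y)|\lesssim b^2(1+|y|)^{N_0}e^{-c|y|}$, hence $|\widetilde H_b(I_{N\lambda}\var,I_{N\lambda}\var)|\lesssim b^2\int|I_{N\lambda}\var|^2e^{-c|y|}$; writing $I_{N\lambda}\var=\var-({\rm Id}-I_{N\lambda})\var$ with $\|({\rm Id}-I_{N\lambda})\var\|_{L^2}\lesssim(N\lambda)^{-1}\|\nabla I_{N\lambda}\var\|_{L^2}$ and interpolating $\int|\var|^2e^{-c|y|}$ against $\|\var\|_{L^2}^2$ and $\int|\var|^2e^{-|y|}$ gives the claimed bound. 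For the cubic pieces, $|\widetilde R_i(\phi)|$ is $O(|\phi|^{2+2/d})$ for $d\ge4$ and $O(|\phi|^3)$ for $d=3$, times an exponentially decaying factor; the weighted Gagliardo--Nirenberg bounds $\int|\phi|^{2+2/d}e^{-c|y|}\lesssim\|\nabla\phi\|_{L^2}^2\|\phi\|_{L^2}^{2/d}$ and $\int|\phi|^{3}e^{-c|y|}\lesssim\|\nabla\phi\|_{L^2}^2\|\phi\|_{L^2}$, applied with $\phi=I_{N\lambda}\var$ and $\|\var\|_{L^2}\ll1$ (from \eqref{equ:btvart}), give $\le\delta(\alpha^\ast)\int|\nabla I_{N\lambda}\var|^2$. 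For $(R_i(\var)-R_i(I_{N\lambda}\var),\Lambda\Sigma/\Lambda\Theta)$: since $R_i$ is formally quadratic, this difference is bilinear in $({\rm Id}-I_{N\lambda})\var$ and either $\var$ or $I_{N\lambda}\var$ against a $Q^{4/d-1}$-weight, and H\"older's inequality, Sobolev embedding, the exponential localization and the frequency gain $(N\lambda)^{-1}\ll1$ together with the bootstrap smallness of $\|\var\|_{H^s}$ yield the same estimate; summing, $|F|\le\delta(\alpha^\ast)(\int|\nabla I_{N\lambda}\var|^2+\int|\var|^2e^{-|y|})+\Gamma_b^{1-C\eta}$. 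Steps 1--2 are exact (if lengthy) algebra in the spirit of Merle--Rapha\"el, the only new point being to carry the $I_{N\lambda}$-truncation along and to recognize the surviving quadratic form as $H$. The genuine difficulty is Step 3: controlling the truncation errors in $F$ forces one to trade the frequency gain $(N\lambda)^{-1}$ against quantities such as $\|\langle\nabla\rangle I_{N\lambda}\var\|$ and to exploit the exponential spatial localization, and it is here that the dimension enters through the admissible exponents in the weighted Gagliardo--Nirenberg/Sobolev inequalities — exactly the mechanism that imposes $\tfrac1{1+\min\{1,4/d\}}<s$ and the choice of $\beta$, and the reason the two-dimensional argument of \cite{CR09} must be supplemented by the commutator estimates of \cite{VZ07} (Lemma \ref{commutor}).
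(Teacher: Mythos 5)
Your Steps 1--2 are correct and in fact go slightly further than the paper, which simply says the identity \eqref{equ:m+m-lamthe} ``can be obtained directly from the proof of Lemma \ref{lem:estconlaw}'' without displaying the computation. Your derivation via $\mathcal{M}(\Lambda Q_b)=-2Q_b+2\Psi_b+\Lambda\Psi_b+2ib\Lambda Q_b+ib\Lambda^2 Q_b$ and the self-adjointness of the real-linear operator $\mathcal{M}$ is the intended Merle--Rapha\"el mechanism, and you correctly use the orthogonality \eqref{equ:cth2} to drop $b[(\var_1,\Lambda^2\Theta)-(\var_2,\Lambda^2\Sigma)]$. (Incidentally your computation shows that the paper's printed \eqref{equ:m+m-lamthe} contains a typo: the second group should read $\Theta-b\Lambda\Sigma-{\rm Im}(\Psi_b)$, not $\Theta-b\Lambda\Theta-{\rm Im}(\Psi_b)$, exactly as in \eqref{equ:modenerg}.) The assembly of $H(I_{N\lambda}\var,I_{N\lambda}\var)$ from the $\int|I_{N\lambda}\nabla\var|^2$ term, the two $Q^{4/d}$-potentials coming from \eqref{equ:modenerg}, and the $\Lambda Q=\tfrac d2 Q+y\cdot\nabla Q$ split of the quadratic parts of $(R_1,\Lambda\Sigma)+(R_2,\Lambda\Theta)$ is exactly what happens, and your identification of which remnants populate $F(s)$ matches the paper.

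The gap is in Step 3, in the treatment of $F_1(s)=(R_1(\var)-R_1(I_{N\lambda}\var),\Lambda\Sigma)+(R_2(\var)-R_2(I_{N\lambda}\var),\Lambda\Theta)$. You estimate this by saying ``since $R_i$ is formally quadratic, this difference is bilinear in $(\mathrm{Id}-I_{N\lambda})\var$ and either $\var$ or $I_{N\lambda}\var$.'' But $R_i$ is only \emph{formally} quadratic: writing $R_i=R_i^{\mathrm{quad}}+\widetilde R_i$, the difference $\widetilde R_i(\var)-\widetilde R_i(I_{N\lambda}\var)$ is genuinely super-quadratic and is not of the bilinear form you describe; your ``cubic pieces'' paragraph handles only $\widetilde R_i(I_{N\lambda}\var)$ (a piece of $F_2$), not this truncation error. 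The paper addresses precisely this by decomposing $F_1$ into three pieces, schematically $\int|\var|^2(\mathrm{Id}-I_{N\lambda})\phi_1$, $\int\bigl[|I_{N\lambda}\var|^2-I_{N\lambda}(|\var|^2)\bigr]\phi_2$, and $O\bigl(\int(|\var|^3\chi_{d=3}+|\var|^{2+2/d}\chi_{d\geq4})\phi_3\bigr)$: the first two are controlled by moving $I_{N\lambda}$ onto the Schwartz weights ($\lesssim\tfrac1{N\lambda}\lesssim\Gamma_b^{10}$), and the third by interpolating between $L^2(|\phi_3|\,dy)$ and $L^{\frac{2d}{d-2}}$ for the $I_{N\lambda}$ part and between $L^2$ and $L^{\frac{2d}{d-2s}}$ for the $J_{N\lambda}=(\mathrm{Id}-I_{N\lambda})$ part, which is where the constraint $s>\tfrac{d}{d+4}$ enters ($\tfrac{2d}{d-2s}>2+\tfrac2d$). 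Your weighted Gagliardo--Nirenberg claims $\int|\phi|^{2+2/d}e^{-c|y|}\lesssim\|\nabla\phi\|_{L^2}^2\|\phi\|_{L^2}^{2/d}$ also have the wrong exponents (the unweighted GN gives $\|\phi\|_{L^{2+2/d}}^{2+2/d}\lesssim\|\nabla\phi\|_{L^2}\|\phi\|_{L^2}^{1+2/d}$; the extra gain must really come from the weight and interpolation, not from GN alone). Finally, the pointwise bound $|V_i|\lesssim b^2(1+|y|)^{N_0}e^{-c|y|}$ is stronger than what is established or needed; the paper only invokes, via \cite{MR03GAFA}, a bound $|\widetilde H_b(I_{N\lambda}\var,I_{N\lambda}\var)|\leq\delta_0\|I_{N\lambda}\var\|_{H^1_{\exp}}^2$, which needs only $V_i\to0$ uniformly (not quadratically in $b$) plus exponential decay. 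None of these gaps is conceptually deep, but the $F_1$ estimate as you stated it does not close, and it is exactly the place where the $I_{N\lambda}$ truncation and the dimension-dependent threshold on $s$ must be used precisely.
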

\begin{remark}
By the estimation of $F(s)$, we can absorb this term into $\Gamma_{b(s)}^{1-C\eta}.$  This is different from the dimension two case in \cite{CR09}.
\end{remark}
\begin{proof}
	The algebraic identity can be obtained directly from the proof of Lemma \ref{lem:estconlaw}. It is only a matter to estimate the remainder term $F(s)$. Denote by
	$$ F_1(s)=(R_2(\epsilon)-R_2(I_{N\lambda}\epsilon),\Lambda\Theta)+(R_1(\epsilon)-R_1(I_{N\lambda}\epsilon),\Lambda\Sigma),
	$$
	$$F_2(s)=F(s)-F_1(s).
	$$
Denote by
$$ \|I_{N\lambda}\epsilon\|_{H_{exp}^1}^2:=\int |I_{N\lambda}\epsilon|^2+\int |I_{N\lambda}\epsilon|^2e^{-|y|}.
$$
\underline{Estimate of $F_2$:} From \cite{MR03GAFA},
$$ |\widetilde{H}_b(I_{N\lambda}\epsilon,I_{N\lambda}\epsilon)|+|(\widetilde{R}_1(I_{N\lambda}\epsilon),\Lambda\Sigma)|+|(\widetilde{R}_2(I_{N\lambda}\epsilon),\Lambda\Theta)|\leq \delta_0\|I_{N\lambda}\epsilon\|_{H_{exp}^1}^2.
$$
\underline{Estimate of $F_1$:}
Note that $F_1$ can be written in the following form
\begin{equation*}
\begin{split}
F_1(s)=&\int |\epsilon|^2(\mathrm{Id}-I_{N\lambda})\phi_1+\int\left[|I_{N\lambda}\epsilon|^2-I_{N\lambda}(|\epsilon|^2)\right]\phi_2\\
+&O\left(\int \left(|\epsilon|^3\chi_{d=3}+|\epsilon|^{2+\frac{2}{d}}\chi_{d\geq 4}\right)\phi_3\right)
\end{split}
\end{equation*}
where $\phi_1,\phi_2,\phi_3$ are Schwartz functions built on $Q$ which decay exponentially as $r\rightarrow\infty.$
We estimate
$$ \left|\int |\epsilon|^2(\mathrm{Id}-I_{N\lambda})\phi_1\right|\leq \|\epsilon\|_{L^2}\|(\mathrm{Id}-I_{N\lambda}\phi)\|_{L^{\infty}}\lesssim \frac{1}{N\lambda}\lesssim \Gamma_b^{10},
$$
and
\begin{equation*}
\begin{split}
&\left|\int \left[|I_{N\lambda}\epsilon|^2-I_{N\lambda}(|\epsilon|^2)\right]\phi_2\right|\\ \lesssim &\|(I_{N\lambda}\epsilon-\epsilon)|\phi_2|^{1/2}\|_{L^2}\|(I_{N\lambda}\epsilon+\epsilon)|\phi_2|^{1/2}\|_{L^2}
+\|\epsilon|\phi|^{1/2}\|_{L^2}^2\|(\mathrm{Id}-I_{N\lambda})\phi_2\|_{L^{\infty}}\\
\lesssim &\frac{1}{N\lambda}\lesssim \Gamma_b^{10}.
\end{split}
\end{equation*}
To estimate the third term in $F_1$, we only deal with the case $d\geq 4$ here. Denote by $J_{N\lambda}=\mathrm{Id}-I_{N\lambda}, $and from the assumption $s>\frac{1}{1+\frac{4}{d}}=\frac{d}{d+4}$, we know that $\frac{2d}{d-2s}>2+\frac{2}{d}$, we use Sobolev embedding to estimate
\begin{equation*}
\begin{split}
\left|\int |\epsilon|^{2+\frac{2}{d}}\phi_3\right|\leq & C\int|I_{N\lambda}\epsilon|^{2+\frac{2}{d}}|\phi_3|+C\int |J_{N\lambda}\epsilon|^{2+\frac{2}{d}}|\phi_3|,\\
\|I_{N\lambda}\epsilon\|_{L^{2+\frac{2}{d}}(|\phi_3|dy)}\leq &\|I_{N\lambda}\epsilon\|_{L^2(|\phi_3|dy)}^{\theta_1}\|I_{N\lambda}\epsilon\|_{L^{\frac{2d}{d-2}}(|\phi_3|dy)}^{1-\theta_1}\\
\leq &C\|I_{N\lambda}\epsilon\|_{L^2(|\phi_3|dy)}^{\theta_1}\|\nabla I_{N\lambda}\epsilon\|_{L^2}^{1-\theta_1}
\leq C\|I_{N\lambda}\epsilon\|_{H_{exp}^1},
\end{split}
\end{equation*}
and
\begin{equation*}
\begin{split}
\|J_{N\lambda}\epsilon\|_{L^{2+\frac{2}{d}}(|\phi_3|dy)}\leq &\|J_{N\lambda}\epsilon\|_{L^2(|\phi_3|dy)}^{\theta_2}\|J_{N\lambda}\epsilon\|_{L^{\frac{2d}{d-2s}}(|\phi_3|dy)}^{1-\theta_2}\\
\leq & \|J_{N\lambda}\epsilon\|_{L^2}^{\theta_2}\|J_{N\lambda}\epsilon\|_{\dot{H}^s}^{1-\theta_2}
\\ \leq  &\frac{1}{(N\lambda)^{ s\theta_2}}\|J_{N\lambda}\epsilon\|_{\dot{H}^s}\\ \leq &\frac{1}{(N\lambda)^{s\theta_2}}\|P_{\geq N}\epsilon\|_{\dot{H}^s}\\ \leq &\frac{1}{(N\lambda)^{s\theta_2}}\|\nabla I_{N\lambda}\epsilon\|_{L^2}\leq C\|I_{N\lambda}\epsilon\|_{H_{exp}^1}.
\end{split}
\end{equation*}
This completes the proof of Lemma3.4.

\end{proof}

View the four equalities \eqref{equ:lamsest}, \eqref{equ:xsest}, \eqref{equ:gammasest}, \eqref{equ:bestfurth} as a linear system, which is invertible, we obtain
\begin{lemma}[Control of the geometrical parameters]\label{lem:congeo}
There holds:
\begin{align}\nonumber
\Big|\frac{\lambda_s}{\lambda}+b\Big|+|b_s|\leq&C\Big(\Xi(s)+\int|\nabla I_{N(s)\lambda(s)}\var(s)|^2+\int|\var(s)|^2e^{-|y|}\Big)\\\label{equ:lambsest}
&+\Gamma_{b(s)}^{1-C\eta},\\\nonumber
\Big|\tilde{\gamma}_s-\frac{(\var_1,L_+\Lambda^2Q)}{\|\Lambda Q\|_{L^2}^2}\Big|+\Big|\frac{x_s}{\lambda}\Big|\leq&\delta_0\Big(\int|\var(s)|^2e^{-|y|}\Big)^\frac12+\Gamma_{b(s)}^{1-C\eta}\\\label{equ:gammaxsest}
&+C\Big(\Xi(s)+\int|\nabla I_{N(s)\lambda(s)}\var(s)|^2\Big).
\end{align}

\end{lemma}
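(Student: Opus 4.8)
\emph{Proof strategy for Lemma \ref{lem:congeo}.} The plan is to read \eqref{equ:lamsest}, \eqref{equ:xsest}, \eqref{equ:gammasest} together with the refined identity \eqref{equ:bestfurth} as a linear system for the four unknowns $\big(\tfrac{\lambda_s}{\lambda}+b,\ b_s,\ \tilde\gamma_s,\ \tfrac{x_s}{\lambda}\big)$, the $d$ components of $\tfrac{x_s}{\lambda}$ decoupling by the radial symmetry of $Q_b$. First I would pin down the leading coefficients. On the bootstrap regime $b+\|\var\|_{L^2}\ll1$ one has $\Theta=\mathrm{Im}\,Q_b=O(b)$ and $Q_b\to Q$ exponentially by \eqref{equ:qbclsq}, so, using \eqref{equ:qb0paqb} and the algebraic relations $L_+(\Lambda Q)=-2Q$, $L_-Q=0$, $L_-(yQ)=-2\nabla Q$, $L_-(|y|^2Q)=-4\Lambda Q$, the diagonal coefficients are $(\Lambda Q,|y|^2Q)+o(1)$ in \eqref{equ:lamsest}, $(\nabla Q,yQ)+o(1)=-\tfrac d2\|Q\|_{L^2}^2+o(1)$ in \eqref{equ:xsest}, $(\Sigma,\Lambda^2\Sigma)+(\Theta,\Lambda^2\Theta)+o(1)=-\|\Lambda Q\|_{L^2}^2+o(1)$ in \eqref{equ:gammasest}, and $-\tfrac14(|y|^2Q,\Lambda Q)+o(1)=\tfrac14\int|y|^2Q^2+o(1)$ in \eqref{equ:bestfurth}; all four are bounded away from $0$. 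The off-diagonal entries are either zero by parity/radiality (such as $(\Theta,\nabla\Sigma)=0$) or $O(b)+O\big((\int|\var|^2e^{-|y|})^{1/2}\big)$, so the coefficient matrix is a small perturbation of a triangular invertible matrix and has uniformly bounded inverse.

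Next I would estimate the right-hand sides, sorting their contributions. The terms carrying $\Delta\var$ are exactly the $M_\pm(\var)$ pairings; integrating by parts against the exponentially localized weights $\Lambda\Sigma,\ |y|^2\Sigma,\ y\Sigma,\ \Lambda^2\Sigma,\dots$ moves every derivative onto $Q_b$ and leaves $\var$ tested against Schwartz profiles built on $Q$. In \eqref{equ:lamsest} and \eqref{equ:xsest} the weights $|y|^2\Theta$ and $y\Theta$ are $O(b)$, so the leftover $\var_1$-piece of the $M_+$ pairing is a small multiple of a localized integral of $\var$, while $(M_-(\var),y\Sigma)=-2(\var_2,\nabla Q)+O(\cdots)$ is small by the momentum estimate \eqref{equ:modmomet}; this produces the $\delta_0\big(\int|\var|^2e^{-|y|}\big)^{1/2}$-type gain for $\tfrac{x_s}{\lambda}$. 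In \eqref{equ:gammasest}, by contrast, $(M_+(\var),\Lambda^2\Sigma)+(M_-(\var),\Lambda^2\Theta)=(\var_1,L_+\Lambda^2Q)+O(\cdots)$ is a genuine linear-in-$\var$ term which, divided by the $\tilde\gamma_s$-coefficient $\pm\|\Lambda Q\|_{L^2}^2$, gives the main term $\tfrac{(\var_1,L_+\Lambda^2Q)}{\|\Lambda Q\|_{L^2}^2}$. The quadratic terms $R_i(\var)$ contribute $O\big(\int|\var|^2e^{-|y|}\big)$ up to frequency-truncation errors already handled in the proof of \eqref{equ:bestfurth}; the $\Psi_b$ terms give $O(\Gamma_{b(s)}^{1-C\eta})$ by Lemma \ref{lem:linoutrad} and \eqref{equ:eqb}; the cubic remainder $F(s)$ is $\le\delta(\alpha^\ast)\big(\int|\nabla I_{N\lambda}\var|^2+\int|\var|^2e^{-|y|}\big)+\Gamma_{b(s)}^{1-C\eta}$; and in \eqref{equ:bestfurth} the linear pairing $2\big(\var_1,\Sigma+b\Lambda\Theta-\mathrm{Re}(\Psi_b)\big)+2\big(\var_2,\Theta-b\Lambda\Theta-\mathrm{Im}(\Psi_b)\big)$ is rewritten via \eqref{equ:modenerg} as $2\Xi(s)+H(I_{N\lambda}\var,I_{N\lambda}\var)$ modulo $\delta_0\big(\int|\nabla I_{N\lambda}\var|^2+\int|\var|^2e^{-|y|}\big)+\Gamma_{b(s)}^{1-C\eta}$, and the spectral form $H(I_{N\lambda}\var,I_{N\lambda}\var)$ is bounded by $\int|\nabla I_{N\lambda}\var|^2+\int|I_{N\lambda}\var|^2e^{-|y|}$, the last of which reduces to $\int|\var|^2e^{-|y|}$ up to a negligible power of $(N\lambda)^{-1}$. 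The remaining cross terms, in which $\tilde\gamma_s,\ \tfrac{x_s}{\lambda},\ \tfrac{\lambda_s}{\lambda},\ b_s$ multiply small localized integrals $(\var,\cdot)$, are transposed to the left and absorbed by the invertibility.

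Solving the perturbed linear system with uniformly bounded inverse then yields \eqref{equ:lambsest} and \eqref{equ:gammaxsest}: for $\tfrac{\lambda_s}{\lambda}+b$ and $b_s$ every contributing term has been placed in one of the buckets $\Xi(s)$, $\int|\nabla I_{N(s)\lambda(s)}\var|^2$, $\int|\var|^2e^{-|y|}$, $\Gamma_{b(s)}^{1-C\eta}$, while for $\tfrac{x_s}{\lambda}$ and $\tilde\gamma_s$ the only un-absorbable linear piece is the explicit $\tfrac{(\var_1,L_+\Lambda^2Q)}{\|\Lambda Q\|_{L^2}^2}$, everything else being proportional to $\delta_0$, quadratic in $\var$, or exponentially small. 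The main difficulty is not the algebra, which is the standard Merle--Rapha\"el modulation computation adapted to the deformed operator $M=(M_+,M_-)$, but the bookkeeping forced by the $I$-method: at every step one must ensure that a derivative of $\var$ enters only through $\nabla I_{N\lambda}\var$ and never through $\nabla\var$ --- this is precisely why the $M_\pm$ pairings must be integrated by parts at the outset, and why the refined identity \eqref{equ:bestfurth} (with $2\Xi(s)$ and the truncated form $H(I_{N\lambda}\var,I_{N\lambda}\var)$ already extracted) is used in place of \eqref{equ:bsest} --- and one must check that every truncation error $(\mathrm{Id}-I_{N\lambda})(\cdot)$ is beaten by a positive power of $N\lambda\gg1$, hence by $\Gamma_{b(s)}^{10}$, using \eqref{equ:nlamd}, \eqref{equ:h0small} and the bootstrap \eqref{equ:vartass}.
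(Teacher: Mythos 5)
Your proposal follows exactly the route the paper takes (indeed, the paper's entire "proof" is the single remark that one should view \eqref{equ:lamsest}, \eqref{equ:xsest}, \eqref{equ:gammasest}, \eqref{equ:bestfurth} as an invertible linear system in $\big(\tfrac{\lambda_s}{\lambda}+b,\ b_s,\ \tilde\gamma_s,\ \tfrac{x_s}{\lambda}\big)$), and you supply the standard Merle--Rapha\"el modulation computation plus the $I$-method bookkeeping that the paper implicitly delegates to \cite{MR03GAFA,MR06JAMS,CR09}. One minor slip: the $\tfrac{x_s}{\lambda}$-block is a $d\times d$ matrix with diagonal entries $(\partial_iQ,y_iQ)=-\tfrac12\|Q\|_{L^2}^2$, not $-\tfrac d2\|Q\|_{L^2}^2$, though this does not affect the invertibility argument.
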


\subsection{Virial dispersion}

In this subsection, we will derive two global virial dispersive estimates at the heart of the control of the log-log regime in \cite{MR03GAFA,MR06JAMS}.  We begin with the global virial estimate first established in \cite{MR05annmath,MR03GAFA}.

\begin{lemma}[Global virial estimate, \cite{CR09}]\label{lem:glvirest}
We have
\begin{equation}\label{equ:glovirest}
b_s\geq c_0\Big(\Xi(s)+\int|\nabla I_{N(s)\lambda(s)}\var(s)|^2+\int|\var(s)|^2e^{-|y|}\Big)-\Gamma_{b(s)}^{1-C\eta}.
\end{equation}

\end{lemma}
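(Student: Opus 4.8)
The plan is to read off \eqref{equ:glovirest} from the algebraic identity \eqref{equ:bestfurth}, which already isolates $b_s$ (times an explicit coefficient) on the left and displays the spectral quadratic form $H(I_{N\lambda}\var,I_{N\lambda}\var)$, the term $2\Xi(s)$, and explicit remainders on the right. The first task is to pin down the coefficient of $b_s$: using \eqref{equ:qb0paqb}, the commutator relations \eqref{equ:commut}, the closeness estimates \eqref{equ:qbclsq}--\eqref{equ:paqbest}, and the integration by parts $(|y|^2Q,\Lambda Q)=-\|yQ\|_{L^2}^2$, one checks that
\[
(\pa_b\Theta,\Lambda\Sigma)-(\pa_b\Sigma,\Lambda\Theta)+(\var_1,\pa_b\Lambda\Theta)-(\var_2,\pa_b\Lambda\Sigma)=\frac{\|yQ\|_{L^2}^2}{4}+O(b)+O\Big(\big(\int|\var|^2e^{-|y|}\big)^{1/2}\Big),
\]
so that, for $b(0)$ and $\alpha^\ast$ small, this coefficient lies in a fixed compact subinterval of $(0,+\infty)$. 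It therefore suffices to bound the right-hand side of \eqref{equ:bestfurth} from below.

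The term $2\Xi(s)\ge0$ only helps. The radiation terms $(\var_1,\mathrm{Re}(\Lambda\Psi_b))+(\var_2,\mathrm{Im}(\Lambda\Psi_b))$ are controlled by Cauchy--Schwarz against an exponential weight, using that $\Psi_b$ is supported in $\{R_b^-\le|y|\le R_b\}$ with $|\tilde Q_b|\lesssim|Q(y)|$ there (cf. the Proposition on localized self-similar profiles, Remark \ref{Rem:qbequ}, and Lemma \ref{lem:linoutrad}), which gives a bound $\delta_0\int|\var|^2e^{-|y|}+C\Gamma_b^{1-C\eta}$. For the modulation-parameter terms $\tilde\gamma_s[(\var_1,\Lambda\Sigma)+(\var_2,\Lambda\Theta)]$ and $\frac{x_s}{\lambda}\cdot[(\var_2,\nabla\Lambda\Sigma)-(\var_1,\nabla\Lambda\Theta)]$, I would substitute $\tilde\gamma_s=(\var_1,L_+\Lambda^2Q)/\|\Lambda Q\|_{L^2}^2+O(\cdots)$ from \eqref{equ:gammaxsest} (Lemma \ref{lem:congeo}): the main contributions combine with $H$ into a quadratic form which the spectral property absorbs, and the remainders are $\le\delta(\alpha^\ast)\big(\int|\nabla I_{N\lambda}\var|^2+\int|\var|^2e^{-|y|}\big)+C\Gamma_b^{1-C\eta}$, where one uses $\Xi\ge0$ and the smallness $\int|\nabla I_{N\lambda}\var|^2+\int|\var|^2e^{-|y|}\ll1$ from \eqref{equ:vartass}. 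Finally the term $F(s)$ is already bounded by $\delta(\alpha^\ast)\big(\int|\nabla I_{N\lambda}\var|^2+\int|\var|^2e^{-|y|}\big)+\Gamma_b^{1-C\eta}$ in the lemma where \eqref{equ:bestfurth} is proved.

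The heart of the matter is the coercivity of $H(I_{N\lambda}\var,I_{N\lambda}\var)$. From \eqref{equ:cy1}, \eqref{equ:cth}, \eqref{equ:cth2} and $\|e^{Cr}(Q_b-Q)\|_{H^{10}\cap C^2}\to0$, the pairings $(\var_1,yQ)$, $(\var_2,\Lambda Q)$, $(\var_2,\Lambda^2Q)$ are $O(\Gamma_b^{10})$, while $(\var_2,\nabla Q)$ and the remaining directions $(\var_1,Q)$, $(\var_1,\Lambda Q)$ are controlled up to $\delta_0(\int|\nabla I_{N\lambda}\var|^2)^{1/2}+\Gamma_b^{1-C\eta}$ by the momentum estimate \eqref{equ:modmomet}, the energy estimate \eqref{equ:modenerg}, and conservation of the $L^2$ mass (the first two induced by the almost conservation of $E(I_Nu)$ and $P(I_Nu)$), exactly as in \cite{MR03GAFA,MR06JAMS,CR09}; moreover, replacing $\var$ by $I_{N\lambda}\var=\var-P_{>N\lambda}\var$ in any pairing with a Schwartz weight costs only $\lesssim(N\lambda)^{-10}\|\var\|_{L^2}\le\Gamma_b^{10}$, since $N\lambda=\lambda^{-(1-\beta)/\beta}\gg1$ and $\lambda\ll\Gamma_b^{100}$ by \eqref{equ:ltass}. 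The spectral property then yields $H(I_{N\lambda}\var,I_{N\lambda}\var)\ge(\tilde\delta_1-C\delta(\alpha^\ast))\big(\int|\nabla I_{N\lambda}\var|^2+\int|\var|^2e^{-|y|}\big)-C\Gamma_b^{1-C\eta}$, after comparing $e^{-(2_-)|y|}$ with $e^{-|y|}$ and writing $\int|I_{N\lambda}\var|^2e^{-(2_-)|y|}=\int|\var|^2e^{-(2_-)|y|}+O(\Gamma_b^{10})$. Inserting all of this in \eqref{equ:bestfurth}, choosing $\alpha^\ast$ and $\delta_0$ small enough to keep half the coercive lower bound, and dividing by the (positive, bounded) coefficient of $b_s$ gives \eqref{equ:glovirest} with a suitable $c_0$, the constant in $\Gamma_b^{1-C\eta}$ being enlarged to swallow the inverse coefficient.

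The step I expect to be the real obstacle is this last coercivity argument: reconciling the $b$-dependent orthogonality of Lemma \ref{lem:moduthe} with the fixed orthogonality conditions of the spectral property through quantities that are only \emph{almost} conserved, while keeping the discrepancy between $\var$ and $I_{N\lambda}\var$ at the harmless level $\Gamma_b^{1-C\eta}$ rather than a mere power of $\lambda$. It is exactly here that the choice $N(t)=\lambda(t)^{-1/\beta}$ together with $\lambda\ll\Gamma_b^{100}$ from \eqref{equ:ltass} is decisive, and where the higher-dimensional nonlinearity $|u|^{4/d}u$ forces the nonlinear remainders to be routed through the commutator estimates of Lemma \ref{commutor} (already absorbed into the bound on $F(s)$), which is the source of the restriction on $s$.
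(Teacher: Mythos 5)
Your proposal correctly reconstructs the argument that the paper delegates to \cite{CR09}: read off $b_s$ from \eqref{equ:bestfurth}, verify the coefficient $\frac14\|yQ\|_{L^2}^2+O(b)$, keep the positive $\Xi(s)$, absorb the $\tilde\gamma_s$-contribution into a modified quadratic form that the spectral property still controls after eliminating the degenerate scalar products via the orthogonality conditions and the almost-conserved energy/momentum, and discard the remainders $F(s)$ and $\Psi_b$-terms at the $\Gamma_b^{1-C\eta}$ level. This is exactly the route underlying Lemma 3.7 here and Lemma 5.1 in \cite{CR09}, and the bookkeeping on $I_{N\lambda}$ versus $\var$ (an $O((N\lambda)^{-k})\ll\Gamma_b^{10}$ loss per pairing) is handled as in the paper's Lemma \ref{lem:virdradreg}.

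One small imprecision: the orthogonality-condition pairings $(\var_1,yQ)$, $(\var_2,\Lambda Q)$, $(\var_2,\Lambda^2 Q)$ are not $O(\Gamma_b^{10})$. They are $O(b)\big(\int|\var|^2 e^{-|y|}\big)^{1/2}$, because one only has the $b$-dependent orthogonality \eqref{equ:cy1}--\eqref{equ:cth2} and $\Theta=O(b)$, $\Sigma-Q=O(b^2)$ by \eqref{equ:paqbest}, not an exponentially small deviation. After squaring, this enters the spectral estimate as $O(b^2)\int|\var|^2 e^{-|y|}$, which is still absorbable into the coercive lower bound for $\alpha^\ast$ (hence $b$) small, so the conclusion stands; but the size is qualitatively different from $\Gamma_b^{10}$ and must be absorbed by smallness of $b$, not exponential decay.
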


Next, we consider another dispersive control of a slightly different kind exhibited in \cite{MR04Invemath,MR06JAMS}. The main idea is that the profile $Q_b+\zeta_b$ should be a better approximation of the solution. Let us introduce a cut off parameter
\begin{equation}\label{equ:atdef}
A(t)=e^\frac{2a}{b(t)}\quad\text{so that}\quad \Gamma_b^{-\frac{a}2}\leq A\leq\Gamma_b^{-\frac{3a}2}
\end{equation}
for some small parameter $0<a\ll1$ and
$$\tilde{\zeta}=\chi\big(\tfrac{r}{A}\big)\zeta_b$$
where $\chi(r)$ is a smooth cut-off function with
\begin{equation*}
\chi(r)=\begin{cases}1\quad\text{if}\quad 0\leq r\leq\frac32\\
0\quad\text{if}\quad r\geq2.
\end{cases}
\end{equation*}
We remark that $\tilde{\zeta}$ is a small Schwartz function due to the $A$ localization. We next consider the new variable
\begin{equation}\label{tildevardef}
\tilde{\var}=\var-\tilde{\zeta}.
\end{equation}
Then, by the same argument as  Lemma 6 in \cite{MR06JAMS} and the above, we obtain

\begin{lemma}[Virial dispersion in the radiative regime]\label{lem:virdradreg}
There holds for some universal constants $c_1>0$ and $s\in[s_0,s^+]$:
\begin{align}\label{equ:f1sdef}
\{f_1(s)\}_s\geq& c_1\Big(\Xi(s)+\int\big|\nabla I_{N(s)\lambda(s)}\tilde{\var}(s)\big|^2+\int|\widetilde{\var}(s)|^2e^{-|y|}+\Gamma_b\Big)\\\nonumber&-\frac1{\delta_1}\int_A^{2A}|\var|^2,
\end{align}
with
\begin{equation}\label{equ:f1defs}
f_1(s)=\frac{b}{4}\|yQ_b\|_{L^2}^2+\frac12{\rm Im}\Big(\int (y\cdot\nabla\tilde{\zeta})\var{\tilde{\zeta}}\Big)
+{\rm Re}(\var_2,\Lambda\tilde{\zeta})-{\rm Im}(\var_1,\Lambda\tilde{\zeta}).
\end{equation}
\end{lemma}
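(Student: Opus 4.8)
The plan is to follow the scheme of Lemma~6 in \cite{MR06JAMS} (and the closely related computations of \cite{MR04Invemath,MR03GAFA}): the functional $f_1$ is engineered so that its $s$-derivative reproduces, up to controllable errors, the coercive virial quadratic form attached to the linearized operator near $Q$, while the terms of $f_1$ built on $\tilde\zeta$ are precisely what turns the outgoing flux of the radiation $\zeta_b$ at spatial infinity into the clean positive term $\Gamma_b$ on the right-hand side, and at the same time cancels the $\Gamma_b^{1-C\eta}$-size errors produced by $\Psi_b$ and by the $b_s$ term. The only genuinely new point compared with \cite{MR06JAMS} is the Fourier truncation $I_{N(s)\lambda(s)}$, whose effect I would absorb using the commutator estimates of Lemma~\ref{commutor} and the almost conservation laws of Proposition~\ref{prop:almostcons}.

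First I would derive the equation for the shifted variable $\tilde\var=\var-\tilde\zeta$. Subtracting the radiation equation $\Delta\zeta_b-\zeta_b+ib\Lambda\zeta_b=\Psi_b$ of Lemma~\ref{lem:linoutrad}, cut off at scale $A(t)=e^{2a/b(t)}$, from the system \eqref{equ:var1}--\eqref{equ:var2}, and using the decay bounds $\big\||y|^{d/2}(|\zeta_b|+|y||\nabla\zeta_b|)\big\|_{L^\infty(|y|\ge R_b)}\le\Gamma_b^{1/2-C\eta}$, $\int|\nabla\zeta_b|^2\le\Gamma_b^{1-C\eta}$ and $\|\partial_b\zeta_b\|_{\mathcal{C}^1}\le\Gamma_b^{1/2-C\eta}$ from that lemma, one checks that $\tilde\var$ solves the same linearized flow as $\var$, except that: (i) the profile error $\Psi_b$ is essentially cancelled, since $Q_b+\zeta_b$ is a sharper approximate self-similar profile than $Q_b$, leaving only terms of size $O(\Gamma_b^{1/2-C\eta})$ localized in $|y|\ge R_b$; (ii) the cutoff $\chi(r/A)$ produces commutator terms supported in the annulus $A\le|y|\le2A$, which are the origin of the loss $-\frac1{\delta_1}\int_A^{2A}|\var|^2$; and (iii) the nonlinear interaction terms $R_1,R_2$ and their cubic refinements $\widetilde R_1,\widetilde R_2$ persist, which I would keep evaluated at $\var$, the discrepancy with $\tilde\var$ being harmless by the $\Gamma_b$-smallness of $\tilde\zeta$.

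Next I would differentiate $f_1(s)$ term by term, interpreting $\{f_1(s)\}_s$ in the almost-everywhere sense since $f_1$ is only Lipschitz in $s$. The leading term $\tfrac b4\|yQ_b\|_{L^2}^2$ contributes $\tfrac{b_s}4\|yQ_b\|_{L^2}^2$ plus lower-order terms, and the global virial lower bound $b_s\ge c_0\big(\Xi(s)+\int|\nabla I_{N\lambda}\var|^2+\int|\var|^2e^{-|y|}\big)-\Gamma_b^{1-C\eta}$ of Lemma~\ref{lem:glvirest} already produces a large part of the desired positive quantity. Differentiating the remaining (radiation) terms of $f_1$, substituting the $\tilde\var$- and $\tilde\zeta$-equations, and integrating by parts with the identities $\nabla\Lambda f-\Lambda\nabla f=\nabla f$, $(f,\Lambda g)=-(\Lambda f,g)$, $\Delta\Lambda f=2\Delta f+\Lambda\Delta f$ together with $L_-Q=0$, $L_-(|y|^2Q)=-4\Lambda Q$, $L_+(\Lambda Q)=-2Q$, everything recombines, as in \cite{MR03GAFA,MR06JAMS}, into $\tfrac12H(\tilde\var,\tilde\var)$ with the modulation parameters removed, plus the radiation flux term $\gtrsim\Gamma_b$. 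At this stage I would invoke the orthogonality conditions \eqref{equ:cy2}--\eqref{equ:cth2}, the momentum control $|(\var_2,\nabla Q)|\le\delta_0\big(\int|\var|^2e^{-|y|}\big)^{1/2}+\Gamma_b^{10}$ of Lemma~\ref{lem:estconlaw}, and the modulation estimates of Lemma~\ref{lem:congeo} for $\tfrac{\lambda_s}\lambda+b$, $b_s$, $\tilde\gamma_s$, $\tfrac{x_s}\lambda$, in order to place $\tilde\var$ in the regime where the spectral property applies and $H(\tilde\var,\tilde\var)\ge\tilde\delta_1\int\big(|\nabla\tilde\var|^2+|\tilde\var|^2e^{-(2_-)|y|}\big)$, modulo errors of size $\Gamma_b$.

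The step I expect to be the main obstacle is the systematic control of the errors generated by the I-method. These come from the difference between the truncated quadratic forms $\int|\nabla I_{N\lambda}\tilde\var|^2$, $\int\big(\tfrac4d+1\big)|Q|^{4/d}(I_{N\lambda}\var_1)^2$, $\int|Q|^{4/d}(I_{N\lambda}\var_2)^2$ that appear naturally and their untruncated analogues in \cite{MR06JAMS}, and from the commutators $I_{N\lambda}(|u|^{4/d}u)-|I_{N\lambda}u|^{4/d}I_{N\lambda}u$ that arise whenever the modified energy or momentum is differentiated along the flow. To control these I would use Lemma~\ref{commutor} (in the forms \eqref{equ2.4} and \eqref{equ:xianchafa}) to bound the commutators by $N^{-\min\{1,4/d\}s_+}$ times Strichartz norms of $\langle\nabla\rangle I_Nu$, then Proposition~\ref{prop:almostcons} together with $N(s)=\lambda(s)^{-1/\beta}$ and $\|u(s)\|_{H^s}\sim\lambda(s)^{-s}$ to turn these into positive powers of $\lambda(s)$, hence into $\Gamma_b^{10}$; the bootstrap bound \eqref{equ:vartass} keeps $\|I_{N\lambda}\var\|_{H^1}$ small throughout. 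The refinements $\widetilde R_1,\widetilde R_2$ are genuinely cubic only when $d=3$; for $d\ge4$ they are merely $O(|\var|^{2+2/d})$, so, exactly as in the proof of Lemma~\ref{lem:estconlaw} above, the hypothesis $s>\tfrac1{1+4/d}=\tfrac d{d+4}$ yields $\tfrac{2d}{d-2s}>2+\tfrac2d$ and allows the interpolation $\|J_{N\lambda}\var\|_{L^{2+2/d}}\le\|J_{N\lambda}\var\|_{L^2}^{\theta}\|J_{N\lambda}\var\|_{\dot H^s}^{1-\theta}$ against the exponentially decaying weights built on $Q$, turning these terms into $\delta_0\|I_{N\lambda}\var\|_{H_{exp}^1}^2$ plus remainders $\lesssim(N\lambda)^{-s\theta}\lesssim\Gamma_b^{10}$. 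Finally I would collect the positive contributions (from $b_s$, from $\tfrac12H(\tilde\var,\tilde\var)$, and the radiation flux $\gtrsim\Gamma_b$), absorb all the error terms above into the small constant $\delta_0$ (choosing $c_1<\min\{c_0,\tilde\delta_1\}$), send the annular commutator losses into $-\frac1{\delta_1}\int_A^{2A}|\var|^2$, and absorb the residual profile- and radiation-tail errors into $c_1\Gamma_b$ — this last point requiring, as in \cite{MR06JAMS}, a suitable choice of the parameter $a$ in $A(t)=e^{2a/b(t)}$ relative to $\eta$ so that these residuals are $\ll\Gamma_b$. This yields the stated differential inequality.
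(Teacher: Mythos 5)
Your overall scheme — differentiate $f_1$ along the flow, use the spectral property for the quadratic form, convert $\var$ into $\tilde\var$ via the radiation, and control the I-method commutators using Lemma~\ref{commutor} and Proposition~\ref{prop:almostcons} — matches the paper's. The treatment of the cubic (resp. $O(|\var|^{2+2/d})$) remainders $\widetilde R_1,\widetilde R_2$ by interpolation under the assumption $s>\frac{d}{d+4}$, and the use of Lemma~\ref{lem:estconlaw}, Lemma~\ref{lem:congeo} and the orthogonality conditions to kill degenerate directions, are also in line with the paper's Steps~2--6.

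There is, however, a genuine gap in the way you treat the leading term of $f_1$. You propose to write $\{f_1\}_s\approx \tfrac{b_s}{4}\|yQ_b\|^2_{L^2}+\{\text{radiation terms}\}_s$ and invoke the global virial bound $b_s\geq c_0(\Xi+\int|\nabla I_{N\lambda}\var|^2+\int|\var|^2e^{-|y|})-\Gamma_b^{1-C\eta}$ of Lemma~\ref{lem:glvirest} to get the bulk of the positivity, with the radiation terms supplying the extra $+\Gamma_b$. This cannot work, for two reasons. First, the $-\Gamma_b^{1-C\eta}$ error carried by Lemma~\ref{lem:glvirest} is much \emph{larger} than $+\Gamma_b$ (recall $\eta>0$, so $\Gamma_b^{1-C\eta}\gg\Gamma_b$), so no additive $+c\Gamma_b$ gain from the radiation piece can absorb it; the sharpened error $+c\Gamma_b-\Gamma_b^{1+z_0}$ is obtained only because $\tilde\zeta_b$ solves $\Delta\zeta_b-\zeta_b+ib\Lambda\zeta_b=\Psi_b$ and therefore cancels the $\Psi_b$-interactions \emph{algebraically inside} the computation of $\{f_1\}_s$, not as a correction on top of the already-lossy estimate. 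Second, and relatedly, Lemma~\ref{lem:glvirest} is itself a consequence of the spectral quadratic form $H$ applied through the modulation equation for $b_s$; in the actual identity (the paper's Step~1, equation for $\frac{df_1}{ds}$), $b_s$ appears only through the term $b_s\big[(\tilde\var_2,\partial_b\Lambda(\Sigma+\Re\tilde\zeta))-(\tilde\var_1,\partial_b\Lambda(\Theta+\Im\tilde\zeta))\big]$, which is a \emph{small error}, controlled in absolute value by $|b_s|\lesssim \Xi+\int|\nabla I_{N\lambda}\var|^2+\int|\var|^2e^{-|y|}+\Gamma_b^{1-C\eta}$ from Lemma~\ref{lem:congeo} and then absorbed by Young's inequality into $\delta_0(\Xi+\dots)+\Gamma_b^{1+z_0}$. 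The positivity in the paper comes exclusively from three sources: (a) $H(I_{N\lambda}\var-\tilde\zeta_b,I_{N\lambda}\var-\tilde\zeta_b)$ via the spectral property with the degenerate directions removed by Claim~4; (b) the radiation flux $-[(\Re\tilde\zeta_b,\Lambda\Re F)+(\Im\tilde\zeta_b,\Lambda\Im F)]>c\Gamma_b$; and (c) the combination $-2(\lambda^2E(I_Nu)+\Xi)+2\Xi=-2\lambda^2E(I_Nu)\geq 2\Xi-\Gamma_b^{10}$ coming from the almost-conservation of the modified energy (Proposition~\ref{prop:almostcons}), which is what generates the $\Xi$ in the lower bound — a mechanism your sketch does not mention. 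Using Lemma~\ref{lem:glvirest} as a separate source of positivity on top of $H(\tilde\var,\tilde\var)$ would in effect be double-counting the same spectral inequality, and still would not produce the $+\Gamma_b$ gain.
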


\begin{proof}
	\textbf{Step 1}: Algebraic dispersive relation.\\
	We have(See \cite{MR06JAMS} for the calculation of principle terms)
	\begin{equation}\label{algebraicfinal1}
	\begin{split}
	\frac{df_1}{ds}=&H(I_{N\lambda}\epsilon-\wit{\zeta}_b,I_{N\lambda}\epsilon-\wit{\zeta}_b)+(\epsilon_1-\Re\wit{\zeta}_b,\Re\Lambda F)+(\epsilon_2-\Im\wit{\zeta}_b,\Im\Lambda F)\\
	-&2(\lambda^2E(I_Nu)+\Xi(s))+b_s\left[(\epsilon_2-\Im\wit{\zeta},\partial_b\Lambda(\Sigma+\Re\wit{\zeta}))-(\epsilon_1-\Re\wit{\zeta},\partial_b\Lambda(\Theta+\Im\wit{\zeta}))\right]\\
	-&\frac{A_s}{A^2}\left[(\epsilon_2-\Im\wit{\zeta},\Lambda(y\cdot(\nabla\chi)\left(\frac{y}{A}\right)\Re\wit{\zeta}))-(\epsilon_1-\Re\wit{\zeta},\Lambda(y\cdot(\nabla\chi)\left(\frac{y}{A}\right)\Im\wit{\zeta}))\right]\\
	-&\left(\frac{\lambda_s}{\lambda}+b\right)\left[(\epsilon_2-\Im\wit{\zeta},\Lambda^2(\Sigma+\Re\wit{\zeta}))-(\epsilon_1-\Re\wit{\zeta},\Lambda^2(\Theta+\Im\wit{\zeta}))\right]\\
	-&\wit{\gamma}_s\left[(\epsilon_1-\Re\wit{\zeta},\Lambda(\Sigma+\Re\wit{\zeta}))+(\epsilon_2-\Im\wit{\zeta},\Lambda(\Theta+\Im\wit{\zeta}))\right]\\
	-&\frac{x_s}{\lambda}\cdot\left[(\epsilon_2-\Im\wit{\zeta},\nabla\Lambda(\Sigma+\Re\wit{\zeta}))-(\epsilon_1-\Re\wit{\zeta},\nabla\Lambda(\Theta+\Im\wit{\zeta}))\right]\\
	+&(R_1(\epsilon),\Lambda\Re\wit{\zeta})+(R_2(\epsilon),\Lambda\Im\wit{\zeta})\\
	+&(\epsilon_1-\Re\wit{\zeta}_b,\Lambda(\left(1+\frac{4}{d}\right)Q^{\frac{4}{d}}\Im\wit{\zeta_b}))+(\epsilon_2-\Im\wit{\zeta}_b,\Lambda(Q^{\frac{4}{d}}\Im\wit{\zeta}_b))\\
	+&(\epsilon_1,\wit{L})+(\epsilon_2,\wit{K})+\Upsilon_{N\lambda}+2\Xi(s)+\mathcal{R}emainder
	\end{split}
	\end{equation}
	with
	\begin{equation*}
	\begin{split}
	\wit{L}=&\left[\left(\frac{4\Sigma^2}{d|Q_b|^2}+1\right)|Q_b|^{\frac{4}{d}}-\left(1+\frac{4}{d}\right)Q^{\frac{4}{d}}\right]\Lambda\Re\wit{\zeta}_b+\frac{4\Sigma\Theta}{d|Q_b|^2}|Q_b|^{\frac{4}{d}}\Lambda\Im\wit{\zeta}_b,\\
	\wit{K}=&\left[\left(\frac{4\Theta^2}{d|Q_b|^2}+1\right)|Q_b|^{\frac{4}{d}}-Q^{\frac{4}{d}}\right]\Lambda\Im\wit{\zeta}_b+\frac{4\Sigma\Theta}{d|Q_b|^2}|Q_b|^{\frac{4}{d}}\Lambda\Re\wit{\zeta}_b,
	\end{split}
	\end{equation*}
	coming from the error term:
	\begin{equation*}
	\begin{split}
	&(L_+\epsilon_1+b\Lambda\epsilon_2,\Lambda\Re\wit{\zeta}_b)+(L_-\epsilon_2-b\Lambda\epsilon_1,\Lambda\Im\wit{\zeta}_b)\\
	-&(M_+(\epsilon)+b\Lambda\epsilon_2,\Lambda\Re\wit{\zeta}_b)-(M_-(\epsilon)-b\Lambda\epsilon_1,\Lambda\Im\wit{\zeta}_b).
	\end{split}
	\end{equation*}
	Another error terms
	\begin{equation*}
	\begin{split}
	\Upsilon_{N\lambda}=&((I_{N\lambda}-Id)\epsilon_1,\Re\Lambda\Psi_b)+((I_{N\lambda}-Id)\epsilon_2,\Im\Lambda\Psi_b)\\
	+&(L_+(I_{N\lambda}-Id)\epsilon_1+b\Lambda(I_{N\lambda}-Id)\epsilon_2,\Lambda\Re\wit{\zeta}_b)\\
	+&(L_-(I_{N\lambda}-Id)\epsilon_2-b\Lambda(I_{N\lambda}-Id)\epsilon_1,\Lambda\Im\wit{\zeta}_b)\\
	+&((I_{N\lambda}-Id)\epsilon_1,\Lambda(\Re F+\left(1+\frac{4}{d}\right)Q^{\frac{4}{d}}\Re\wit{\zeta}_b))\\
	+&((I_{N\lambda}-Id)\epsilon_2,\Lambda(\Im F+Q^{\frac{4}{d}}\Im\wit{\zeta}_b))
	\end{split}
	\end{equation*}
	and
	\begin{equation*}
	\begin{split}
	\mathcal{R}eminder=&\widetilde{H}_b(I_{N\lambda}\epsilon,I_{N\lambda}\epsilon)+(R_1(\epsilon)-R_1(I_{N\lambda}\epsilon),\Lambda\Sigma)+(R_2(\epsilon)-R_2(I_{N\lambda}\epsilon),\Lambda\Theta)\\
	+&(\wit{R}_1(I_{N\lambda}\epsilon),\Lambda\Sigma)+(\wit{R}_2(I_{N\lambda}\epsilon),\Lambda\Theta)-\frac{d}{d+2}\int J(I_{N\lambda}\epsilon)\\
	+&2E(I_{N\lambda}(Q_b+\epsilon))-2E(Q_b+I_{N\lambda}\epsilon)-2\Re\int(I_{N\lambda}-\mathrm{Id})(Q_b-ib\Lambda Q_b-\Psi_b)\ov{\epsilon}.
	\end{split}
	\end{equation*}
	
	\textbf{Step 2}: Control of interaction terms.\\
	
	\underline{Claim 1:}\begin{equation*}
	\begin{split}
	&\int |(\mathrm{Id}-I_{N\lambda})\epsilon|^2e^{-|y|}=O(\Gamma_b^{1+z_0}),\\
	&\int |\nabla I_{N\lambda}\wit{\epsilon}|^2+\int|\wit{\epsilon}|^2e^{-|y|}=\int|\nabla(I_{N\lambda}\epsilon-\wit{\zeta}_b)|^2+\int|I_{N\lambda}\epsilon-\wit{\zeta}_b|^2e^{-|y|}+O(\Gamma_b^{1+z_0}),\\
	&\int |\epsilon|^2e^{-|y|}\leq 2\int|\wit{\epsilon}|^2e^{-|y|}+\Gamma_b^{1+z_0}\\
	&\int |\nabla I_{N\lambda}\epsilon|^2\leq C\int |\nabla I_{N\lambda}\wit{\epsilon} |^2+\Gamma_b^{1-C\eta}
	\end{split}
	\end{equation*}
	Indeed, the first estimate comes from the boundness of $I_{N\lambda}\epsilon$ in $H^1$( thus the boundness of $\epsilon$ in $H^s$) as well as $(N\lambda)^{-s}=O(\Gamma_b^{1+z_0})$ by our bootstrap assumption. Similarly, the second estimate comes from the smallness of $\wit{\zeta}_b$:
	$$ \|\wit{\zeta}_b\|_{H^1}\leq \Gamma_b^{1-C\eta}
	$$
	and the error estimate $\|\mathrm{Id}-I_{N\lambda}\|_{H^1\rightarrow L^2}\leq \frac{1}{N\lambda}$. The third inequality comes from the property of $\zeta:$
	$$ \|\zeta_b(y)e^{-\frac{\sigma\theta(b|y|)}{b}}\|_{L^{\infty}(|y|\leq R_b)}\leq \Gamma_b^{\frac{1}{2}+\frac{\sigma}{10}},\forall \sigma\in(0,5),\quad \|\zeta_b |y|^{d/2}\|_{L^{\infty}(|y|>R_b)}\leq \Gamma_b^{\frac{1}{2}-C\eta}.
	$$
	\begin{equation*}
	\begin{split}
	\int |\epsilon|^2e^{-|y|}\leq &2\int |\wit{\epsilon}|^2e^{-|y|}+2\int |\wit{\zeta}|^2e^{-|y|}\\
	\leq &2\int |\wit{\epsilon}|^2e^{-|y|}+2\int \chi_A(y)^2|\zeta_b|^2e^{-\frac{\theta(b|y|)}{|b|}}\\
	\leq &2\int |\wit{\epsilon}|^2e^{-|y|}+2\Gamma_b^{1+\frac{1-C\eta}{5}}R_b+2\Gamma_b^{1-2C\eta}\log(2A)e^{-\frac{\theta(2\sqrt{1-\eta})}{|b|}}\\
	\leq &2\int |\wit{\epsilon}|^2e^{-|y|}+\Gamma_b^{1+z_0}.
	\end{split}
	\end{equation*}

	\underline{Claim 2:}\\
	\begin{enumerate}
		\item For $d\geq 3$ and any $B\geq 2$,
		$$ \int_{|y|\leq B}|\wit{\epsilon}^2|\leq CB^2\left(\int|\nabla I_{N\lambda}\wit{\epsilon}|^2+\int |\wit{\epsilon}|^2e^{-|y|}\right)+\Gamma_b^{1+z_0}
		$$
		\item Second order interaction : for $R(\epsilon)=R_1(\epsilon)$ or $R_2(\epsilon)$,
		$$ \int |R(\epsilon)|e^{-(1-C\eta)\frac{\theta(b|y|)}{|b|}}\leq  C\left(\int |\nabla I_{N\lambda}\wit{\epsilon}|^2+\int |\wit{\epsilon}|^2e^{-|y|}\right)+\Gamma_b^{1+z_0}.
		$$
		\item Small second-order interaction:
		$$ \int |R(\epsilon)|(|\wit{\zeta}_b|+|y\cdot\nabla\wit{\zeta}_b|)\leq \delta(\alpha^*)\left(\int |\nabla I_{N\lambda}\wit{\epsilon}|^2+\int |\wit{\epsilon}|^2e^{-|y|}\right)+\Gamma_b^{1+z_0}
		$$
		\item Small second-order scalar products: For any polynomial $P(y)$ and integers $0\leq k\leq 2, 0\leq l\leq 1$, there exists $C>0$ such that
		$$\int |\wit{\epsilon}||P(y)|\left(|\nabla_y^{k}\wit{\zeta}_b|+|\nabla_y^l\partial_b\wit{\zeta}_b|\right)\leq \Gamma_b^C\left(\int|\nabla I_{N\lambda}\wit{\epsilon}|^2+\int |\wit{\epsilon}|^2e^{-|y|}\right)^{\frac{1}{2}}+\Gamma_b^{\frac{1}{2}+z_0}
		$$
		\item Cut-off $\chi_A$ induced estimates
		$$\int |\epsilon|(|F|+|y\cdot F|)\leq C\Gamma_b^{\frac{1}{2}}\left(\int_{A\leq|y|\leq 2A}|\epsilon|^2\right)^{\frac{1}{2}}.
		$$
	\end{enumerate}
	\underline{Proof:}\\
	(1) follows from $\|\wit{\epsilon}\|_{L^2(|y|\leq B)}\leq \|I_{N\lambda}\wit{\epsilon}\|_{L^2(|y|\leq B)}+\|(\textrm{Id}-I_{N\lambda})\wit{\epsilon}\|_{L^2}$ and classical inequality(see \cite{MR06JAMS})
	$$ \int_{|y|\leq B}|v|^2\leq CB^2\left(\int |\nabla v|^2+\int |v|^2e^{-|y|}\right),
	$$
	combining with the first inequality in claim 1.
	
	For (2), from the classical inequality (see \cite{MR06JAMS})
	$$ |R(\epsilon)|\leq C|\epsilon|^2e^{-\left(\frac{4}{d}-1\right)\left(1-C\eta\right)\frac{\theta(b|y|)}{|b|}}+|\epsilon|^{1+\frac{4}{d}},\quad d\leq 3,
	$$
	$$ |R(\epsilon)|\leq C\min\left(|\epsilon|^2e^{-\left(\frac{4}{d}-1\right)\left(1-C\eta\right)\frac{\theta(b|y|)}{|b|}},|\epsilon|^{1+\frac{4}{d}}\right),\quad d\geq 4,
	$$
	Using the classical inequality
	$$ \int |v|^2e^{-\kappa |y|}\leq C\left(\int |\nabla v|^2+\int |v|^2e^{-|y|}\right)
	$$
	and the third inequality in claim 1, for $d\geq 4$, we have
	$$ \int |\epsilon|^2e^{-\left(\frac{4}{d}-1\right)(1-C\eta)\frac{\theta(b|y|)}{|b|}}\leq \int |\wit{\epsilon}|^2e^{-\frac{4}{d}(1-C\eta)\frac{\theta(b|y|)}{|b|}}+\Gamma_b^{1+z_0},
	$$
	we conclude by replacing $\wit{\epsilon}$ to $I_{N\lambda}\wit{\epsilon}$ and an error term can be absorbed into $\Gamma_b^{1+z_0}$. For $d=3, $ we easily have
	$$ \int |\epsilon|^{\frac{7}{7}}e^{-(1-C\eta)\frac{\theta(b|y|)}{|b|}}\leq C\int |\wit{\epsilon}|^{\frac{7}{3}}e^{-(1-C\eta)\frac{\theta(b|y|)}{|b|}}+\Gamma_b^{1+z_0}
	$$
	since $\frac{7}{3}>2$.
	
	For (4), we do the case $d\geq 4$here . We estimate it by
	\begin{equation*}
	\begin{split}
	&\int |\epsilon|^{1+\frac{4}{d}}(|\wit{\zeta}_b|+|y\cdot\nabla\wit{\zeta}_b|)\\ \leq &\Gamma_b^{\frac{1}{2}(1-C\eta)}\int_{|y|\leq 2A}|\epsilon|^{1+\frac{4}{d}}\\
	\leq &  C\Gamma_b^{\frac{1}{2}(1-C\eta)}\int_{|y|\leq 2A}|\wit{\epsilon}|^{1+\frac{4}{d}}+\Gamma_b^{1+z_0}\\
	\leq &C\Gamma_b^{\frac{1}{2}(1-C\eta)}\int_{|y|\leq 2A}\left(|I_{N\lambda}\wit{\epsilon}|^{1+\frac{4}{d}}+|(\mathrm{Id}-I_{N\lambda})\wit{\epsilon}|^{1+\frac{4}{d}}\right)+\Gamma_b^{1+z_0}
	\\
	\leq &C\Gamma_b^{\frac{1}{2}(1-C\eta)}A^C\left(\int_{|y|\leq 2A} |I_{N\lambda}\wit{\epsilon}|^{\frac{2d}{d-2}}\right)^{\frac{(d+4)(d-2)}{2d^2}}+\Gamma_b^{\frac{1}{2}(1-C\eta)}A^{\frac{2d}{1-\frac{4}{d}}}\|(\mathrm{Id}-I_{N\lambda})\wit{\epsilon}\|_{L^2}^{\frac{1+\frac{4}{d}}{2}}+\Gamma_b^{1+z_0}\\
	\leq &C\Gamma_b^{\frac{1}{2}(1-C\eta)}A^C\|\nabla I_{N\lambda}\wit{\epsilon}\|_{L^2}^{\frac{d+4}{d}}+\Gamma_b^{1+z_0}\\
	\leq &\Gamma_b^C\|\nabla I_{N\lambda}\wit{\epsilon}\|_{L^2}^2+\Gamma_b^{1+z_0},
	\end{split}
	\end{equation*}
	provided that $a>0$, $\eta>0$ are small enough ($A=e^{\frac{2a\theta(2)}{b}}$).
	
	(4) follows from the classical pointwise bound
	$$ \left| P(y)\left(|\nabla_y^k\wit{\zeta}_b|+|\nabla_y^l\partial_b\wit{\zeta}_b|\right)\right|\leq A^C\Gamma_b^{\frac{1}{2}-C\eta}
	$$
	and (1).
	
	(5) follows from the pointwise bound
	$$ \|F\|_{L^{\infty}}+\|y\cdot\nabla F\|_{L^{\infty}}\leq \frac{C\Gamma_b^{\frac{1}{2}}}{A^{\frac{d}{2}}}
	$$
	and Cauchy-Schwartz inequality.
	
	\textbf{Step 3}: Estimate of terms involving geometric parameters:	
	
	Denote by $\wit{\epsilon}_1=\epsilon_1-\Re\wit{\zeta}_b,\wit{\epsilon_2}=\epsilon_2-\Re\wit{\zeta}_2$, the terms to be estimated are:
	\begin{equation*}
	\begin{split}
	&Term_1=b_s\left[(\wit{\epsilon}_2,\partial_b\Lambda(\Sigma+\Re\wit{\zeta}_b))-(\wit{\epsilon}_1,\partial_b\Lambda(\Theta+\Im\wit{\zeta}_b))\right],\\
	&Term_2=\left(\frac{\lambda_s}{\lambda}+b\right)\left[(\wit{\epsilon}_2,\Lambda^2(\Sigma+\Re\wit{\zeta}_b))-(\wit{\epsilon}_1,\Lambda^2(\Theta+\Im\wit{\zeta}_b))\right],\\
	&Term_3=\frac{x_s}{\lambda}\cdot\left[(\wit{\epsilon}_2,\nabla\Lambda(\Sigma+\Re\wit{\zeta}_b))-(\wit{\epsilon}_1,\nabla\Lambda(\Theta+\Im\wit{\zeta}_b))\right],\\
	&Term_4=\wit{\gamma}_s\left[(\wit{\epsilon}_1,\Lambda(\Sigma+\Re\wit{\zeta}_b))+(\wit{\epsilon}_2,\Lambda(\Theta+\Im\wit{\zeta}_b))\right],\\
	&Term_5=\frac{A_s}{A}\left[(\wit{\epsilon}_2,\Lambda(y\cdot\nabla\chi(\frac{y}{A})\Re\wit{\zeta}_b))-
	(\wit{\epsilon}_1,\Lambda(y\cdot\nabla\chi(\frac{y}{A})\Im\wit{\zeta}_b))\right]
	\end{split}
	\end{equation*}
	\underline{Claim 3:}\\
	\begin{enumerate}
		\item
		$$ |Term_1|+|Term_2|+|Term_3|\leq \delta(\alpha^*)\left(\Xi(s)+\int|I_{N\lambda}\wit{\epsilon}|^2+\int|\wit{\epsilon}|^2e^{-|y|}\right)+\Gamma_b^{1+z_0}.
		$$
		\item
		$$ \left|Term_4-\frac{(\wit{\epsilon}_1,L_+\Lambda^2Q)(\wit{\epsilon}_1,\Lambda Q)}{\|\Lambda Q\|_{L^2}^2}\right|\leq \delta(\alpha^*)\left(\Xi(s)+\int|I_{N\lambda}\wit{\epsilon}|^2+\int|\wit{\epsilon}|^2e^{-|y|}\right)+\Gamma_b^{1+z_0}.
		$$
		\item
		$$ |Term_5|\leq \delta(\alpha^*)\left(\int|I_{N\lambda}\wit{\epsilon}|^2+\int|\wit{\epsilon}|^2e^{-|y|}\right)+\Gamma_b^{1+z_0}.
		$$
	\end{enumerate}
	\underline{Proof:}\\
	For (1), the three terms on the left hand side can be estimated in a similar way by using  Lemma 3.6 and Claim 1.
	\begin{equation*}
	\begin{split}
	|Term_1|\leq & |b_s|\left(\int|\wit{\epsilon}|^2e^{-\kappa |y|}\right)^{\frac{1}{2}}\\
	\leq &C\left(\Xi(s)+\int|\nabla I_{N\lambda}\epsilon|^2+\int |\epsilon|^2e^{-|y|}+\Gamma_b^{1-C\eta}\right)\left(\int |\wit{\epsilon}|^2e^{-|y|}\right)^{\frac{1}{2}}\\
	\leq &C\left(\Xi(s)+\int|\nabla I_{N\lambda}\wit{\epsilon}|^2+\int |\wit{\epsilon}|^2e^{-|y|}+\Gamma_b^{1-C\eta}\right)\left(\int |\wit{\epsilon}|^2e^{-|y|}\right)^{\frac{1}{2}}\\
	\leq &\delta(\alpha^*)\left(\Xi(s)+\int|\nabla I_{N\lambda}\wit{\epsilon}|^2+\int|\wit{\epsilon}|^2e^{-|y|}\right)+\Gamma_b^{1+z_0}
	\end{split}
	\end{equation*}
	where in the last step, we write the term $$\Gamma_b^{1-C\eta}\left(\int |\wit{\epsilon}|^2e^{-|y|}\right)^{\frac{1}{2}}=\Gamma_b^{1-C\eta-\eta'}\left(\Gamma_b^{\eta'}\left(\int |\wit{\epsilon}|^2e^{-|y|}\right)^{\frac{1}{2}}\right)$$	
	and use the elementary inequality $XY\leq\frac{X^2+Y^2}{2}$.
	
	For (2), the left hand side can be estimated as
	\begin{equation*}
	\begin{split}
	&\left|\wit{\gamma}_s-\frac{(\wit{\epsilon_1},L_+\Lambda^2 Q)}{\|\Lambda Q\|_{L^2}^2}\right|\left|(\wit{\epsilon_1},\Lambda(\Sigma+\Re\wit{\zeta}_b))+(\wit{\epsilon_2},\Lambda(\Theta+\Im\wit{\zeta}_b))\right|\\
	+&\left|\frac{(\wit{\epsilon_1},L_+\Lambda^2 Q)}{\|\Lambda Q\|_{L^2}^2}\right|\left|(\wit{\epsilon_1},\Lambda(\Sigma-Q+\Re\wit{\zeta}_b))+(\wit{\epsilon_2},\Lambda(\Theta+\Im\wit{\zeta}_b))\right|.
	\end{split}
	\end{equation*}
	From  Lemma 3.6,
	\begin{equation*}
	\begin{split}
	\left|\wit{\gamma}_s-\frac{(\wit{\epsilon_1},L_+\Lambda^2 Q)}{\|\Lambda Q\|_{L^2}^2}\right|\leq & \left|\wit{\gamma}_s-\frac{(\epsilon_1,L_+\Lambda^2 Q)}{\|\Lambda Q\|_{L^2}^2}\right|+\frac{|(\Re\wit{\zeta}_b,L_+^2\Lambda^2 Q)|}{\|\Lambda Q\|_{L^2}^2}\\
	\leq &\delta(\alpha^*)\left(\int |\nabla I_{N\lambda}\wit{\epsilon}|^2+\int |\wit{\epsilon}|^2e^{-|y|}\right)^{\frac{1}{2}}+\Gamma_b^{\frac{1}{2}+z_0'}+C\Xi(s)
	\end{split}
	\end{equation*}
	since
	$$ \int |\wit{\zeta}_b||L^+\Lambda^2 Q|\leq C\int|\wit{\zeta}_b|e^{-\kappa|y|}\leq \Gamma_b^{\frac{1}{2}+z_0'},
	$$
	thanks to the property of $\zeta_b$, (see \cite{MR06JAMS}). Now (2) follows from (4) of Claim 2.
	
	For (3), we note that $\left|\frac{A_s}{A}\right|\leq C\left|\frac{b_s}{b^2}\right|$ and
	$$ \|\Lambda(y\cdot\nabla\left(\frac{y}{A}\right)\zeta_b)\|_{L^{\infty}}\leq A^C\Gamma_b^{1-C\eta},
	$$
	thus
	\begin{equation*}
	\begin{split}
	|Term_5|\leq & C\left|\frac{b_s}{b^2}\right|A^C\Gamma_b^{1-C\eta}\left(\int_{|y|\leq 2A}|\wit{\epsilon}|^2\right)^{\frac{1}{2}}\\
	\leq &C\left|\frac{b_s}{b^2}\right|A^C\Gamma_b^{1-C\eta}\left[A\left(\int |\nabla I_{N\lambda}\wit{\epsilon}|^2+\int|\wit{\epsilon}|^2e^{-|y|}\right)^{\frac{1}{2}}+\Gamma_b^{\frac{1}{2}+z_0}\right]\\
	\leq &\delta(\alpha^*)\left(\int|I_{N\lambda}\wit{\epsilon}|^2+\int|\wit{\epsilon}|^2e^{-|y|}\right)+\Gamma_b^{1+z_0}
	\end{split}
	\end{equation*}
	by writing that
	$$\|\wit{\epsilon}\|_{L^2(|y|\leq 2A)}\leq \|I_{N\lambda}\wit{\epsilon}\|_{L^2(|y|\leq 2A)}+\|(\mathrm{Id}-I_{N\lambda})\wit{\epsilon}\|_{L^2}
	$$
	and using (1) of Claim 1.

	\textbf{Step 4}: Estimate of degenerate scalar products.\\
	\underline{Claim 4}:
	\begin{enumerate}
		\item
		$$ (\wit{\epsilon_1},Q)^2\leq \delta(\alpha^*)\left(\int|\nabla I_{N\lambda}\wit{\epsilon}|^2+\int|\wit{\epsilon}|^2e^{-|y|}+\Xi(s)\right)+\Gamma_b^{1+z_0}
		$$
		\item $$(\wit{\epsilon}_2)^2\leq \delta(\alpha^*)\left(\int|\nabla I_{N\lambda}\wit{\epsilon}|^2+\int|\wit{\epsilon}|^2e^{-|y|}\right)+\Gamma_b^{1+z_0}.
		$$
		\item
		\begin{equation*}
		\begin{split}
		&(\wit{\epsilon_1},|y|^2Q)^2+(\wit{\epsilon}_1,yQ)^2+(\wit{\epsilon}_2,\Lambda Q)^2+(\wit{\epsilon_2},\Lambda^2 Q)^2\\ \leq &\delta(\alpha^*)\left(\int|\nabla I_{N\lambda}\wit{\epsilon}|^2+\int|\wit{\epsilon}|^2e^{-|y|}\right)+\Gamma_b^{1+z_0}.
		\end{split}
		\end{equation*}
	\end{enumerate}
	\underline{Proof:}\\
	We first indicate that $\Xi(s)\leq\delta(\delta^*)$. Essentially,
	$$ \Xi(s)=\frac{\lambda^2(s)}{2}\int |J_{N(s)}G(0,x)|^2dx
	$$
	with $J_N=\mathrm{Id}-I_{N}$.
	Recall that
	$$G(0,x)=\frac{1}{\lambda(0)^{\frac{d}{2}}}(Q_{b()}+g(0))\left(\frac{x-x(0)}{\lambda(0)}\right)e^{-i\gamma(0)},
	$$
	we estimate
	\begin{equation*}
	\begin{split}
	\frac{\lambda^2(s)}{2}\|J_{N(s)}G(0)\|_{L^2}^2\leq &\frac{\lambda(s)^2}{\lambda(0)^2}\|\nabla g(0)\|_{L^2}^2+\|J_{N(s)\lambda(0)}(\nabla Q_{b(0)})\|_{L^2}^2\\
	\leq & C\Gamma_{b(0)}^{\frac{3}{4}}+\frac{1}{N(s)\lambda(s)}\frac{\lambda(s)}{\lambda(0)}\\
	\leq & \delta(\alpha^*)
	\end{split}
	\end{equation*}
	thanks to the bootstrap assumption.
	
	Now (1) follows from the following estimate induced by almost conservation of energy:
	\begin{equation}\label{tildeenergy}
	(\wit{\epsilon}_1,Q)^2\leq \delta(\alpha^*)\left(\int|\nabla I_{N\lambda}\wit{\epsilon}|^2+\int |\wit{\epsilon}|^2e^{-|y|}+\Xi(s)\right)+\Gamma_b^{1+z_0}+(\lambda^2E(I_Nu))^2
	\end{equation}
	Indeed, we have already seen in Step 3 that
	$$ \int |\wit{\zeta}_b|^2e^{-\kappa|y|}\leq \Gamma_b^{\frac{1}{2}+z_0}.
	$$
	Thus we could replace the left hand side by $(\epsilon_1,Q)^2$. From the almost conservation of energy
	Lemma 3.2, we have
	\begin{equation*}
	\begin{split}
	2(\epsilon_1,Q)=-&2(\epsilon_1,\Sigma-Q+b\Lambda\Theta)-2(\epsilon,\Theta-b\Lambda\Sigma)+2(\epsilon_1,\Re\Psi)+2(\epsilon_2,\Im\Psi)\\
	+&2\Xi(s)+O\left(\int|\nabla I_{N\lambda}\epsilon|^2+\int|\epsilon|^2e^{-|y|}+\Gamma_b^{1-C\eta}\right)
	\end{split}
	\end{equation*}
	Then we conclude by the estimates
	$$ |\Re(\epsilon,Q_b-Q+ib\Lambda Q_b)|\leq \delta(\alpha^*)\left(\int |\nabla I_{N\lambda}\epsilon|^2+\int|\epsilon|^2e^{-|y|}\right)^{\frac{1}{2}}+\Gamma_b^{\frac{1}{2}+z_0},
	$$
	\begin{equation*}
	\begin{split}
	\left|\int |\epsilon||\Psi|\right|^2\leq &\Gamma_b^{1-C\eta}\left(\int_{R_b^-\leq|y|\leq R_b}|\epsilon|\right)^2\\
	\leq &\Gamma_b^{1-C\eta}\left(\int |\nabla I_{N\lambda}\epsilon|^2+\int|\epsilon|^2e^{-|y|}\right)^{\frac{1}{2}}+\Gamma_b^{\frac{1}{2}+z_0}
	\end{split}
	\end{equation*}
	and the Claim 1.

	(2) follows from the almost conservation of momentum
	\begin{align*}
	-2(\var_2,\nabla Q)=&\lambda P(I_Nu)-\Big(P\big(I_{N\lambda}(Q_b+\var)\big)-P(Q_b+I_{N\lambda}\var)\Big)\\
	&-2(\var_2,\nabla(Q-\Sigma))-2(\var_2,({\rm Id}-I_{N\lambda})\nabla\Sigma)\\
	&-2\int I_{N\lambda}\var_1\nabla\Theta-2\int I_{N\lambda}\var_1\nabla I_{N\lambda}\var_2.
	\end{align*}
	Again, we could replace the left hand side by $(\epsilon_2,\nabla Q)^2$. The desired estimate follows from the same manipulation as for (1). Finally, (3) follows from orthogonality condition imposed by $\epsilon$ and the small deformation of $Q$ to $Q_b$.

	\underline{Claim 5}:
	\begin{equation*}
	\begin{split}
	&|(\epsilon_1,\wit{L})|+|(\epsilon_2,\wit{K})|+|(\wit{\epsilon_1},\Lambda(Q^{\frac{4}{d}}\Re\wit{\zeta}_b))|+|(\wit{\epsilon_2},\Lambda(Q^{\frac{4}{d}}\Im\wit{\zeta}_b))|\\
	\leq & \delta(\alpha^*)\left(\int|\nabla I_{N\lambda}\wit{\epsilon}|^2+\int|\wit{\epsilon}|^2e^{-|y|}\right)+\Gamma_b^{1+z_0}.
	\end{split}
	\end{equation*}
	\underline{Proof:}\\
	Using the expression of $\wit{L}$, the property of $Q_b,\wit{\zeta}_b$, we estimate
	\begin{equation*}
	\begin{split}
	|(\epsilon_1,\wit{L})|=&\left|\int \epsilon_1\left[\left(\left(\frac{4\Sigma^2}{d|Q_b|^2}+1\right)|Q_b|^{\frac{4}{d}}-\left(1+\frac{4}{d}\right)Q^{\frac{4}{d}}\right)\Lambda\Re\wit{\zeta}_b+\frac{4\Sigma\Theta}{d|Q_b|^{2}}|Q_b|^{\frac{4}{d}}\Lambda\Im\wit{\zeta}_b\right]\right|\\
	\leq & C\left(\int |\epsilon|^2e^{-\frac{2}{d}(1-C\eta)\frac{\theta(b|y|)}{|b|}}\right)^{\frac{1}{2}}\Gamma_b^{1+\frac{C}{d}-C\eta}\\
	\leq &\delta(\alpha^*)\left(\int|\nabla I_{N\lambda}\wit{\epsilon}|^2+\int|\wit{\epsilon}|^2e^{-|y|}\right)+\Gamma_b^{1+z_0}
	\end{split}
	\end{equation*}
	Similarly for $|(\epsilon_2,\wit{K})|$. For the last two terms, we estimate, for example
	\begin{equation*}
	\begin{split}
	|(\wit{\epsilon}_1,\Lambda(Q^{\frac{4}{d}}\Re\wit{\zeta}_b))|\leq & \int|\wit{\epsilon}|(|\wit{\zeta}_b|+|\nabla\wit{\zeta}_b|)e^{-\kappa|y|}\\
	\leq &\left(\int |\wit{\epsilon}|^2e^{-\kappa|y|}\right)^{\frac{1}{2}}\Gamma_b^{\frac{1}{2}+z_0'}\\
	=&\left(\Gamma_b^{z_0'}\int |\wit{\epsilon}|^2e^{-\kappa|y|}\right)^{\frac{1}{2}}\Gamma_b^{\frac{1+z_0'}{2}}\\
	\leq &\delta(\alpha^*)\left(\int|\nabla I_{N\lambda}\wit{\epsilon}|^2+\int|\wit{\epsilon}|^2e^{-|y|}\right)+\Gamma_b^{1+z_0}.
	\end{split}
	\end{equation*}

	\textbf{Step 5}: Estimate of principal terms.
	
	\underline{Claim 6:}
	\begin{equation}\label{Claim6}
	\begin{split}
	&H(I_{N\lambda}\epsilon-\wit{\zeta}_b,I_{N\lambda}\epsilon-\wit{\zeta}_b)-\frac{(\wit{\epsilon_1},L_+\Lambda^2 Q)(\wit{\epsilon_1},\Lambda Q)}{\|\Lambda Q\|_{L^2}^2}-\left[(\Re\wit{\zeta}_b,\Lambda\Re F)+(\Im\wit{\zeta}_b,\Lambda\Im F)\right]\\
	\geq & c_1\left(\int|\nabla I_{N\lambda}\wit{\epsilon}|^2+\int|\wit{\epsilon}|^2e^{-|y|}+\Gamma_b\right)-\delta(\alpha^*)\Xi(s)
	\end{split}
	\end{equation}
	\underline{Proof:}\\
	Indeed, we may replace $I_{N\lambda}-\wit{\zeta}_b$ by $I_{N\lambda}\wit{\epsilon}$ by adding an error $\Gamma_b^{1+z_0}$ which does not change the type of the desired estimate. From the spectral property and Claim 4, we have
	\begin{equation*}
	\begin{split}
	&H(I_{N\lambda}\wit{\epsilon},I_{N\lambda}\wit{\epsilon})-\frac{(\wit{\epsilon_1},L_+\Lambda^2 Q)(\wit{\epsilon_1},\Lambda Q)}{\|\Lambda Q\|_{L^2}^2}\geq c_0\left(\int |I_{N\lambda}\wit{\epsilon}|^2+\int |I_{N\lambda}\wit{\epsilon}|^2e^{-|y|}\right)\\
	-&\frac{1}{c_0}\left[
	(\wit{\epsilon}_1,Q)^2+(\wit{\epsilon}_1,|y|^2Q)^2+(\wit{\epsilon}_1,yQ)^2+(\wit{\epsilon}_2,\Lambda Q)^2+(\wit{\epsilon}_2,\Lambda^2 Q)^2+(\wit{\epsilon}_2,\nabla Q)^2\right]\\
	\geq & c_1\left(\int |I_{N\lambda}\wit{\epsilon}|^2+\int |\wit{\epsilon}|^2e^{-|y|}\right)-\delta(\alpha^*)\Xi(s)-\Gamma_b^{1+z_0}
	\end{split}
	\end{equation*}
	Finally, we conclude by the estimate proved in \cite{MR06JAMS}:
	$$-\left[(\Re\wit{\zeta}_b,\Lambda\Re F)+(\Im\wit{\zeta}_b,\Lambda\Im F)\right]>c\Gamma_b.
	$$
	\textbf{Step 6}: Estimate of original reminder terms and conclusion.
	
	The classical remainder term $\mathcal{R}emainder$ has been already estimated in the proof of Lemma 3.4, while the term $\Upsilon_{N\lambda}$ can be also bounded easily by $\Gamma_b^{1+z_0}$.
	Combining Step 1 to Step 5, we obtain the desired estimate.

\end{proof}
Next, we need to control the $L^2$ type of term $\int_A^{2A}|\var|^2$ in \eqref{equ:f1sdef}. This is achieved by computing the flux of $L^2$ norm escaping the radiative zone. To do it, we introduce a radial nonnegative cutoff function $\psi(r)$ such that
\begin{align*}
\psi(r)= \begin{cases} 0 \quad r\leq\frac12\\
1\quad r\geq3, \end{cases}
\frac14\leq\psi'(r)\leq\frac12\quad\text{for}~1\leq r\leq2,~\psi'(r)\geq0.
\end{align*}
Let
$$\psi_A(s,r)=\psi\big(\tfrac{r}{A(s)}\big),$$
with $A(s)$ being given by \eqref{equ:atdef}, and so
\begin{align*}
\begin{cases}
\psi_A(r)=0~\text{for}~0\leq r\leq\frac{A}{2},\\
\frac{1}{4A}\leq\psi_A'(r)\leq\frac{1}{2A}~\text{for}~A\leq r\leq2A,\\
\psi_A(r)=1~\text{for}~r\geq3r,\\
\psi_A'(r)\geq0,~0\leq\psi_A(r)\leq1.
\end{cases}
\end{align*}

Moreover, we restrict the freedom on the choice of the parameters $(\eta,a)$ by assuming $a>C\eta$.

\begin{lemma}[$L^2$ dispersion at infinity in space]\label{lem:l2dispinf}
There holds for some universal constants $C,c_3>0$ and $s$ large enough:
\begin{equation}\label{equ:psiavar}
\Big\{\int \psi_A|\var|^2\Big\}_s\geq c_3b\int_A^{2A}|\var|^2-\frac{C}{b^2}\Xi(s)-\Gamma_b^{1+Ca}-\Gamma_b^\frac{a}{2}\int|\nabla I_{N(s)\lambda(s)}\var|^2.
\end{equation}
\end{lemma}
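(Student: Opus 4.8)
The plan is to derive a localized $L^2$-flux identity for $\int\psi_A|\var|^2$, following the $H^1$-scheme of \cite{MR06JAMS} but carrying the frequency truncation $I_{N\lambda}$ throughout as in \cite{CR09}. Writing the renormalized flow \eqref{equ:var1}--\eqref{equ:var2} in complex form, $\var$ solves $i\pa_s\var+\Delta\var+ib\Lambda\var=\mathcal{W}$, where $\mathcal{W}$ collects: the localized potential $\sim|Q_b|^{4/d}\var$ (supported in $\{|y|\le R_b\}$) coming from $M_\pm(\var)$; the nonlinear interactions $R_1(\var)+iR_2(\var)$, which coincide with $|\var|^{4/d}\var$ wherever $Q_b=0$; the small modulation terms $i\big(\tfrac{\lambda_s}\lambda+b\big)\Lambda\var$, $\tilde\gamma_s\var$, $i\tfrac{x_s}\lambda\cdot\nabla\var$; and the inhomogeneous terms $\big(\tfrac{\lambda_s}\lambda+b\big)\Lambda Q_b$, $\tilde\gamma_sQ_b$, $\tfrac{x_s}\lambda\cdot\nabla Q_b$, $\Psi_b$, $b_s\pa_bQ_b$, all supported in $\{|y|\le R_b\}$. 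Since by \eqref{equ:atdef} $\mathrm{supp}\,\psi_A\subset\{|y|\ge A/2\}$ with $A\sim e^{2a/b}\gg R_b$, the localized terms are negligible on $\mathrm{supp}\,\psi_A$ (their $I_{N\lambda}$-images, restricted there, are $\Gamma_b$-tiny by the rapid decay of the multiplier kernel), and since $\|(\mathrm{Id}-I_{N\lambda})\var\|_{L^2}\lesssim(N\lambda)^{-s}\ll\Gamma_b^{10}$ one may replace $\int\psi_A|\var|^2$ by $\int\psi_A|I_{N\lambda}\var|^2$ (including in its $s$-derivative, using the control of $|b_s|$ from Lemma \ref{lem:congeo}). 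I would then compute $\{\int\psi_A|I_{N\lambda}\var|^2\}_s$ from the equation for $I_{N\lambda}\var$, the commutators $[I_{N\lambda},\Lambda]\var$ and $[I_{N\lambda},y\cdot\nabla]\var$ being $\lesssim(N\lambda)^{-s}\ll\Gamma_b^{10}$ in $L^2$ since their symbols live at frequency $\sim N\lambda$.

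Testing against $\psi_A\overline{I_{N\lambda}\var}$ and taking imaginary parts: the $i\pa_s$-term gives $\tfrac12\{\int\psi_A|I_{N\lambda}\var|^2\}_s-\tfrac12\int(\pa_s\psi_A)|I_{N\lambda}\var|^2$; the $\Delta$-term, after one integration by parts, gives $-\mathcal{F}$ with $\mathcal{F}:={\rm Im}\int(\nabla\psi_A\cdot\nabla I_{N\lambda}\var)\overline{I_{N\lambda}\var}$; and the $ib\Lambda$-term, using ${\rm Re}(\Lambda v\,\bar v)=\tfrac d2|v|^2+\tfrac12 y\cdot\nabla|v|^2$ and integration by parts, gives $-\tfrac b2\int(y\cdot\nabla\psi_A)|I_{N\lambda}\var|^2$. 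Rearranging,
\[
\Big\{\int\psi_A|I_{N\lambda}\var|^2\Big\}_s=b\int(y\cdot\nabla\psi_A)|I_{N\lambda}\var|^2+\int(\pa_s\psi_A)|I_{N\lambda}\var|^2+2\mathcal{F}+2\,{\rm Im}\!\int\widetilde{\mathcal{W}}\,\psi_A\overline{I_{N\lambda}\var},
\]
with $\widetilde{\mathcal{W}}=I_{N\lambda}\mathcal{W}-ib[I_{N\lambda},\Lambda]\var$. The main term is $b\int(y\cdot\nabla\psi_A)|I_{N\lambda}\var|^2=b\int\tfrac{|y|}A\psi'(\tfrac{|y|}A)|I_{N\lambda}\var|^2\ge\tfrac b8\int_A^{2A}|\var|^2-\Gamma_b^{1+Ca}$, since $\psi'\ge0$ and $\psi'\ge\tfrac14$ on $[1,2]$; this produces the asserted $c_3b\int_A^{2A}|\var|^2$.

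In $\widetilde{\mathcal{W}}$: the diagonal nonlinearity $|I_{N\lambda}\var|^{4/d}I_{N\lambda}\var$ pairs with $\psi_A\overline{I_{N\lambda}\var}$ into the real integrand $\psi_A|I_{N\lambda}\var|^{2+4/d}$, hence $0$ after taking ${\rm Im}$; the residual nonlinear contribution is the commutator $I_{N\lambda}(|\var|^{4/d}\var)-|I_{N\lambda}\var|^{4/d}I_{N\lambda}\var$, estimated by Lemma \ref{lem:xianchafa}; the term $\tilde\gamma_s\var$ contributes ${\rm Im}\int\tilde\gamma_s\psi_A|I_{N\lambda}\var|^2=0$; the localized potential, the inhomogeneous terms and the commutators are $\Gamma_b$-tiny as above; and the modulation terms $i\big(\tfrac{\lambda_s}\lambda+b\big)\Lambda\var$, $i\tfrac{x_s}\lambda\cdot\nabla\var$ are controlled, after integrating by parts onto $\psi_A\overline{I_{N\lambda}\var}$, via $|\tfrac{\lambda_s}\lambda+b|,\,|\tfrac{x_s}\lambda|\le\delta_0\big(\int|\var|^2e^{-|y|}\big)^{1/2}+C\big(\Xi(s)+\int|\nabla I_{N\lambda}\var|^2\big)+\Gamma_b^{1-C\eta}$ from Lemma \ref{lem:congeo}, exactly as in \cite{MR06JAMS,CR09}. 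Finally $\int(\pa_s\psi_A)|I_{N\lambda}\var|^2=\tfrac{2ab_s}{b^2}\int\tfrac{|y|}A\psi'(\tfrac{|y|}A)|I_{N\lambda}\var|^2$ has a favourable sign when $b_s\ge0$, and otherwise $|b_s|\le C\big(\Xi(s)+\int|\nabla I_{N\lambda}\var|^2+\int|\var|^2e^{-|y|}\big)+\Gamma_b^{1-C\eta}$ (Lemma \ref{lem:congeo}) together with the bootstrap \eqref{equ:vartass} gives $\tfrac{a|b_s|}{b^2}\ll b$, so this term is absorbed; the residue is controlled, using the Poincaré bound $\int_{|y|\le 3A}|v|^2\lesssim A^2(\int|\nabla I_{N\lambda}v|^2+\int|v|^2e^{-|y|})+\Gamma_b^{1+z_0}$ as in the proof of Lemma \ref{lem:virdradreg} together with $A^2\Gamma_b^{1/4}\le\Gamma_b^{c}$ for $a$ small, by terms of size $\tfrac C{b^2}\Xi(s)$ and $\Gamma_b^{1+Ca}$.

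The main obstacle, and the only place where the truncation $I_{N\lambda}$ is genuinely essential, is the current term $\mathcal{F}$: the full gradient $\nabla\var$ need not even lie in $L^2$, whereas $\nabla I_{N\lambda}\var$ is controlled by the bootstrap. Using that $\psi'$ is bounded, so $|\nabla\psi_A|^2\lesssim\tfrac1A\cdot\tfrac{\psi'(|y|/A)}A$, and that $\tfrac{|y|}A\ge\tfrac12$ on $\mathrm{supp}\,\psi'$, so $\int\tfrac{\psi'(|y|/A)}A|I_{N\lambda}\var|^2\lesssim\int(y\cdot\nabla\psi_A)|I_{N\lambda}\var|^2$ is comparable to the main term, Cauchy--Schwarz and Young's inequality give
\[
2|\mathcal{F}|\le\tfrac b2\int(y\cdot\nabla\psi_A)|I_{N\lambda}\var|^2+\tfrac{C}{bA}\int|\nabla I_{N\lambda}\var|^2 ,
\]
the first piece being absorbed into the main term, and the second producing precisely the error $\Gamma_b^{a/2}\int|\nabla I_{N(s)\lambda(s)}\var|^2$, because $\tfrac1{bA}=\tfrac1b e^{-2a/b}\le\Gamma_b^{a/2}$ once $a>C\eta$ (which is assumed) and $b$ is small. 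Collecting the four contributions gives \eqref{equ:psiavar}; throughout, the delicate point is the arithmetic of the small parameters $b,a,\eta$ and the $\Gamma_b$-powers, which has to be arranged so that all the error terms combine into exactly $\tfrac C{b^2}\Xi(s)$, $\Gamma_b^{1+Ca}$, and $\Gamma_b^{a/2}\int|\nabla I_{N\lambda}\var|^2$.
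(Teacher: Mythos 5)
Your route commutes $I_{N\lambda}$ through the virial computation, whereas the paper keeps the flux identity for $\var$ itself \emph{exact} and introduces the truncation only inside the one term that needs it. Pairing \eqref{equ:var1}, \eqref{equ:var2} with $\psi_A\var_1$, $\psi_A\var_2$, every term involving $Q_b$, $\partial_bQ_b$ or $\Psi_b$ vanishes identically on $\mathrm{supp}\,\psi_A\subset\{|y|\ge A/2\}$, the nonlinear term pairs to the real quantity $\int\psi_A|\var|^{4/d}|\var|^2$ and drops out, and one lands on
\begin{equation*}
\tfrac12\Big\{\int\psi_A|\var|^2\Big\}_s=\tfrac12\int(\pa_s\psi_A)|\var|^2+\tfrac b2\int(y\cdot\nabla\psi_A)|\var|^2+\Im\int\nabla\psi_A\cdot\nabla\var\,\bar\var-\tfrac12\Big(\tfrac{\lambda_s}{\lambda}+b\Big)\int(y\cdot\nabla\psi_A)|\var|^2-\tfrac12\tfrac{x_s}{\lambda}\cdot\int\nabla\psi_A|\var|^2
\end{equation*}
with no error whatsoever. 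The only place $\nabla\var\notin L^2$ hurts is the current $\Im\int\nabla\psi_A\cdot\nabla\var\,\bar\var$; the paper splits $\nabla\var=\nabla I_{N\lambda}\var+\nabla(\mathrm{Id}-I_{N\lambda})\var$ there, treats the first piece by Cauchy--Schwarz as you do, and the second by $\dot H^{1/2}$--$\dot H^{-1/2}$ duality plus interpolation, giving $\Gamma_b^{10}$.

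The gap in your proposal is the sentence claiming one may replace $\int\psi_A|\var|^2$ by $\int\psi_A|I_{N\lambda}\var|^2$ \emph{including in its $s$-derivative}, ``using the control of $|b_s|$ from Lemma \ref{lem:congeo}.'' The lemma asserts a lower bound on $\{\int\psi_A|\var|^2\}_s$, not on $\{\int\psi_A|I_{N\lambda}\var|^2\}_s$, and $|b_s|$ controls only the $\pa_s\psi_A$ term, not the $s$-derivative of $\int\psi_A(|\var|^2-|I_{N\lambda}\var|^2)$. Differentiating that error along the flow produces a cross-current of the form $\Im\int\nabla\psi_A\cdot\big[\nabla I_{N\lambda}\var\,\overline{(\mathrm{Id}-I_{N\lambda})\var}+\nabla(\mathrm{Id}-I_{N\lambda})\var\,\overline{I_{N\lambda}\var}\big]$, whose second piece involves $\nabla(\mathrm{Id}-I_{N\lambda})\var$ and is exactly as delicate as the term the paper handles by duality; the hard estimate has been relocated, not removed. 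Your route also forces in a nonlinear commutator $I_{N\lambda}(|\var|^{4/d}\var)-|I_{N\lambda}\var|^{4/d}I_{N\lambda}\var$ and commutators $[\,I_{N\lambda},y\cdot\nabla\,]\var$, $(\pa_sI_{N\lambda})\var$, none of which appear in the paper's argument; and the citation of Lemma \ref{lem:xianchafa} does not fit, since that lemma gives $L_t^2L_x^{2d/(d+2)}$ bounds over a LWP interval, not the pointwise-in-$s$ scalar bound needed here. The remaining ingredients of your sketch (the main term, the Cauchy--Schwarz/Young bound on the current with $\tfrac1{bA}\lesssim\Gamma_b^{a/2}$ since $\pi/2<2$, the sign of $\pa_s\psi_A$ via \eqref{equ:glovirest}, the modulation terms via Lemma \ref{lem:congeo}) match the paper. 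The clean fix is to keep $\var$ in the identity, as the paper does, and split only inside the single current term.
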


\begin{proof}


Take the inner product of \eqref{equ:var1} with $\psi_A\var_1$ and of \eqref{equ:var2} with $\psi_A\var_2$ and integrate by parts. Note that the supports of $(Q_b,\Psi_b)$ and $\psi_A$ are disjoint. Then, we obtain
\begin{align}\nonumber
\frac12\Big\{\int\psi_A|\var|^2\Big\}_s=&\frac12\int\frac{\pa\psi_A}{\pa s}|\var|^2+\frac{b}{2}\int y\cdot\psi_A|\var|^2+{\rm Im}\Big(\int\nabla\psi_A\cdot\nabla\var\bar{\var}\Big)\\\label{equ:dsvar2est}
&-\frac12\Big(\frac{\lambda_s}{\lambda}+b\Big)\int y\cdot\nabla\psi_A|\var|^2-\frac12\frac{x_s}{\lambda}\int\nabla\psi_A|\var|^2.
\end{align}
First observe from the choice of $\psi:$
\begin{equation}\label{equ:phivar2}
10\int\psi'\big(\tfrac{y}{A}\big)|\var|^2\geq\frac{1}{A}\int y\cdot\nabla\psi\big(\tfrac{y}{A}\big)|\var|^2\geq\frac1{10}\int\psi'\big(\tfrac{y}{A}\big)|\var|^2\geq\frac1{40}\int_A^{2A}|\var|^2.
\end{equation}
The main term is
\begin{equation}\label{equ:maintermvar1}
\frac{b}{2}\int y\cdot\psi_A|\var|^2\geq\frac1{20}\int\psi'\big(\tfrac{y}{A}\big)|\var|^2.
\end{equation}
Using \eqref{equ:atdef} and \eqref{equ:lambsest}, we get
\begin{align}\label{equ:secmaitre}
&\int\frac{\pa\psi_A}{\pa s}|\var|^2=-\frac{A_s}{A^2}\int y\cdot\nabla\psi\big(\tfrac{y}{A}\big)|\var|^2=2a\frac{b_s}{Ab^2}\int y\cdot\nabla\psi\big(\tfrac{y}{A}\big)|\var|^2\\\nonumber
\geq&\frac{a}{Ab^2}\Big[c_0\Big(\Xi(s)+\int|\nabla I_{N(s)\lambda(s)}\var(s)|^2+\int|\var(s)|^2e^{-|y|}\Big)-\Gamma_{b(s)}^{1-C\eta}\Big]\int\psi'\big(\tfrac{y}{A}\big)|\var|^2.
\end{align}
Next,
by Cauchy-Schwartz inequality, we have
\begin{align*}
&\Big|\int\nabla\psi_A\cdot\nabla\var\bar{\var}\Big|\\
\leq&\Big|\int\nabla\psi_A\cdot\nabla(I_{N\lambda}\var)\bar{\var}\Big|
+\Big|\int\nabla\psi_A\cdot\nabla\big(({\rm Id}-I_{N\lambda})\var\big)\bar{\var}\Big|\\
\leq&\frac{b}{100}\int\psi'\big(\tfrac{y}{A}\big)|\var|^2+\Gamma_b^\frac{a}{2}\int|\nabla I_{N\lambda}\var|^2+\Big|\int\nabla\psi_A\cdot\nabla\big(({\rm Id}-I_{N\lambda})\var\big)\bar{\var}\Big|.
\end{align*}
On the other hand, by duality, Bernstein's inequaltiy, and interpolation, we estimate
\begin{align*}
\Big|\int\nabla\psi_A\cdot\nabla\big(({\rm Id}-I_{N\lambda})\var\big)\bar{\var}\Big|\leq&\big\|({\rm Id}-I_{N\lambda})\var\big\|_{\dot{H}^\frac12}\|\nabla\psi_A\bar{\var}\|_{\dot{H}^\frac12}\\
\lesssim&\frac{(N\lambda)^{\frac12-s}}{A}\|\var\|_{\dot{H}^s}\|\var\|_{\dot{H}^\frac12}\\
\lesssim&\Gamma_b^\frac{a}2\lambda(t)^{\frac{1-\beta}{\beta}(s-\frac12)}
\|\var\|_{L^2}^{1-\frac1{2s}}\|I_{N\lambda}\var\|_{\dot{H}^1}^{1+\frac{1}{2s}}\\
\leq&\Gamma_b^{10}.
\end{align*}
Using \eqref{equ:gammaxsest}, we derive
\begin{equation}\label{equ:xsterm}
\Big|\frac{x_s}{\lambda}\int\nabla\psi_A|\var|^2\Big|\leq\frac{C}{A}\int\psi'\big(\tfrac{y}{A}\big)|\var|^2\leq\Gamma_b^\frac{a}2\int\psi'\big(\tfrac{y}{A}\big)|\var|^2.
\end{equation}
By the same way, we have by  \eqref{equ:lambsest}
\begin{align}\label{equ:lamterm}
&\Big|\Big(\frac{\lambda_s}{\lambda}+b\Big)\int y\cdot\nabla\psi_A|\var|^2\Big|\\\nonumber\leq& C\Big(\Xi(s)+\int|\nabla I_{N(s)\lambda(s)}\var(s)|^2+\int|\var(s)|^2e^{-|y|}+\Gamma_{b(s)}^{1-C\eta}\Big)\int\psi'\big(\tfrac{y}{A}\big)|\var|^2.
\end{align}

\end{proof}

\begin{corollary}[Lyapounov functional]\label{cor:jsest}
For some universal constant $C>0$ anf for $s$ large, the following holds:
\begin{align}\nonumber
\{\mathcal{J}\}_s\leq&-Cb\Big(\Gamma_b+\Xi(s)+\int|\nabla I_{N(s)\lambda(s)}\tilde{\var}|^2+\int|\tilde{\var}(s)|^2e^{-|y|}+\int_A^{2A}|\var(s)|^2\Big)\\\label{equ:jsest}
&+C\frac{\Xi(s)}{b^2},
\end{align}
with
\begin{align}\nonumber
\mathcal{J}(s)=&\Big(\int |Q_b|^2-\int Q^2\Big)+2(\var_1,\Sigma)+2(\var_2,\Theta)+\int(1-\psi_A)|\var|^2\\\label{equ:jsdef}
&-c\Big(b\tilde{f}_1(b)-\int_0^b\tilde{f}_1(v)\;dv+b\big[{\rm Re}(\var_2,\Lambda\tilde{\zeta})-{\rm Im}(\var_1,\Lambda\tilde{\zeta})\big]\Big),
\end{align}
where $c>0$ denotes some small enough universal constant and
\begin{equation}\label{equ:tildef1def}
\tilde{f}_1(b)=\frac{b}{4}\|yQ_b\|_{L^2}^2+\frac12{\rm Im}\Big(\int(y\cdot\nabla\tilde{\zeta})\bar{\tilde{\zeta}}\Big).
\end{equation}
\end{corollary}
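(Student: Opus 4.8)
The plan is to differentiate, with respect to the rescaled time $s$, each of the four blocks in the definition \eqref{equ:jsdef} of $\mathcal{J}$, to estimate each $s$-derivative by the dispersive estimates already established --- the global virial estimate (Lemma \ref{lem:glvirest}), the radiative virial estimate (Lemma \ref{lem:virdradreg}) and the $L^2$-flux estimate (Lemma \ref{lem:l2dispinf}) --- together with the parameter control of Lemma \ref{lem:congeo}, and then to add everything up and fix the free small constant $c$ in \eqref{equ:jsdef} (and, implicitly, the parameters $a$, $\eta$ subject to $a>C\eta$) so that all remainder terms are absorbed. For the first three terms one uses that the $H^s$ flow preserves $\|u\|_{L^2}$, which by the geometric decomposition reads $\|Q_b\|_{L^2}^2+2(\var_1,\Sigma)+2(\var_2,\Theta)+\|\var\|_{L^2}^2=\|u_0\|_{L^2}^2$, hence
$$\Big\{\int|Q_b|^2-\int Q^2+2(\var_1,\Sigma)+2(\var_2,\Theta)+\int(1-\psi_A)|\var|^2\Big\}_s=-\Big\{\int\psi_A|\var|^2\Big\}_s,$$
which Lemma \ref{lem:l2dispinf} bounds from above by $-c_3b\int_A^{2A}|\var|^2+\tfrac{C}{b^2}\Xi(s)+\Gamma_b^{1+Ca}+\Gamma_b^{a/2}\int|\nabla I_{N\lambda}\var|^2$.

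For the radiative block the point is that $\tilde f_1$ of \eqref{equ:tildef1def} is exactly the $b$-only part of $f_1$ of \eqref{equ:f1defs}, so a computation with the fundamental theorem of calculus produces the cancellation
$$\Big\{b\tilde f_1(b)-\int_0^b\tilde f_1(v)\,dv+b\big[{\rm Re}(\var_2,\Lambda\tilde\zeta)-{\rm Im}(\var_1,\Lambda\tilde\zeta)\big]\Big\}_s=b\{f_1\}_s+b_s\,g_1(s)-b\,\{r\}_s,$$
where $g_1(s)={\rm Re}(\var_2,\Lambda\tilde\zeta)-{\rm Im}(\var_1,\Lambda\tilde\zeta)$ and $r(s)$ is the leftover piece of $f_1$, which together with its $s$-derivative is $O(\Gamma_b^{1+z_0})$ by the $A$-localization and smallness of $\tilde\zeta$ and $\|\var\|_{L^2}\ll1$. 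Lemma \ref{lem:virdradreg} bounds $b\{f_1\}_s$ from below by $c_1b\big(\Xi+\int|\nabla I_{N\lambda}\tilde\var|^2+\int|\tilde\var|^2e^{-|y|}+\Gamma_b\big)-\tfrac{b}{\delta_1}\int_A^{2A}|\var|^2$, while the cross term $b_sg_1$ is handled exactly as $\mathrm{Term}_1$ in Claim 3, Step 3 of the proof of Lemma \ref{lem:virdradreg}: bounding $|b_s|$ by \eqref{equ:lambsest} and $|g_1|\lesssim\Gamma_b^{\frac12-C\eta}\big(\int|\var|^2e^{-\kappa|y|}\big)^{1/2}$ via the exponential decay and smallness of $\Lambda\tilde\zeta$, a Young inequality gives $|b_sg_1|\leq\delta(\alpha^\ast)\big(\Xi+\int|\nabla I_{N\lambda}\var|^2+\int|\var|^2e^{-|y|}\big)+\Gamma_b^{1+z_0}$. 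Multiplying this block by $-c$ therefore yields a net negative contribution $-cc_1b\big(\Xi+\int|\nabla I_{N\lambda}\tilde\var|^2+\int|\tilde\var|^2e^{-|y|}+\Gamma_b\big)$, a positive term $\tfrac{cb}{\delta_1}\int_A^{2A}|\var|^2$, and errors of size $\delta(\alpha^\ast)(\cdots)+\Gamma_b^{1+z_0}$.

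It then remains to add the two contributions. First I would fix $c$ small enough that $c/\delta_1<c_3/2$, which keeps the clean term $-\tfrac{c_3}{2}b\int_A^{2A}|\var|^2$. The one genuinely new difficulty --- absent from the $H^1$ theory of \cite{MR06JAMS,CR09} --- is the low-regularity remainder $\Gamma_b^{a/2}\int|\nabla I_{N\lambda}\var|^2$ from the first block (and, more broadly, the systematic appearance of $\tilde\var$-norms in the radiative block against $\var$-norms in the mass block): to dispose of it I would use the elementary comparison $\int|\nabla I_{N\lambda}\var|^2\leq C\int|\nabla I_{N\lambda}\tilde\var|^2+\Gamma_b^{1-C\eta}$ (Claim 1 of the proof of Lemma \ref{lem:virdradreg}) together with $\Gamma_b^{a/2}\ll b$, so that it is swallowed by $-cc_1b\int|\nabla I_{N\lambda}\tilde\var|^2$ at the cost of a $\Gamma_b^{1+a/2-C\eta}$ error, which is $o(b\Gamma_b)$ precisely because $a>C\eta$; likewise $\Gamma_b^{1+Ca}=o(b\Gamma_b)$, and all $\delta(\alpha^\ast)$- and $\Gamma_b^{1+z_0}$-errors are $o$ of the surviving negative block for $\alpha^\ast$ and $b$ small. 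The term $\tfrac{C}{b^2}\Xi(s)$ is kept unchanged --- it is exactly the last term allowed in \eqref{equ:jsest} --- and the remaining negative terms $-Cb\big(\Gamma_b+\Xi+\int|\nabla I_{N\lambda}\tilde\var|^2+\int|\tilde\var|^2e^{-|y|}+\int_A^{2A}|\var|^2\big)$ give \eqref{equ:jsest} for $s$ large enough (equivalently $b(s)$ small enough) that all the above absorptions hold. The main obstacle is precisely this bookkeeping of the $I_N$-induced errors; the rest is a direct transcription of the $H^1$ argument.
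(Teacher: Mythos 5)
Your proof is correct and reconstructs exactly the argument the paper leaves implicit: the paper gives no proof of Corollary~\ref{cor:jsest} and instead refers to Subsection~4.3 of~\cite{CR09}, and the three ingredients you combine --- $L^2$-mass conservation turning the first three blocks of $\mathcal{J}$ into $-\{\int\psi_A|\var|^2\}_s$ controlled by Lemma~\ref{lem:l2dispinf}, the Legendre-type trick in $b$ so that differentiating $b\tilde f_1-\int_0^b\tilde f_1+bg_1$ produces $b\{f_1\}_s+b_sg_1$ controlled by Lemma~\ref{lem:virdradreg} and Lemma~\ref{lem:congeo}, and finally fixing $c$ small and using $\Gamma_b^{a/2}\ll b$, $a>C\eta$, and $\int|\nabla I_{N\lambda}\var|^2\leq C\int|\nabla I_{N\lambda}\tilde\var|^2+\Gamma_b^{1-C\eta}$ to absorb all remainders --- are precisely the standard steps of that subsection, adapted to the present $I_N$-framework. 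One small clarification: since \eqref{equ:f1defs} and \eqref{equ:tildef1def} give exactly $f_1=\tilde f_1(b)+g_1$ (the $\var{\tilde\zeta}$ in \eqref{equ:f1defs} is a typo for $\bar{\tilde\zeta}$), your "leftover piece $r$ of $f_1$" is identically zero, so the $-b\{r\}_s$ term you allow for never actually appears; the rest of the bookkeeping (in particular the choice $c/\delta_1<c_3/2$ to keep a genuine $-\tfrac{c_3}{2}b\int_A^{2A}|\var|^2$) is sound.
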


Finally, by the same argument as Subsection 4.3 in \cite{CR09}, we conclude the proof of the bootstrap Lemma \ref{lem:bootstrap}.

\subsection{End proof of Proposition \ref{prop:main}}

Now, we are in position to proving Proposition \ref{prop:main}. The proof is the same as in \cite{CR09}. The main difference is the step 4: Strong $L^2$ convergence of excess mass outside the blowup point and Step 5: nonconcentration of the $L^2$ norm at the blowup point. We are reduced to show the following lemma.

\begin{lemma}\label{lem:finallemm}
Let $R>0$. Let $x(T)=\lim\limits_{t\to T}x(t)$. Then, there exists $u^\ast\in L^2$ such that
\begin{equation}\label{equ:l2convetout}
u(t)\to u^\ast~\text{in}~L^2(\R^d\backslash\{|x-x(T)|\leq R\})~\text{as}~t\to T.
\end{equation}
Furthermore, there holds
\begin{equation}\label{equ:uastl2est}
\int|u^\ast|^2=\lim_{t\to T}\int\chi_{R(t)}|I_{N(t)}u(t)|^2,~R(t)=10A(t)\lambda(t),
\end{equation}
 with
\begin{equation}
\chi_R(x)=\chi\big(\tfrac{x-x(t)}{R}\big),~\chi(x)=\begin{cases} 0\quad |x|\leq1\\
1\quad|x|\geq2.
\end{cases}
\end{equation}
\end{lemma}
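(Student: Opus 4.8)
The plan is to follow Steps 4--5 of \cite{CR09}, replacing the solution $u$ by its regularization $I_{N(t)}u$ in every $H^1$-level estimate and exploiting that $I_{N(t)}$ is transparent at the geometrical scale: since $N(t)\lambda(t)=\lambda(t)^{-\frac{1-\beta}{\beta}}\to+\infty$ superexponentially, one has $\|(\mathrm{Id}-I_{N(t)\lambda(t)})\var(t)\|_{L^2}\lesssim (N(t)\lambda(t))^{-s}\|\var(t)\|_{\dot H^s}\to0$, and similarly $I_{N(t)\lambda(t)}Q_{b(t)}$ and $\tilde{\zeta}_{b(t)}$ are negligible far from the soliton core. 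First I would reduce the statement to two assertions: \textbf{(a)} for each fixed $R>0$ the family $\{u(t)\}$ is Cauchy in $L^2(\{|x-x(T)|\ge R\})$ as $t\to T$; \textbf{(b)} the mass $\int_{R(t)\le|x-x(t)|\le R}|u(t)|^2$ of $u(t)$ in the annulus between the shrinking core radius $R(t)=10A(t)\lambda(t)$ and the fixed radius $R$ converges, as $t\to T$, to $\int_{0<|x-x(T)|\le R}|u^\ast|^2$. Granting (a) one gets a limit $u^\ast\in L^2(\R^d\setminus\{x(T)\})$, consistent as $R\downarrow0$ and globally in $L^2$ by $\|u(t)\|_{L^2}=\|u_0\|_{L^2}$, which is \eqref{equ:l2convetout}; granting (b), splitting $\int\chi_{R(t)}|I_{N(t)}u(t)|^2=\int_{R(t)\le|x-x(t)|\le R}|I_{N(t)}u|^2+\int_{|x-x(t)|\ge R}|I_{N(t)}u|^2$, replacing $I_{N(t)}u$ by $u$ up to an $o(1)$ error, and letting $t\to T$ then $R\downarrow0$, yields \eqref{equ:uastl2est}.

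For \textbf{(a)} I would take a radial cutoff $\phi_R(x)=\phi\big(\tfrac{x-x(T)}{R}\big)$ with $\phi\equiv0$ on $\{|x|\le1\}$, $\phi\equiv1$ on $\{|x|\ge2\}$, and study $m_R(t):=\int\phi_R|u(t)|^2$, so that $m_R'(t)=2\,\mathrm{Im}\int\nabla\phi_R\cdot(\bar u\,\nabla u)$. For $t$ near $T$ one has $|x(t)-x(T)|+R(t)\ll R$, so on $\mathrm{supp}\,\nabla\phi_R$ the self-similar variable satisfies $|y|=\tfrac{|x-x(t)|}{\lambda(t)}\gg R_{b(t)}$, where $Q_{b(t)}\equiv0$, and the geometrical decomposition \eqref{equ:decass} reads simply $u(t,x)=\lambda(t)^{-d/2}\var(t,y)e^{-i\gamma(t)}$ there. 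Rescaling the flux integral to the $y$-variable and splitting $\var=I_{N(t)\lambda(t)}\var+(\mathrm{Id}-I_{N(t)\lambda(t)})\var$, the low-frequency piece is bounded by $\lambda(t)^{-1}\|\var(t)\|_{L^2}\|\nabla I_{N\lambda}\var(t)\|_{L^2}\lesssim\lambda(t)^{-1}\Gamma_{b(t)}^{1/3}$ by \eqref{equ:lamtimpse}, while the high-frequency piece is estimated via $\dot H^{1/2}$--$\dot H^{-1/2}$ duality, Bernstein, and the interpolation $\|\var\|_{\dot H^{1/2}}\lesssim\|\var\|_{L^2}^{1-\frac1{2s}}\|\var\|_{\dot H^s}^{\frac1{2s}}$ -- legitimate since $s>\tfrac12$ on the admissible range $s>\tfrac1{1+\min\{1,4/d\}}$ -- gaining a factor $(N(t)\lambda(t))^{\frac12-s}$. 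Passing to the rescaled time of \eqref{equ:stdef}, both contributions become positive powers of $\lambda(s)$ times bounded factors, hence integrable up to $t=T$ with vanishing tail since $\lambda(s)$ decays superexponentially; thus $m_R(t)$ converges. Combining with the bubble decomposition of $|u(t)|^2$ along $t\to T$ -- which, as in \cite{CR09}, gives $|u(t)|^2\rightharpoonup\|Q\|_{L^2}^2\,\delta_{x(T)}+|u^\ast|^2$ -- one identifies the limit with $\int_{|x-x(T)|>R}|u^\ast|^2$ and deduces $\int\phi_R|u(t)-u^\ast|^2\to0$, which is (a) after tightening the cutoff.

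For \textbf{(b)} the $L^2$-dispersion estimates of the radiative regime enter essentially. Corollary \ref{cor:jsest} shows that $\mathcal{J}(s)$ is almost decreasing; substituting the mass relation $2(\var_1,\Sigma)+2(\var_2,\Theta)=\|u_0\|_{L^2}^2-\int|Q_b|^2-\int|\var|^2$ into \eqref{equ:jsdef} reduces $\mathcal{J}(s)$, up to $O(\text{small})$, to $(\|u_0\|_{L^2}^2-\|Q\|_{L^2}^2)-\int\psi_{A(s)}|\var(s)|^2$, so $s\mapsto\int\psi_{A(s)}|\var(s)|^2$ is almost increasing and converges, say to $\ell$; moreover the flux term $b\int_A^{2A}|\var|^2$ of \eqref{equ:psiavar} has finite time-integral, which controls the discrepancy between the windows $\{|y|\gtrsim A\}$ and $\{|y|\gtrsim10A\}$ relevant to $\psi_A$ and to $\chi_{R(t)}$. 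Since on $\{|y|\gtrsim A(t)\}$ one has $Q_{b(t)}\equiv0$, $I_{N(t)\lambda(t)}Q_{b(t)}$ and $\tilde{\zeta}_{b(t)}$ negligible, and $\|(\mathrm{Id}-I_{N(t)\lambda(t)})\var(t)\|_{L^2}\to0$, this yields $\lim_{t\to T}\int\chi_{R(t)}|I_{N(t)}u(t)|^2=\ell$. Finally, combining (a) with the same dispersion control -- which forbids $L^2$-mass from oscillating in and out of a neighborhood of $x(T)$ -- gives $\int_{R(t)\le|x-x(t)|\le R}|u(t)|^2\to\int_{0<|x-x(T)|\le R}|u^\ast|^2$; adding $\int_{|x-x(T)|\ge R}|u^\ast|^2$ and letting $R\downarrow0$ identifies $\ell=\int|u^\ast|^2$, which is \eqref{equ:uastl2est}.

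The main obstacle, and the reason the higher-dimensional case is not a routine transcription of \cite{CR09}, is that $u$ lies only in $H^s$ with $s<1$: none of the $H^1$-level flux, virial and local-smoothing estimates apply to $u$ itself, so everything must be run for $I_{N(t)}u$, whose $H^1$-norm is kept under control by the almost conservation laws of Section 2, while absorbing the commutator errors produced by $I_{N(t)}$ against the spatial cutoffs and against the non-even-integer-power nonlinearity $|u|^{4/d}u$. These errors are affordable precisely because $N(t)\lambda(t)\to\infty$ superexponentially and because the Visan--Zhang commutator estimate (Lemma \ref{commutor}) supplies the gains $N^{-\min\{1,4/d\}s_+}$ -- which is also the origin of the constraint $s>\tfrac1{1+\min\{1,4/d\}}$. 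A secondary delicate point is the matching of the fixed window $R$ (for (a)) with the dynamical window $R(t)$ (for (b)) down to the blowup point, where Lemmas \ref{lem:virdradreg}--\ref{lem:l2dispinf} are needed in their sharp quantitative form rather than merely through the Lyapounov functional $\mathcal{J}$.
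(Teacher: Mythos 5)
Your proposal captures the overall picture (what must be controlled, the role of $I_N$, the source of the constraint $s>\frac1{1+\min\{1,4/d\}}$), but the central step of part \textbf{(a)} is argued in a way that leaves a genuine gap, and is quite different from what the paper actually does.

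For \eqref{equ:l2convetout}, you track the localized mass $m_R(t)=\int\phi_R|u(t)|^2$, show that $m_R'(t)$ is integrable up to $T$, and then invoke a bubble decomposition $|u(t)|^2\rightharpoonup\|Q\|_{L^2}^2\delta_{x(T)}+|u^\ast|^2$ to ``identify the limit.'' This does not close: convergence of the scalar $m_R(t)$ gives at most convergence of $\|u(t)\|_{L^2(|x-x(T)|>R)}$, not of the function $u(t)$, and the bubble decomposition you invoke to upgrade this to strong $L^2$ convergence is precisely the statement under proof (the existence of $u^\ast$ as an $L^2$ function, not merely of a limiting measure). The paper instead sets $w_R(t)=\chi_R I_{N(t)}u(t)$, observes that $w_R$ solves a forced linear Schr\"odinger equation \eqref{equ:wrequ}, and proves via Duhamel and Strichartz that the forcing lies in $L_t^2L_x^{2d/(d+2)}([0,T])$. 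Since $T<\infty$, absolute convergence of the Duhamel integral gives strong $L^2$-Cauchyness of $w_R(t)$ as $t\to T$, which is then transferred to $u(t)$ by \eqref{equ2.2}. The only genuinely new ingredient beyond \cite{CR09} is the nonlinear commutator term $\|\,|I_Nu|^{4/d}I_Nu-I_N(|u|^{4/d}u)\|_{L_t^2L_x^{2d/(d+2)}}$, bounded interval-by-interval by Lemma \ref{lem:xianchafa} and summed as a convergent geometric series under the second constraint on $\beta$ (Remark \ref{rem:beta}). Your write-up mentions the commutator gains qualitatively, but neither the Duhamel set-up that produces strong convergence nor the summation condition $\frac1\beta>\frac{4s}{\min\{4,d\}s^2-(1-s)}$ is present.

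For \eqref{equ:uastl2est}, your route through the Lyapunov functional $\mathcal{J}$ and mass conservation is genuinely different from the paper's argument and could in principle succeed, but as written it is loose at exactly the point you flag as delicate: matching the $\psi_{A(s)}$ window (scale $A$) to the $\chi_{R(t)}$ window (scale $10A$), and controlling the flux in between using $\int b\int_A^{2A}|\var|^2\,ds<\infty$. The paper is more direct: it writes the exact evolution of $\int\chi\big(\frac{x-x(\tau)}{R(t)}\big)|I_Nu(\tau)|^2$ driven by \eqref{equ:wrequ}, integrates in $\tau$ from $t$ to $T$, uses \eqref{equ:l2convetout} to identify the boundary term at $T$ with $\int\chi(\frac{x-x(T)}{R(t)})|u^\ast|^2$, and then bounds each error in \eqref{equ:inuuuspe}; the only new term versus \cite{CR09} is again the commutator $\int_t^T\int|(|I_Nu|^{4/d}I_Nu-I_N(|u|^{4/d}u))\overline{I_Nu}|$, handled by Lemma \ref{lem:xianchafa} and the same geometric summation. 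If you pursue your Lyapunov route, you will need to make quantitative the passage from the $\psi_A$-localization at scale $A$ to $\chi_{R(t)}$ at scale $10A$, and you will still need a clean argument that the removal of $I_N$, $Q_b$, $\tilde\zeta_b$ from the integrand costs only $o(1)$ --- which is essentially what the paper's flux computation packages once \eqref{equ:l2convetout} is in hand.
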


\begin{proof}
First, we prove the claim.
For $R>0$, let $w_R(t,x)=\chi_R(x)\big[I_{N(t)}u(t,x)\big]$
then, $w_R$ solves
\begin{equation}\label{equ:wrequ}
i\pa_tw_R+\Delta w_R=i\chi_R\tilde{I}_Nu+2\nabla\chi_R\cdot\nabla I_Nu+\Delta\chi_RI_Nu-\chi_RI_N\big(|u|^\frac4du\big).
\end{equation}
where
\begin{equation}\label{equ:tilindef}
\widehat{\tilde{I}_Nu}(\xi)=-\frac{N_t}{N}\tilde{m}_N(\xi)\widehat{I_Nu}(\xi),~~\tilde{m}_N(\xi)=\frac{\xi}{N}\cdot\frac{\nabla m_N(\xi)}{m_N(\xi)}.
\end{equation}
It follows from \cite{CR09} that
\begin{align*}
&\Big\|\int_0^te^{i(t-\tau)\Delta}\big(i\chi_R\tilde{I}_Nu+2\nabla\chi_R\cdot\nabla I_Nu+\Delta\chi_RI_Nu\big)(\tau)\;d\tau\Big\|_{L_t^\infty L_x^2([0,T]\times\R^d)}<+\infty.
\end{align*}
We only need to estimate the nonlinear term
\begin{align}\label{equ:nonestred}
\Big\|\int_0^te^{i(t-\tau)\Delta}\chi_RI_N\big(|u|^\frac4du\big)\;d\tau\Big\|_{L_t^\infty L_x^2([0,T]\times\R^d)}<+\infty.
\end{align}
And
\begin{align*}
&\Big\|\int_0^te^{i(t-\tau)\Delta}\chi_RI_N\big(|u|^\frac4du\big)\;d\tau\Big\|_{L_t^\infty L_x^2([0,T]\times\R^d)}\leq C\big\|\chi_RI_N\big(|u|^\frac4du\big)\big\|_{L_t^2L_x^\frac{2d}{d+2}([0,T]\times\R^d)}\\
\lesssim&\big\|\chi_R\big(|I_Nu|^\frac4dI_Nu\big)\big\|_{L_t^2L_x^\frac{2d}{d+2}([0,T]\times\R^d)}+\big\||I_Nu|^\frac4dI_Nu-I_N(|u|^\frac4du)\big\|_{L_t^2L_x^\frac{2d}{d+2}([0,T]\times\R^d)}.
\end{align*}
On one hand,
\begin{align*}
\big\|\chi_R\big(|I_Nu|^\frac4dI_Nu\big)\big\|_{L_t^2L_x^\frac{2d}{d+2}([0,T]\times\R^d)}^2=&\int_0^T\big\|\chi_R^\frac{d}{d+4}I_Nu\big\|_{L_x^\frac{2(d+4)}{d+2}}^\frac{2(d+4)}{d}\;dt\\
\lesssim&\int_0^T\big\|\chi_\frac{R}{2}I_Nu\big\|_{L_x^\frac{2(d+4)}{d+2}}^\frac{2(d+4)}{d}\;dt\\
\lesssim&\int_0^T\Big\|\nabla\big(\chi_\frac{R}{2}I_Nu\big)\Big\|_{L^2}^2<+\infty,
\end{align*}
where the last inequality follows from (4.49) in \cite{CR09}.
On the other hand, we have by Lemma \ref{lem:xianchafa} and \eqref{equ:dius}
\begin{align*}
&\big\||I_Nu|^\frac4dI_Nu-I_N(|u|^\frac4du)\big\|_{L_t^2L_x^\frac{2d}{d+2}([0,T]\times\R^d)}^2\\
\lesssim&\sum_{k=k_0}^{+\infty}\sum_{j=1}^{J_k}\big\||I_Nu|^\frac4dI_Nu-I_N(|u|^\frac4du)\big\|_{L_t^2L_x^\frac{2d}{d+2}([\tau_k^j,\tau_k^{j+1}]\times\R^d)}^2\\
\lesssim&\sum_{k=k_0}^{+\infty}\sum_{j=1}^{J_k}\lambda(t_k)^{\frac2\beta\min\{1,\frac4d\}s_-\frac{8}{d}}\\
\lesssim&\sum_{k=k_0}^{+\infty}k\lambda(t_k)^{\frac2\beta(\min\{1,\frac4d\}s_-\frac{1-s}{s})-\frac{8}{d}}\\
\lesssim&\sum_{k=k_0}^{+\infty}k2^{-\frac{2k}\beta(\min\{1,\frac4d\}s_-\frac{1-s}{s})+\frac{8k}{d}}<+\infty
\end{align*}
provided that $\frac{1}{\beta}>\frac{4s}{\min\{4,d\}s^2-(1-s)}.$
Then, by the standard argument as in \cite{CR09}, we obtain \eqref{equ:l2convetout}.

Next, we turn to show \eqref{equ:uastl2est}. Observer that $I_Nu$ solves
\begin{equation}
i\pa_t(I_Nu)+\Delta(I_Nu)=i\tilde{I}_Nu-I_N\big(|u|^\frac{4}{d}u\big).
\end{equation}
We then compute the flux of $L^2$-norm:
\begin{align*}
&\frac12\frac{d}{d\tau}\int\chi\Big(\frac{x-x(\tau)}{R(t)}\Big)|I_Nu(\tau)|^2\;dx\\
=&\frac{1}{R(t)}{\rm Im}\Big(\int\nabla\chi\Big(\frac{x-x(\tau)}{R(t)}\Big)\cdot\nabla I_Nu\overline{I_Nu}\Big)-\frac{x_t}{2R(t)}\cdot\int\nabla\chi\Big(\frac{x-x(\tau)}{R(t)}\Big)|I_Nu(\tau)|^2\\
&+{\rm Re}\Big(\int \chi\Big(\frac{x-x(\tau)}{R(t)}\Big)\tilde{I}_Nu\overline{I_Nu}\Big)+{\rm Im}\Big(\chi\Big(\frac{x-x(\tau)}{R(t)}\Big)\big(I_Nu|I_Nu|^\frac4d-I_N(u|u|^\frac4d)\big)\overline{I_Nu}\Big)
\end{align*}
and integrate from $t\to T.$ We obtain
\begin{align}\nonumber
&\Big|\int\chi\Big(\frac{x-x(T)}{R(t)}\Big)|u^\ast|^2-\int\chi\Big(\frac{x-x(t)}{R(t)}\Big)|I_Nu(t)|^2\Big|\\\nonumber
\lesssim&\frac{1}{A(t)\lambda(t)}\int_t^T\|\nabla I_{N(\tau)}u(\tau)\|_{L^2}\;d\tau+\frac{1}{A(t)\lambda(t)}\int_t^T\big|\tfrac{x_s}{\lambda}\big|\frac{d\tau}{\lambda(\tau)}+\int_t^T\Big|\int\chi\Big(\frac{x-x(\tau)}{R(\tau)}\Big)\tilde{I}_Nu\overline{I_Nu}\Big|\\\label{equ:inuuuspe}
&+\int_t^T\int\Big|(I_Nu|I_Nu|^\frac4d-I_N(u|u|^\frac4d)\big)\overline{I_Nu}\Big|.
\end{align}
By the same argument as in\cite{CR09}, we obtain
\begin{align*}
&\frac{1}{A(t)\lambda(t)}\int_t^T\|\nabla I_{N(\tau)}u(\tau)\|_{L^2}\;d\tau+\frac{1}{A(t)\lambda(t)}\int_t^T\big|\tfrac{x_s}{\lambda}\big|\frac{d\tau}{\lambda(\tau)}+\int_t^T\Big|\int\chi\Big(\frac{x-x(\tau)}{R(\tau)}\Big)\tilde{I}_Nu\overline{I_Nu}\Big|\\
&\to0\quad\text{as}\quad t\to T.
\end{align*}
Hence, we only need to show
$$\int_t^T\int\Big|(I_Nu|I_Nu|^\frac4d-I_N(u|u|^\frac4d)\big)\overline{I_Nu}\Big|\to 0\quad\text{as}\quad t\to T.$$
Indeed, by \eqref{equ:xianchafa}, we obtain
\begin{align*}
&\int_t^T\int\Big|(I_Nu|I_Nu|^\frac4d-I_N(u|u|^\frac4d)\big)\overline{I_Nu}\Big|\\
\lesssim&\sum_{k=k_t}^{+\infty}\sum_{j=1}^{J_k}\|I_Nu\|_{L_t^2L_x^\frac{2d}{d-2}([\tau_k^j,\tau_k^{j+1}]\times\R^d)}\big\|I_Nu|I_Nu|^\frac4d-I_N(|u|^\frac4du)\big\|_{L_t^2L_x^\frac{2d}{d+2}([\tau_k^j,\tau_k^{j+1}]\times\R^d)}\\
\lesssim&\sum_{k=k_t}^{+\infty}k\lambda(t_k)^{\frac1\beta(\min\{1,\frac4d\}s-\frac{1-s}{s})-\min\{1,\frac4d\}}\\
\lesssim&\sum_{k=k_t}^{+\infty}k2^{-k(\frac1\beta(\min\{1,\frac4d\}s-\frac{1-s}{s})-\min\{1,\frac4d\})}\to0\quad\text{as}\quad k_t\to+\infty.
\end{align*}

\end{proof}

\subsection*{Acknowledgements} J. Zheng was supported by NSFC Grants 11771041, 11831004.  Part of this work was completed while J. Zheng was supported by the the European Research Council, ERC-2014-
CoG, project number 646650 Singwave at Universit\'e de Nice.

\begin{center}

\end{center}

\end{document}